\newtheorem{satz}{Satz}[section]
\newtheorem{Theorem}[satz]{Theorem}
\newtheorem{Lemma}[satz]{Lemma}
\newtheorem{lemma}[satz]{Lemma}
\newtheorem{Prop}[satz]{Proposition}
\newtheorem{prop}[satz]{Proposition}
\newtheorem{Cor}[satz]{Corollary}
\newtheorem{Conj}[satz]{Conjecture}
\theoremstyle{definition}
\newtheorem{Definition}[satz]{Definition}
\newtheorem{Notation}[satz]{Notation}
\theoremstyle{remark}
\newtheorem{example}[satz]{Example}
\newtheorem{remark}[satz]{Remark}
\Crefname{Theorem}{Theorem}{Theorems}
\Crefname{Definition}{Definition}{Definitions}
\Crefname{Lemma}{Lemma}{Lemmas}
\Crefname{lemma}{Lemma}{Lemmas}
\Crefname{Notation}{Notation}{Notations}
\Crefname{Prop}{Proposition}{Propositions}
\Crefname{prop}{Proposition}{Propositions}
\Crefname{example}{Example}{Examples}
\Crefname{remark}{Remark}{Remarks}
\Crefname{Cor}{Corollary}{Corollaries}
\Crefname{Conj}{Conjecture}{Conjectures}
\Crefname{equation}{Equation}{Equations}
\renewcommand{\O}{\mathcal{O}}
\newcommand{\OO}{\mathcal{O}}
\renewcommand{\AA}{\mathcal{A}}
\newcommand{\F}{\mathcal{F}}
\renewcommand{\L}{\mathcal{L}}
\newcommand{\R}{\mathbb{R}}
\newcommand{\Z}{\mathbb{Z}}
\renewcommand{\H}{\mathbb{H}}
\newcommand{\Q}{\mathbb{Q}}
\newcommand{\MM}{\mathcal{M}}
\newcommand{\EE}{\mathcal{E}}
\newcommand{\FF}{\mathcal{F}}
\newcommand{\Fb}{\mathbb{F}}
\newcommand{\GG}{\mathbb{G}}
\newcommand{\cG}{\mathcal{G}}
\newcommand{\MMb}{\overline{\mathcal{M}}}
\newcommand{\mell}{\MM_{ell}}
\newcommand{\mcub}{\MM_{cub}}
\newcommand{\XX}{\mathcal{X}}
\newcommand{\YY}{\mathcal{Y}}
\newcommand{\C}{\mathbb{C}}
\newcommand{\bH}{\mathbb{H}}
\newcommand{\A}{\mathcal{A}_*}
\newcommand{\omline}{\underline{\omega}}
\newcommand{\aq}{\overline{a}}
\newcommand{\m}{\mathfrak{m}}
\newcommand{\p}{\mathfrak{p}}
\newcommand{\At}{\widetilde{A}}
\newcommand{\Gt}{\widetilde{\Gamma}}
\DeclareMathOperator{\Hom}{Hom}
\DeclareMathOperator{\Ext}{Ext}
\DeclareMathOperator{\Pic}{Pic}
\DeclareMathOperator{\MF}{mf}
\DeclareMathOperator{\MFC}{MF}
\DeclareMathOperator{\MFq}{mf}
\newcommand{\mf}{\operatorname{mf}_1(7)}
\DeclareMathOperator{\id}{id}
\DeclareMathOperator{\SL}{SL}
\DeclareMathOperator{\Spec}{Spec}
\DeclareMathOperator{\tensor}{\otimes}
\DeclareMathOperator{\Ell}{Ell}
\DeclareMathOperator{\Nat}{Nat}
\DeclareMathOperator{\Tate}{Tate}
\DeclareMathOperator{\tr}{tr}
\DeclareMathOperator{\Conv}{Conv}
\DeclareMathOperator{\ev}{ev}
\DeclareMathOperator{\QCoh}{QCoh}
\DeclareMathOperator{\modules}{\text{-}mod}
\DeclareMathOperator{\Tr}{Tr}
\DeclareMathOperator{\res}{res}
\DeclareMathOperator{\TMF}{TMF}
\DeclareMathOperator{\tmf}{tmf}
\DeclareMathOperator{\Tmf}{Tmf}
\newcommand{\zsk}{(\mathbb{Z}/7)^{\times}}
\renewcommand{\P}{\mathbb{P}}
\newenvironment{pf}{\begin{proof}[Proof]}{\end{proof}}
\let\c@equation\c@satz
\numberwithin{equation}{section}
\author{Lennart Meier}
\address{Mathematisch Instituut, Universiteit Utrecht, Budapestlaan 6, 3584 CD Utrecht, Nederland}
\email{f.l.m.meier@uu.nl}
\author{Viktoriya Ozornova}
\address{Fakult\"at f\"ur Mathematik, Ruhr-Universit\"at Bochum, D-44780 Bochum, Germany}
\email{viktoriya.ozornova@rub.de}
\title{Rings of modular forms and a splitting of $\TMF_0(7)$}
\begin{document}

\begin{abstract}
Among topological modular forms with level structure, $\TMF_0(7)$ at the prime $3$ is the first example that had not been understood yet. We provide a splitting of $\TMF_0(7)$ at the prime 3 as $\TMF$-module into two shifted copies of $\TMF$ and two shifted copies of $\TMF_1(2)$. This gives evidence to a much more general splitting conjecture. Along the way, we develop several new results on the algebraic side. For example, we show the normality of rings of modular forms of level $n$ and introduce cubical versions of moduli stacks of elliptic curves with level structure. 
\end{abstract}

\maketitle
\tableofcontents

\section{Introduction}
The main objects of our study, spectra of \emph{topological modular forms}, are beasts in which arithmetic geometry and stable homotopy theory heavily intertwine. This entails that a huge part of our article is purely concerned with modular forms and different moduli stacks and a smaller part draws the consequences in homotopy theory. For the convenience of readers with different backgrounds, we have accordingly divided the introduction into two parts that separate our algebraic and homotopy-theoretic motivations and results. 

\addtocontents{toc}{\protect\setcounter{tocdepth}{1}}
\subsection*{Modular forms and moduli stacks}
Given a congruence subgroup $\Gamma \subset \SL_2(\Z)$, we consider the corresponding ring of holomorphic modular forms $\MF(\Gamma;\C)$. For every subring $R\subset \C$, we can further consider $\MF(\Gamma; R)$, the subring of modular forms whose $q$-expansion has coefficients in $R$. We will mainly be interested in the cases $\Gamma = \Gamma_0(n)$ and $\Gamma_1(n)$. While modular forms with respect to these groups have received an extraordinary amount of attention in number theory, the ring-theoretic properties of $\MF(\Gamma_0(n); R)$ and $\MF(\Gamma_1(n); R)$ have been far less studied. Exceptions are Deligne and Rapoport \cite{DeligneRapoport}, who show that these rings are finitely generated $\Z[\frac1n]$-algebras, and the work of Rustom \cite{RustomGenerators} \cite{RustomGeneratorsRelations}, where he provides both bounds on the degrees of generators and relations and an algorithm to determine the ring structure of $\MF(\Gamma_1(n); R)$ in weights at least $2$ if $R=\Z[\frac1n]$. 

Part of our algebraic aims in this article is to continue the study of the rings $\MF(\Gamma_1(n); R)$ and also of the corresponding moduli stacks $\MMb_1(n)$. These are the compactifications of the moduli stack $\MM_1(n)$ of elliptic curves with a chosen point of exact order $n$ over $\Z[\frac1n]$-algebras. The connection is that at least for $\frac1n\in R$ we can reinterpret the weight-$k$-part $\MF_k(\Gamma_1(n); R)$ as $H^0(\MMb_1(n)_R; \omline^{\tensor k})$, where $\omline$ denotes the pushforward of the sheaf of differentials of the universal generalized elliptic curve. Similarly, we can reinterpret $\MF_k(\Gamma_0(n); R)$ as $H^0(\MMb_0(n)_R; \omline^{\tensor k})$, where $\MMb_0(n)$ is the compactification of the moduli stack $\MM_0(n)$ of elliptic curves with chosen cyclic subgroup of order $n$ over $\Z[\frac1n]$-algebras. This allows to define modular forms for $\Z[\frac1n]$-algebras $R$ not contained in $\C$ as well.

Our first aim is to give equations for the universal elliptic curve over $\MM_1(n)$ in terms of modular forms. A more precise statement can be found in \cref{prop:holomorphicalphas} and its surrounding. 
\begin{Theorem}\label{thm:holomorphicTate}
For $n\geq 3$, the universal elliptic curve over $\MM_1(n)$ can be expressed via an equation
\[y^2 + \alpha_1 xy + \alpha_3 y = x^3 + \alpha_2 x^2,\]
where the $\alpha_i$ are \emph{holomorphic} modular forms for $\Gamma_1(n)$ of weight $i$, with explicit formulae for their $q$-expansions. 
\end{Theorem}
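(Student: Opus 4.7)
My plan is to combine the classical Tate normal form with an explicit analysis of the Tate curve at each cusp of $\MMb_1(n)$.

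First, I would recall the Tate normal form: given an elliptic curve $E$ over a ring together with a section $P$ that is nowhere $2$-torsion, there is a unique Weierstrass presentation for which $P = (0,0)$ and the tangent to $E$ at $P$ is the line $\{y = 0\}$. In that presentation the Weierstrass equation automatically takes the shape
\[y^2 + \alpha_1 xy + \alpha_3 y = x^3 + \alpha_2 x^2,\]
with $\alpha_3$ a unit at every point. For $n \geq 3$ the universal point of exact order $n$ on the universal elliptic curve $E \to \MM_1(n)$ is nowhere $2$-torsion, so this construction applies universally and produces the $\alpha_i$ as sections of $\omline^{\otimes i}$ over the open stack $\MM_1(n)$, i.e.\ as meromorphic modular forms of weight $i$.

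Second, I would verify holomorphicity at the cusps by explicit computation with the Tate curve $E_q = \mathbb{G}_m/q^{\Z}$. Each cusp of $\MMb_1(n)$ corresponds, after a suitable base change, to a choice of point $u = \zeta\, q^{a/n}$ of exact order $n$ on $E_q$, with $\zeta$ a root of unity. Classical Eisenstein-type series express the Weierstrass coordinates $(X(u), Y(u))$ of $u$ as Laurent series in $q^{1/n}$. Substituting these into the translate-and-shear construction of the first step yields explicit series for $\alpha_1, \alpha_2, \alpha_3$. The crucial observation is that the denominator arising in the shear, namely a quantity equal to zero exactly when $P$ is $2$-torsion, is a $q$-adic unit in the Tate-curve ring for $n \geq 3$. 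Inspecting the resulting $q$-expansions shows that none of the $\alpha_i$ has a polar part, which, by the $q$-expansion principle, upgrades them to honest holomorphic modular forms on $\MMb_1(n)$.

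Third, the same calculation yields the explicit closed-form $q$-expansion formulae alluded to in the theorem, as spelled out in \cref{prop:holomorphicalphas}.

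The main obstacle is the cuspidal analysis in the second step: one must carefully bookkeep which cusps correspond to which choices of $u$, in particular the ones involving Puiseux variables $q^{b/n}$, and check that, after the necessary cancellations in the Eisenstein-like series, no negative powers of $q^{1/n}$ survive in any $\alpha_i$.
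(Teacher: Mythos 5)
Your overall strategy matches the paper's: first put the universal curve over $\MM_1(n)$ into Tate normal form, yielding meromorphic modular forms $\alpha_i$, and then check holomorphicity by explicit $q$-expansion analysis at the Tate curve. The paper's computational input (Lemma~\ref{lem:powerseries}) is exactly the case analysis you sketch. There are two points of divergence worth noting.

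First, the paper organizes the passage between cusps of $\MMb_1(n)$ and Tate-curve evaluations via the Fricke/Atkin--Lehner operator $W_n$ (Lemmas~\ref{lem:Atkin} and~\ref{lem:holomorphicitycriterion}). After conjugating the Katz modular form by $W_n$, every cusp expansion becomes the evaluation of the natural transformation at $\Tate(q^n)$ with torsion point $(X(\zeta^dq^c,q^n),Y(\zeta^dq^c,q^n))$, so that one only ever manipulates honest Laurent series in $q$ with integer exponents. Your proposal goes directly to the Tate curve $E_q$ with torsion points $\zeta\, q^{a/n}$, which is equivalent after the substitution $q\mapsto q^n$, but forces you into Puiseux series in $q^{1/n}$ and into tracking cusp widths by hand. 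Both roads lead to the same computation, but the $W_n$ reformulation makes the bookkeeping considerably cleaner, which is exactly why the paper introduces it.

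Second, and more substantively: the claim that the denominator of the shear is a ``$q$-adic unit in the Tate-curve ring'' is not correct, and it is the load-bearing claim in your Step 2. That denominator is $\alpha_3 = X(u,q^n)+2Y(u,q^n)$, and the case analysis in Lemma~\ref{lem:powerseries} shows that for $0<k<n$ its lowest-order term is $vq^k$, $(v-v^{-1})q^{n/2}$, or $-v^{-1}q^{n-k}$, i.e.\ it has \emph{positive} $q$-adic valuation. It is emphatically not invertible in $\Z\llbracket q\rrbracket$. What actually saves the day is that the numerator $a_4(q^n)-X+3X^2$ has $q$-adic valuation at least as large as that of $X+2Y$ (the table in the paper compares the leading terms of $X$ and $X+2Y$ in each of the four regimes $k=0$, $0<k<n/2$, $k=n/2$, $n/2<k<n$, with the degenerate leading coefficients excluded precisely because $P$ has order $n\geq 3$). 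So the quotient $s'$ is a power series even though the denominator is not a unit. This needs to be the ``crucial observation,'' not a unit statement; as written your argument has a genuine gap at this step. Finally, the appeal to ``the $q$-expansion principle'' at the end is a bit off-target: the $q$-expansion principle detects the ring of definition of a form from one cusp's $q$-expansion, whereas what you need here is the criterion (Lemma~\ref{lem:holomorphicitycriterion}) that absence of poles in all the Tate-curve evaluations implies holomorphicity at all cusps.
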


Our main example is the case $n=7$.

\begin{Theorem}\label{thm:Gamma17alph}
There is an isomorphism 
\[\MF(\Gamma_1(7);\Z) \cong \Z[z_1,z_2, z_3]/(z_1z_2+z_2z_3+z_3z_1)\] 
with $|z_i| = 1$. A generator $t$ of the group $\Gamma_1(7)\!\setminus\!\Gamma_0(7)\cong \zsk$ acts via 
\[t.z_3 = -z_1,\quad t.z_1 = -z_2 \quad \text{ and }\quad t.z_2 = -z_3.\]
The $\alpha_i$ are given by 
\[\alpha_1 = z_1-z_2 +z_3,\quad \alpha_2 = z_1z_2 +z_1z_3\quad\text{ and }\quad\alpha_3 = z_1z_3^2.\]
\end{Theorem}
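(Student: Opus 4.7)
The plan is to produce three weight-$1$ modular forms $z_1, z_2, z_3 \in \MF(\Gamma_1(7);\Z)$ with prescribed integral $q$-expansions that realize all the stated identities, and then identify the presentation $\Z[z_1,z_2,z_3]/(z_1z_2+z_2z_3+z_3z_1)$ with $\MF(\Gamma_1(7);\Z)$ via a Hilbert-series comparison, invoking the $q$-expansion principle throughout to pass between algebraic identities and sections of $\omline^{\otimes k}$.

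First, I would introduce $z_1, z_2, z_3$ by prescribing integral $q$-expansions; natural candidates are weight-$1$ Eisenstein series attached to Dirichlet characters modulo $7$, or suitable square-root-type combinations of weight-$2$ expressions in the $\alpha_i$ made available by \Cref{thm:holomorphicTate}. Because $\MMb_1(7)$ is connected and has a cusp section defined over $\Z$, the $q$-expansion principle allows me to verify each of $\alpha_1 = z_1-z_2+z_3$, $\alpha_2 = z_1z_2+z_1z_3$, $\alpha_3 = z_1z_3^2$, the quadric $z_1z_2+z_2z_3+z_3z_1 = 0$, and the action $t.z_1 = -z_2$, $t.z_2 = -z_3$, $t.z_3 = -z_1$ purely as identities of $q$-expansions.

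These identities produce a graded ring homomorphism $\varphi\colon A := \Z[z_1,z_2,z_3]/(z_1z_2+z_2z_3+z_3z_1) \to \MF(\Gamma_1(7);\Z)$. The ring $A$ is the homogeneous coordinate ring of a smooth plane conic with $\Z$-rational points $(1\!:\!0\!:\!0), (0\!:\!1\!:\!0), (0\!:\!0\!:\!1)$, hence isomorphic to $\P^1_\Z$, so $A$ is $\Z$-flat with $\dim_\Z A_k = 2k+1$. On the other side, $\MMb_1(7)_\C \cong \P^1_\C$ has genus $0$, no elliptic points, and six cusps; the classical dimension formula gives $\dim_\C \MF_k(\Gamma_1(7);\C) = 2k+1$ for all $k \geq 0$, and $\omline$ has degree $2$ on $\MMb_1(7)_\Q \cong \P^1_\Q$. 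Linear independence of $z_1, z_2, z_3$ (checkable on $q$-expansions) makes $\varphi \otimes \Q$ an isomorphism in weight $1$; since $\MF(\Gamma_1(7);\Q) \cong \bigoplus_k H^0(\P^1_\Q, \O(2k))$ is generated in weight $1$, $\varphi \otimes \Q$ is surjective and hence an isomorphism by the dimension count.

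The main obstacle is upgrading from $\Q$ to $\Z$, especially at the prime $p=7$, where $\MF(\Gamma_1(7);\Z)$ is defined directly by integrality of $q$-expansions rather than as sections of a moduli problem. Since $A$ is $\Z$-flat, $\varphi$ is injective. For integral surjectivity I would exhibit, for each $k$, a $\Z$-basis of $A_k$ consisting of $2k+1$ monomials $z_1^a z_2^b z_3^c$ in a fixed normal form modulo the quadric, compute their $q$-expansions, and verify that the resulting leading-term matrix is unit-triangular over $\Z$; then any form with integer $q$-expansion is uniquely an integer-linear combination of these monomials. This explicit $q$-expansion bookkeeping --- leveraging the formulas developed around \Cref{thm:holomorphicTate} --- is the main technical input.
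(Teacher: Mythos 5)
Your proposal follows essentially the same path as the paper: construct the $z_i$ from the weight-one Eisenstein series attached to the odd characters mod $7$, verify the polynomial identities by $q$-expansion comparison using the dimension formula and the identification $\MMb_1(7)_\C \cong \P^1_\C$ with $\deg\omline = 2$, and get the ring presentation by a generation-in-weight-one argument. Your unit-triangular leading-term observation for pinning down $\Z$-integrality is the same device the paper uses explicitly in weight $2$ (using $z_3, z_1, z_1{+}z_2$ with leading terms $1, q, q^2$) and implicitly in all weights, so the two arguments coincide; if anything, you make the integral surjectivity more explicit than the paper, which states the ring presentation over $\Z[\tfrac17]$.

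One small caution: the $(\Z/7)^\times$-action on the $z_i$ cannot literally be ``verified purely as an identity of $q$-expansions.'' The $q$-expansion of $t.z_1$ at the cusp $\infty$ is \emph{not} determined by the $q$-expansion of $z_1$ there --- computing it requires knowing how $z_1$ transforms under $\Gamma_0(7)$. You need the nebentypus transformation $E(\varphi_j)[\gamma]_1 = \varphi_j(d)\,E(\varphi_j)$ of the Eisenstein series as a non-$q$-expansion input (which is exactly what the paper uses); only after that does the $q$-expansion principle let you identify the resulting modular form with $-z_2$. Since you already plan to build the $z_i$ out of $E(\varphi_j)$, this is a matter of phrasing rather than a genuine gap, but it is worth stating the extra input explicitly.
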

While our computation of $\MF(\Gamma_1(7);\Z)$ is not difficult to obtain, the reader should compare the simplicity of its expression with the presentation Rustom \cite[Section 3.1]{RustomGeneratorsRelations} is forced to give by ignoring the elements of weight $1$. 

After these explicit computations, we also prove more structural results about rings of modular forms. 
\begin{Theorem}
For every $n\geq 2$, the ring $\MF(\Gamma_1(n);\Z[\frac1n])$ is normal.
\end{Theorem}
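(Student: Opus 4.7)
The plan is to identify
\[R := \MF(\Gamma_1(n); \Z[\tfrac{1}{n}]) \cong \bigoplus_{k \geq 0} H^0(X, \omline^{\otimes k})\]
for $X := \MMb_1(n)_{\Z[\frac{1}{n}]}$, as recalled in the introduction, and to transfer normality directly from the moduli stack to its section ring. By classical work of Deligne--Rapoport and Katz--Mazur, $X$ is smooth proper over $\Z[\tfrac{1}{n}]$, and hence regular and in particular normal.

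Since normality of a finite product of rings reduces to normality of each factor, and since $X$, being smooth, is a finite disjoint union of integral connected components $X_i$ inducing a splitting $R = \prod R^{(i)}$ of section rings, it suffices to show each $R^{(i)}$ is normal. We may thus assume $R$ itself is a domain. As $R$ is $\Z$-graded, its integral closure in $\mathrm{Frac}(R)$ is a graded subring, so the task reduces further to showing that every homogeneous $f \in \mathrm{Frac}(R)_k$ integral over $R$ already lies in $R_k$.

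Such an $f$ is a rational section of $\omline^{\otimes k}$ satisfying a monic polynomial relation with coefficients in $R$. Étale-locally at each point $x \in X$ one may trivialize $\omline$, whereupon $f$ becomes a rational function integral over the regular (hence normal) local ring $\O_{X,x}$; thus $f$ is regular at $x$. Since $x \in X$ was arbitrary, $f$ extends to a global section of $\omline^{\otimes k}$, i.e.\ $f \in R_k$, which completes the plan.

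The main obstacle I anticipate lies in verifying the reductions over the arithmetic base $\Z[\tfrac{1}{n}]$: that the decomposition of $X$ into integral connected components behaves well (as opposed to the situation over a field), and that the integral closure of a $\Z$-graded domain is graded. Both facts are standard, but demand care in locating precise references for Deligne--Mumford stacks. A clean alternative bypassing the domain reduction is to formulate the integral-closure characterization directly for reduced Noetherian $\Z$-graded rings and to apply the local argument component-wise.
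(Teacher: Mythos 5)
Your proof is correct and takes a genuinely different, more conceptual route than the paper. You establish the statement as an instance of the general fact that the section ring $\bigoplus_{k\ge0}H^0(X;\omline^{\otimes k})$ of a line bundle on a smooth, integral, proper Deligne--Mumford stack is integrally closed: the integral closure of a graded domain is itself graded (this is the Bourbaki argument that it lies in the homogeneous localization, a Laurent ring over a field and hence a PID; worth citing, since it is exactly the step that lets you reduce to homogeneous $f$), and a homogeneous integral element, viewed as a rational section of $\omline^{\otimes k}$, becomes after \'etale-local trivialization of $\omline$ a rational function integral over the regular local ring $\O_{X,x}$, hence regular everywhere and thus a global section. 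Your only inputs are smoothness of $\MMb_1(n)$ over $\Z[\tfrac{1}{n}]$ (Deligne--Rapoport) and integrality of $\MF_1(n)$, both of which the paper uses as well. The paper instead verifies Serre's conditions R1 and S2 directly on $\Spec \MF_1(n)$: R1 is checked by invoking \cref{prop:coordinatesm1n} to identify the smooth open locus $\MMb_1^1(n)$ with $\Spec \MF_1(n)\setminus V(c_4,\Delta)$ and showing $(c_4,\Delta)$ is a regular sequence, so height-one primes land in this locus; S2 is handled by a case analysis on primes of height $\ge2$, using that $\MF_1(n)/p$ is a domain and that $\MF_1(n)_{\Q}$ is finite free over $\Q[c_4,c_6]$. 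Your argument bypasses all of this machinery; the paper's approach, in exchange, produces a concrete description of $\Spec \MF_1(n)$ and its non-regular locus, which feeds into the later analysis of the cubical stacks $\MM_1(n)_{cub}$.
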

Using results from \cite{MeierDecomposition}, we give moreover a criterion for $\MF(\Gamma_1(n);\Z[\frac1n])$ being Cohen--Macaulay, which is satisfied for all $2\leq n \leq 28$. 

These commutative algebra results help us to develop \emph{cubical} analogues of the stacks $\MMb_1(n)$. To this purpose recall that $\MMb_1(n)$ is usually defined as the normalization of the compactified moduli stack of elliptic curves $\MMb_{ell}$ in $\MM_1(n)$. The stack $\MMb_{ell}$ embeds into the larger stack $\MM_{cub}$ of all curves that can locally be described by a cubic Weierstra{\ss} equation. We can define $\MM_1(n)_{cub}$ as the normalization of $\MM_{cub}$ in $\MM_1(n)$. We show: 

\begin{Theorem}
For $n\geq 2$, the stack $\MM_1(n)_{cub}$ is equivalent to the stack quotient $\Spec \MF(\Gamma_1(n);\Z[\frac1n])/\GG_m$. Moreover, the map $\MM_1(n)_{cub} \to \MM_{cub,\Z[\frac1n]}$ is finite and flat if $\MF(\Gamma_1(n);\Z[\frac1n])$ is Cohen--Macaulay.
\end{Theorem}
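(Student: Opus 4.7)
The plan is to exhibit the stack quotient $[\Spec B/\GG_m]$, with $B:=\MF(\Gamma_1(n);\Z[\tfrac{1}{n}])$ and the $\GG_m$-action induced by the weight grading, as a normal finite cover of $\MM_{cub,\Z[\frac{1}{n}]}$ whose restriction to the elliptic locus recovers $\MM_1(n)$, and then to invoke the universal property of normalization to identify it with $\MM_1(n)_{cub}$. Throughout, the $\GG_m$-quotient shape is natural because $\omline$ is the tautological weight-one line bundle on both stacks.

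First I would build the comparison morphism
\[\Phi\colon[\Spec B/\GG_m]\longrightarrow\MM_{cub,\Z[\frac{1}{n}]}.\]
For $n\geq 3$, the holomorphic forms $\alpha_1,\alpha_2,\alpha_3\in B$ provided by \cref{thm:holomorphicTate} and the equation $y^2+\alpha_1 xy+\alpha_3 y=x^3+\alpha_2 x^2$ classify the universal $\Gamma_1(n)$-elliptic curve as a cubic Weierstra{\ss} curve in a $\GG_m$-equivariant way, which gives $\Phi$. The case $n=2$ I would handle by the analogous Tate normal form. By construction, $\Phi$ restricts to the tautological open immersion $\MM_1(n)\hookrightarrow\MM_{cub,\Z[\frac{1}{n}]}$ over the elliptic locus.

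Next I would verify that $\Phi$ is a finite morphism of normal stacks. Normality of the source follows from the normality theorem stated above. Finiteness decomposes into integrality plus finite generation: $B$ is a finitely generated $\Z[\tfrac{1}{n}]$-algebra by Deligne--Rapoport, and $B$ is integral over the image of the Weierstra{\ss} coordinate ring because (i) the classical finite map $\MMb_1(n)\to\MMb_{ell,\Z[\frac{1}{n}]}$ supplies each modular form with a monic polynomial relation over $\MF(\SL_2(\Z);\Z[\tfrac{1}{n}])$, and (ii) the standard generators $c_4,c_6,\Delta$ of the latter are explicit polynomials in the $a_i$. Integral plus finitely generated as algebra implies module-finite, so $\Phi$ is finite; the universal property of normalization then identifies $\Phi$ with $\MM_1(n)_{cub}\to\MM_{cub,\Z[\frac{1}{n}]}$.

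For the last assertion, assuming $B$ is Cohen--Macaulay, base-changing $\Phi$ along a smooth affine chart of $\MM_{cub,\Z[\frac{1}{n}]}$ yields a finite map of affine schemes of equal Krull dimension, with the target regular (a polynomial ring over $\Z[\tfrac{1}{n}]$) and the source Cohen--Macaulay. Miracle flatness then delivers flatness of $\Phi$, hence the finiteness and flatness of $\MM_1(n)_{cub}\to\MM_{cub,\Z[\frac{1}{n}]}$. The main obstacle in this plan will be controlling $\Phi$ across the cuspidal cubic divisor of $\MM_{cub,\Z[\frac{1}{n}]}$, where the classical normalization $\MMb_1(n)\to\MMb_{ell,\Z[\frac{1}{n}]}$ gives no direct leverage; it is precisely here that \cref{thm:holomorphicTate} (providing holomorphic, not just meromorphic, Weierstra{\ss} coefficients) and the normality of $B$ play the decisive role.
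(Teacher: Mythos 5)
Your overall blueprint mirrors the paper's: build the comparison morphism $\Phi\colon \Spec B/\GG_m \to \MM_{cub}$ from the (holomorphic) Tate normal form, establish normality and finiteness, identify $\Spec B/\GG_m$ with $\MM_1(n)_{cub}$ via the universal property of normalization, and deduce flatness in the Cohen--Macaulay case from Hironaka's criterion (miracle flatness). The normality input and the flatness step match the paper's argument. However, there is a genuine gap in your finiteness step.

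You conclude that $\Phi$ is finite because $B$ is module-finite over $A=\Z[\tfrac1n][a_1,\dots,a_6]$ via the classifying map. This implication does not hold. To test finiteness of $\Phi$ one base-changes along the fpqc cover $\Spec A\to\MM_{cub}$, obtaining $\Spec(\Gamma\otimes_A B)\to\Spec A$ (tensor along $\eta_R$, module structure via $\eta_L$); since $\Gamma=A[r,s,t]$ is infinite over $A$, finiteness of $B$ over $A$ does not propagate through this twist. Concretely, $A/(a_1)$ is $A$-finite (being a quotient), yet after inverting $2$ one has $\Gamma\otimes_A (A/(a_1))\cong A[r,t]$, which is infinite over $A$; likewise the paper's $\widetilde{A}\cong A/(a_1,a_3)$ is $A$-finite but $\Spec\widetilde{A}/\GG_m\to\MM_{cub}$ is a \emph{smooth, non-finite} cover. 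Your integrality argument via the finite map $\MMb_1(n)\to\MMb_{ell}$ gives no control over the locus $V(c_4,\Delta)\subset\Spec A$, exactly where the automorphism schemes of cubic curves become positive-dimensional and the Hopf-algebroid twist can destroy finiteness.

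What closes this gap in the paper is Proposition~\ref{prop:finiteness}: $\Gamma\otimes_A\MF_1(n)$ is identified with the global sections of $k_*$ of a reflexive (indeed locally free) sheaf on the complement $U$ of $V(c_4,\Delta)$ in $\Spec A$; since $V(c_4,\Delta)$ has codimension $2$ and $A$ is normal, this pushforward is again reflexive and hence coherent. You correctly flag the cuspidal locus as the obstacle, but the ingredients you cite as overcoming it (the holomorphic Tate normal form and the normality of $B$) are not what does so: they supply the map $\Phi$ and the normality input for the normalization criterion, while the coherence of $\Gamma\otimes_A B$ requires the codimension-$2$, reflexive-sheaf argument (or, as an alternative route, the Nagata-ring finiteness of Lemma~\ref{lem:finitem1n}, which by itself however does not identify $\MM_1(n)_{cub}$ with $\Spec B/\GG_m$).
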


Our reason to consider these cubical stacks is that vector bundles seem to be easier to study on $\MM_{cub}$ than on $\MMb_{ell}$, a view inspired by work of Mathew \cite{Mathewtmf}. This line of thought is already implicit though in the classification of vector bundles over weighted projective lines in \cite[Proposition 3.4]{MeierVB}, where the idea was to extend them to vector bundles on the non-separated stack $\mathbb{A}^2/\GG_m$. For $\MM_{cub}$ this idea takes the form that there is a nice smooth cover $\Spec A \to \MM_{cub}$ with $A = \Z[a_1, a_2, a_3, a_4, a_6]$ given by Weierstra{\ss} curves and thus quasi-coherent sheaves on $\MM_{cub}$ become equivalent to comodules over a certain explicit Hopf algebroid $(A,\Gamma)$. For explicit calculations, this outweighs the disadvantage that $\MM_{cub}$ is neither Deligne--Mumford nor separated. 

Our wish for such explicit calculations was motivated by the results from \cite{MeierDecomposition}. There it was shown that the pushforward $(f_n)_*\OO_{\MMb_1(n)_{(p)}}$ along the map 
\[f_n\colon \MMb_1(n)_{(p)} \to \MMb_{ell,(p)}\]
splits for a prime $p$ under mild hypotheses into a few simple pieces and the same is true for the pushforward $(h_n)_*\OO_{\MMb_0(n)_{(p)}}$ along $h_n\colon \MMb_0(n)_{(p)} \to \MMb_{ell,(p)}$ if $p$ does not divide $\lvert (\Z/n)^\times \rvert$ or $p>3$. For $p=3$, the first case not covered is $\MMb_0(7)$. By extending the vector bundle $(h_7)_*\OO_{\MMb_0(7)_{(3)}}$ to $\MM_{cub,(3)}$ and performing computation with comodules over Hopf algebroids, we arrive at the following splitting result. 

\begin{Theorem}\label{thm:MainTheorem}
 The quasi-coherent sheaf $(h_7)_*\O_{\MMb_0(7)_{(3)}}$ on $\MMb_{ell,(3)}$ is a vector bundle of rank $8$, which can be decomposed as a sum 
 $$\O_{\MMb_{ell,(3)}} \oplus \omline^{-6} \oplus \left((f_2)_*\OO_{\MMb_1(2)_{(3)}}\otimes\omline^{-2}\right) \oplus \left((f_2)_*\OO_{\MMb_1(2)_{(3)}}\otimes\omline^{-4}\right).$$
\end{Theorem}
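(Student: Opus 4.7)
The plan is to transport the problem to the cubical moduli stack $\MM_{cub,(3)}$, where quasi-coherent sheaves are equivalent to comodules over the explicit Hopf algebroid $(A,\Gamma)$ with $A=\Z[a_1,\dots,a_6]$. Using the Cohen--Macaulay property established above for $\MF(\Gamma_1(7);\Z[\tfrac17])$ (the case $n=7$ lies well within $n\leq 28$) together with the cubical moduli theorem, the sheaf $(h_7)_*\O_{\MMb_0(7)_{(3)}}$ and each summand in the claimed decomposition extend canonically to vector bundles on $\MM_{cub,(3)}$. It therefore suffices to verify the decomposition after this extension, where everything can be handled concretely via graded $A$-modules with $\Gamma$-coactions.

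Next, using \cref{thm:Gamma17alph} and the standard formulas expressing $a_1,\dots,a_6$ in terms of the $\alpha_i$, one obtains an explicit $A$-algebra presentation $\MF(\Gamma_1(7);\Z_{(3)})\cong \Z_{(3)}[z_1,z_2,z_3]/(z_1z_2+z_2z_3+z_3z_1)$. The subgroup $\langle t^3\rangle\cong\Z/2\subset\zsk$ acts as $-1$ in each odd weight, so its invariants select the even-weight subring, and the quotient $\Z/3$ acts by the cyclic permutation $z_1\mapsto z_3\mapsto z_2\mapsto z_1$. Taking full $\zsk$-invariants then presents $\MF(\Gamma_0(7);\Z_{(3)})$ explicitly as a graded $A$-module, together with its inherited $\Gamma$-coaction from the universal Weierstra{\ss} curve.

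I would then exhibit four comodule maps that realize the four claimed summands. The $\O$-summand corresponds to the unit map $A\to \MF(\Gamma_0(7);\Z_{(3)})$, and the $\omline^{-6}$-summand comes from a specific $\zsk$-invariant weight-$6$ element that is not in the image of $A$ in that degree. For the two shifted copies of $(f_2)_*\O_{\MMb_1(2)_{(3)}}$, I would first describe this sheaf explicitly as an $(A,\Gamma)$-comodule using the known presentation of $\MF(\Gamma_1(2);\Z_{(3)})$, and then locate two sub-comodules of $(h_7)_*\O_{\MMb_0(7)_{(3)}}$ whose structure agrees with the shifts by $\omline^{-2}$ and $\omline^{-4}$. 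Candidate generators can be constructed from $\zsk$-invariant elements in the appropriate weights, packaged with the compatible coactions.

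Assembling these four maps produces a morphism from $\O\oplus\omline^{-6}\oplus\bigl((f_2)_*\O_{\MMb_1(2)_{(3)}}\otimes\omline^{-2}\bigr)\oplus\bigl((f_2)_*\O_{\MMb_1(2)_{(3)}}\otimes\omline^{-4}\bigr)$ to $(h_7)_*\O_{\MMb_0(7)_{(3)}}$ between two rank-$8$ vector bundles on $\MM_{cub,(3)}$; one verifies it is an isomorphism by comparing graded ranks at each weight or by a fibrewise check at the supersingular point. The principal obstacle is recognizing the two copies of $(f_2)_*\O_{\MMb_1(2)_{(3)}}$, which are invisible from the ring structure alone and must be identified via matching Hopf algebroid coactions. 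The failure of semisimplicity for the $\Z/3$-subaction at $p=3$ is precisely what forces the presence of these non-trivial summands, explaining why this case is the first one not covered by the general framework of \cite{MeierDecomposition}.
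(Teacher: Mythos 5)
Your overall strategy — transporting to $\MM_{cub,(3)}$, working with graded comodules, computing $(\Z/7)^\times$-invariants, and exhibiting an explicit comodule decomposition — is essentially the same as the paper's. But there are two genuine gaps.

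First, you claim that the Cohen--Macaulay property of $\MF(\Gamma_1(7);\Z[\tfrac17])$ ``together with the cubical moduli theorem'' gives that $(h_7)_*\O_{\MMb_0(7)_{(3)}}$ extends canonically to a vector bundle on $\MM_{cub,(3)}$. This does not follow from the results you cite. \Cref{prop:CM} and \Cref{prop:Gamma1(n)flatness} establish flatness only for $\MM_1(n)_{cub} \to \MM_{cub}$; no corresponding flatness criterion is available for $\MM_0(n)_{cub}$, since $\MF(\Gamma_0(n);\Z[\tfrac1n])$ — the relevant invariant subring — is not shown to be Cohen--Macaulay a priori, and the paper explicitly flags this as an open question. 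The paper instead obtains the vector-bundle property of $(h'_7)_*\OO_{\MM_0(7)_{cub}}$ as an \emph{output} of the computation: it is \cref{SBbasis} proving that $S_{\widetilde{A}}$ is a free $\widetilde{A}$-module of rank $8$ — via an explicit basis, a rational rank argument, and a mod-$3$ injectivity check — that establishes the flatness. You cannot assume the conclusion to set up the proof.

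Second, your final verification step — ``comparing graded ranks at each weight or by a fibrewise check at the supersingular point'' — would not show that a map between two rank-$8$ vector bundles on $\MM_{cub,(3)}$ is an isomorphism. Equality of graded ranks says nothing about injectivity or surjectivity, and a check at one point (or one fibre) is insufficient over a two-dimensional base. The paper sidesteps the issue by directly computing the comodule structure on $S_{\widetilde{A}}$ in \cref{SBcomodule} and observing that the coaction formulas split visibly, as recorded in \cref{prop:comoduleiso}; no separate ``isomorphism check'' is required. Two smaller points: the paper works $3$-locally over the simpler Hopf algebroid $(\widetilde{A},\widetilde{\Gamma})$ rather than $(A,\Gamma)$, which makes the freeness and invariant computations tractable (rank $48$ over $\widetilde{A}$ rather than over $A$), and the generator $t=[3]$ acts by $t^2\colon z_1\mapsto z_2\mapsto z_3\mapsto z_1$, i.e.\ the opposite $3$-cycle to the one you wrote.
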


This obviously implies for every $\Z_{(3)}$-algebra $R$ a splitting
\begin{equation}\label{eq:modformsplitting}\MF_*(\Gamma_0(7);R) \cong \MF_*^R \oplus \MF_{*-6}^R \oplus \MF_{*-2}(\Gamma_1(2);R) \oplus \MF_{*-4}(\Gamma_1(2);R)\end{equation}
(with $\MF_*^R = \MF_*(\SL_2(\Z);R)$), but is a far stronger statement. One reason for our interest in this stronger statement will be apparent in the next subsection when we discuss a topological analogue of \eqref{eq:modformsplitting}. 

\subsection*{Topological modular forms}
The spectrum $\TMF$ of \emph{topological modular forms} was introduced by Goerss, Hopkins and Miller as a topological analogue of the ring $\MFC(\SL_2(\Z);\Z)$ of meromorphic modular forms. It is constructed as the global sections of a sheaf of ring spectra on the moduli stack of elliptic curves $\MM_{ell}$. As such it has the disadvantage that its homotopy groups are infinitely generated in most degrees, which is different for the refinement $\Tmf$ that is based on the compactified moduli stack $\MMb_{ell}$ instead. Its connective cover $\tmf$  can be seen as a topological analogue of the ring $\MF(\SL_2(\Z); \Z)$ of holomorphic modular forms. The unit map $\mathbb{S} \to \tmf$ identifies it moreover as a good approximation to the sphere spectrum in a certain range. We refer to \cite{TMFbook} as a basic reference for topological modular forms.

In many respects, $\TMF$ and $\Tmf$ can be seen as analogues of the real $K$-theory spectrum $KO$ at chromatic height $2$. The study of modules over $KO$ has been central in Bousfield's work on the classification of $E(1)$-local spectra at the prime $2$ \cite{Bou90}. In analogy, we expect modules over topological modular forms to play a central role in our understanding of $E(2)$-local spectra. For concrete incarnations of this philosophy after a further $K(2)$-localization see for example \cite{BehrensModular} or \cite{BobGoerss}. 

The $\TMF$-modules of relevance in this context are examples of \emph{topological modular forms with level structure}. Among the most important of these are the spectra $\TMF_1(n)$ and $\TMF_0(n)$, which are topological analogues of the rings of meromorphic modular forms for the congruence groups $\Gamma_1(n)$ and $\Gamma_0(n)$, respectively. More precisely, they arise as the global sections of sheaves of ring spectra on the stacks $\MM_1(n)$ and $\MM_0(n)$ introduced above, where we still implicitly invert $n$ everywhere. Hill and Lawson \cite{HillLawson} were able to define spectra $\Tmf_0(n)$ and $\Tmf_1(n)$ based on the compactified moduli $\MMb_0(n)$ and $\MMb_1(n)$ as well. While the $K(2)$-localizations of $\TMF_0(n)$ and $\Tmf_0(n)$ are equivalent, the latter appears to be a more powerful tool to understand the $E(2)$-local world and is also the first step to construct connective versions $\tmf_0(n)$ with interesting cohomological properties. 

Analogously to the algebraic splitting results mentioned above, the first-named author \cite{MeierTMFLevel} has proven splitting results for $\Tmf_1(n)$ and $\Tmf_0(n)$ in many cases if we localize at a prime $p$. If $p>3$, there is an explicit criterion when these modules are free over $\Tmf$. If $p=3$, the splittings have shifted copies of $\Tmf_1(2)_{(3)}$ as their summands. As $\pi_*\Tmf_1(2)_{(3)}$ is torsionfree, splittings into shifted copies of $\Tmf_1(2)_{(3)}$ can only exist if $\pi_*\Tmf_0(n)_{(3)}$ is also torsionfree, which is not expected if $3$ divides $|(\Z/n)^\times|$ and the criterion from \cite{MeierTMFLevel} does indeed not apply in this case. The first case where this occurs is $\Tmf_0(7)$, where we can prove the following modified splitting result. 

\begin{Theorem}\label{thm:TopMainResult}
The $\Tmf_{(3)}$-module $\Tmf_0(7)_{(3)}$ decomposes as 
$$\Tmf_{(3)} \oplus \Sigma^4\Tmf_1(2)_{(3)} \oplus \Sigma^8\Tmf_1(2)_{(3)}\oplus L,$$
where $L \in \Pic(\Tmf_{(3)})$, i.e.\ $L$ is an invertible $\Tmf_{(3)}$-module. 
The $\TMF_{(3)}$-module $\TMF_0(7)_{(3)}$ decomposes as
$$\TMF_{(3)} \oplus \Sigma^4\TMF_1(2)_{(3)} \oplus \TMF_1(2)_{(3)}\oplus \Sigma^{36}\TMF_{(3)}.$$
\end{Theorem}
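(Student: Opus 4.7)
The plan is to realize the algebraic decomposition of \cref{thm:MainTheorem} topologically as a splitting of $\Tmf_{(3)}$-modules, and then to derive the $\TMF_{(3)}$-statement by inverting the discriminant. Recall from Hill--Lawson that $\Tmf_0(7)_{(3)} \simeq \Gamma(\MMb_{ell,(3)}; (h_7)_*\OO^{\mathrm{top}})$, so the $\Tmf_{(3)}$-module $\Tmf_0(7)_{(3)}$ is controlled by the sheaf $(h_7)_*\OO^{\mathrm{top}}$ of modules over $\OO^{\mathrm{top}}_{\MMb_{ell,(3)}}$, whose underlying sheaf of $\pi_{2*}$'s is the vector bundle in \cref{thm:MainTheorem}. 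Each of the four algebraic summands has a canonical topological realization as a $\Tmf_{(3)}$-module: $\OO$ realizes to $\Tmf_{(3)}$; the invertible sheaf $\omline^{-6}$ realizes to an invertible $\Tmf_{(3)}$-module $L$; and $(f_2)_*\OO_{\MMb_1(2)_{(3)}}\otimes \omline^{-2k}$ realizes to $\Sigma^{4k}\Tmf_1(2)_{(3)}$ for $k=1,2$, using that $\Gamma(\MMb_{ell,(3)}; (f_2)_*\OO^{\mathrm{top}}) = \Tmf_1(2)_{(3)}$ and the standard correspondence between $\omline^{-j}$-twists and topological shifts by $\Sigma^{2j}$.

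The core step is to lift the four algebraic inclusions to $\Tmf_{(3)}$-module maps into $\Tmf_0(7)_{(3)}$. For each summand $\mathcal{F}_i$, the descent spectral sequence
\[E_2^{s,t} = H^s(\MMb_{ell,(3)}; \mathcal{H}\!om(\mathcal{F}_i, (h_7)_*\OO_{\MMb_0(7)_{(3)}}) \otimes \omline^t) \Rightarrow \pi_{2t-s}\operatorname{Map}_{\Tmf_{(3)}}(\mathcal{F}_i^{\mathrm{top}}, \Tmf_0(7)_{(3)})\]
carries the algebraic inclusion as a class in filtration zero. Showing this class is a permanent cycle free of hidden extensions is the principal obstacle; it reduces to sheaf-cohomology calculations on $\MMb_{ell,(3)}$, which are tractable because \cref{thm:MainTheorem} decomposes $\mathcal{H}\!om(\mathcal{F}_i, (h_7)_*\OO_{\MMb_0(7)_{(3)}})$ into well-understood pieces (in the spirit of the computations of \cite{MeierTMFLevel}). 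Summing the four resulting $\Tmf_{(3)}$-module maps yields a map whose underlying map of sheaves is precisely the algebraic splitting of \cref{thm:MainTheorem}, hence an equivalence.

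For the $\TMF_{(3)}$-statement, apply $(-)[\Delta^{-1}]$ to the $\Tmf_{(3)}$-splitting, using $\TMF_0(7)_{(3)} \simeq \Tmf_0(7)_{(3)}[\Delta^{-1}]$. The trivialization $\Delta \in H^0(\MM_{ell,(3)}; \omline^{12})$ converts the $L$-summand into $\omline^{-6}\otimes\OO^{\mathrm{top}}|_{\MM_{ell,(3)}} \simeq \Sigma^{12}\TMF_{(3)}$, which may be rewritten as $\Sigma^{36}\TMF_{(3)}$ via the $24$-fold $\Delta$-periodicity. For the $\Sigma^8\Tmf_1(2)_{(3)}$-summand, observe that on $\MM_1(2)_{(3)}$ the factorization $\Delta = 16 a_4^2(a_2^2 - 4a_4)$ implies $a_4 \in \pi_8\TMF_1(2)_{(3)}$ is a unit, so multiplication by $a_4^{-1}$ produces $\Sigma^8\TMF_1(2)_{(3)} \simeq \TMF_1(2)_{(3)}$. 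The remaining summands $\Tmf_{(3)}$ and $\Sigma^4\Tmf_1(2)_{(3)}$ pass unchanged to $\TMF_{(3)}$ and $\Sigma^4\TMF_1(2)_{(3)}$, yielding the claimed decomposition.
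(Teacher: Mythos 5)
Your strategy for the $\Tmf_{(3)}$-splitting is in the right spirit (lift the algebraic splitting through the descent spectral sequence), though you skip the two lemmas that make it work in the paper: a transfer argument (\cref{lem:trace}) to split off the unit copy of $\Tmf_{(3)}$, and the concentration of the relevant mapping-space descent spectral sequences in filtrations $0$ and $1$ (\cref{lem:canrealize}), which is what forces your filtration-zero classes to be permanent cycles and produces the splitting maps uniquely. Asserting that the calculations ``are tractable'' is not a substitute for this vanishing; this is precisely where the paper has to do real work.

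The $\TMF_{(3)}$-part of your argument is incorrect, and for a reason that goes to the heart of why this theorem is nontrivial. You write that inverting $\Delta$ ``converts the $L$-summand into $\omline^{-6}\otimes\OO^{\mathrm{top}}|_{\MM_{ell,(3)}}\simeq\Sigma^{12}\TMF_{(3)}$,'' and then use ``$24$-fold $\Delta$-periodicity'' to rewrite this as $\Sigma^{36}\TMF_{(3)}$. Both steps fail. First, the only thing you know about $L$ a priori is that $\pi_0$ of the corresponding sheaf is $\omline^{\tensor(-6)}$; by the Mathew--Stojanoska computation $\Pic(\TMF_{(3)})\cong\Z/72$, there are three distinct invertible $\TMF_{(3)}$-modules with that $\pi_0$ (namely $\Sigma^{12}$, $\Sigma^{36}$, $\Sigma^{60}$), so knowing $\pi_0$ does not single out $\Sigma^{12}\TMF_{(3)}$. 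Second, $\TMF_{(3)}$ is $72$-periodic, not $24$-periodic: $\Delta$ itself is not a permanent cycle in the descent spectral sequence for $\TMF_{(3)}$ (one has $d_5(\Delta)=\alpha\beta^2$), so $\Sigma^{12}\TMF_{(3)}\not\simeq\Sigma^{36}\TMF_{(3)}$ and your periodicity argument is vacuous. Pinning down which of the three candidate shifts $L[\Delta^{-1}]$ actually is requires additional input: the paper obtains it from the explicit $d_5$-differential computation in \cref{AppB}, which shows that $\Delta\sigma_3^2$ is a $d_5$-cycle in the descent spectral sequence for $\TMF_0(7)$ and hence forces $L[\Delta^{-1}]\simeq\Sigma^{36}\TMF_{(3)}$. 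Without some such argument your proof establishes only that $L[\Delta^{-1}]$ is one of $\Sigma^{12}$, $\Sigma^{36}$, $\Sigma^{60}$.
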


This topological analogue of \eqref{eq:modformsplitting} is based on the algebraic splitting result \cref{thm:MainTheorem} and on computations by M.\ Olbermann. The necessary parts of the latter can be found in \cref{AppB}, whose results are completely those of Olbermann. These also allow to characterize $L$ precisely, using additionally the work of Mathew and Stojanoska \cite{MathewStojanoska}. 

While in itself, \Cref{thm:TopMainResult} might seem an isolated result, it is an important test case of the following more general conjecture for arbitrary $n\geq 1$.

\begin{Conj}\label{conj:TMFsplitting}
The $\TMF$-module $\TMF_0(n)_{(3)}$ decomposes into possibly shifted copies of $\TMF_1(2)_{(3)}$ and of $\TMF_{(3)}$.

If $H^1(\MMb_0(n); \omline)$ has no $3$-torsion, the $\Tmf$-module $\Tmf_0(n)_{(3)}$ decomposes into possibly shifted copies of $\Tmf_1(2)_{(3)}$ and into invertible $\Tmf_{(3)}$-modules. 
\end{Conj}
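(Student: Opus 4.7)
The plan is to generalize the two-step strategy of \cref{thm:MainTheorem} and \cref{thm:TopMainResult}. First comes an algebraic step: I would show that $(h_n)_*\OO_{\MMb_0(n)_{(3)}}$ splits on $\MMb_{ell,(3)}$ as a direct sum of twists $(f_2)_*\OO_{\MMb_1(2)_{(3)}}\otimes\omline^{\tensor j_i}$ together with invertible sheaves $\L_k$. Second, a topological step: lift this algebraic decomposition to a splitting of the $\Tmf_{(3)}$-module $\Tmf_0(n)_{(3)}$, and then recover the $\TMF$-statement by inverting the discriminant, using that $\Pic(\TMF_{(3)})$ is generated by suspensions (see \cite{MathewStojanoska}) to convert every invertible summand into a shift of $\TMF_{(3)}$. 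Note that the unconditional $\TMF$-part of the conjecture already follows from the algebraic splitting restricted to the open stack $\MM_{ell,(3)}$; the $H^1$ hypothesis is only needed to control the compactification in the $\Tmf$-part.

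For the algebraic step, I would extend $(h_n)_*\OO_{\MMb_0(n)_{(3)}}$ to a quasi-coherent sheaf $\F_n$ on the cubical moduli stack $\MM_{cub,(3)}$ developed earlier in the paper. Whenever $\MF(\Gamma_0(n);\Z_{(3)})$ is Cohen--Macaulay, $\F_n$ is a finite flat vector bundle, and pulling back along the smooth cover $\Spec A\to \MM_{cub,(3)}$ with $A=\Z_{(3)}[a_1,a_2,a_3,a_4,a_6]$ turns it into an explicit graded comodule over the Hopf algebroid $(A,\Gamma)$. Writing $M_2$ for the comodule representing $(f_2)_*\OO_{\MMb_1(2)_{(3)}}$, the desired splitting becomes a finite-dimensional linear problem in each weight. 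To produce the decomposition uniformly in $n$, I would exploit the action of $(\Z/n)^{\times}$ and the Atkin--Lehner involutions, and try to factor $h_n$ through an intermediate stack classifying compatible $\Gamma_0(n)$- and $\Gamma_1(2)$-level structures, reducing the problem to an analogue of \cite{MeierDecomposition} on $\MMb_1(2)_{(3)}$, a setting in which $3$ is not a special prime.

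For the topological step, I would follow the obstruction-theoretic approach of \cite{MeierTMFLevel}. Given the algebraic splitting, the obstructions to realizing each summand as a $\Tmf_{(3)}$-module map lie in $\Ext^{s,t}$-groups over $\MMb_{ell,(3)}$, computable via the descent spectral sequence. Under the assumption that $H^1(\MMb_0(n);\omline)$ is $3$-torsion free, the relevant obstructions vanish, so each twist of $(f_2)_*\OO_{\MMb_1(2)_{(3)}}$ realizes as a shift of $\Tmf_1(2)_{(3)}$ and each algebraic invertible $\L_k$ lifts to an element of $\Pic(\Tmf_{(3)})$ by \cite{MathewStojanoska}. Inverting the discriminant then collapses each invertible summand to a suspension of $\TMF_{(3)}$ and yields the $\TMF$-part of the conjecture.

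The main obstacle will be producing the algebraic decomposition uniformly in $n$. For $n=7$ the relevant comodule is small enough to analyze by direct computation, but as $n$ grows the rank of $\F_n$ and the combinatorial complexity of $\MF(\Gamma_0(n);\Z_{(3)})$ increase rapidly, and no obvious functorial recipe identifies the copies of twists of $M_2$ inside $\F_n$. The deeper structural input that seems necessary is a conceptual reason for the repeated appearance of $(f_2)_*\OO_{\MMb_1(2)_{(3)}}$ as a universal building block at $p=3$, likely reflecting the interplay between the supersingular locus of $\MMb_{ell,(3)}$ and cyclic $n$-isogenies; finding such a reason, rather than proceeding case-by-case, is the crux of the conjecture.
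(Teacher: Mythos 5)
The statement you were asked to prove is \cref{conj:TMFsplitting}, which is stated in the paper as an open \emph{conjecture}; the paper itself does not prove it, only the special case $n=7$ (\cref{thm:MainTheorem} and \cref{thm:TopMainResult}). So there is no paper proof to compare against, and your proposal should be read as a research strategy rather than a proof. You are candid about this: you identify the lack of a uniform algebraic decomposition of $(h_n)_*\OO_{\MMb_0(n)_{(3)}}$ as the crux, and that is indeed exactly what is missing.

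Beyond that honest admission, there are two further gaps you should be aware of. First, your algebraic step is conditioned on $\MF(\Gamma_0(n);\Z_{(3)})$ being Cohen--Macaulay so that $\F_n$ becomes a vector bundle over $\MM_{cub,(3)}$. But the paper only establishes normality and a Cohen--Macaulay criterion for $\MF_1(n)$, not for $\MF(\Gamma_0(n);\cdot)$; indeed, the paper explicitly remarks after \cref{prop:Gamma1(n)flatness} that whether a corresponding flatness result holds for $\MM_0(n)_{cub}\to\MM_{cub}$ remains open. Even in the $n=7$ case, flatness of $\MM_0(7)_{cub}$ over $\MM_{cub}$ is deduced only \emph{a posteriori} from the explicit splitting of the comodule $S_{\widetilde{A}}$, not from a general criterion. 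Second, your assertion that the unconditional $\TMF$-part follows from the algebraic splitting on $\MM_{ell,(3)}$ alone is not quite complete: to realize each algebraic summand topologically one still needs a statement like \cref{lem:canrealize}, whose proof uses the duality identification \cref{lem:dual} and the vanishing of higher cohomology on $\MM_1(2)$; you would need to argue that those inputs survive the passage from $\MMb_{ell}$ to $\MM_{ell}$ (they do, but it should be said). The conceptual reason you gesture at for the universality of $(f_2)_*\OO_{\MMb_1(2)_{(3)}}$ at $p=3$---interaction with the supersingular locus---is a sensible heuristic and matches the paper's philosophy, but it is not yet a mechanism that would produce the required comodule decompositions uniformly.
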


By the results of \cite{MathewMeier}, this conjecture actually implies the corresponding algebraic conjecture that $(f_n)_*\OO_{\MMb_1(n)_{(3)}}$ splits under the same condition on $H^1(\MMb_0(n); \omline)$ into copies of $\omline^{\tensor ?}$ and $(f_2)_*\OO_{\MMb_1(2)_{(3)}} \tensor \omline^{\tensor ?}$. There are also analogues of \cref{conj:TMFsplitting} for other primes. For primes $p>3$ it was already proven in \cite{MeierTMFLevel} that $\TMF_0(n)_{(p)}$ is (after $p$-completion) a free $\TMF_{(p)}$-module. For $p=2$, we expect a splitting into possibly shifted copies of $\TMF_1(3)_{(2)}$, $\TMF_0(3)_{(2)}$ and $\TMF_0(5)_{(2)}$. 

Lastly, we mention that we believe the stacks $\MM_1(n)_{cub}$ and $\MM_0(n)_{cub}$ to be important for the understanding of connective variants $\tmf_1(n)$ and $\tmf_0(n)$ of topological modular forms with level structure, a point we plan to come back to in future work.

\subsection*{Overview of the structure of the paper}
In rough outline, our strategy is to show \cref{thm:MainTheorem} in Sections \ref{sec:ModularForms} to \ref{sec:Comodule} and to deduce \cref{thm:TopMainResult} in the last section. As first preparation, \cref{sec:ModularForms} computes the ring $\MF(\Gamma_1(7), \Z)$ together with its natural action by $\Gamma_1(7)\!\setminus\Gamma_0(7) \cong (\Z/7)^\times$. Moreover, it provides explicit $q$-expansions of the generators. In \cref{sec:TateCurve} we study the equations of the universal elliptic curves over $\MM_1(n)$. In particular, we show \cref{thm:holomorphicTate} and the identification of the $\alpha_i$ in \cref{thm:Gamma17alph}, which is important input for our later computations. 

Our strategy to show Theorem \ref{thm:MainTheorem} is to show a statement about the corresponding comodules. The precise relationship between quasi-coherent sheaves and graded comodules will be recalled in \cref{sec:HopfStacks}. It is both easier and yields stronger results to use this relationship not for $\MMb_{ell}$ but for $\mcub$ instead. The latter stack has a presentation by the Hopf algebroid $(A,\Gamma)$ with $A = \Z[a_1,a_2,a_3,a_4,a_6]$ and at the prime $3$ actually also by a smaller Hopf algebroid $(\widetilde{A}, \widetilde{\Gamma})$. Thus, quasi-coherent sheaves on $\MM_{cub, (3)}$ become equivalent to graded $(\widetilde{A}, \widetilde{\Gamma})$-comodules. To formulate a version of Theorem \ref{thm:MainTheorem} on $\MM_{cub}$ we define and explore the cubical versions of $\MM_1(n)$ and $\MM_0(n)$ by a normalization procedure in \Cref{sec:Flatness}. 

The next step is to make the Hopf algebroids corresponding to $\MM_1(7)_{cub}$ and $\MM_0(7)_{cub}$ explicit. In \Cref{sec:Splitting}, we produce explicit $\widetilde{A}$-bases of $\widetilde{A}$-algebras $R_{\widetilde{A}}$ and $S_{\widetilde{A}}$, which are defined by 
$$\Spec R_{\widetilde{A}} \cong \MM_1(7)_{cub}\times_{\mcub} \Spec \widetilde{A}\quad \text{ and }\; \Spec S_{\widetilde{A}} \cong \MM_0(7)_{cub}\times_{\mcub} \Spec \widetilde{A}.$$ 
This allows us to prove in \Cref{sec:Comodule} a splitting of $S_{\widetilde{A}}$ as a graded comodule over $(\widetilde{A},\widetilde{\Gamma})$, which implies \Cref{thm:MainTheorem}. In \Cref{sec:TopConclusions} we apply standard techniques (the transfer and the descent spectral sequence) to deduce our topological main theorem. 

We continue with \cref{qExpAppendix}, which gives an exposition of the theory of modular forms with level over general rings and their $q$-expansions. The reason for the length of this appendix is the subtle difference between so-called arithmetic and naive level structures, which only agree in the presence of an $n$-th root of unity. To achieve a $q$-expansion principle in the form we need, we have to choose a slightly unusual identification of the sections of $\omline^{\tensor *}$ on $\MMb_1(n)_{\C}$ with holomorphic $\Gamma_1(n)$-modular forms in the classical sense. 

Lastly, \cref{AppB} explains the pieces of M.\ Olbermann's unpublished computation of $\pi_*\Tmf_0(7)$ we need to prove the precise form of \cref{thm:TopMainResult}.

\subsection*{Acknowledgements}
We thank Martin Olbermann and Dominik Absmeier for helpful discussions. In particular, we thank Martin for sharing his unpublished work with us, of which part resulted in \cref{AppB}.
  
Both authors thank SPP 1786 for its financial support. Moreover, the authors would like to thank the Isaac Newton Institute for Mathematical Sciences for support and hospitality during the programme \lq\lq Homotopy harnessing higher structures\rq\rq{} when work on this paper was undertaken. This work was supported by 
EPSRC grant numbers EP/K032208/1 and EP/R014604/1. 

\subsection*{Conventions}
All quotients of schemes by group schemes (like $\GG_m$) are understood to be stack quotients. Unless clearly otherwise, all rings and algebras are assumed to be commutative and unital. Tensor products of quasi-coherent sheaves are always over the structure sheaf.

%%%%%%%%%%%%%%%%%%%%%%%%%%%%%%%%%%

\section{Modular forms of level $7$}\label{sec:ModularForms}
\addtocontents{toc}{\protect\setcounter{tocdepth}{2}}

Our goal in this section is to understand the ring of modular forms $\MF(\Gamma_1(7);\Z)$ with respect to the congruence group $\Gamma_1(7)\subset \SL_2(\Z)$ together with the action of $\left(\Z/7\right)^{\times}\cong\Z/6$. We refer for notation and background on modular forms and in particular on the $q$-expansion principle to \cref{qExpAppendix}. 

We begin with some recollections about moduli of elliptic curves. We denote by $\MM_{ell}$ the moduli stack of elliptic curves and by $\MMb_{ell}$ its compactification in the sense of $\mathfrak{M}_1$ of \cite[Chapter III]{DeligneRapoport}, i.e.\ the stack classifying generalized elliptic curves whose geometric fibers are elliptic curves or N\'eron $1$-gons. By $\MM_1(n)$ we denote the moduli stack of elliptic curves $E$ with chosen \emph{point $P\colon S \to E$ of exact order $n$} over schemes $S$ with $n$ invertible and by $\MM_0(n)$ the moduli stack of elliptic curves with chosen \emph{cyclic subgroup of order $n$} over such schemes. More precisely, we demand for $\MM_1(n)$ that for every geometric point $s\colon \Spec K \to S$ the pullback $s^*P$ spans a cyclic subgroup of order $n$ in $E(K)$ or, equivalently, that $P$ defines a closed immersion $(\Z/n)_S \to E$. Moreover, we call a group scheme over $S$ \emph{cyclic} if it is \'etale locally isomorphic to $(\Z/n)_S$.  

We can define compactifications $\MMb_1(n)$ and $\MMb_0(n)$ as the normalizations of $\MMb_{ell}$ in $\MM_1(n)$ and $\MM_0(n)$, respectively (for the definition of normalization see \cref{sec:Flatness}). The relevance for our purposes is that we have isomorphisms 
\begin{align*}\MF_k(\Gamma_1(n);R)&\cong H^0(\MMb_1(n)_R; \omline^{\tensor k}) \text{ and }\\
\MF_k(\Gamma_0(n);R) &\cong H^0(\MMb_0(n)_R; \omline^{\tensor k}).\end{align*}
for every $\Z[\frac1n]$-algebra $R$. The relevant case for us is the first one and is explained in \cref{sec:summary}. For $\Z[\frac1n]$-algebras $R$ that do not embed into $\C$, we will use the formula above as a definition of $\MF_k(\Gamma_1(n);R)$. In general, $\MF(\Gamma_1(n);R)$ differs from $\MF_1(n)\tensor R$, where we use $\MF_1(n)$ as an abbreviation for $\MF(\Gamma_1(n);\Z[\frac1n])$. But we have the following useful lemma.  

\begin{lemma}\label{lem:basechange}
Let $R \to S$ be a flat ring extension of $\Z[\tfrac{1}{n}]$-algebras. Then the canonical map $\MF(\Gamma_1(n);R) \tensor_R S \to \MF(\Gamma_1(n);S)$ is an isomorphism.
\end{lemma}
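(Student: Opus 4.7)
The plan is to translate the claim into a flat base change question on the compactified moduli stack via the identification $\MF_k(\Gamma_1(n);R) \cong H^0(\MMb_1(n)_R; \omline^{\tensor k})$ recalled just above the lemma. Since $\MF(\Gamma_1(n);R) = \bigoplus_k \MF_k(\Gamma_1(n);R)$ as graded rings, $-\tensor_R S$ preserves direct sums, and the multiplication in both sides comes from the same maps $\omline^{\tensor k}\tensor \omline^{\tensor l}\to \omline^{\tensor(k+l)}$, it is enough to produce, in each weight $k$, an isomorphism
\[H^0(\MMb_1(n)_R; \omline^{\tensor k}) \tensor_R S \;\longrightarrow\; H^0(\MMb_1(n)_S; \omline^{\tensor k}).\]

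I would then form the cartesian square
\[\begin{tikzcd} \MMb_1(n)_S \arrow[r, "q"] \arrow[d, "f_S"'] & \MMb_1(n)_R \arrow[d, "f_R"] \\ \Spec S \arrow[r, "g"'] & \Spec R, \end{tikzcd}\]
in which $g$ is flat by hypothesis. The morphism $f_R$ is the base change to $\Spec R$ of $\MMb_1(n)\to \Spec \Z[\tfrac{1}{n}]$; the latter is proper since $\MMb_1(n)$ is defined as the normalization of the proper Deligne--Mumford stack $\MMb_{ell, \Z[1/n]}$ along a finite map, and in particular $f_R$ is quasi-compact and quasi-separated. Flat base change for quasi-coherent cohomology on algebraic stacks then yields a natural isomorphism $g^{*}(f_R)_{*}\omline^{\tensor k}\cong (f_S)_{*}q^{*}\omline^{\tensor k}$. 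Because $\omline$ is the pushforward of the sheaf of differentials of the universal generalized elliptic curve, and both the universal curve and the sheaf of differentials commute with arbitrary base change, one has $q^{*}\omline^{\tensor k}\cong \omline^{\tensor k}$ on $\MMb_1(n)_S$. Since $\Spec R$ and $\Spec S$ are affine, pushforward along $f_R$ and $f_S$ is computed by taking global sections, and the displayed isomorphism above drops out.

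The argument is formal once the geometric description of modular forms is in hand. The only point requiring mild care is applying flat base change in the stacky setting rather than for schemes; this is no obstacle because $\MMb_1(n)$ is noetherian and proper over $\Z[\tfrac{1}{n}]$, so the standard proof (via a \v{C}ech computation on an affine cover of the target) goes through after choosing a smooth atlas. I therefore do not expect any genuinely hard step here; this lemma is essentially bookkeeping packaging of flat base change.
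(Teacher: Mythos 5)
Your proof is correct and takes essentially the same route as the paper: the paper's proof is a one-paragraph appeal to flat base change for the sheaf $\omline^{\tensor k}$ on $\MMb_1(n)$, citing Liu for the scheme case and remarking that the same argument carries over to Deligne--Mumford stacks by working with \'etale rather than Zariski covers. You have simply spelled out the cartesian square, the identification $q^*\omline^{\tensor k}\cong\omline^{\tensor k}$, and the passage from pushforward to global sections over the affine base, none of which changes the substance of the argument.
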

\begin{proof}
This is a variant of flat base change applied to the sheaf $\omline^{\tensor i}$ on $\MMb_1(n)$. If $\MMb_1(n)$ is a scheme, we can apply \cite[Lemma 5.2.26]{QingLiu} directly. For Deligne--Mumford stacks, the proof is the same using \'etale instead of Zariski coverings.
\end{proof}

To determine $\MF_k(\Gamma_1(n);R)$ for small $n$, we will use the following lemma.

\begin{lemma}
 The stack $\MMb_1(n)$ is equivalent to $\mathbb{P}^1_{\Z[\frac1n]}$ for $5\leq n\leq 10$ and $n=12$. For $n=7$, the line bundle $\omline$ corresponds under this equivalence to $\OO(2)$.
 \end{lemma}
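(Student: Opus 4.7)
The approach has two stages: first show $\MMb_1(n) \cong \mathbb{P}^1_{\Z[\frac1n]}$ for the listed values of $n$, then identify the line bundle $\omline$ on $\MMb_1(7)$. For the first stage, I would recall that for $n\geq 4$ the moduli problem $[\Gamma_1(n)]$ is rigid and representable, so $\MMb_1(n)$ is a smooth proper scheme of relative dimension one over $\Z[\frac1n]$ in the sense of Deligne--Rapoport. Its geometric fibers are therefore smooth proper curves, whose genus coincides with that of the classical complex modular curve $X_1(n)$; the standard genus formula gives genus zero exactly for $n\leq 10$ and $n=12$, so in our range the fibers are smooth genus zero curves. Moreover, the cusp $\infty$ provides a section $\sigma\colon \Spec \Z[\frac1n]\to \MMb_1(n)$, so each geometric fiber is a smooth genus zero curve with a rational point, hence $\mathbb{P}^1$.

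To upgrade this fiberwise statement to a global isomorphism, I would work with the line bundle $\OO(\sigma)$, which has relative degree one. Since $H^1$ of a degree-one line bundle on $\mathbb{P}^1$ vanishes, cohomology and base change show that $\pi_*\OO(\sigma)$ is locally free of rank $2$ on $\Spec \Z[\frac1n]$. Because $\Z[\frac1n]$ is a principal ideal domain, every finitely generated projective module is free, so this locally free sheaf is in fact trivial. The universal property of projective space then produces a morphism $\MMb_1(n)\to \mathbb{P}(\pi_*\OO(\sigma)) \cong \mathbb{P}^1_{\Z[\frac1n]}$. This morphism restricts to the standard isomorphism on every geometric fiber, and since both source and target are smooth and proper over the connected base $\Spec \Z[\frac1n]$, it is a global isomorphism.

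For the second assertion, under the isomorphism $\MMb_1(7)\cong \mathbb{P}^1_{\Z[\frac17]}$ the line bundle $\omline$ must be of the form $\OO(d)$ for some $d\in\Z$, since $\Pic(\mathbb{P}^1_{\Z[\frac17]})\cong \Z$. To pin down $d$, I would compare global sections after base change to $\C$: on the one hand $\dim_{\C} H^0(\mathbb{P}^1_{\C},\OO(d)) = \max(d+1,0)$, and on the other hand $\dim_{\C} H^0(\MMb_1(7)_{\C},\omline) = \dim_{\C}\MF_1(\Gamma_1(7);\C) = 3$ by the classical dimension formula for holomorphic weight-one modular forms on $\Gamma_1(7)$. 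Hence $d=2$.

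The main obstacle is the descent step in the second paragraph: knowing each geometric fiber is $\mathbb{P}^1$ only gives a Brauer--Severi scheme \emph{a priori}, and one needs the conjunction of the cusp section with the PID hypothesis on $\Z[\frac1n]$ to conclude the family is globally trivial. Everything else is essentially bookkeeping once one believes the genus computation and the weight-one dimension count.
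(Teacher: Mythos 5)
Your argument is sound, but both halves take a different route from the paper, and the second half has a subtlety worth surfacing.

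For the identification $\MMb_1(n) \cong \mathbb{P}^1_{\Z[\frac1n]}$, the paper simply cites \cite[Section 2]{MeierDecomposition}; your direct argument (rigidity for $n\geq 4$, genus-zero fibers, the cusp section killing the Brauer obstruction, then $\pi_*\OO(\sigma)$ being free because $\Z[\frac1n]$ is a PID and mapping to $\mathbb{P}(\pi_*\OO(\sigma))$) is a perfectly standard unwinding of that reference and is correct.

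For the degree of $\omline$, the paper plugs $n=7$ into the general formula $\deg\omline = \tfrac{1}{24}n^2\prod_{p\mid n}(1-\tfrac1{p^2})$ from \cite[Lemma 4.3]{MeierDecomposition}. You instead equate $\deg$ with $\dim_\C \MF_1(\Gamma_1(7);\C) - 1$. This works, but the phrase ``classical dimension formula for holomorphic weight-one modular forms'' understates a genuine difficulty: Diamond--Shurman's Theorem 3.5.1 expresses $\dim M_1(\Gamma)$ as $\tfrac{\varepsilon_\infty^{\mathrm{reg}}}{2} + \dim S_1(\Gamma)$, and there is \emph{no} general Riemann--Roch formula for $\dim S_1(\Gamma)$ because $\omline$ has exactly the critical degree. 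To complete your argument you must observe separately that $S_1(\Gamma_1(7)) = 0$; this does follow cheaply from genus zero, since the square of a nonzero weight-one cusp form would be a nonzero weight-two cusp form, but $S_2(\Gamma_1(7)) \cong H^0(X_1(7),\Omega^1) = 0$ as $g(X_1(7))=0$. With that added sentence your count $\dim M_1 = \tfrac62 + 0 = 3$, hence $d=2$, is correct. The paper's route via the degree formula has the advantage of sidestepping the weight-one pathology altogether; yours has the advantage of requiring only genus-zero facts and the cusp count.
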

\begin{proof}
 The first statement is proven in Section 2 of \cite{MeierDecomposition}. The Picard group of $\mathbb{P}^1_{\Z[\frac1n]}$ is isomorphic to $\Z$. Indeed, by \cite[Prop 6.5c]{Hartshorne}, we have a short exact sequence
 \[0 \to \Z \to \Pic \mathbb{P}^1_{\Z[\frac1n]} \to \Pic \mathbb{A}^1_{\Z[\frac1n]} \to 0,\]
 where the first map is split by degree and $\Pic \mathbb{A}^1_{\Z[\frac1n]} \cong \Pic \Z[\frac1n] = 0$ by \cite[Prop 6.6]{Hartshorne}. 
 
Thus, we have only to compute the degree of $\omline$ on $\MMb_1(7)$. In general, the degree of $\omline$ on $\MMb_1(n)$ is $\frac{1}{24}n^2\prod\limits_{p|n}(1-\frac{1}{p^2})$ \cite[Lemma 4.3]{MeierDecomposition}. So we conclude that for $n=7$ that the degree is $2$.  
\end{proof}

This directly implies that $\MF(\Gamma_1(7);\Z[\frac17])$ is isomorphic to the subalgebra of $\Z[\frac17][x,y]$ of polynomials of even degree. Our plan is to find explicit modular forms as generators and to use this to determine the action of $\left(\Z/7\right)^{\times}\cong\Z/6$ on $\MF(\Gamma_1(7); \Z[\frac17])$. A priori there are two different actions, one on the analytic side via an isomorphism $\Gamma_0(7)/\Gamma_1(7)\cong \left(\Z/7\right)^{\times}$ and one by the inverse of the $\left(\Z/7\right)^{\times}$-action on the torsion points of precise order $7$ in the modular interpretation. These actions and their comparison are discussed more precisely in \cref{rem:equivariance}. 

We will identify a basis of $\MF_1(\Gamma_1(7);\C)$ using Eisenstein series. According to \cite[Theorem 4.8.1]{DiamondShurman}, there is for every odd character $\varphi\colon \zsk \to \C^{\times}$ an Eisenstein series $E(\varphi)\in \MF_1(\Gamma_1(7);\C)$ and these are linearly independent. Fixing the generator $t = [3]$ in $\zsk$ induces an isomorphism $\zsk \cong \Z/6$. In terms of this generator, the three odd characters $\varphi_1,\varphi_2,\varphi_3\colon \zsk\to \C^{\times}$ are described by
\[
 \begin{aligned}
  \varphi_1(t)&=\zeta_6,\\
  \varphi_2(t)&=-1,\\
  \varphi_3(t)&=-\zeta_6+1,
 \end{aligned}
\]
where $\zeta_6=\exp(\frac{2\pi i}{6})$ is a sixth primitive root of unity. As $\dim_{\C}\MF_1(\Gamma_1(7);\C) = 3$ by our identification of the ring $\MF(\Gamma_1(7);\Z[\frac17])$ above, we conclude that $E(\varphi_1),\allowbreak E(\varphi_2),\allowbreak E(\varphi_3)$ are a basis of $\MF_1(\Gamma_1(7);\C)$. The $\zsk$-action on $E(\varphi)$ is described by the character $\varphi$. According to \cite[Section 4.8 and Formula (4.33)]{DiamondShurman} (or \cite{Buzzard}), the $q$-expansions of these Eisenstein series looks like follows:
\[
 E(\varphi_j)(\tau)=-\frac{1}{14}\sum_{n=1}^{6}n\varphi_j(n)+\sum_{k=1}^{\infty}\left(\sum_{l|k, l>0}\varphi_j(l)\right)q^k, \mbox{ with } q=\exp(2\pi i\tau).
\]
To actually obtain an integral basis of the weight $1$ modular forms, we consider the following three modular forms instead of the Eisenstein series.

\[
 \begin{aligned}
  z_1&=&\frac{1}{3}(3\zeta_6-1)E(\varphi_1)+\frac{2}{3}E(\varphi_2)+\frac{1}{3}(-3\zeta_6+2)E(\varphi_3),\\
  z_2&=& \frac{1}{3}(-\zeta_6-2)E(\varphi_1)+\frac{2}{3}E(\varphi_2)+\frac{1}{3}(\zeta_6-3)E(\varphi_3),\\
  z_3&=& \frac{1}{3}(-2\zeta_6+3)E(\varphi_1)+\frac{2}{3}E(\varphi_2)+\frac{1}{3}(2\zeta_6+1)E(\varphi_3).
 \end{aligned}
\]
Note that the base change matrix
\[
 \frac{1}{3}\begin{pmatrix}
             3\zeta_6-1& -\zeta_6-2&-2\zeta_6+3\\
	     2 & 2& 2\\
	    -3\zeta_6+2 & \zeta_6-3&2\zeta_6+1
            \end{pmatrix}
\]
has determinant $\frac{2}{27}\left(84\zeta_6-42\right)$, which is invertible in $\C$. Thus, $z_1, z_2, z_3$ form a new $\C$-basis of $\MF_1(\Gamma_1(7);\C)$. 

\begin{lemma}
 The $z_j$ have only integer coefficients in their $q$-expansion.
\end{lemma}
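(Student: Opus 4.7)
The plan is to compute the $q$-expansion coefficients of each $z_i$ directly from the explicit $q$-expansions of $E(\varphi_1), E(\varphi_2), E(\varphi_3)$ recalled above. For $k\geq 1$, the coefficient of $q^k$ in $z_i$ equals $\sum_{l\mid k}\psi_i(l)$, where
\[\psi_i(l) := \sum_{j=1}^3 a_{ij}\,\varphi_j(l)\]
with $(a_{ij})$ the displayed base-change matrix, extended by $\psi_i(l)=0$ for $7\mid l$. The constant term of $z_i$ is $-\tfrac{1}{14}\sum_{n=1}^{6} n\,\psi_i(n)$. So it is enough to verify that each $\psi_i(l)\in\Z$ for $l\in (\Z/7)^{\times}$ and that $\sum_{n=1}^6 n\,\psi_i(n)$ is divisible by $14$.

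Before starting the computation I would cut the a priori denominators down by a Galois argument. The nontrivial element $\sigma \in \operatorname{Gal}(\Q(\zeta_6)/\Q)$ sends $\zeta_6 \mapsto 1-\zeta_6$ and thereby interchanges $\varphi_1 \leftrightarrow \varphi_3$ (hence $E(\varphi_1) \leftrightarrow E(\varphi_3)$) while fixing $\varphi_2$. A row-by-row inspection of the base-change matrix shows that $\sigma$ permutes the entries $(a_{i1},a_{i2},a_{i3})$ as $(a_{i3},a_{i2},a_{i1})$, so each $z_i$ is $\sigma$-fixed. Since the entries of $(a_{ij})$ lie in $\tfrac13\Z[\zeta_6]$, this already gives $\psi_i(l) \in \tfrac13\Z$ and the constant term in $\tfrac{1}{42}\Z$ without any calculation.

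What remains is a bounded arithmetic check. Using that $(\Z/7)^{\times}=\langle 3\rangle$ is cyclic of order $6$ and writing each $l\in\{1,\dots,6\}$ as $3^m$, the values $\varphi_j(l)=\varphi_j(3)^m$ can be tabulated in $\Z[\zeta_6]$ using $\zeta_6^2=\zeta_6-1$; substituting into the formula for $\psi_i(l)$ yields explicit integer values. By oddness of the $\varphi_j$, one has $\psi_i(7-l)=-\psi_i(l)$, so the constant-term sum collapses to $-5\psi_i(1)-3\psi_i(2)-\psi_i(3)$, reducing divisibility by $14$ to one congruence per $i$. The only obstacle is computational care: there is no conceptual difficulty, merely the risk of an arithmetic slip in $\Z[\zeta_6]$, and the Galois reduction above keeps the remaining verification to a small table of values.
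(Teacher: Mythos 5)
Your proposal takes essentially the same route as the paper: both boil down to tabulating the quantities $\psi_i(l)$ for $l \in (\Z/7)^\times$ and checking divisibility of $\sum_n n\,\psi_i(n)$ by $14$, which the paper does directly (giving, e.g., the table of values $1,-1,-2,2,1,-1$ for $\psi_1$ and the constant terms $0,0,1$). The Galois-fixedness argument is a pleasant a priori observation that guarantees $\psi_i(l)\in\tfrac13\Z$ and collapses the constant-term sum via $\psi_i(7-l)=-\psi_i(l)$, but it does not replace the final arithmetic check, which your write-up stops short of actually carrying out and should be supplied to make the proof complete.
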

\begin{proof}
Denote the coefficient of $q^n$ in $z_j$ by $c_n(z_j)$. First, we compute $c_0(z_j)$. This calculation is somewhat different from the ones for higher coefficients:
\[
\begin{aligned}
 c_0(z_1)=&\frac{1}{3}(3\zeta_6-1)\cdot \left(-\frac{1}{14}\sum_{n=1}^{6}n\varphi_1(n)\right)+\frac{2}{3}\cdot \left(-\frac{1}{14}\sum_{n=1}^{6}n\varphi_2(n)\right)\\
 &+\frac{1}{3}(-3\zeta_6+2)\cdot \left(-\frac{1}{14}\sum_{n=1}^{6}n\varphi_3(n)\right)
 \end{aligned}
\]
Evaluating the sum for $\varphi_1$, we obtain
\[
 \sum_{n=1}^{6}n\varphi_1(n)=1+2\zeta_6^2+3\zeta_6+4\zeta_6^4+5\zeta_6^5+6\zeta_6^3 = -4\zeta_6-2,
\]
using that $\zeta_6^2=\zeta_6-1$ and $\zeta_6^3=-1$. 
For $\varphi_2$, we obtain 
\[
 \sum_{n=1}^{6}n\varphi_2(n)=1+2-3+4-5-6=-7.
\]
For $\varphi_3$, recall that $1-\zeta_6=\zeta_6^5$, so we obtain
\[
 \sum_{n=1}^{6}n\varphi_3(n)=1+2\zeta_6^4+3\zeta_6^5+4\zeta_6^2+5\zeta_6+6\zeta_6^3 = 4\zeta_6-6.
\]
Inserting this values into the formula for $c_0(z_1)$, we get
\[
\begin{aligned}
 c_0(z_1)=&\frac{1}{3}(3\zeta_6-1)\cdot \left(-\frac{1}{14}(-4\zeta_6-2)\right)+\frac{1}{3}\\
 &+\frac{1}{3}(-3\zeta_6+2)\cdot \left(-\frac{1}{14}(4\zeta_6-6)\right)=0.
 \end{aligned}
\]
Similarly, we obtain $c_0(z_2) = 0$ and $c_0(z_3) = 1$.

Now we will show that $c_k(z_j)$ for $k>0$ and $j\in\{1,2,3\}$ is always an integer. For $z_1$, we obtain
\[
\begin{aligned}
 c_k(z_1)&=&\frac{1}{3}(3\zeta_6-1)\cdot\left(\sum_{l|k, l>0}\varphi_1(l)\right)+\frac{2}{3}\cdot \left(\sum_{l|k, l>0}\varphi_2(l)\right)\\
 &&+ \frac{1}{3}(-3\zeta_6+2)\cdot\left(\sum_{l|k, l>0}\varphi_3(l)\right)\\
 &=&\sum_{l|k, l>0}\frac{1}{3}\left((3\zeta_6-1)\varphi_1(l)+2\varphi_2(l)+(-3\zeta_6+2)\varphi_3(l)\right).
 \end{aligned}
\]
where $l$ denotes also its congruence class in $\Z/7$. 

We give the values of the summands depending on $l$: 
\begin{center}
\begin{tabular}{c|c|c|c|c|c|c|c}
$l \mod 7$ & $0$ & $1$ & $2$ & $3$ & $4$ & $5$ & $6$\\ \hline
Summand & $0$ & $1$ & $-1$ & $-2$ & $2$ & $1$ & $-1$
\end{tabular}
\end{center}
 In particular, the sum we obtain has only integer summands, thus is itself an integer. The calculations for $c_k(z_2)$ and $c_k(z_3)$ are similar and we obtain $z_1,z_2,z_3\in \MF_1(\Gamma_1(7);\mathbb{Z})$. 

\end{proof}

We want to show that $z_1,z_2,z_3\in \MFq_1(\Gamma_1(7);\mathbb{Z})$ is a basis. For this, we consider the $q$-expansions of $z_1,z_2,z_3$ modulo $q^3$: 
\[
 \begin{aligned}
  z_1&\equiv& &&q& &\mod q^3\\
  z_2&\equiv& &&-q&+q^2&\mod q^3\\
  z_3&\equiv& 1&&+2q&+3q^2&\mod q^3
 \end{aligned}
\]
The right-hand sides form obviously a $\Z$-basis of $\Z\llbracket q\rrbracket/(q^3)$. Thus, $\MFq_1(\Gamma_1(7);\mathbb{Z}) \to \Z\llbracket q\rrbracket/(q^3)$ is surjective. Since the composite
\[\MF_1(\Gamma_1(7);\Z) \to \MF_1(\Gamma_1(7); \C) \to \C\llbracket q \rrbracket /(q^3)\]
is as a composition of an injection and a surjection between $3$-dimensional $\C$-vector spaces an injection as well, we conclude that $\MFq_1(\Gamma_1(7);\mathbb{Z}) \to \Z\llbracket q\rrbracket/(q^3)$ is actually an isomorphism. This implies that $z_1,z_2,z_3\in \MFq_1(\Gamma_1(7);\mathbb{Z})$ is indeed a basis. The same is thus true in $\MF_1(\Gamma_1(7);\mathbb{Z}[\frac17])$.

Our next goal is to understand all of $\mf$ in terms of $z_i$'s. We will prove the following proposition:
\begin{Prop}
\label{prop:IdentifyMF}
 There is an isomorphism of rings
\[
 \mathbb{Z}\left[\tfrac{1}{7}\right][z_1, z_2, z_3]/(z_1z_2+z_2z_3+z_3z_1)\to \MF(\Gamma_1(7); \mathbb{Z}\left[\tfrac{1}{7}\right]).
\]
\end{Prop}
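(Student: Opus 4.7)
The plan is to realize the given ring as the image of $\Z[\tfrac{1}{7}][z_1, z_2, z_3]$ in $\MF(\Gamma_1(7); \Z[\tfrac{1}{7}])$ under the evaluation map and to match ranks in each weight. First, the identification $\MMb_1(7) \cong \mathbb{P}^1_{\Z[\tfrac{1}{7}]}$ with $\omline \cong \OO(2)$ shows that $\MF(\Gamma_1(7); \Z[\tfrac{1}{7}])$ is the even-degree subring of $\Z[\tfrac{1}{7}][x, y]$, generated by $x^2, xy, y^2$ in weight one. Since we have already established that $z_1, z_2, z_3$ form a $\Z[\tfrac{1}{7}]$-basis of $\MF_1(\Gamma_1(7); \Z[\tfrac{1}{7}])$, the evaluation map $\phi\colon \Z[\tfrac{1}{7}][z_1, z_2, z_3] \to \MF(\Gamma_1(7); \Z[\tfrac{1}{7}])$ is surjective.

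The second step is to verify that the relation $z_1 z_2 + z_2 z_3 + z_3 z_1 = 0$ holds in $\MF$. A priori we know some relation must exist, because $\MF_2(\Gamma_1(7); \Z[\tfrac{1}{7}])$ is free of rank $\dim H^0(\mathbb{P}^1, \OO(4)) = 5$ while the six monomials $z_i z_j$ of weight two live in it. By the $q$-expansion principle it then suffices to plug in the explicit $q$-expansions of the $z_i$ from the preceding lemma, multiply them out, and check that the $q$-series of $z_1 z_2 + z_2 z_3 + z_3 z_1$ vanishes identically. This induces the desired surjection $\pi\colon R := \Z[\tfrac{1}{7}][z_1, z_2, z_3]/(z_1 z_2 + z_2 z_3 + z_3 z_1) \twoheadrightarrow \MF(\Gamma_1(7); \Z[\tfrac{1}{7}])$.

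Finally, I would prove injectivity of $\pi$ by a rank count. The polynomial $f := z_1 z_2 + z_2 z_3 + z_3 z_1$ is irreducible, since comparing coefficients of a hypothetical factorization into two linear forms over any field quickly leads to a contradiction; hence $R$ is a domain, so each weight piece $R_k$ is finitely generated and torsion-free over the PID $\Z[\tfrac{1}{7}]$, and therefore free. Multiplication by $f$ fits into a short exact sequence
\[0 \to \Z[\tfrac{1}{7}][z_1, z_2, z_3]_{k-2} \xrightarrow{\cdot f} \Z[\tfrac{1}{7}][z_1, z_2, z_3]_{k} \to R_k \to 0,\]
yielding $\mathrm{rk}\, R_k = \binom{k+2}{2} - \binom{k}{2} = 2k+1$, which agrees with $\mathrm{rk}\, \MF_k(\Gamma_1(7); \Z[\tfrac{1}{7}]) = \dim H^0(\mathbb{P}^1, \OO(2k))$. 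A surjection between free modules of equal finite rank is an isomorphism, completing the proof.

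The main obstacle is the concrete verification that $z_1 z_2 + z_2 z_3 + z_3 z_1$ has vanishing $q$-expansion; once this is in hand, the remainder is a clean matter of Hilbert series and linear algebra over a PID.
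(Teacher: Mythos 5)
Your proof is correct and follows essentially the same route as the paper's: establish the relation via $q$-expansions, observe that the ring is generated in weight one with a single quadratic relation, and match ranks in each weight. You supply slightly more detail than the paper's terse ``counting ranks'' by invoking the irreducibility of $z_1z_2+z_2z_3+z_3z_1$ to show $R$ is a domain, hence that each $R_k$ is torsion-free and therefore free over the PID $\Z[\tfrac17]$; this is a clean way to make the rank argument airtight, since a priori the graded pieces of a quotient by a nonzerodivisor could have torsion. The one place you should be slightly more careful is the claim that multiplying out $q$-expansions lets you ``check that the $q$-series vanishes identically'': a finite computation only checks finitely many coefficients, so one needs the paper's observation that a weight-2 form for $\Gamma_1(7)$ is determined by its $q$-expansion modulo $q^5$ (exhibited via the basis $z_3^2, z_1z_3, z_3(z_1+z_2), z_1(z_1+z_2), (z_1+z_2)^2$), after which vanishing modulo $q^5$ does the job.
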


\begin{pf} 
 We will first show using $q$-expansions that the relation $z_1z_2+z_2z_3+z_3z_1=0$ is satisfied in $\MFq_2(\Gamma_1(7); \Z)$. Recall from above that $z_3$, $z_1$ and $z_1+z_2$ are modular forms whose $q$-expansions begin with $1$, $q$ and $q^2$, respectively. Thus, the $q$-expansions of the modular forms
 \begin{equation}\label{eq:zbasis}
     z_3^2, z_1z_3, z_3(z_1+z_2), z_1(z_1+z_2)\text{ and }(z_1+z_2)^2
 \end{equation} 
 begin with $1, q, q^2, q^3$ and $q^4$, respectively. As by the dimension formulae from \cite[Section 3.9]{DiamondShurman} the vector space $\MF_2(\Gamma_1(7); \C)$ has dimension $5$, we immediately see that the modular forms in \eqref{eq:zbasis} form a basis of $\MF_2(\Gamma_1(7); \C)$ and that hence a modular form in $\MF_2(\Gamma_1(7); \C)$ is determined by its $q$-expansion modulo $q^5$. Knowing the $q$-expansions of the $z_i$ modulo $q^3$, determines the $q$-expansion of $z_1z_2+z_2z_3+z_3z_1$ modulo $q^5$ and one deduces easily that it is actually zero modulo $q^5$. 

We have noted before that 
\[\MF(\Gamma_1(7); \Z[\tfrac{1}{7}]) \cong H^0(\MMb_1(7); \omline^{\tensor *}) \cong H^0(\mathbb{P}^1_{\Z[\tfrac17]}; \OO(2*)).\]
 Thus, this ring is abstractly isomorphic to polynomials of even degree in two variables of degree $1$ (and the degree of the modular form is half the degree of the polynomial). The ring of such polynomials is generated by the three monomials of degree $2$ with one quadratic relation between those. Thus, the ring $\MF(\Gamma_1(7); \Z\left[\tfrac17\right])$ is generated in degree $1$ and hence by the elements $z_1,z_2, z_3$. We obtain a surjective map 
$$\mathbb{Z}[\tfrac17][z_1, z_2, z_3]/(z_1z_2+z_2z_3+z_3z_1)\to \MF(\Gamma_1(7); \mathbb{Z}[\tfrac17]),$$
which has to be an isomorphism as counting ranks shows. 
\end{pf}

Next, we want to identify the $(\Z/7)^{\times}$ action on the left-hand side in terms of the generator $t = [3] \in (\Z/7)^\times$, where we use the conventions from \cref{rem:equivariance}. 
\begin{lemma}
 The action of $(\Z/7)^\times$ on $\MF(\Gamma_1(7); \mathbb{Z}[\tfrac17])$ is given by $t.z_1 = -z_3$ and $t.z_2 = -z_1$ and $t.z_3 = -z_2$. 
\end{lemma}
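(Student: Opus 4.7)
The plan is a direct computation exploiting the explicit description of the $z_i$ as $\C$-linear combinations of the Eisenstein series $E(\varphi_j)$. Under the chosen convention for the diamond action (as fixed in Remark \ref{rem:equivariance}), the generator $t = [3] \in \zsk$ acts on each $E(\varphi_j)$ as a scalar, namely $t.E(\varphi_j) = \varphi_j(t) \cdot E(\varphi_j)$, so that
\[
t.E(\varphi_1) = \zeta_6\, E(\varphi_1),\quad t.E(\varphi_2) = -E(\varphi_2),\quad t.E(\varphi_3) = (1-\zeta_6)\, E(\varphi_3).
\]
Since the $z_i$ are $\C$-linear combinations of the $E(\varphi_j)$, the action of $t$ on each $z_i$ is obtained by multiplying each coefficient by the respective $\varphi_j(t)$ and expressing the result back in the basis $z_1, z_2, z_3$.

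Concretely, for $z_1$ one computes, using $\zeta_6^2 = \zeta_6 - 1$, that
\[
(3\zeta_6 - 1)\zeta_6 = 2\zeta_6 - 3,\qquad (-3\zeta_6 + 2)(1 - \zeta_6) = -2\zeta_6 - 1,
\]
so that
\[
t.z_1 = \tfrac{1}{3}(2\zeta_6 - 3) E(\varphi_1) - \tfrac{2}{3} E(\varphi_2) + \tfrac{1}{3}(-2\zeta_6 - 1) E(\varphi_3),
\]
which one recognises as $-z_3$ by comparison with the given expression for $z_3$. The analogous simplifications for $t.z_2$ and $t.z_3$ yield $-z_1$ and $-z_2$ respectively; the relevant algebraic identities are the cyclic counterparts obtained by replacing the coefficient vector in the base-change matrix and again using $\zeta_6^2 = \zeta_6 - 1$.

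As a sanity check, one could alternatively verify this on the level of $q$-expansions: since $\MF_1(\Gamma_1(7); \C) \to \C\llbracket q \rrbracket/(q^3)$ is an isomorphism (as established in the preceding paragraphs), it suffices to compare the first three Fourier coefficients of $t.z_i$ and $-z_{\sigma(i)}$ for the asserted cyclic permutation $\sigma$. This bypasses arithmetic in $\Z[\zeta_6]$ at the cost of having to compute $q$-expansions of the $t.E(\varphi_j)$. I expect the main obstacle to be purely bookkeeping: the identities are straightforward once the convention issue flagged in Remark \ref{rem:equivariance} has been settled (i.e., fixing whether one uses the analytic diamond action or the inverse of the modular action on $7$-torsion points), since the two conventions can differ by inversion on $\zsk$ and hence would swap the roles of $\varphi_j$ with its complex conjugate character.
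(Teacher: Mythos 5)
Your proof is correct and follows exactly the same route as the paper's: act by $\varphi_j(t)$ on each Eisenstein series $E(\varphi_j)$, simplify the coefficients using $\zeta_6^2 = \zeta_6 - 1$, and recognise the result in the $z$-basis. The side remark about the $q$-expansion check and the convention caveat are consistent with the paper's setup in \cref{rem:equivariance}, but the core computation is the one the paper itself performs.
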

\begin{proof}
 From the known action on the Eisenstein series, so we calculate as follows:
\[
 \begin{aligned}
  t.z_1&=\frac{1}{3}(3\zeta_6-1)\varphi_1(t)E(\varphi_1)+\frac{2}{3}\varphi_2(t)E(\varphi_2)+\frac{1}{3}(-3\zeta_6+2)\varphi_3(t)E(\varphi_3)\\
&= \frac{1}{3}(3\zeta_6-1)\zeta_6 E(\varphi_1)-\frac{2}{3}E(\varphi_2)+\frac{1}{3}(-3\zeta_6+2)(-\zeta_6+1)E(\varphi_3)\\
&= \frac{1}{3}(2\zeta_6-3)E(\varphi_1)-\frac{2}{3}E(\varphi_2)-\frac{1}{3}(2\zeta_6+1)E(\varphi_3)\\
&= -z_3.
 \end{aligned}
\]
Similarly, one obtains $t.z_2 = -z_1$ and $t.z_3 = -z_2$. 
\end{proof}
Note that the resulting action on $\Z[z_1,z_2,z_3]$ makes it isomorphic as a $\Z/6\cong \Z/2\times\Z/3$-representation to $\Z^{\mbox{sign}}\otimes \Z[z_1,z_2,z_3]$, where $\Z^{\mbox{sign}}$ is the sign representation of $\Z/2$ and $\Z/3$ acts on $\Z[z_1,z_2,z_3]$ by permuting the variables as indicated above. We record without proof the following consequence. 

\begin{prop} \label{Z6invariants}
If we denote by $\sigma_1,\sigma_2,\sigma_3 \in \Z[z_1,z_2,z_3]$ the elementary symmetric polynomials, we obtain that the invariants 
\[\MF(\Gamma_0(7);\Z\left[\tfrac17\right]) \cong H^0(\Z/6, \Z[z_1,z_2,z_3]/\sigma_2)\] 
are the even degrees of the free $\Z[\sigma_1,\sigma_3]$-module on $1$ and $z_1^2z_2+z_2^2z_3+z_3^2z_1$. As explained in \cref{sec:analytic}, this implies 
\[\MFC(\Gamma_0(7);\Z\left[\tfrac17\right]) \cong H^0(\Z/6, \Z[z_1,z_2,z_3]/\sigma_2)[\Delta^{-1}].\] 
\end{prop}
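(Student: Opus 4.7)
The plan is to use the isomorphism $\Z/6\cong \Z/3\times \Z/2$ and take invariants in two stages. Writing $B := \Z[\tfrac17][z_1,z_2,z_3]/\sigma_2$, the previous lemma shows that a generator $t$ of $\Z/6$ acts by $t^2\cdot(z_1,z_2,z_3)=(z_2,z_3,z_1)$ and $t^3(z_i)=-z_i$, so $\Z/2=\langle t^3\rangle$ acts on the graded piece $B_k$ by $(-1)^k$. Taking $\Z/2$-invariants at the end therefore merely selects the even-degree part, and all the real work lies in understanding $B^{\Z/3}$. We would transport the classical description of cyclic invariants: over $\Z$, the ring $\Z[z_1,z_2,z_3]^{\Z/3}$ is generated over the symmetric polynomials by the cyclic sums $p:=z_1^2 z_2 + z_2^2 z_3 + z_3^2 z_1$ and $q:=z_1 z_2^2 + z_2 z_3^2 + z_3 z_1^2$, with $p+q = \sigma_1\sigma_2 - 3\sigma_3$ and $(p-q)^2$ equal to the discriminant of the generic cubic $z^3-\sigma_1 z^2+\sigma_2 z-\sigma_3$. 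Setting $\sigma_2=0$ gives $q=-3\sigma_3 - p$, and combining $p - q = 2p+3\sigma_3$ with the reduced discriminant $-4\sigma_1^3\sigma_3 - 27\sigma_3^2$ yields the monic quadratic $p^2 + 3\sigma_3 p + \sigma_1^3\sigma_3 + 9\sigma_3^2 = 0$. The image of these classical invariants in $B$ is thus the free $\Z[\tfrac17][\sigma_1,\sigma_3]$-module on $\{1,p\}$; independence of $\{1,p\}$ follows because a relation $a+bp=0$ with $a,b$ symmetric forces $bp$ to be $S_3$-symmetric and hence $b=0$, as $p\neq q$ in the integral domain $B$.

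The key technical point is surjectivity of $\Z[\tfrac17][z_1,z_2,z_3]^{\Z/3}/\sigma_2 \to B^{\Z/3}$, i.e.\ that no extra $\Z/3$-invariants appear in the quotient. Applying $H^*(\Z/3;-)$ to the short exact sequence
\[0\to \sigma_2 \Z[z_1,z_2,z_3] \to \Z[z_1,z_2,z_3] \to B \to 0\]
reduces this to the vanishing of $H^1\bigl(\Z/3;\Z[z_1,z_2,z_3]\bigr)$. The monomial-orbit decomposition of $\Z[z_1,z_2,z_3]$ as a $\Z/3$-module splits it into summands isomorphic either to the trivial module $\Z$ or to the regular module $\Z[\Z/3]$, both of which have $H^1(\Z/3;-)=0$, so we are done. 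As a sanity check, Molien's formula produces the Hilbert series of $B^{\Z/3}\otimes\Q$ as $(1+t^3)/\bigl((1-t)(1-t^3)\bigr)$, in agreement with a free rank-$2$ module on basis elements of degrees $0$ and $3$.

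Finally, since $\sigma_1,\sigma_3,p$ all have odd total degree in the $z_i$, the $\Z/2$-invariants of the free $\Z[\tfrac17][\sigma_1,\sigma_3]$-module on $\{1,p\}$ are precisely the even-degree part, giving the first claim. The assertion about $\MFC(\Gamma_0(7);\Z[\tfrac17])$ is then immediate from \cref{sec:analytic} by inverting $\Delta$. The main obstacle we anticipate is the cohomology vanishing that underpins the quotient step over $\Z[\tfrac17]$ rather than $\Q$; everything else reduces to bookkeeping with classical invariant theory.
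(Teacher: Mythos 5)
The paper records this proposition explicitly \emph{without proof}, so there is no internal argument to compare against; I will simply evaluate your proposal on its own terms. Your two-stage strategy (first $\Z/3$-invariants, then the $\Z/2$-action by $(-1)^{\deg}$), the computation $p+q=\sigma_1\sigma_2-3\sigma_3$, $(p-q)^2 = -4\sigma_1^3\sigma_3-27\sigma_3^2$ modulo $\sigma_2$, the resulting monic relation $p^2+3\sigma_3 p + \sigma_1^3\sigma_3 + 9\sigma_3^2 = 0$, the linear-independence argument via the transposition $(z_1z_2)$ in the integral domain $B$, and the vanishing $H^1(\Z/3;\Z[\tfrac17][z_1,z_2,z_3])=0$ via the monomial-orbit decomposition into trivial and regular $\Z/3$-modules are all correct, and the $H^1$ step does give the surjectivity of $\Z[\tfrac17][z_1,z_2,z_3]^{\Z/3}\to B^{\Z/3}$ that you want.

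There is, however, one step that you assert but do not justify, and it is a second place where ``$\Z$ vs.\ $\Q$'' matters and not only in the $H^1$ step as you suggest at the end: the claim that $\Z[\tfrac17][z_1,z_2,z_3]^{\Z/3}$ is generated over $\Z[\tfrac17][\sigma_1,\sigma_2,\sigma_3]$ by $p$ (equivalently, is the free module on $\{1,p\}$). Over $\Q$ this follows from the Reynolds operator, but over $\Z[\tfrac17]$ the usual averaging argument introduces a $\tfrac13$ or $\tfrac16$ that is not available, so the integral statement requires a separate argument. Fortunately it is true and has a short proof that fits naturally into your framework: given $f\in\Z[\tfrac17][z_1,z_2,z_3]^{\Z/3}$, the element $f - f^{(z_1z_2)}$ is alternating, hence equals $(q-p)\,h$ for a unique symmetric $h$ (divisibility by $\prod_{i<j}(z_i-z_j)$ in the UFD $\Z[\tfrac17][z_1,z_2,z_3]$); then $f-ph$ is $\Z/3$-invariant and satisfies $(f-ph)-(f-ph)^{(z_1z_2)} = (f-f^{(z_1z_2)})-(p-q)h = 0$, so it is $S_3$-invariant, i.e.\ lies in $\Z[\tfrac17][\sigma_1,\sigma_2,\sigma_3]$. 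This proves $\Z[\tfrac17][z_1,z_2,z_3]^{\Z/3}=\Z[\tfrac17][\sigma_1,\sigma_2,\sigma_3]\oplus p\,\Z[\tfrac17][\sigma_1,\sigma_2,\sigma_3]$, after which your reduction modulo $\sigma_2$ and the surjectivity from $H^1=0$ finish the argument exactly as you outlined. With this step supplied, the proof is complete.
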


%%%%%%%%%%%%%%%%%%%%%%%%%%%%%%%%%%%%%%%%%%%%%%%%%%%%%%%%%%%%%%%

\section{$q$-expansions of the coefficients of universal elliptic curves}
\label{sec:TateCurve}
The aim of this section is to obtain $q$-expansions for the coefficients of Weierstra\ss{} equations for the universal elliptic curves with a $\Gamma_1(n)$-level structure. It is easy to see that for such curves there exists a Weierstra\ss{} equation whose coefficients are meromorphic $\Gamma_1(n)$-modular forms. Our computations show in particular that we can find a Weiersra\ss{} equation in Tate normal form whose coefficients are indeed holomorphic. In the special case of $n=7$, this will allows us to identify them with polynomials in the modular forms $z_1, z_2, z_3$ from the previous section. 

\subsection{Coordinates for generalized elliptic curves}
In this section, we will need some results about Weierstra{\ss} equations for generalized elliptic curves in the sense of \cite[Definition II.1.12]{DeligneRapoport}. Actually for the main results of this section, \cref{prop:holomorphicalphas} and \cref{Alphas}, we could work with the usual smooth elliptic curves instead, but first of all we will show here a similar statement to \cref{prop:holomorphicalphas} without the necessity of calculations with $q$-expansion and furthermore we will need the specific form of \cref{prop:coordinatesm1n} in the proof of \cref{prop:normal}. In the following theorem we will summarize the necessary input from \cite[\S 1]{dapres}.

\begin{Theorem}\label{thm:coordinates}
Let $S$ be a scheme and let $p\colon \EE \to S$ be a generalized elliptic curve whose geometric fibers are either smooth or N\'eron $1$-gons. Let furthermore $(e)$ be the relative Cartier divisor defined by the unit section $e\colon S \to \EE$.
\begin{enumerate}[label=\alph*)]
    \item The sheaves $p_*\OO_{\EE}(ke)$ are locally free of rank $k$ for $k>0$ and are related by short exact sequences 
\begin{align}\label{eq:omegashort}
    0 \to p_*\OO_{\EE}((k-1)e) \to p_*\OO_{\EE}(ke) \to \omega_{\EE}^{\tensor (-k)} \to 0
\end{align}
for $k>1$. The morphisms $\OO_S \to p_*\OO_{\EE} \to p_*\OO_{\EE}(e)$ are isomorphisms. Moreover, $R^1p_*\OO(ke) = 0$ for $k>0$.
\item Zariski locally one can choose a trivialization of $\omega_{\EE}$ and splittings of \eqref{eq:omegashort} for $k=2$ and $3$ and these choices define a trivialization 
\[(1,x,y)\colon \OO_S^3 \xrightarrow{\cong} p_*\OO(3e).\]
This gives rise to a closed embedding $\EE \to \P^2_S$ with image cut out by a cubic equation 
\[y^2+a_1xy+a_3y = x^3+a_2x^2+a_4x+a_6
\]
with $a_1,a_2,a_3,a_4,a_6\in \Gamma(\OO_S)$ and the $a_i$ are completely determined by the previous choices.
\end{enumerate}
\end{Theorem}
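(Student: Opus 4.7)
My plan is to prove (a) by cohomology and base change, and then derive (b) from (a) by the standard Riemann--Roch style argument for Weierstra\ss{} equations, adapted to work fiberwise over both smooth elliptic curves and N\'eron $1$-gons.

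For (a), I would first verify the claim on geometric fibers. On a smooth elliptic curve, Riemann--Roch gives $h^0(E,\OO(ke)) = k$ and $h^1(E,\OO(ke)) = 0$ for $k \geq 1$. For a N\'eron $1$-gon the arithmetic genus is still $1$ and the dualizing sheaf is trivial, so Riemann--Roch and Serre duality (in the form available for Gorenstein curves) yield the same numerical output. With the fiberwise vanishing $h^1 = 0$ in hand, cohomology and base change forces $R^1p_*\OO(ke) = 0$ and makes $p_*\OO(ke)$ locally free of the predicted rank $k$. The short exact sequence comes from the divisor sequence
\[0 \to \OO_{\EE}((k-1)e) \to \OO_{\EE}(ke) \to \OO_{\EE}(ke)|_{(e)} \to 0,\]
using that $\OO_{\EE}(ke)|_{(e)}$ is the $k$-th power of the conormal-inverse, i.e.\ $e^*T_{\EE/S}^{\tensor k} = \omega_{\EE}^{\tensor(-k)}$ (this is the defining description of $\omega_{\EE}$). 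Pushing forward and using the vanishing of $R^1p_*\OO((k-1)e)$ gives the stated sequence. The two isomorphisms $\OO_S \xrightarrow{\cong} p_*\OO_{\EE} \xrightarrow{\cong} p_*\OO_{\EE}(e)$ follow because both sheaves are locally free of rank $1$ and the composite contains the constants.

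For (b), Zariski locally I trivialize $\omega_{\EE}$ and pick splittings of the sequences from (a) for $k = 2, 3$. This produces sections $x \in p_*\OO(2e)$ and $y \in p_*\OO(3e)$ lifting the chosen generators of $\omega^{\tensor -2}$ and $\omega^{\tensor -3}$, and $(1,x,y)$ is then a basis of $p_*\OO(3e)$ by (a) together with the splitting. To extract the cubic, I would examine $p_*\OO(6e)$: by (a) it is locally free of rank $6$, yet the seven elements $1, x, y, x^2, xy, y^2, x^3$ sit inside it, each visible by its pole order at $(e)$. Reading off top pole contributions via the graded pieces $\omega^{\tensor -k}$, the leading coefficients of $y^2$ and $x^3$ generate the same line, so after normalizing (using that $\omega$ was trivialized) they appear with coefficients $1$ and $-1$, and the remaining relation is precisely a Weierstra\ss{} equation with $a_i \in \Gamma(\OO_S)$. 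The embedding $\EE \hookrightarrow \P^2_S$ is cut out by this equation because $\OO(3e)$ is relatively very ample (its degree $3$ on each fiber exceeds $2g = 2$).

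The main obstacle in this strategy is the uniform treatment of the N\'eron $1$-gon fibers alongside the smooth ones: one must know that $\OO(ke)$ behaves cohomologically like the smooth case on $1$-gons, that $\omega_{\EE}$ extends as a line bundle and agrees with $e^*\Omega^1$ on the smooth part, and that the divisor $(e)$ sits in the smooth locus (so $\OO(ke)$ is an honest line bundle). These facts are precisely what \cite[\S 1]{dapres} collects; with them, the arguments above go through verbatim and (b) follows from (a) by the classical Weierstra\ss{} construction.
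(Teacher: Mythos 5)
The paper does not prove this theorem: the authors introduce it with the phrase ``we will summarize the necessary input from \cite[\S 1]{dapres}'' and treat it purely as a citation of Deligne's formulaire, so there is no in-paper proof to compare against. Your sketch is nonetheless a correct reconstruction of the standard argument underlying that reference: fiberwise Riemann--Roch (in its Gorenstein form on the N\'eron $1$-gon, where the dualizing sheaf is trivial) gives $h^0 = k$ and $h^1 = 0$ for $k>0$; cohomology and base change upgrades this to local freeness and $R^1p_*$-vanishing; the short exact sequence comes from the divisor sequence together with the identification $\omega_{\EE} \cong e^*\Omega^1_{\EE/S} \cong I_e/I_e^2$ (valid because $e$ lands in the smooth locus); and the cubic is extracted by comparing the seven monomials $1,x,y,x^2,xy,y^2,x^3$ to the rank-$6$ sheaf $p_*\OO(6e)$.

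One minor imprecision worth flagging: you say the leading terms of $y^2$ and $x^3$ ``generate the same line'' and can be normalized to $\pm 1$. In fact, once the trivialization of $\omega_{\EE}$ and the splittings for $k=2,3$ are fixed, $x$ and $y$ map to the canonical generators of $\omega^{\otimes -2}$ and $\omega^{\otimes -3}$, so $y^2$ and $x^3$ automatically both have leading coefficient $1$ in $\omega^{\otimes -6}$; there is no further normalization to make (nor would one be available, since all choices have been used up). The difference $y^2 - x^3$ then lies in $p_*\OO(5e)$ and is a \emph{unique} $\Gamma(\OO_S)$-linear combination of the basis $1,x,y,x^2,xy$, which is exactly what makes the $a_i$ determined by the earlier choices as the theorem asserts. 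Also, the correct very-ampleness threshold is $\deg \geq 2g+1 = 3$ rather than ``exceeds $2g$,'' though the conclusion is unaffected; on $1$-gon fibers the very ampleness of $\OO(3e)$ and the identification of the image with the cubic's vanishing locus are among the facts supplied by \cite[\S 1]{dapres}, as you note.
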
 

\begin{remark}\label{rem:grading}
We want to discuss the role of gradings in the preceding theorem. Let $p\colon \EE \to S$ be a generalized elliptic curve as in the theorem with chosen splittings of \eqref{eq:omegashort} for $k=2$ and $k=3$. This yields an isomorphism
\[p_*\OO_{\EE}(3e) \cong \OO_S \oplus \omega_{\EE}^{\tensor (-2)} \oplus \omega_{\EE}^{\tensor (-3)}.\]
Let $q\colon T \to S$ be a morphism with a trivialization $\OO_T \xrightarrow{\omega} \omega_{\EE_T}$, where $p^{T}\colon \EE_T \to T$ is the pullback of $p$ along $q$. By \cite[Proposition 4.37]{VistoliDescent}, the natural map $q^*p_*\OO_{\EE}(3e) \to p^T_*\OO_{\EE_T}(3e)$ is an isomorphism. The resulting isomorphism $\OO_T^3 \to p^T_*\OO_{\EE_T}(3e)$ sends the standard basis to elements $(1,x,y)$ and we get associated quantities $a_i \in \Gamma(\OO_T)$. Changing the trivialization to $\lambda\omega$ for some $\lambda \in \GG_m(T)$ produces new coordinates $(1,x',y') = (1, \lambda^{-2}x, \lambda^{-3}y)$ with associated quantities $a_i' = \lambda^{-i}a_i$. 

In particular, we can consider the $\GG_m$-torsor $q\colon T \to S$ given by the relative spectrum $T = \underline{\Spec}\left(\bigoplus_{i\in\Z}\omega_{\EE}^{\tensor i}\right)$.  This comes with a canonical trivialization of $q^*\omega_{\EE}$ and thus by \eqref{eq:canonicaliso} also of $\omega_{\EE_T}$; this produces elements $a_i \in \Gamma(\OO_T)$. The computation above shows that the degree of $a_i$ is $i$, where the grading on $\Gamma(\OO_T)$ comes from the $\GG_m$-action on $T$ or equivalently from the identification with $\left(\bigoplus_{i\in\Z} \omega_{\EE}^{\tensor i}\right)(S)$.
\end{remark}

The following standard fact will be needed for the proof of \cref{prop:coordinatesm1n}.
\begin{lemma}\label{lem:separated}
Let $R$ be a nonnegatively graded ring, $Z$ the vanishing locus of the ideal generated by the positive degree homogeneous elements and $U$ its complement in $\Spec R$. Then $U/\GG_m$ is separated. 
\end{lemma}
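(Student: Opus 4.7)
The plan is to verify the valuative criterion for separatedness of the algebraic stack $U/\GG_m$. Let $A$ be a valuation ring with field of fractions $K$ and valuation $v$, and suppose we are given two morphisms $\phi_1, \phi_2\colon \Spec A \to U/\GG_m$ together with a $2$-isomorphism between their restrictions to $\Spec K$; the goal is to extend this to a $2$-isomorphism over $\Spec A$.

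First I would make the data concrete. Since $\Pic(A) = 0$, every $\GG_m$-torsor on $\Spec A$ is trivial, so each $\phi_i$ is represented by an honest morphism $g_i\colon \Spec A \to U$, equivalently by a ring map $g_i^*\colon R \to A$ the image of whose irrelevant ideal $R_+$ (generated by positive-degree homogeneous elements) spans the unit ideal in $A$. A $2$-isomorphism between the restrictions to $\Spec K$ is then encoded by an element $t \in K^\times$ satisfying
\[g_1^*(f) = t^{\deg f} \cdot g_2^*(f) \quad \text{in } K\]
for every homogeneous $f \in R$.

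The heart of the argument is to show $v(t) = 0$, so that $t$ is already a unit in $A$ and the $\GG_m$-action by $t$ provides an identification of $g_1$ and $g_2$ over $\Spec A$ itself. Because $A$ is local and each $g_i^*(R_+)$ generates the unit ideal, there must exist homogeneous $f_i \in R_+$ of positive degree with $g_i^*(f_i) \in A^\times$. Applying $v$ to the defining relation with $f = f_1$ gives
\[0 = v(g_1^*(f_1)) = \deg(f_1)\cdot v(t) + v(g_2^*(f_1)) \geq \deg(f_1)\cdot v(t),\]
so $v(t) \leq 0$; a symmetric argument with $f_2$ yields $v(t) \geq 0$, hence $v(t) = 0$ and $t \in A^\times$. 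Both sides of the defining relation then lie in $A$ and coincide after the inclusion $A \hookrightarrow K$, so they already coincide in $A$, and the action of $t$ supplies the desired extension.

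The only real obstacle I foresee is the bookkeeping needed to translate between morphisms into the quotient stack $U/\GG_m$ and pairs (torsor, equivariant map), together with a careful verification of uniqueness of the extending $2$-isomorphism. The geometric content is simply the observation that the defining property of $U$ forces some homogeneous element of $R_+$ to map to a unit on any local base, and this alone pins down the valuation of $t$.
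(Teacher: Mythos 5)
Your argument is essentially identical to the paper's own proof: both apply the valuative criterion, use triviality of $\GG_m$-torsors over a valuation ring to describe points of $U/\GG_m$ as ring maps $R\to A$ sending some positive-degree homogeneous element to a unit, and then pin down the relating element $t$ as a unit by looking at both $g_1^*(f_1)$ and $g_2^*(f_2)$. The only cosmetic difference is that you argue via the valuation $v(t)\leq 0$ and $v(t)\geq 0$, whereas the paper argues via normality that $\lambda^i\in V$ forces $\lambda\in V$ (and symmetrically $\lambda^{-1}\in V$) — these are the same calculation.
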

\begin{proof}
By the valuative criterion it suffices to show that for every valuation ring $V$ with field of fractions $K$, the map $p_V\colon (U/\GG_m)(V) \to (U/\GG_m)(K)$ of groupoids is fully faithful \cite[Proposition 7.8]{Laumon}. As every $\GG_m$-torsor over $V$ is trivial, the groupoid $(U/\GG_m)(V)$ can be up to equivalence described as follows: It has as objects $\mathbb{G}_m$-equivariant maps $\Spec V \times \mathbb{G}_m \to U$ and morphisms are $\mathbb{G}_m$-equivariant endomorphisms of $\Spec V\times \mathbb{G}_m$ over $\Spec V$ and $U$. More concretely, its objects can be described as ring morphisms $f\colon R \to V$ such that $f(r)$ is invertible for some $r$ homogeneous of positive degree. A morphism $g \to f$ is given in this language by an element $\lambda\in \GG_m(V)$ such that $g(r) = \lambda^if(r)$ for all homogeneous $r$ of degree $i$. The description of $(U/\GG_m)(K)$ is analogous. As $\GG_m(V)$ includes into $\GG_m(K)$, we see that $p_V$ is faithful. Now suppose that $f,g\colon R \to V$ are two objects in $(U/\GG_m)(V)$ and $\lambda \in \GG_m(K)$ satisfies $g(r) = \lambda^if(r)$ for all homogeneous $r$ of degree $i$. We can choose an $r\in R$ such that $f(r)$ is invertible in $V$ and thus $\lambda^i \in V$ and hence $\lambda \in V$ as $V$ is normal. Repeating this argument for an $r' \in R$ such that $g(r')$ is invertible yields that $\lambda^{-1}\in V$ and hence $\lambda \in \GG_m(V)$. Thus $p_V$ is full. 
\end{proof}

\begin{prop}\label{prop:coordinatesm1n}
Let $n\geq 2$ and
\[\pi\colon \MMb_1^1(n) = \underline{\Spec}\left(\bigoplus_{i\in\Z}\omline^{\tensor i}\right) \to \MMb_1(n)\]
be the $\GG_m$-torsor trivializing $\omline$. Let $p\colon \EE \to \MMb_1(n)$ be the generalized elliptic curve classified by the map $\MMb_1(n) \to \MMb_{ell}$ that is induced by the forgetful map $\MM_1(n) \to \MM_{ell}$.

Then there are
\[a_1, a_2, a_3, a_4, a_6\in \MF_1(n) = H^0(\MMb_1^1(n), \OO_{\MMb_1^1(n)})\]
with $|a_i| = i$ such that the generalized elliptic curve $\pi^*\EE$ is defined by the cubic equation
\begin{equation}\label{eq:cubic} y^2 +a_1xy +a_3y = x^3+a_2x^2+a_4x + a_6.\end{equation}
Consider moreover the canonical morphism $\MMb_1^1(n) \to \Spec \MF_1(n)$. It is an open immersion onto the complement of the common vanishing locus of $c_4$ and $\Delta$ for the usual quantities $c_4$ and $\Delta$ associated with $a_1,\dots, a_6$. Moreover, $c_4, c_6$ and $\Delta$ coincide with the images of the classes with the same name along $\MF_1(1) \to \MF_1(n)$.
\end{prop}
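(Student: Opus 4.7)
\emph{Proof plan.} The plan is to first produce the $a_i \in \MF_1(n)$ by globalizing local Weierstrass coordinates (this is the hard step), deduce that the canonical map $\varphi\colon \MMb_1^1(n) \to \Spec \MF_1(n)$ is a quasi-compact open immersion by quasi-affineness, and finally identify its image with $U := \Spec \MF_1(n) \setminus V(c_4, \Delta)$ via the universal property of normalization.

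For the first step, $\omega_{\pi^*\EE}$ is canonically trivialized on $\MMb_1^1(n)$, so by \cref{thm:coordinates} there exist Zariski-local Weierstrass coordinates and hence local choices of $a_i$. Patching them amounts to trivializing the $\mathbb{A}^3$-torsor of Weierstrass changes $(r, s, t)$ of weights $2, 1, 3$; the obstruction lies in $\bigoplus_{k=1,2,3} H^1(\MMb_1(n), \omline^{\tensor k})$, using $\pi_*\OO_{\MMb_1^1(n)} = \bigoplus_k \omline^{\tensor k}$ together with a degree-tracking to translate $H^1(\MMb_1^1(n), \OO)$ into this direct sum. These $H^1$-groups vanish by positivity of $\omline$ on the smooth proper stack $\MMb_1(n)$---directly verified for the small $n$ where $\MMb_1(n) \cong \P^1$. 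Globalized coordinates yield $a_i \in \MF_1(n)$ of degree $i$, a classifying morphism $\psi\colon \MMb_1^1(n) \to \Spec A$ with $A = \Z[a_1,\ldots,a_6]$, and the claimed Weierstrass equation for $\pi^*\EE$. Since $\psi$ factors through the analogous $\GG_m$-torsor $\MMb_{ell}^1$ over $\MMb_{ell}$, the resulting $c_4, c_6, \Delta \in \MF_1(n)$ agree by naturality with the pullbacks of the corresponding classes from $\MF_1(1)$.

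Second, $\psi$ is quasi-affine: it factors as $\MMb_1^1(n) \to \MMb_{ell}^1 \hookrightarrow \mcub^1$ (the first map finite, as the pullback of the finite $f_n$; the second an open immersion) lifted along the affine $\mathbb{A}^3$-torsor $\Spec A \to \mcub^1$, and is thus affine as a morphism to $\Spec A$. Consequently $\MMb_1^1(n)$ is a quasi-affine scheme, so the canonical morphism $\varphi\colon \MMb_1^1(n) \to \Spec \MF_1(n)$ is a quasi-compact open immersion. Its image lies in $U$ because $\psi$ factors through $\Spec A \setminus V(c_4, \Delta)$: the universal Weierstrass cubic pulled back to $\MMb_1^1(n)$ is never cuspidal, as $\pi^*\EE$ has only smooth or nodal geometric fibers. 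To identify the image with all of $U$, I would use the definition of $\MMb_1(n)$ as the normalization of $\MMb_{ell}$ in $\MM_1(n)$, and therefore of $\MMb_1^1(n)$ as the normalization of $\MMb_{ell}^1$ in $\MM_1(n)^1$. The subscheme $U \subset \Spec \MF_1(n)$ is normal (by normality of $\MF_1(n)$, established earlier in the paper) and integral, sharing its function field with its dense open $\MMb_1^1(n)$. The map $U \to \MMb_{ell}^1$ induced by $\psi$ extended to $\Spec \MF_1(n) \to \Spec A$ is finite: $\MF_1(n)$ is a finite $A$-module because $A \to \MF_1(1)$ is surjective (as $c_4, c_6, \Delta$ are polynomials in the $a_i$ and generate $\MF_1(1)$), while $\MF_1(n)$ is finite over $\MF_1(1)$ by finiteness of $f_n$. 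The universal property of normalization then produces a factorization $U \to \MMb_1^1(n) \to \MMb_{ell}^1$ inverse to $\varphi$ on the dense open $\MMb_1^1(n)$, hence on all of $U$ by separatedness. Therefore $\varphi\colon \MMb_1^1(n) \to U$ is the desired isomorphism.

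The main obstacle is the globalization of Weierstrass coordinates in the first paragraph, where cohomological input does the essential work. The remaining steps---quasi-affineness and the normalization identification---are essentially formal, leaning on the structural results available at this point of the paper (normality of $\MF_1(n)$, finiteness of $f_n$, and the definition of $\MMb_1(n)$ as a normalization).
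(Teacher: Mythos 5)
Your proposal has a genuine gap at the central step: the claim that the obstruction groups $\bigoplus_{k=1,2,3} H^1(\MMb_1(n), \omline^{\tensor k})$ all vanish ``by positivity of $\omline$.'' This fails for the $k=1$ summand. The group $H^1(\MMb_1(n);\omline)$ does \emph{not} vanish in general: after base change to $\C$ and by Serre duality plus Kodaira--Spencer, $H^1(\MMb_1(n)_\C;\omline)$ is dual to the space of weight-$1$ holomorphic cusp forms for $\Gamma_1(n)$, which is nonzero for $n=23$ (as the paper itself recalls in Example~\ref{exa:CM}, citing Buzzard) and for many larger $n$. Since $\C$ is flat over $\Z[\frac1n]$ and $H^1$ commutes with flat base change, $H^1(\MMb_1(n);\omline)\neq 0$ whenever these cusp forms exist. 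The ``directly verified for small $n$ where $\MMb_1(n)\cong\P^1$'' parenthetical does not help, because the statement is claimed for all $n\geq 2$. This is precisely why the paper's argument is more delicate: it shows that the groups $H^1(\MMb_1(n);\omline^{\tensor k})$ for $k\geq 2$ vanish by \cite[Proposition 2.14]{MeierDecomposition}, and that the remaining component of the obstruction in $H^1(\MMb_1(n);\omline)$ is the pullback of a class from $H^1(\MMb_{ell};\omline)$; the map $H^1(\MMb_{ell};\omline)\to H^1(\MMb_1(n);\omline)$ is zero by \cite[Proposition 2.16]{MeierDecomposition}, so that particular class dies even though the ambient group may be nonzero. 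Without this refinement your splitting argument does not go through.

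For what it's worth, your approach to the second half (identification of the image with $U = \Spec\MF_1(n)\smallsetminus V(c_4,\Delta)$) is a genuinely different route than the paper's. You argue that $U$ is a normal integral scheme finite over $\Spec A\smallsetminus V(c_4,\Delta)\simeq\MMb_{ell}^1$ sharing its function field with $\MMb_1^1(n)$, and invoke uniqueness of normalization; the paper instead proves $U/\GG_m$ is separated (\cref{lem:separated}), deduces that the open immersion $\MMb_1(n)\hookrightarrow U/\GG_m$ is proper, hence closed, and concludes by connectedness of $U$. Your route requires some care with the universal property of normalization (in particular, you need $U\to V$ to be finite and dominant, and $U$ to have the right function field), but modulo those routine checks it looks sound. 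Still, the first half is where the real content lies, and the missing idea there is the pullback argument that lets one avoid needing $H^1(\MMb_1(n);\omline)$ itself to vanish.
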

\begin{proof}
To apply \cref{thm:coordinates} and \cref{rem:grading}, it suffices to show that the inclusions 
\[p_*\OO_{\EE}((k-1)e) \to p_*\OO_{\EE}(ke)\]
split for $k>1$. By induction, we can assume that $p_*\OO_{\EE}((k-1)e)$ is via such splittings isomorphic to $\OO_{\MMb_1(n)} \oplus \omline^{\tensor (-2)} \oplus \cdots \oplus \omline^{\tensor (-k+1)}$. The vector bundle $p_*\OO_{\EE}(ke)$ is an extension of $p_*\OO_{\EE}((k-1)e)$ and $\omline^{\tensor (-k)}$ and thus we have to show the vanishing of a class $\chi$ in 
\[\Ext_{\OO_{\MMb_1(n)}}(\omline^{\tensor (-k)}, p_*\OO_{\EE}((k-1)e)) \cong H^1(\MMb_1(n); \omline^{\tensor k} \oplus \omline^{\tensor (k-2)} \oplus \cdots \oplus \omline)\]
The vanishing result \cite[Proposition 2.14]{MeierDecomposition} implies that the projection to 
\[\Ext_{\OO_{\MMb_1(n)}}(\omline^{\tensor (-k)}, p_*\OO_{\EE}((k-1)e)/p_*\OO_{\EE}((k-2)e)) \cong H^1(\MMb_1(n); \omline)\]
is an isomorphism and thus it suffices to show the vanishing of the projection $\chi'$ of $\chi$.
As $\EE$ is the pullback of the universal generalized elliptic curve $\EE^{uni} \to \MMb_{ell}$ along $\MMb_1(n) \to \MMb_{ell}$, the class $\chi'$  actually lies in the image from  
\[\Ext_{\OO_{\MMb_{ell}}}(\omline^{\tensor (-k)}, p_*\OO_{\EE^{uni}}((k-1)e)/p_*\OO_{\EE^{uni}}((k-2)e)) \cong H^1(\MMb_{ell}; \omline).\]
As shown in \cite[Proposition 2.16]{MeierDecomposition}, the map $H^1(\MMb_{ell};\omline) \to H^1(\MMb_1(n); \omline)$ is trivial and thus the inclusions $p_*\OO_{\EE}((k-1)e) \to p_*\OO_{\EE}(ke)$ are split for $k>1$. We see that $\pi^*\EE$ is indeed given by a cubic equation as in \eqref{eq:cubic} with $|a_i|=i$.

If $S \to \MMb_{ell}$ classifies a generalized elliptic curve $E\to S$ given by a cubic equation as in \eqref{eq:cubic}, then the images of $c_4,\Delta \in H^0(\MMb_{ell};\omline^{\tensor *})$ in $H^0(S; \omega_E^{\tensor *})$ coincide with the corresponding polynomials in the coefficients $a_i$ of the Weierstra{\ss} equation; thus $c_4,\Delta\in H^0(S;\OO_S)$ have an unambiguous meaning. Moreover, a curve on $S$ defined by a cubic equation as in \eqref{eq:cubic} defines a generalized elliptic curve only if $c_4$ and $\Delta$ vanish nowhere simultaneously \cite[Proposition III.1.4]{SilvermanAEC}. 

As $\omline$ is ample on $\MMb_1(n)$ by \cite[Lemma 5.11]{MeierTMFLevel}, the pullback $\pi^*\omline$ on $\MMb_1^1(n)$ is both trivial and ample and thus $\MMb_1^1(n)$ is quasi-affine, i.e.\ the canonical morphism $\MMb_1^1(n) \to \Spec H^0(\MMb_1^1(n), \OO_{\MMb_1^1(n)}) = \Spec \MF_1(n)$ is an open immersion \cite[Propositions 13.83 and 13.80]{GoertzWedhorn}. Moreover, $\pi^*\EE$ being a generalized elliptic curve implies that the immersion $\MMb_1^1(n)\to \Spec \MF_1(n)$ has image in the complement $U$ of the common vanishing locus of $c_4$ and $\Delta$. Note that the inclusion $\MMb_1^1(n) \to U$ is $\GG_m$-equivariant. 

By \cref{lem:separated}, $U/\GG_m$ is separated. As $\MMb_1(n)$ is proper over $\Spec \Z[\frac1n]$ and $U/\GG_m$ is separated, we obtain analogously to \cite[Corollary II.4.6]{Hartshorne} that the open immersion $\MMb_1(n) \hookrightarrow U/\GG_m$ is proper. Thus, the image is closed. As $\MF_1(n)$ is an integral domain by \cite{MeierDecomposition}, we see tha $U$ and hence $U/\mathbb{G}_m$ are connected. We deduce that $\MMb_1(n) \hookrightarrow U/\GG_m$ is an isomorphism. 
\end{proof}

At least over a field, the following lemma is well-known.

\begin{lemma}
Let $\EE \to S:=\Spec R$ be an elliptic curve given by a Weierstra{\ss} equation, where $R$ is graded. Assume that the degrees of the associated quantities $a_i \in \Gamma(\OO_S)=R$ have degree $i$. Let there furthermore be a section $P\colon S \to \EE$ of exact order $n \geq 3$. Then there are coordinates for $\EE$ such that the associated Weierstra{\ss} equation is of the form 
\[y^2+\alpha_1xy + \alpha_3y = x^3 +\alpha_2x^2,\]
$P$ corresponds to the point $(0,0)$ and $\lvert\alpha_i\rvert = i$.
\end{lemma}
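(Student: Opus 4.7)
The plan is to apply two successive admissible Weierstraß coordinate changes: first a translation sending $P$ to the origin, then a shear killing the remaining linear term in $x$ on the right-hand side. Both substitutions will be $\GG_m$-equivariant, so the degree statement for the final $\alpha_i$ will fall out automatically.

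Writing $P = (x_0, y_0)$, $\GG_m$-equivariance of the section forces $|x_0| = 2$ and $|y_0| = 3$. The first step is the admissible substitution $x \mapsto x + x_0$, $y \mapsto y + y_0$. Its new coefficients $A_1, \dots, A_6$ are explicit polynomials in $a_j, x_0, y_0$ that are visibly homogeneous of the expected degrees; since $(x_0, y_0)$ lies on the original curve, the constant term cancels and $A_6 = 0$. In the new coordinates $P = (0,0)$ and the equation becomes
\[y^2 + A_1 xy + A_3 y = x^3 + A_2 x^2 + A_4 x.\]

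The next step is to show that $A_3$ is a unit in $R$. A short calculation identifies $A_3$ with the value of $F_y$ at $(0,0)$, where $F := y^2 + A_1 xy + A_3 y - x^3 - A_2 x^2 - A_4 x$; hence the vanishing locus of $A_3$ on $S$ is exactly the locus on which the tangent to $\EE$ at $P$ is vertical, equivalently the locus on which $2P = O$. Since $P$ defines a closed immersion $(\Z/n)_S \hookrightarrow \EE$ with $n \geq 3$, the section $2P$ is disjoint from the zero section, so this locus is empty and $A_3$ is a unit. Setting $\lambda := A_4/A_3 \in R_1$, the admissible substitution $y \mapsto y + \lambda x$ kills the linear term in $x$, since the new coefficient is $A_4 - \lambda A_3 = 0$. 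The resulting Tate normal form
\[y^2 + \alpha_1 xy + \alpha_3 y = x^3 + \alpha_2 x^2\]
has $\alpha_1 = A_1 + 2\lambda$, $\alpha_2 = A_2 - \lambda^2 - A_1 \lambda$, $\alpha_3 = A_3$, all homogeneous of the required degrees, and $P$ still sits at the origin.

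The main obstacle is the invertibility of $A_3$: one has to convert the geometric statement that $P$ has exact order $n \geq 3$ into the statement that a specific element of $R$ is a unit. The key input is that exact order $n$ means $P$ defines a closed immersion from $(\Z/n)_S$, so in particular $2P$ avoids the zero section everywhere on $\Spec R$; once this is in hand, everything else is a routine algebraic manipulation.
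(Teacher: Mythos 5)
Your proof is correct and follows essentially the same two-step strategy as the paper's: translate $P$ to the origin (which also kills $a_6$), then shear by $A_4/A_3$ to remove the remaining linear term. The only place where you diverge is the justification that $A_3$ is a unit. The paper cites Husem\"oller's Remark 4.2.1 to conclude that over any field, $a_3 = 0$ would force $(0,0)$ to be singular or $2$-torsion, and then reduces to this fiberwise case; you instead identify $A_3$ with $F_y(0,0)$ and interpret its vanishing as a vertical tangent at $P$, equivalently $2P=O$. These are the same observation, yours slightly more self-contained. One small point in your favor: you note explicitly that $\GG_m$-equivariance of the section is what gives $|x_0|=2,\,|y_0|=3$, which the paper leaves implicit though it is needed for the grading conclusion $|\alpha_i|=i$.
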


This special form of the Weierstra{\ss} equation is called \emph{Tate normal form} or also sometimes \emph{Kubert--Tate normal form}. 

\begin{proof}
We perform a similar transformation as in the proof of \cite[Theorem 1.1.1]{BehrensOrmsby}. First, observe from formulae in \cite[Section III.1]{SilvermanAEC} that any Weierstra\ss{} equation of the form
\[
 y^2+a_1xy+a_3x=x^3+a_2x^2+a_4x+a_6
\]
can be transformed so that the chosen torsion point $(x_0,y_0)$ on this curve is moving to $(0,0)$. This transformation has the transformation parameter 
\[
 \begin{aligned}
  r= x_0, \quad
  t= y_0, \quad
  s= 0. 
 \end{aligned}
\]

Thus we may assume $a_6=0$ and the torsion point to be $(0,0)$. Over any field $K$, it follows from \cite[Remark 4.2.1]{HusemollerEllCurves} that if $a_3=0$ over this field, then $(0,0)$ would be either singular or a $2$-torsion point, contradicting the assumption that it is a torsion point of strict order $n\geq 3$. Thus, $a_3$ is invertible in $R$ since it maps to a non-zero element in every field for any ring map $R \to K$.

This allows to define a transformation with transformation parameters
\[
 \begin{aligned}
  r= 0, \quad
  t= 0, \quad
  s= \frac{a_4}{a_3}, 
 \end{aligned}
\]
and the resulting coefficients are 
\[
y^2+\alpha_1xy + \alpha_3y = x^3 +\alpha_2x^2.
\] 
\end{proof}

\begin{remark}\label{rmk:generalTateNF}
If we start with the Weierstra\ss{} equation $ y^2+a_1xy+a_3x=x^3+a_2x^2+a_4x+a_6$ as above, we want to record for later use the values of $\alpha_i$ obtained by the procedure in the proof of the lemma above. Denote by $s'$ the auxiliary quantity (the invertibility of the denominator follows from the lemma above)
\[
s'=\frac{a_4+2a_2x_0-a_1y_0+3x_0^2}{a_3+a_1x_0+2y_0}.
\]
Then we obtain
\begin{alignat*}{2}
\alpha_1&=&&\frac{a_1a_3+2a_4+(a_1^2+4a_2)x_0+6x_0^2}{a_3+a_1x_0+2y_0}\\
\alpha_2&=&&a_2+3x_0-a_1s'-(s')^2\\
\alpha_3&=&&a_3+a_1x_0+2y_0.
\end{alignat*}
\end{remark}

\subsection{The $q$-expansions of the coefficients of Weierstras{\ss} equations}
\label{subsec:qAlphai}
In the last subsection, we showed that the universal elliptic curve over $\MM_1^1(n)$ has a Weierstra{\ss} equation in Tate normal form and that the corresponding quantities $\alpha_i$ are elements in \[H^0(\MM_1^1(n);\OO_{\MM_1^1(n)})\cong \bigoplus_{i\in\Z}H^0(\MM_1(n);\omline^{\tensor i})\]
of degree $i$, i.e.\ meromorphic modular forms of weight $i$. In this subsection, we will show that the $\alpha_i$ are actually \emph{holomorphic} modular forms and provide a general formula for the $q$-expansion. This will allow us to identify them with explicit polynomials in the $z_i$ in the case $n=7$. Our first goal will be to prove a criterion to check whether a meromorphic $\Gamma_1(n)$-modular form is actually holomorphic. 

Recall from \cref{sec:analytic} that meromorphic $\Gamma_1(n)$-modular form $g$ of weight $k$ (in the analytic sense) is holomorphic if 
for every $\gamma=\begin{pmatrix}a & b\\ c &d\end{pmatrix}\in \SL_2(\Z)$, the map $g[\gamma]_k$, given by 
\[
z \mapsto (cz+d)^{-k}g\left(\frac{az+b}{cz+d}\right)
\]
is holomorphic at $\infty$. 

Similarly to e.g.\ \cite[Section 4]{DiamondIm}, we use certain operators $[W_m]$ to rephrase holomorphy of a $\Gamma_1(n)$-modular form. We will see in the proof of \cref{lem:holomorphicitycriterion} that these operators are closely related to the equivalence $\varphi\colon \MM_1(n) \to \MM_{\mu}(n)$ in \cref{lem:phiequivalence} (see also \cite[Section VII.6]{LangMod}). 
\begin{lemma}\label{lem:Atkin}
Let $g\colon \mathbb{H}\to \mathbb{C}$ be a meromorphic $\Gamma_1(n)$-modular form of weight $k$. Then the function
\begin{alignat*}{2}
W_ng:=g[W_n]_k\colon&\mathbb{H}\to \mathbb{C}\\
 &\tau \mapsto (n\tau)^{-k}g\left(-\frac{1}{n\tau}\right)
\end{alignat*}
is is a meromorphic $\Gamma_1(n)$-modular form again, where $W_n=\begin{pmatrix}0 &-1\\n &\hphantom{-}0\end{pmatrix}$. Moreover, $g$ is holomorphic as a modular form if and only if $W_ng$ is holomorphic as a modular form. 
\end{lemma}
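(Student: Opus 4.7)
The strategy has two parts: first I would verify that $W_n g$ is again a $\Gamma_1(n)$-modular form of weight $k$, and then I would compare the cusp conditions for $g$ and $W_n g$.

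For the first part, the key computation is that $W_n$ normalizes $\Gamma_1(n)$. Using $W_n^{-1} = -\tfrac{1}{n}W_n$ (which follows from $W_n^2 = -nI$), a direct matrix calculation gives
\[
W_n \gamma W_n^{-1} = \begin{pmatrix} d & -c/n \\ -nb & a \end{pmatrix} \in \Gamma_1(n)
\]
for any $\gamma = \begin{pmatrix} a & b \\ c & d \end{pmatrix} \in \Gamma_1(n)$, since $n\mid c$ makes the upper-right entry integral and the diagonal congruences $a\equiv d\equiv 1\pmod{n}$ are preserved. Setting $\delta := W_n \gamma W_n^{-1}$, the cocycle identity $g[\gamma_1\gamma_2]_k = (g[\gamma_1]_k)[\gamma_2]_k$ for the slash operator then yields
\[
(W_n g)[\gamma]_k = g[W_n \gamma]_k = g[\delta W_n]_k = (g[\delta]_k)[W_n]_k = g[W_n]_k = W_n g,
\]
so $W_n g$ is $\Gamma_1(n)$-invariant, and it is clearly meromorphic on $\mathbb{H}$ since $W_n$ acts biholomorphically there.

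For the second part, given $\gamma\in\SL_2(\Z)$, let $p/q := W_n\gamma\cdot\infty \in \mathbb{P}^1(\Q)$ in lowest terms and pick $\eta\in \SL_2(\Z)$ with $\eta\cdot\infty = p/q$. Then $u := \eta^{-1} W_n \gamma$ fixes $\infty$, has integer entries, and determinant $n$, hence is upper triangular with positive diagonal. The identity
\[
(W_n g)[\gamma]_k = g[W_n \gamma]_k = (g[\eta]_k)[u]_k
\]
reduces holomorphy (and meromorphy) of $(W_n g)[\gamma]_k$ at $\infty$ to the corresponding statement about $g[\eta]_k$: slashing by an integer upper triangular matrix with positive diagonal amounts, up to a nonzero scalar, to a substitution $z\mapsto (a'z+b')/d'$ that sends $\mathrm{Im}(z)\to\infty$ to $\mathrm{Im}(\cdot)\to\infty$, and this preserves boundedness as well as finite-order pole behavior at the cusp $\infty$ in both directions. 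Moreover, since $W_n$ normalizes $\Gamma_1(n)$, the induced involution on $\Gamma_1(n)\backslash\mathbb{H}^*$ permutes the cusps, so as $\gamma$ varies over $\SL_2(\Z)$ the chosen $\eta$ hits every $\Gamma_1(n)$-cusp. This establishes both the meromorphic-modular-form claim and the required holomorphy equivalence.

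The main technical hurdle is the integrality of $W_n\gamma W_n^{-1}$, which rests on the condition $n\mid c$ defining $\Gamma_1(n)$; once that is in place, everything else reduces to standard manipulations with the slash cocycle and integer upper triangular matrices.
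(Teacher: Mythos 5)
Your proof is correct and follows essentially the same route as the paper: compute $W_n\gamma W_n^{-1}$ explicitly to see that $W_n$ normalizes $\Gamma_1(n)$, then deduce the modularity of $W_n g$ and the equivalence of cusp conditions. The paper simply cites \cite[Exercise 1.2.11(c)]{DiamondShurman} for the second half, whereas you supply the argument (factoring $W_n\gamma=\eta u$ with $\eta\in\SL_2(\Z)$ and $u$ integer upper triangular of determinant $n$) that one would give to solve that exercise.

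One small point worth tightening: $u=\eta^{-1}W_n\gamma$ fixing $\infty$ with $\det u=n>0$ only forces the two diagonal entries to have the \emph{same} sign; to make them positive you should replace $\eta$ by $-\eta$ when necessary. Also, the last step in your cusp argument does not actually need the normalization property of $W_n$: since $\SL_2(\Z)$ acts transitively on $\mathbb{P}^1(\Q)$ and $W_n$ maps $\mathbb{P}^1(\Q)$ to itself, the points $W_n\gamma\cdot\infty$ already exhaust $\mathbb{P}^1(\Q)$ as $\gamma$ ranges over $\SL_2(\Z)$, so every cusp representative $\eta$ is realized.
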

\begin{pf}
  Given any matrix $\gamma=\begin{pmatrix}a & b\\ c &d\end{pmatrix}\in\Gamma_1(n)$, observe that its conjugate 
  \[W_n\gamma W_n^{-1}=\begin{pmatrix} \hphantom{-}d & -\frac{c}{n}\\ -bn &\hphantom{-}a\end{pmatrix} = W_n^{-1}\gamma W_n\]
  also lies in $\Gamma_1(n)$ and hence $W_n^{-1}\Gamma_1(n)W_n = \Gamma_1(n)$. Thus our lemma becomes a special case of \cite[Exercise 1.2.11(c)]{DiamondShurman}.
\end{pf}

For the next lemma recall the Tate curve $\Tate(q^n)$ from the discussion after \cref{TateWeierstrass}, as well as the description of torsion points on this curve. 
\begin{lemma}\label{lem:holomorphicitycriterion}
Let $g \in \Nat_k^{\C}(\Ell^1_{\Gamma_1(n)}(-), \Gamma(-))$ be a Katz modular form over $\mathbb{C}$ and let $\beta_1(g)\in \MFC(\Gamma_1(n);\C)$ be its associated modular form as in \cref{sec:naive}. Assume that the evaluation at the Tate curve $\Tate(q^n)$ with its invariant differential $\eta^{can}$ over $\Conv_{q^n}$ yields a power series (as opposed to a general Laurent series) for any choice of torsion point $(X(\zeta^dq^{c}, q^{n}), Y(\zeta^dq^{c}, q^{n}))$ with $\zeta = e^{\frac{2\pi i}n}$. Then $\beta_1(g)$ is actually holomorphic. 
\end{lemma}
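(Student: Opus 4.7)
The plan is to reduce the holomorphy of $\beta_1(g)$ to a statement at the cusps of $\Gamma_1(n)$ and then to translate each such cuspidal $q$-expansion into a Katz evaluation on the Tate curve with an explicit torsion point. Since $\beta_1(g)$ is automatically holomorphic on $\H$, holomorphy as a classical modular form reduces to showing that for every $\gamma = \begin{pmatrix} a & b \\ c & d \end{pmatrix} \in \SL_2(\Z)$ the slashed form $\beta_1(g)[\gamma]_k$ has no negative powers in its Fourier expansion at $\infty$. As this condition only depends on the $\Gamma_1(n)$-class of $\gamma\cdot\infty$, it is enough to verify it on a finite system of cusp representatives.

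The heart of the argument is the following identification: for $\gamma = \begin{pmatrix} a & b \\ c & d \end{pmatrix}\in\SL_2(\Z)$, the $q$-expansion of $\beta_1(g)[\gamma]_k$ at $\infty$ coincides, up to a nonzero scalar coming from the normalization of $\eta^{can}$, with the Katz evaluation of $g$ on the triple $(\Tate(q^n), \eta^{can}, P_{c,d})$, where $P_{c,d}$ is the order-$n$ point with coordinates $(X(\zeta^d q^c, q^n), Y(\zeta^d q^c, q^n))$. Analytically this is seen by writing the level structure in the classifying map $\tau\mapsto (\C/(\Z\tau+\Z), (c\tau+d)/n)$; applying $\gamma^{-1}$ moves the cusp $\gamma\cdot\infty$ back to $\infty$, and the analytic-to-Tate uniformization $\C^\times/q^\Z \xrightarrow{\cong} \Tate(q^n)$ turns the analytic torsion point $(c\tau+d)/n$ into precisely $\zeta^d q^c$. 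At the standard cusp ($\gamma=I$, $(c,d)=(0,1)$) this is just the tautology that defines $\beta_1$ in \cref{sec:naive}; for general $\gamma$ one extends by the $\SL_2(\Z)$-equivariance of the comparison between analytic and algebraic uniformizations.

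With this identification in hand, the hypothesis that every Katz evaluation at $(\Tate(q^n), \eta^{can}, P_{c,d})$ is a power series translates directly into holomorphy of each $\beta_1(g)[\gamma]_k$ at $\infty$, and thus into holomorphy of $\beta_1(g)$. The main obstacle is the bookkeeping behind the above identification: one must verify that the pairs $(c,d)\bmod n$ arising from a full set of $\Gamma_1(n)$-cusp representatives really exhaust the torsion points that appear in the hypothesis, and one must ensure that the analytic and algebraic pictures of the level structure are consistent with the specific choice $\zeta = e^{2\pi i/n}$ and with the convergence region $\Conv_{q^n}$. This bookkeeping is essentially the reason \cref{qExpAppendix} was set up, so once the dictionary is in place the proof collapses to a direct application of the hypothesis.
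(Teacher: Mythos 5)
Your proposal has the right overall shape — reduce to cusps, match each slash $\beta_1(g)[\gamma]_k$ with a Katz evaluation on the Tate curve at a specific torsion point, then apply the hypothesis — but the central identification you propose is not correct for $\beta_1$, and this is precisely the subtlety the paper's proof has to navigate.

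You assert that the $q$-expansion of $\beta_1(g)[\gamma]_k$ is the Katz evaluation at $(\Tate(q^n),\eta^{can},\zeta^d q^c)$, justified by "writing the level structure in the classifying map $\tau\mapsto(\C/(\Z\tau+\Z),(c\tau+d)/n)$". But this is not the classifying map underlying $\beta_1$. By definition $\beta_1(g)(\tau)=g(\C/(\Z+n\tau\Z),dz,\tau)$: the naive convention uses the lattice $\Z+n\tau\Z$ and torsion point $\tau$, not $\Z+\tau\Z$ and $\tfrac1n$. Check $\gamma=I$: the paper's $q$-expansion principle identifies the classical $q$-expansion of $\beta_1(g)$ with $q\mapsto g(\C^\times/q^{n\Z},\tfrac{du}{u},q)$, i.e.\ torsion point $q=\zeta^0q^1$, whereas your formula with $(c,d)=(0,1)$ predicts torsion point $\zeta$. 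The formula you wrote is the correct one for the function $g'(\tau)=g(\C/(\Z+\tau\Z),dz,\tfrac1n)$, which realizes the \emph{arithmetic} ($\Gamma_\mu$) convention, not the naive one. When you slash $\beta_1(g)$ directly, the lattice becomes $(c\tau+d)\Z + n(a\tau+b)\Z$ with torsion point $a\tau+b$, which does not have a clean basis of the form $\{1,n\tau'\}$ with $\tau'\to i\infty$ for general $\gamma$, so the identification you claim does not hold as stated.

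The missing ingredient is the Fricke/Atkin--Lehner operator $W_n$. The paper's proof first shows that $W_n\beta_1(g)$ agrees (up to the nonzero constant $(-n)^{-k}$) with $g'$, and then observes, via \cref{lem:Atkin}, that holomorphy of $\beta_1(g)$ is equivalent to holomorphy of $W_n\beta_1(g)$. It is $g'$, not $\beta_1(g)$, that transforms cleanly: one computes directly that $g'[\gamma]_k(\tau)=g(\C^\times/q_0^{n\Z},\tfrac{du}{u},\zeta^dq_0^c)$ with $q_0=e^{2\pi i\tau/n}$, and only then does the hypothesis about power series at the Tate curve apply. Without passing through $W_n$, your "bookkeeping" step conceals a real change of modular form (a conjugation by an element outside $\SL_2(\Z)$), and the torsion points you wrote down are attached to the wrong function. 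To repair the argument you should either insert the $W_n$ step as in \cref{lem:Atkin}, or redo the lattice bookkeeping honestly for $\beta_1(g)[\gamma]_k$ itself — which still lands in the allowed set $\{\zeta^dq^c\}$ of torsion points but through a different $(c,d)$ than you claimed and with a reparametrized $q$.
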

\begin{proof}
Throughout this proof, let $\tau$ be an arbitrary point in the upper half-plane. 
By definition, $\beta_1(g)(\tau) = g(\C/\Z+n\tau\Z, dz, \tau)$. We claim that 
\[(W_n\beta_1(g))(\tau) = (-n)^{-k}g(\C/\Z+\tau\Z, dz, \frac1n).\] Indeed, multiplication by $-\tau$ induces an isomorphism from the elliptic curve $\C/\Z+n\frac{-1}{\hphantom{-}n\tau}\Z$ to $\C/\Z+\tau\Z$. Thus, $(\C/\Z+n\frac{-1}{\hphantom{-}n\tau}\Z, dz, \frac{-1}{n\tau})$ is isomorphic to $(\C/\Z+\tau\Z, (-\tau)^{-1}dz, \frac1n)$. We obtain 
\[(W_n\beta_1(g))(\tau) = (n\tau)^{-k}\beta_1(g)\left(-\frac{1}{n\tau}\right) = (n\tau)^{-k}(-\tau)^kg\left(\C/\Z+\tau\Z, dz, \frac1n\right)
\]
as was to be shown.

The assignment $g'\colon \mathbb{H} \to \mathbb{C}$, given by $\tau\mapsto g(\C/\Z+\tau\Z, dz, \frac1n)$, can be checked to define a  meromorphic $\Gamma_1(n)$-modular form. In particular, it suffices by \cref{lem:Atkin} and our previous computation to show that $g'$ is holomorphic. By definition, this means that $g'[\gamma]_k$ are holomorphic at $\infty$ for every $\gamma \in \SL_2(\Z)$. If we write $\gamma = \begin{pmatrix}a&b\\c&d\end{pmatrix}$ for such a $\gamma$, we have 
\begin{align*}g'[\gamma]_k(\tau) &= (c\tau+d)^{-k}g\left(\C/\Z+\frac{a\tau+b}{c\tau+d}\Z, dz, \frac1n\right) \\
&= g\left(\C/\Z+\frac{a\tau+b}{c\tau+d}\Z, (c\tau+d)dz, \frac1n\right) \\
&= g\left(\C/\Z+\tau\Z, dz, \frac{c\tau+d}n\right)\\
&= g\left(\C^\times/q_0^{n\Z}, \frac{du}u, \zeta^dq_0^c\right)
\end{align*}
with $q_0 = e^{\frac{2\pi i\tau}n}$.  
If we push the Tate curve $\Tate(q^n)$ forward along the evaluation map $\ev_{q_0}\colon \Conv_{q^n}\to \mathbb{C}$, the resulting elliptic curve with chosen torsion point is isomorphic to $\left(\C^\times/q_0^{n\Z}, \frac{du}u, \zeta^dq_0^c\right)$. By naturality, the values of $g$ are also related via $\ev_{q_0}$. 
Because $g(E_{q^n}, \eta^{can}, (X(\zeta^dq^{c}, q^{n}), Y(\zeta^dq^{c}, q^{n})))$ is a power series in $q^n$ by assumption, we see thus that $g'[\gamma]$ is holomorphic at $\infty$.
\end{proof}

From now on we will assume $n\geq 3$. Our aim now is to compute the $q$-expansions of the coefficients $\alpha_i$ of the Tate normal form 
\[
 y^2+\alpha_1xy+\alpha_3y=x^3+\alpha_2x^2
\]
for the Tate curve $\Tate(q^n)$ given by $y^2+xy=x^3+a_4(q^n)x+a_6(q^n)$ over $\Conv_{q^n}$ with a chosen $n$-torsion point $(x_0,y_0)$. 
 The values from \cref{rmk:generalTateNF} specialize to 
\[
 \begin{aligned}
  s'&=&& \frac{a_4(q^n)-y_0+3x_0^2}{x_0+2y_0},\\
  \alpha_1&=&& \frac{x_0+6x_0^2+2a_4(q^n)}{x_0+2y_0},\\
  \alpha_2&=&& 3x_0-s'-(s')^2,\\
  \alpha_3&=&& x_0+2y_0.
 \end{aligned}
\] 

Next, we want to specify the torsion point $(x_0,y_0)$ on the Tate curve $\Tate(q^n)$. 
We use methods from \cite[Section V.3]{SilvermanAdvanced}, to simplify the expressions for $X(vq^k,q^n)$ and $Y(vq^k,q^n)$ in our case, where $v$ and $q$ are complex numbers with $|v| = 1$ and $|q|<1$ and $0\leq k <n$. First, we reindex the sum over positive natural numbers:
\[
 \begin{aligned}
  X(vq^k,q^n)&=&&\sum_{m\in\Z} \frac{vq^{mn+k}}{(1-vq^{mn+k})^2}-2s_1(q^n),\\
&=&& \frac{vq^k}{(1-vq^k)^2}+\sum_{m\geq 1} \left( \frac{vq^{mn+k}}{(1-vq^{mn+k})^2}+ \frac{v^{-1}q^{mn-k}}{(1-v^{-1}q^{mn-k})^2}\right.\\
&&&\left.-2 \frac{q^{mn}}{(1-q^{mn})^2}\right).
 \end{aligned}
\]
Recall the following formulae for $|x|<1$, obtained e.g.\ by differentiating the geometric series:
\[
 \frac{x}{(1-x)^2} = \sum_{l\geq 1} lx^l\mbox{ and }\frac{x^2}{(1-x)^3}=\sum_{l\geq 1} \frac{l(l-1)}{2} x^l \mbox{ and }\frac{x}{(1-x)^3}=\sum_{l\geq 0}\frac{l(l+1)}{2} x^l .
\]

\noindent Inserting this into the expression for $X(vq^k,q^n)$, we obtain for $k>0$
\[
  X(vq^k,q^n)= \sum_{l\geq 1}lv^lq^{kl}+\sum_{m\geq 1}\sum_{l\geq 1} \left( lv^lq^{(mn+k)l}+ lv^{-l}q^{(mn-k)l} 
 -2 lq^{mnl}\right). 
\]

\noindent For $k=0$ and $v\neq 1$ we obtain similarly
 \begin{align*}
    X(v,q^n)
      = \frac{v}{(1-v)^2} + \sum_{m>0 } \left(\sum_{l|m} l(v^l+v^{-l}-2)\right)q^{mn}.
\end{align*}

\noindent For $Y(vq^k,q^n)$ we get
\[
 \begin{aligned}
  Y(vq^k,q^n)&=&&\sum_{m\in\Z} \frac{v^2q^{2(mn+k)}}{(1-vq^{mn+k})^3}+s_1(q^n),\\
&=&& \frac{v^2q^{2k}}{(1-vq^k)^3}+\sum_{m\geq 1} \left( \frac{v^2q^{2(mn+k)}}{(1-vq^{mn+k})^3}- \frac{v^{-1}q^{mn-k}}{(1-v^{-1}q^{mn-k})^3}\right.\\
&&&\left.\vphantom{\frac{v^2q^{2(mn+k)}}{(1-vq^{mn+k})^3}}+ \frac{q^{mn}}{(1-q^{mn})^2}\right).
 \end{aligned}
\]

\noindent Using again the formulae derived from geometric series, we obtain for $k>0$
\[
 \begin{aligned}
  Y(vq^k,q^n) &=&& \sum_{l\geq 2}\frac{(l-1)l}{2}v^lq^{kl}+\sum_{m\geq 1}\sum_{l\geq 1} \left( \frac{(l-1)l}{2}v^lq^{(mn+k)l} \right.\\
&&&\left.
- \frac{l(l+1)}{2}v^{-l}q^{(mn-k)l}+ lq^{mnl}\right).%\\
 \end{aligned}
\]
For $k=0$ and $v\neq 1$, we obtain instead
\begin{align*}
    Y(v,q^n)
          = \frac{v^2}{(1-v)^3} + \sum_{m>0 } \left(\sum_{l|m} \left(\frac{l(l-1)}{2}v^l-\frac{l(l+1)}{2}v^{-l}
         +1\right)\right)q^{mn}.
\end{align*}

\begin{lemma}\label{lem:powerseries}
As before let $|q|<1$, $|v| =1$ and $0\leq k <n$. In terms of the $X(vq^k, q^n)$ and $Y(vq^k, q^n)$ the Laurent series $\alpha_1, \alpha_2$ and $\alpha_3$ are actually power series as well if we assume $v\neq \pm 1$ in case that $k=0$ or $k = \frac{n}2$.
\end{lemma}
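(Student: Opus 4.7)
The plan is to prove the lemma by a direct case-by-case computation using the explicit $q$-expansions of $X(vq^k,q^n)$ and $Y(vq^k,q^n)$ just derived. First observe that $\alpha_3 = x_0 + 2y_0$ is by construction a sum of power series and hence automatically a power series. Since $\alpha_2 = 3x_0 - s' - (s')^2$, it then suffices to show that the two rational expressions
$$\alpha_1 = \frac{x_0 + 6x_0^2 + 2a_4(q^n)}{x_0 + 2y_0}\qquad\text{and}\qquad s' = \frac{a_4(q^n) - y_0 + 3x_0^2}{x_0 + 2y_0},$$
which a priori only lie in $\C((q))$, in fact lie in $\C\llbracket q\rrbracket$. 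Concretely, the strategy is to determine the order of vanishing $d = \operatorname{ord}_q(\alpha_3)$ at $q = 0$ and to verify that both numerators vanish to order at least $d$.

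I would split the argument along the value of $k$. For $k = 0$ with $v \neq \pm 1$, reading off the constant term of $\alpha_3$ yields
$$\alpha_3|_{q=0} = \frac{v}{(1-v)^2}+\frac{2v^2}{(1-v)^3} = \frac{v(1+v)}{(1-v)^3},$$
which is a nonzero element of $\C$; hence $\alpha_3$ is invertible in $\C\llbracket q^n\rrbracket$ and both quotients are automatically power series. For $0 < k < n/2$, reading off leading terms gives $\operatorname{ord}_q(x_0) = k$ with leading coefficient $v$ and $\operatorname{ord}_q(y_0) \geq \min(2k, n-k) > k$, so $\alpha_3$ has order exactly $k$. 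Since $a_4(q^n)$ has order $n > k$, the numerator of $\alpha_1$ also has order $k$, and all three summands in the numerator of $s'$ have $q$-order $> k$. The case $n/2 < k < n$ is parallel: both $X$ and $Y$ are now dominated by the $(m,l) = (1,1)$ contribution $\pm v^{-1}q^{n-k}$, so $\alpha_3$ has order $n-k$ with leading coefficient $-v^{-1}$, and the same comparison of orders shows the numerators vanish to order at least $n-k$.

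The delicate case is $k = n/2$, where several summands contribute at the leading order $q^{n/2}$. Enumerating the $(m,l)$-pairs with $kl = n/2$, $(mn+k)l = n/2$, $(mn-k)l = n/2$ or $mnl = n/2$, one finds
$$x_0 = (v+v^{-1})q^{n/2}+\cdots,\qquad y_0 = -v^{-1}q^{n/2}+\cdots,$$
whence $\alpha_3 = (v - v^{-1})q^{n/2} + \cdots$. The hypothesis $v \neq \pm 1$ is exactly what is needed to make $v - v^{-1}$ nonzero; were $v$ equal to $\pm 1$, the denominator would vanish to strictly higher order than the numerators of $\alpha_1$ and $s'$, and both fractions would acquire genuine poles. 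One then checks directly that both numerators have $q$-order at least $n/2$ in this case as well.

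I expect the main obstacle to be confined to this last case: one needs to enumerate the finitely many summands contributing at the critical order $q^{n/2}$ in $X$ and $Y$, and to verify that the exceptional role of $v = \pm 1$ matches exactly the exclusion stated in the lemma. The remaining cases amount to reading off the lowest-degree monomials appearing in the sums and comparing orders between numerator and denominator.
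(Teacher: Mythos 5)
Your proposal is correct and takes essentially the same approach as the paper: the same reduction to comparing $q$-orders of numerators against $\operatorname{ord}_q(\alpha_3) = \operatorname{ord}_q(X+2Y)$, the same case split on $k$, the same leading-term computations (including the table of leading coefficients), and the same identification of $k = n/2$ as the case where $v \neq \pm 1$ is essential because $\alpha_3$ has leading coefficient $v - v^{-1}$. The only cosmetic difference is that you analyze $\alpha_1$ as an independent fraction, while the paper reduces all of $\alpha_1, \alpha_2$ to the single expression $s'$ (implicitly via $\alpha_1 = 1 + 2s'$ since $a_1 = 1$ for the Tate curve).
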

\begin{proof}
Note that in each of the cases above both $X(vq^k, q^n)$ and $Y(vq^k, q^n)$ are not just Laurent series in $q$, but actually power series.
 In particular, so is $\alpha_3=X+2Y$. Given the expressions for $\alpha_1$ and $\alpha_2$, we only need to check that 
\[
   s'= \frac{a_4(q^n)-X(vq^k, q^n)+3X(vq^k, q^n)^2}{X(vq^k, q^n)+2Y(vq^k, q^n)}
\]
is a power series to obtain that $\alpha_1$ and $\alpha_2$ are power series as well. In our Tate curve, we have 
\[
  a_4(q^n)= -5s_3(q^n) = -5 \sum_{m\geq 1}\sigma_3(m)q^{mn}
\]
so this power series has $n>k$ as lowest exponent of $q$. Thus, the lowest power of $q$ occuring in the numerator is the same as for $X$ (unless the numerator is $0$ and thus $s'=0$). It thus suffices to show that the lowest term of the power series for $X$ has at least the order of the lowest term of the power series defining $X+2Y$. In the table below we will compute the lowest term in the power series defining $X$, $Y$ and $X+2Y$ in the different cases.

\begin{center}
\begin{tabular}{|c|c|c|c|}
\hline
 & $X$ & $Y$ & $X+2Y$ \\[0.5ex] \hline
  &&&\\ 
$k=0$ & $\frac{v}{(1-v)^2}$ & $\frac{v^2}{(1-v)^3}$ & $\frac{v+v^2}{(1-v)^3}$ \\[3ex] \hline 
  &&&\\
$0<k<\frac{n}{2}$ & $vq^k$ & higher term & $vq^k$\\[3ex] \hline
  &&&\\
$k=\frac{n}{2}$ & $(v+v^{-1})q^{\frac{n}{2}}$ & $-v^{-1}q^{\frac{n}{2}}$ & $(v-v^{-1})q^{\frac{n}{2}}$\\[3ex] \hline
  &&&\\
$\frac{n}{2}<k<n$ & $v^{-1}q^{n-k}$ & $-v^{-1}q^{n-k}$ & $-v^{-1}q^{n-k}$\\[3ex] \hline
\end{tabular}
\end{center}
Note that $\frac{v+v^2}{(1-v)^3} = 0$ only if $v =-1$ and $v - v^{-1} = 0$ only if $v = \pm 1$. 
\end{proof}

\begin{Prop}\label{prop:holomorphicalphas}
If $n\geq 3$, the universal elliptic curve over $\MM_1^1(n)$ has a Weierstra{\ss} equation of the form
\[y^2 +\alpha_1xy + \alpha_3y = x^3 + \alpha_2x^2, \]
where the $\alpha_i$ are \emph{holomorphic} modular forms in $\MF(\Gamma_1(n);\Z[\frac1n])$ of degree $i$. 
\end{Prop}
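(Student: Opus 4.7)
By \cref{prop:coordinatesm1n} and the subsequent Tate normal form lemma, the pullback of the universal generalized elliptic curve to $\MM_1^1(n)$ admits a Weierstra{\ss} equation $y^2 + \alpha_1 xy + \alpha_3 y = x^3 + \alpha_2 x^2$, with $\alpha_i \in H^0(\MM_1^1(n), \OO)$ of degree $i$. Under the identification $H^0(\MM_1^1(n), \OO) \cong \bigoplus_{i \in \Z} H^0(\MM_1(n); \omline^{\tensor i})$, the $\alpha_i$ thus appear a priori as $\Z[\tfrac{1}{n}]$-integral \emph{meromorphic} $\Gamma_1(n)$-modular forms of weight $i$. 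The proposition reduces to showing that each $\alpha_i$ extends to a section of $\omline^{\tensor i}$ on the compactification $\MMb_1(n)_{\Z[\frac{1}{n}]}$, i.e.\ that $\alpha_i$ is holomorphic at every cusp.

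The strategy is to apply the holomorphicity criterion of \cref{lem:holomorphicitycriterion}. Since the Tate normal form construction is natural in an elliptic curve equipped with a trivialized invariant differential and a section of exact order $n \geq 3$, each $\alpha_i$ defines a Katz modular form in $\Nat_i^{\C}(\Ell^1_{\Gamma_1(n)}(-), \Gamma(-))$. According to \cref{lem:holomorphicitycriterion}, holomorphicity of the associated classical form $\beta_1(\alpha_i)$ will follow once we verify that evaluation at $(\Tate(q^n), \eta^{can})$ over $\Conv_{q^n}$, together with any torsion point $(X(\zeta^d q^c, q^n), Y(\zeta^d q^c, q^n))$ of exact order $n$, yields a formal power series in $q$ rather than a genuine Laurent series. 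The explicit formulas of Section 3.2 express each $\alpha_i$ as a rational expression in $X(vq^k, q^n)$, $Y(vq^k, q^n)$ and $a_4(q^n)$, and \cref{lem:powerseries} confirms that this rational expression is always a power series, apart from the exceptional parameters $(v, k) \in \{(1, 0), (-1, 0), (1, n/2), (-1, n/2)\}$.

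It remains to rule out these exceptional cases. A torsion point $\zeta^d q^c \in \Tate(q^n)(\C)[n]$ has exact order $n$ precisely when $\gcd(d, c, n) = 1$; but in each of the four exceptional pairs one has $d, c \in \{0, n/2\}$, which forces $\gcd(d, c, n) \geq n/2 \geq 2$ as soon as $n \geq 3$. Hence no exceptional $(v, k)$ arises from a torsion point of exact order $n$, so \cref{lem:holomorphicitycriterion} yields holomorphicity of $\alpha_i$ as a complex-analytic modular form. Since $\alpha_i$ was already constructed over the $\Z[\tfrac{1}{n}]$-stack $\MM_1(n)$, the $q$-expansion principle of \cref{qExpAppendix} promotes this to membership in $\MF(\Gamma_1(n); \Z[\tfrac{1}{n}])$. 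The principal subtlety in the argument is the bookkeeping that matches the $(v, k)$-parametrization of torsion points used in Section 3.2 with the $(d, c)$-indexed evaluations that enter \cref{lem:holomorphicitycriterion}; once this is in place, everything else is mechanical.
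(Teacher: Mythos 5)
Your proof is correct and takes essentially the same approach as the paper: derive the $\alpha_i$ from \cref{prop:coordinatesm1n} and the Tate normal form lemma as meromorphic modular forms, then apply the holomorphicity criterion \cref{lem:holomorphicitycriterion} via \cref{lem:powerseries}, ruling out the exceptional parameters because the corresponding torsion points cannot have exact order $n\geq 3$. The only cosmetic difference is that you phrase the exclusion via the $\gcd(d,c,n)=1$ criterion for exact order, while the paper simply observes that the exceptional points have order dividing $2$; these amount to the same check.
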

\begin{proof}
\cref{lem:powerseries} checks exactly the holomorphy criterion \cref{lem:holomorphicitycriterion} once we observe that $P = (X(\zeta^dq^k, q^{n}), Y(\zeta^dq^k, q^{n}))$ cannot be a point of exact order $n$ if $k=0$ or $k=\frac{n}2$ and $\zeta^d = \pm 1$ because this would imply that $P$ is of order $2$.  
\end{proof}

According to the conventions from \cref{sec:qexpansions} we obtain the $q$-expansions of the $\alpha_i$ by specializing to the torsion point $(X(q, q^{n}), Y(q, q^{n}))$ above and use our explicit expressions of the $\alpha_i$ in terms of $X$ and $Y$. 

In our case of $n=7$ we obtain the following $q$-expansions for $X$ and $Y$:
\begin{align*}
    X &= q +2q^2+3q^3+4q^4+5q^5+7q^6+5q^7+9q^8+\cdots\\
Y &= q^2+3q^3+6q^4+10q^5+14q^6+22q^7+28q^8 +\cdots 
\end{align*}
As in the proof of \cref{prop:IdentifyMF}, the form of the $q$-expansions of $z_1,z_2$ and $z_3$ implies that there are elements of $\MF_k(\Gamma_1(7);\Z[\frac17])$ with $q$-expansions of the form $q^i + \text{higher terms}$ where $i$ runs over all integers in $[0,2k]$. As $\MF_k(\Gamma_1(7);\Z[\frac17])$ is free of rank $2k+1$, these elements form automatically a basis and thus every element in $\MF_k(\Gamma_1(7);\Z[\frac17])$ is determined by its $q$-expansion modulo $q^{2k+1}$. Comparing the $q$-expansions of the $z_i$ with those of the $\alpha_i$ using \texttt{MAGMA} implies the following theorem. 

\begin{Theorem}
\label{Alphas}
The elliptic curve classified by the composition 
\[\Spec \MFC(\Gamma_1(7), \Z[\tfrac17]) \to \MM_1(7) \to \MM_{ell}\]
is given by the equation
\[y^2 +\alpha_1xy +\alpha_3y = x^3 + \alpha_2x^2\]
with
\begin{equation*}
\begin{aligned}
 \alpha_1&=z_1-z_2+z_3,\\
 \alpha_2&=z_1z_2+z_1z_3,\\
 \alpha_3&=z_1z_3^2. 
\end{aligned}
\end{equation*}
\end{Theorem}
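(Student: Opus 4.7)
My plan is to reduce the theorem to a finite computation via the $q$-expansion principle. By \cref{prop:holomorphicalphas} the coefficients $\alpha_1,\alpha_2,\alpha_3$ coming from putting the universal curve over $\MM_1^1(7)$ into Tate normal form are holomorphic modular forms in $\MF_1, \MF_2, \MF_3$ with respect to $\Gamma_1(7)$ over $\Z[\tfrac{1}{7}]$. By \cref{prop:IdentifyMF}, the graded piece $\MF_k(\Gamma_1(7);\Z[\tfrac{1}{7}])$ is free of rank $2k+1$, and the discussion preceding the theorem shows that one can exhibit a $\Z[\tfrac{1}{7}]$-basis whose $q$-expansions begin with $q^i + \text{higher terms}$ for $i=0,1,\dots,2k$ (this being visible from the leading terms of $z_3$, $z_1$, and $z_1+z_2$). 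Consequently, an element of $\MF_k(\Gamma_1(7);\Z[\tfrac{1}{7}])$ is uniquely determined by its $q$-expansion modulo $q^{2k+1}$.

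The plan is therefore to compute the $q$-expansions of $\alpha_1,\alpha_2,\alpha_3$ modulo $q^3,q^5,q^7$ respectively, and compare them to the $q$-expansions of the candidate polynomials $z_1-z_2+z_3$, $z_1z_2+z_1z_3$, $z_1z_3^2$ modulo the same powers. For the left-hand sides I would use the explicit formulae derived in \cref{subsec:qAlphai}: namely, for $n=7$ and the $7$-torsion point $(X(q,q^7),Y(q,q^7))$ on the Tate curve, one has
\[
\alpha_3 = X+2Y,\quad s' = \frac{a_4(q^7)-Y+3X^2}{X+2Y},\quad \alpha_1 = \frac{X+6X^2+2a_4(q^7)}{X+2Y},\quad \alpha_2 = 3X-s'-(s')^2,
\]
together with the power series for $X$ and $Y$ quoted just before the theorem (and the fact that $a_4(q^7)=-5\sum_{m\geq 1}\sigma_3(m)q^{7m}$, which contributes nothing below degree $7$).

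For the right-hand sides I would substitute the known $q$-expansions of $z_1,z_2,z_3$ (computed in \cref{sec:ModularForms}) into the claimed polynomials. To get them to the required precision one needs the $z_i$ up to order $q^6$ in the third case; this is a straightforward extension of the integer-coefficient computation already performed and presents no conceptual difficulty, only bookkeeping. Matching the two sides coefficient-by-coefficient then forces the equalities $\alpha_1 = z_1-z_2+z_3$, $\alpha_2 = z_1z_2 + z_1z_3$, $\alpha_3 = z_1z_3^2$ by the uniqueness statement recalled above.

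The main (and only) obstacle is organizing the arithmetic cleanly, since the Tate-normal-form formulae involve rational functions whose denominators $X+2Y$ must be inverted as power series; this means expanding $1/(1+2Y/X)$ iteratively, which is why it is both natural and efficient to outsource the verification to a computer algebra system such as \texttt{MAGMA}, as the authors indicate. Once the matching is verified to the finite orders dictated by the ranks of $\MF_1,\MF_2,\MF_3$, the $q$-expansion principle upgrades it to the honest equalities in $\MF(\Gamma_1(7);\Z[\tfrac{1}{7}])$, completing the proof.
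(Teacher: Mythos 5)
Your proposal is correct and reproduces the paper's argument essentially verbatim: use \cref{prop:holomorphicalphas} to know the $\alpha_i$ are holomorphic, use the rank count from \cref{prop:IdentifyMF} to reduce the identity to checking $q$-expansions modulo $q^{2k+1}$, and then match coefficients by machine computation from the Tate-normal-form formulae and the explicit expansions of $X$, $Y$, and the $z_i$.
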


\section{Graded Hopf algebroids and stacks}\label{sec:HopfStacks}
In this section, all gradings can be taken to be either over $\Z$ (as convenient in the algebraic setting) or over $2\Z$ (as convenient in the topological setting) if this choice is done consistently. In either case, we assume our graded rings to be commutative and not just graded commutative.  All comodules over Hopf algebroids, a notion which we will recall in this section, are chosen to be left comodules. 

\subsection{General theory} \label{subsec:GeneralHopfAlgebroids}
We begin with the relationship between graded and ungraded Hopf algebroids. Let $(B,\Sigma)$ be a graded Hopf algebroid, i.e.\ a cogroupoid object in the category of graded rings. To such a graded Hopf algebroid, we can associate an ungraded Hopf algebroid $(B,\Sigma[u^{\pm 1}])$ as follows. The structure maps $\eta_L$ and $\varepsilon$ are essentially unchanged.  The right unit $\eta_R^{(B,\Sigma[u^{\pm 1}])} \colon B\to \Sigma[u^{\pm 1}]$ is given by
\[
\eta_R^{(B,\Sigma[u^{\pm 1}])}(x) = u^i\eta_R^{(B,\Sigma)}(x)
\]
if $x\in B$ is a homogeneous element of degree $i$. 
The comultiplication 
\[\psi^{(B,\Sigma[u^{\pm 1}])}\colon \Sigma[u^{\pm 1}]\to \Sigma [u^{\pm 1}]\otimes_B \Sigma [u^{\pm 1}]\]
is given by 
\[
\psi^{(B,\Sigma[u^{\pm 1}])}(s) = u^i\psi^{(B,\Sigma)}(s)
\]
for homogeneous elements $s\in \Sigma$ of degree $i$ and  $\psi(u) = 1\tensor u + u\tensor 1$. One can show that the category of graded comodules over $(B,\Sigma)$ is equivalent to that of comodules over $(B,\Sigma[u^{\pm 1}])$. 

In the following, we will assume that $(B,\Sigma)$ is \emph{flat}, i.e.\ that $\Sigma$ is flat as a $B$-module. We observe that $\Sigma$ is flat as a $B$-module with respect to the left module structure given by $\eta_L$ if and only if it is flat as a $B$-module with respect to the right module structure given by $\eta_R$. This can be shown using the conjugation.

\begin{Definition}
 The \emph{associated stack} for a graded Hopf algebroid $(B,\Sigma)$ is the stack $\XX(B,\Sigma)$ associated to the (ungraded) Hopf algebroid $(B,\Sigma[u^{\pm 1}])$ defined above, i.e.\ the stackification of the presheaf of groupoids represented by the groupoid scheme $(\Spec B, \Spec \Sigma[u^{\pm 1}])$ on the fpqc site of schemes. 
\end{Definition}

As explained in \cite[Section 3]{Naumann} the stack $\XX = \XX(B,\Sigma)$ is automatically algebraic in the sense of op.\ cit.\ and actually an Artin stack if $\Sigma$ is a finitely presented $B$-algebra (see \cite[Th\'eor\`eme 10.1]{Laumon}). Moreover, it comes with a map $\Spec B \to \XX$ and the pullback $\Spec B \times_{\XX}\Spec B$ can be identified with $\Spec \Sigma[u^{\pm 1}]$. The pullback 
\[\QCoh(\XX) \to \QCoh(\Spec B) \simeq B\modules\]
refines to an equivalences from $\QCoh(\XX)$ to left $(B,\Sigma[u^{\pm 1}])$-comodules by \cite[Section 3.4]{Naumann}; cf.\ also \cite[Theorem 2.2]{HoveyMoritaHopf}. Given an $\FF\in\QCoh(\XX)$, the comodule structure 
\[\FF(B) \to \Sigma[u^{\pm 1}] \tensor_B \FF(B)\]
is given by composing $\FF(\eta_L^{B,\Sigma[u^{\pm 1}]})\colon \FF(B) \to \FF(\Sigma[u^{\pm 1}])$ with the inverse of the isomorphism $\Sigma[u^{\pm 1}] \tensor_B \FF(B) \xrightarrow{\cong} \FF(\Sigma[u^{\pm 1}])$ induced by $\FF(\eta_R^{B,\Sigma[u^{\pm 1}]})$. 

\begin{example} \label{GradedModulesQCoh}
Let $B$ be a graded ring viewed as a graded Hopf algebroid $(B,B)$. Its associated stack is $\Spec B/\GG_m$ with the $\GG_m$-action corresponding to the grading. Graded $B$-modules are the same as graded comodules over $(B,B)$  and are thus equivalent to $\QCoh(\Spec B/\GG_m)$.
\end{example}

Composing the equivalences betweeen $\QCoh(\XX)$ and $(B,\Sigma[u^{\pm 1}])$-comodules and between the latter and graded $(B,\Sigma)$-comodules, we obtain the following. 
\begin{Prop}\label{associatedstackQCoh}
The map $(\id_B, \varepsilon)\colon (B,\Sigma) \to (B,B)$ of graded Hopf algebroids induces a map $f^B\colon\Spec B/\GG_m \to \XX$ and the pullback functor $(f^B)^*\colon \QCoh(\XX) \to \QCoh(\Spec B/\GG_m)$ refines to an equivalence between quasi-coherent sheaves on $\XX$ and graded comodules over $(B,\Sigma)$. 
\end{Prop}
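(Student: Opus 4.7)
The plan is to combine the two equivalences already recorded in this section: $\QCoh(\XX) \simeq (B, \Sigma[u^{\pm 1}])$-comodules (from \cite{Naumann}) and $(B, \Sigma[u^{\pm 1}])$-comodules $\simeq$ graded $(B, \Sigma)$-comodules (from the $u$-construction at the start of the section). Once these are chained together, it remains only to check that the composite lifts $(f^B)^*$ after forgetting the $\Sigma$-coaction.

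First I would construct $f^B$. Applying the $u$-construction to $(\id_B, \varepsilon)\colon (B,\Sigma) \to (B,B)$ yields a map
\[(\id_B, \varepsilon \tensor \id_{u^{\pm 1}})\colon (B, \Sigma[u^{\pm 1}]) \to (B, B[u^{\pm 1}])\]
of ungraded Hopf algebroids; compatibility with $\eta_R$ and $\psi$ reduces to $\varepsilon$ being degree-preserving, so that the $u$-powers inserted on either side cancel. Passing to associated stacks reverses direction and produces $f^B\colon \XX(B,B) \to \XX(B,\Sigma) = \XX$, where the source is identified with $\Spec B/\GG_m$ via \cref{GradedModulesQCoh}.

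Next I would define the equivalence of the proposition as the composite
\[\QCoh(\XX) \xrightarrow{\simeq} (B,\Sigma[u^{\pm 1}])\text{-comod} \xrightarrow{\simeq} \text{graded }(B,\Sigma)\text{-comod},\]
and verify that it refines $(f^B)^*$. Under the comodule identification, pullback along $f^B$ corresponds to restriction of coactions along the Hopf algebroid map above, i.e.\ to postcomposing the coaction $\rho\colon M \to \Sigma[u^{\pm 1}] \tensor_B M$ with $(\varepsilon \tensor \id_{u^{\pm 1}}) \tensor \id_M$. On the other side, forgetting a graded $(B,\Sigma)$-comodule to a graded $B$-module and then identifying graded $B$-modules with $\QCoh(\Spec B/\GG_m)$ via \cref{GradedModulesQCoh} is exactly the same operation once we use the $u$-construction equivalence.

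The step requiring the most care is checking this final compatibility: that the $B[u^{\pm 1}]$-coaction obtained by restriction along $\varepsilon\tensor\id$ yields the same grading on $M$ as the one read off by viewing $M$ as a graded $(B,\Sigma)$-comodule. This is a bookkeeping exercise rather than a genuine obstacle, since the $u$-construction is set up precisely so that the degree-$n$ piece of a graded $(B,\Sigma)$-comodule is the $u^n$-eigenspace of the associated $(B,\Sigma[u^{\pm 1}])$-coaction, and applying $\varepsilon \tensor \id$ preserves this eigenspace decomposition because $\varepsilon$ is degree-preserving.
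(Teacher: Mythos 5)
Your proposal is correct and follows the same approach the paper itself indicates: the paper's entire proof is the one sentence "Composing the equivalences between $\QCoh(\XX)$ and $(B,\Sigma[u^{\pm 1}])$-comodules and between the latter and graded $(B,\Sigma)$-comodules, we obtain the following," and you have simply filled in the implicit bookkeeping (functoriality of the $u$-construction on morphisms of graded Hopf algebroids, that pullback of sheaves along $f^B$ corresponds to corestriction of coactions along $(\id_B,\varepsilon[u^{\pm 1}])$, and that this corestriction recovers the grading via the counit axiom). Your verification that the $B[u^{\pm1}]$-coaction obtained after applying $\varepsilon$ is $m\mapsto u^n m$ for $m$ of degree $n$ is exactly the needed compatibility, so the argument is sound.
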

It is easy to check that this equivalence is monoidal, when we put the following tensor product on graded comodules: Let $(M, \psi_M)$ and $(N, \psi_N)$ be two graded $(B,\Sigma)$-comodules. Then we set $(M, \psi_M) \tensor (N, \psi_N) = (M\tensor_A N,\psi_{M\tensor N})$, where $\psi_{M\tensor N}$ denotes the composite
\[ M\tensor_B N \xrightarrow{\psi_M \tensor \psi_N} \Sigma\tensor_B M \tensor_B \Sigma \tensor_B N \to \Sigma\tensor_B M \tensor_B N,  \]
with the last map being induced by the multiplication on $\Sigma$. Here we use that the associated graded comodule to some $\FF\in \QCoh(\XX)$ can be described completely analogously to the ungraded case above, only replacing $\Sigma[u^{\pm 1}]$ by $\Sigma$ and observing that $\FF(B)$ has a natural grading. 

Under the equivalences of \cref{GradedModulesQCoh} and \cref{associatedstackQCoh}, the pullback functor $\QCoh(\XX) \to \QCoh(\Spec B/\GG_m)$ translates into the forgetful functor from graded $(B,\Sigma)$-comodules to graded $B$-modules. This forgetful functor has a right adjoint, sending a graded $B$-module $M$ to the \emph{extended} graded $(B,\Sigma)$-comodule $\Sigma\tensor_B M$ with the comodule structure 
\[\psi \tensor \id_M\colon \Sigma\tensor_B M \to \Sigma\tensor_B\Sigma\tensor_B M\]
\cite[Definition A1.2.1]{RavenelAlt}. Under the equivalence above this translates into the right adjoint $f^B_*\colon \QCoh(\Spec B/\GG_m) \to \QCoh(\XX)$ to $(f^B)^*$. 

For the next lemma, consider a graded ring $R$. We recall that an $S$-valued point of $\Spec R/\GG_m$ corresponds to a $\GG_m$-torsor $T\to S$ together with a $\GG_m$-equivariant map $T \to \Spec R$. Here, we consider the $\GG_m$-action on $\Spec R$ corresponding to its grading. In particular the universal $R$-valued point $\Spec R \to \Spec R/\GG_m$ corresponds to the \emph{trivial} $\GG_m$-torsor $\Spec R[u^{\pm 1}] \to \Spec R$ together with the $\GG_m$-equivariant map $\Spec R[u^{\pm 1}] \to \Spec R$ induced by 
\[R \to R[u^{\pm 1}], \qquad x \mapsto xu^n \text{ for }x \text{ of degree }n. \] 
Here, we recall that the trivial $\GG_m$-torsor is induced by the obvious inclusion $i\colon R \to R[u^{\pm 1}]$ and the $\GG_m$-action on $\Spec R[u^{\pm 1}]$ is given by the grading on $R[u^{\pm 1}]$ where $|u| = 1$ and $i$ is the inclusion of the degree $0$ elements. For an arbitrary $S \to \Spec R/\GG_m$, we obtain $T$ as the pullback of this universal $\GG_m$-torsor $\Spec R \to \Spec R/\GG_m$. 

\begin{lemma}
The pullback $\Spec B\times_{\XX} \Spec B/\GG_m$ can be identified with $\Spec \Sigma$, where the first projection corresponds to $\eta_L$ and the second to the composite of the map $\eta_R\colon \Spec \Sigma \to \Spec B$ with the projection $\Spec B \to \Spec B/\GG_m$.
Moreover, the map $\Spec B/\GG_m \to \XX$ is fpqc. 
\end{lemma}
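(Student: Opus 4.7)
The plan is to identify $\Spec B \times_\XX \Spec B/\GG_m$ by descending along the fpqc cover $\Spec B \to \Spec B/\GG_m$, which is a $\GG_m$-torsor as described just before the lemma. Pulling back along this torsor gives
\[(\Spec B \times_\XX \Spec B/\GG_m) \times_{\Spec B/\GG_m} \Spec B \;\cong\; \Spec B \times_\XX \Spec B \;=\; \Spec \Sigma[u^{\pm 1}],\]
so that $\Spec B \times_\XX \Spec B/\GG_m = \Spec \Sigma[u^{\pm 1}]/\GG_m$, where $\GG_m$ acts via the second factor of $\Spec B \times_\XX \Spec B$. The task then splits into (i) identifying this $\GG_m$-action and taking the quotient, and (ii) tracing the two projections through this identification.

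The heart of the proof is to identify the $\GG_m$-action on $\Sigma[u^{\pm 1}]$ explicitly. The compatibility constraints force it: the first projection $\eta_L^{(B,\Sigma[u^{\pm 1}])}$, which equals $\eta_L^{(B,\Sigma)}$ and takes values in $\Sigma$, must be $\GG_m$-invariant (because the first factor of $\Spec B \times_\XX \Spec B$ carries the trivial action), whereas the second projection $\eta_R^{(B,\Sigma[u^{\pm 1}])}(b) = u^{|b|} \eta_R^{(B,\Sigma)}(b)$ must be equivariant for the grading action on the second copy of $\Spec B$. The unique grading on $\Sigma[u^{\pm 1}]$ satisfying both is $|u|=1$ with $\Sigma$ placed in weight $0$; equivalently, $\Sigma[u^{\pm 1}] = \Sigma \tensor_{\Z}\Z[u^{\pm 1}]$ with $\GG_m$ acting only on the Laurent factor. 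As $u$ is invertible this action is free, so the stack quotient coincides with the weight-zero subring, giving $\Spec \Sigma[u^{\pm 1}]/\GG_m \cong \Spec \Sigma$. Tracking the two projections through this identification yields $\eta_L^{(B,\Sigma)}$ for the first projection (since it already lands in the weight-zero subring), and for the second projection the composite $\Spec \Sigma \xrightarrow{\eta_R^{(B,\Sigma)}} \Spec B \to \Spec B/\GG_m$: one checks that the $\GG_m$-torsor this composite classifies over $\Spec \Sigma$, together with its equivariant map to $\Spec B$, coincides with $\Spec\Sigma[u^{\pm 1}]\to\Spec\Sigma$ equipped with $\eta_R^{(B,\Sigma[u^{\pm 1}])}$, which is exactly what the quotient construction produces.

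For the fpqc assertion, $\Spec B \to \Spec B/\GG_m$ is fpqc as a $\GG_m$-torsor, and the composite $\Spec B \to \Spec B/\GG_m \to \XX$ agrees with the defining atlas of the associated stack, hence is itself fpqc. Flatness, surjectivity, and quasi-compactness therefore descend along the faithfully flat cover $\Spec B \to \Spec B/\GG_m$ to yield that $\Spec B/\GG_m \to \XX$ is fpqc. The main bookkeeping hazard throughout is juggling the two distinct gradings in play on $\Sigma[u^{\pm 1}]$: the original one inherited from $\Sigma$ that is used to define the ungraded Hopf algebroid $(B,\Sigma[u^{\pm 1}])$, versus the coarser grading by $u$-weight alone that controls the $\GG_m$-action being quotiented out; conflating these is what would make the second projection appear to land in $\Sigma$ with a spurious $u$-factor.
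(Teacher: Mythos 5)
Your overall strategy — descending along the $\GG_m$-torsor $\Spec B \to \Spec B/\GG_m$ to reduce the pullback computation to $\Spec\Sigma[u^{\pm 1}]$ — is the same one the paper uses, just organized as ``compute the quotient $\Spec\Sigma[u^{\pm 1}]/\GG_m$'' rather than ``exhibit a three-column diagram of cartesian squares.'' The fpqc claim is also handled correctly: you descend flatness, surjectivity and quasi-compactness along a cover of the source, whereas the paper checks the base change to $\Spec B$ (i.e.\ $\eta_L\colon \Spec\Sigma \to \Spec B$) is fpqc using flatness of $\Sigma$ and the retraction $\varepsilon$; both routes are fine.

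There is, however, a genuine gap at what you yourself call the heart of the proof. You assert that $\GG_m$-invariance of $\eta_L^{(B,\Sigma[u^{\pm 1}])}$ together with equivariance of $\eta_R^{(B,\Sigma[u^{\pm 1}])}$ uniquely forces the grading $|u|=1$ with $\Sigma$ in weight $0$. These two conditions only constrain the weights of $\eta_L(b)$ (to be $0$) and of $u^{|b|}\eta_R(b)$ (to be $|b|$); they say nothing about the weight of $u$ or of $\Sigma$ separately, and if $\Sigma$ is strictly larger than the subring generated by $\eta_L(B)$ and $\eta_R(B)$ the constraints do not propagate to all of $\Sigma[u^{\pm 1}]$. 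So uniqueness is not forced by those two conditions alone. The grading you name \emph{is} the correct one, but the clean way to pin it down is to observe, as the paper does, that $\Spec\Sigma[u^{\pm 1}] \to \Spec\Sigma$ (with $|u|=1$, $\Sigma$ in weight $0$) is precisely the pullback of the $\GG_m$-torsor $\Spec B \to \Spec B/\GG_m$ along $\Spec\Sigma \xrightarrow{\eta_R} \Spec B \to \Spec B/\GG_m$, with equivariant map $\eta_R^u$ — i.e.\ the left-hand square in the paper's diagram is cartesian because it is a map of $\GG_m$-torsors. This simultaneously identifies the action, the quotient, and the second projection, which in your write-up you end up re-verifying ``by hand'' after having assumed the grading; replacing the uniqueness assertion with the torsor-pullback observation would close the gap and shorten the argument.
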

\begin{proof}
Consider the trivial $\GG_m$-torsor $i\colon \Sigma \to \Sigma[u^{\pm 1}]$ as above. The right unit $\eta_R^u = \eta_R^{(B,\Sigma[u^{\pm 1}])}$ defines a grading preserving map from $B$ to $\Sigma[u^{\pm 1}]$. We obtain a diagram
\[
\begin{tikzcd}
\Spec \Sigma[u^{\pm 1}] \arrow[r, "i"] \arrow[d, "\eta_R^u"] & \Spec \Sigma \arrow[d] \arrow[r, "\eta_L"] & \Spec B \arrow[d]\\
\Spec B \arrow[r] & \Spec B/\GG_m \arrow[r] & \XX
\end{tikzcd}
\]
As noted already above, the outer rectangle is a pullback square. The left square is a pullback square as it is a map of $\GG_m$-torsors. By fpqc descent along $\Spec B \to \Spec B/\GG_m$ the right square is a pullback square as well. As by \cite[Section 3.3]{Naumann}, the map $\Spec B \to \XX$ is fpqc, $\Sigma$ is flat over $B$ by assumption and moreover $\varepsilon$ is a retraction of $\eta_L$, we see that $\Spec B/\GG_m \to \XX$ is fpqc as well.

It remains to identify the map $\Spec \Sigma \to \Spec B/\GG_m$. Using the discussion before this lemma, we see that the composite $\Spec \Sigma \xrightarrow{\eta_R} \Spec B \to \Spec B/\GG_m$ corresponds to the trivial $\GG_m$-torsor $i\colon \Spec \Sigma[u^{\pm 1}] \to \Spec \Sigma$ together with the $\GG_m$-equivariant map $\eta_R^u\colon \Spec \Sigma[u^{\pm 1}] \to \Spec B$, exactly as claimed. \end{proof}
Dually, we can of course also identify $\Spec B/\GG_m \times_{\XX} \Spec B$ with $\Spec \Sigma$ and obtain analogous descriptions of the projections. In particular, we see that 
\[\Spec B/\GG_m \times_{\XX} \Spec B\quad \text{ and }\,\Spec B \times_{\XX} \Spec B/\GG_m\] 
are equivalent over $\Spec B/\GG_m \times \Spec B/\GG_m$. In the next lemma we investigate what happens after base change along a morphism $B \to C$. 

\begin{Lemma}\label{lem:pullback}\label{lem:associated}
 Let $B \to C$ and $B\to D$ be grading preserving ring morphisms. This induces a morphism $f^C\colon \Spec C/\GG_m \to \Spec B/\GG_m \to \XX$ and similarly for $D$.
 \begin{enumerate}
     \item The pullback $\Spec B \times_{\XX} \Spec C/\GG_m$ is equivalent to $\Spec \Sigma \tensor_B C$,
 where $\Sigma$ is a $B$-module via the right unit $\eta_R$. 
 \item The quasi-coherent sheaf $f^C_*\OO_{\Spec C/\GG_m}$ corresponds under the equivalence from \cref{associatedstackQCoh} to the extended $(B,\Sigma)$-comodule structure on $\Sigma\tensor_B C$.
 \item The pullbacks $\Spec C\times_{\XX}\Spec D/\GG_m$ and $\Spec D/\GG_m \times_{\XX} \Spec C$ are equivalent to $\Spec \Omega$ with $\Omega = C\tensor_B \Sigma \tensor_B D$ and $\Omega = D\tensor_B \Sigma \tensor_B C$, respectively. If $C=D$ and the maps $B\to C$ coincide, then $(C,\Omega)$ obtains the structure of a graded Hopf algebroid.
 \item The stack associated with $(C,\Omega)$ is equivalent to $\XX$ if $f^C$ is fpqc.
 \end{enumerate}
 \end{Lemma}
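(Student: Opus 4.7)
The plan is to reduce all four parts to the pullback identity $\Spec B \times_{\XX} \Spec B/\GG_m \cong \Spec \Sigma$ established in the preceding lemma, combined with standard base change and fpqc descent.

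For (1), I would factor $f^C$ as $\Spec C/\GG_m \xrightarrow{g} \Spec B/\GG_m \xrightarrow{f^B} \XX$ and then compute
\[\Spec B \times_{\XX} \Spec C/\GG_m \cong (\Spec B \times_{\XX} \Spec B/\GG_m) \times_{\Spec B/\GG_m} \Spec C/\GG_m \cong \Spec \Sigma \times_{\Spec B/\GG_m} \Spec C/\GG_m\]
using the preceding lemma. The pullback of the $\GG_m$-torsor $\Spec B \to \Spec B/\GG_m$ along $g$ is the torsor $\Spec C \to \Spec C/\GG_m$, and since $\Spec \Sigma \to \Spec B/\GG_m$ factors through $\eta_R\colon \Spec \Sigma \to \Spec B$, the pullback can be rewritten as $\Spec \Sigma \times_{\Spec B} \Spec C \cong \Spec(\Sigma \otimes_B C)$, with the tensor product taken via $\eta_R$. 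Part (2) then follows from the same factorization: $f^C_* = f^B_* \circ g_*$, where $g_* \OO_{\Spec C/\GG_m}$ is the graded $B$-module $C$ via \cref{GradedModulesQCoh}, and $f^B_*$ is identified under \cref{associatedstackQCoh} with the extension-of-scalars functor $M \mapsto \Sigma \otimes_B M$ (as observed in the paragraph following that proposition). Composing yields $\Sigma \otimes_B C$ equipped with the extended coaction $\psi \otimes \id_C$.

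For (3), I would factor $\Spec C \to \XX$ through $\Spec C \to \Spec B \to \XX$ and invoke part (1):
\[\Spec C \times_{\XX} \Spec D/\GG_m \cong \Spec C \times_{\Spec B} \Spec(\Sigma \otimes_B D) \cong \Spec(C \otimes_B \Sigma \otimes_B D),\]
with the symmetric variant handled analogously. When $C = D$ with both maps $B \to C$ agreeing, the groupoid scheme structure on $(\Spec C, \Spec \Omega)$ is inherited by base change from that on $(\Spec B, \Spec \Sigma)$: the two projections give source and target, $\varepsilon\colon \Sigma \to B$ combined with the multiplication of $C$ gives the identity, and $\psi$ combined with the multiplication of $C$ gives composition. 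Every structure map is grading preserving because those of $(B, \Sigma)$ are. For (4), if $f^C$ is fpqc then $(\Spec C, \Spec \Omega)$ is an fpqc presentation of $\XX$, so $\XX \simeq \XX(C, \Omega)$ by fpqc descent for algebraic stacks.

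The step I expect to require most care is part (2): verifying that the extended comodule structure arising from the identification of $f^B_*$ with $M \mapsto \Sigma \otimes_B M$ really coincides with the coaction $\psi \otimes \id_C$. This reduces to unwinding the description of the comodule--sheaf correspondence from \cref{associatedstackQCoh} in the special case of an extended comodule and tracking which of $\eta_L$, $\eta_R$ and $\psi$ enter where, but should go through cleanly. Part (3) similarly requires checking that the groupoid axioms transfer along the base change, which is formal once the pullback descriptions are in hand.
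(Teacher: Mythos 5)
Your proposal is correct and follows essentially the same route as the paper: reduce everything to the identity $\Spec B \times_{\XX}\Spec B/\GG_m \simeq \Spec \Sigma$ from the preceding lemma, obtain (1) by base change along $\Spec C/\GG_m \to \Spec B/\GG_m$, obtain (2) from the factorization of $f^C$ and the identification of $f^B_*$ with the extended-comodule functor, obtain (3) by further base change, and conclude (4) by fpqc descent via \cite[Section 3.3]{Naumann}. The only place you are slightly looser than the paper is in (4), where the paper explicitly records that $\Spec C \times_{\XX}\Spec C \simeq \Spec \Omega[u^{\pm 1}]$ before invoking Naumann, whereas you speak of $(\Spec C, \Spec\Omega)$ as "an fpqc presentation" — the presentation is really by the ungraded Hopf algebroid $(C,\Omega[u^{\pm 1}])$, which then by definition gives the associated stack of the graded $(C,\Omega)$.
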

\begin{proof}
Recall that $\Spec B \times_{\XX}\Spec B/\GG_m \simeq \Spec \Sigma$. Pulling back along the map $\Spec C/\GG_m \to \Spec B/\GG_m$ gives the equivalence in the first item. The second item follows by the remarks above as we can factor $f^C$ into the map $\Spec C/\GG_m \to \Spec B/\GG_m$ and $f^B$. Pulling back the equivalence from the first item along $\Spec D \to \Spec B$ gives the equivalences in the third item. The pullback $\Spec C \times_{\XX} \Spec C$ is of the form $\Omega[u^{\pm 1}]$ for analogous reasons. By \cite[Section 3.3]{Naumann}, $\XX$ is the stack associated with the (ungraded) Hopf algebroid $(C,\Omega[u^{\pm 1}])$ and thus also with the graded Hopf algebroid $(C,\Omega)$. 
\end{proof}

We remark that with notation as in the preceding lemma, the identity on $\Omega = C\tensor_B\Sigma\tensor_B D$ provides us with an equivalence between $\Spec C\times_{\XX}\Spec D/\GG_m$ and $\Spec C/\GG_m \times_{\XX} \Spec D$ that is compatible with the projections to $\Spec C/\GG_m$ and to $\Spec D/\GG_m$.

\subsection{Stacks related to elliptic curves}
\label{StacksHopfElliptic}

Recall that we are working with the moduli stack of elliptic curves $\MM_{ell}$. Let 
\[
A:=\mathbb{Z}[a_1,a_2, a_3, a_4,a_6] \mbox{ and } \Gamma:=A[r,s,t].
\]
There is an element $\Delta \in A$ corresponding to the discriminant for cubical curves; see e.g.\ \cite[Section III.1]{SilvermanAEC} for a precise formula. The stack $\MM_{ell}$ is equivalent to the stack associated with the graded Weierstra\ss{} Hopf algebroid $(A[\Delta^{-1}], \Gamma[\Delta^{-1}])$ in the sense recalled in \cref{subsec:GeneralHopfAlgebroids}. For the precise structure maps, see e.g.\ \cite{Bauertmf}; note the name comes from the fact that this Hopf algebroid is related to Weierstra\ss{} equations for elliptic curves and the right unit $\eta_R$ comes from change-of-coordinates formulas for these. Our grading convention is that $|a_i| = i$.

One observes that the structure maps do not use the fact that $\Delta$ was inverted, so one can consider the graded Weierstra\ss{} Hopf algebroid $(A, \Gamma)$.

\begin{Definition}
 Let \emph{the moduli stack of cubical curves} $\MM_{cub}$ be the Artin stack associated with the graded Weierstra\ss{} Hopf algebroid $(A, \Gamma)$.
\end{Definition}

The name is justified, as there is a modular interpretation for this stack, see \cite[Section 3.1]{Mathewtmf}, and in particular the morphism $\Spec A \to \mcub$ classifies the Weierstra\ss{} cubical curve over $A$ given by the usual equation
\[
y^2+a_1xy+a_3y=x^3+a_2x^2+a_4x+a_6.
\]

We record the relationship with the moduli stack of elliptic curves.
\begin{lemma}\label{MellopenMcub}
There is a pullback square
\[
\begin{tikzcd}
\Spec A[\Delta^{-1}] \arrow[r] \arrow[d] & \Spec A \arrow[d]\\
\mell \arrow[r] & \mcub
\end{tikzcd}
\]
In particular, $\MM_{cub}$ contains $\MM_{ell}$ as an open substack and the inclusion is an affine morphism.
\end{lemma}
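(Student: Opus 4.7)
The plan is to realize $\MM_{ell}$ as the open complement in $\MM_{cub}$ of the vanishing locus of the discriminant $\Delta \in A$, viewed as a global section of $\omline^{\otimes 12}$. The pullback square then follows automatically from pulling back this open substack to the cover $\Spec A \to \MM_{cub}$, and the affineness comes by fpqc descent.

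The crucial input is that $\Delta$ is primitive in the graded Hopf algebroid $(A, \Gamma)$, i.e.\ $\eta_L(\Delta) = \eta_R(\Delta)$ in $\Gamma$. I would deduce this from the classical transformation formula $u^{12}\Delta' = \Delta$ for the discriminant under a general Weierstrass change of variables: the graded Hopf algebroid $(A, \Gamma)$ corresponds to setting $u = 1$, so this specializes to $\eta_R(\Delta) = \Delta$. Passing to the associated ungraded Hopf algebroid $(A, \Gamma[u^{\pm 1}])$ that presents $\MM_{cub}$, the right unit becomes $\eta_R^u(\Delta) = u^{12}\Delta$, so both projections $\eta_L^u,\eta_R^u\colon \Spec \Gamma[u^{\pm 1}] \to \Spec A$ pull $D(\Delta) \subset \Spec A$ back to the same open $D(\Delta) \subset \Spec \Gamma[u^{\pm 1}]$ (using that $u$ is already invertible).

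Because $D(\Delta) \subset \Spec A$ is thus saturated with respect to the groupoid $\Spec \Gamma[u^{\pm 1}] \rightrightarrows \Spec A$, it descends to an open substack $V \subset \MM_{cub}$ whose pullback along $\Spec A \to \MM_{cub}$ is $\Spec A[\Delta^{-1}]$, giving the square
\[
\begin{tikzcd}
\Spec A[\Delta^{-1}] \arrow[r] \arrow[d] & \Spec A \arrow[d]\\
V \arrow[r] & \MM_{cub}
\end{tikzcd}
\]
as a pullback by construction. The Hopf algebroid presenting $V$ is then the restriction of $(A, \Gamma[u^{\pm 1}])$ to the saturated open $D(\Delta)$, namely $(A[\Delta^{-1}], \Gamma[\Delta^{-1}][u^{\pm 1}])$; this is exactly the ungraded Hopf algebroid associated with the graded Weierstrass Hopf algebroid $(A[\Delta^{-1}], \Gamma[\Delta^{-1}])$ defining $\MM_{ell}$. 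Hence $V \simeq \MM_{ell}$ as open substacks of $\MM_{cub}$.

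Affineness of $\MM_{ell} \hookrightarrow \MM_{cub}$ follows since its pullback along the fpqc cover $\Spec A \to \MM_{cub}$ is the principal open immersion $\Spec A[\Delta^{-1}] \hookrightarrow \Spec A$, which is affine, and affineness of morphisms of stacks is fpqc-local on the target. The only nontrivial step is the primitivity of $\Delta$; the rest is a routine descent argument for Hopf algebroids and their associated stacks.
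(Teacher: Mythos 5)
Your proof is correct and takes a genuinely different route from the paper's. The paper derives the pullback square directly from the modular interpretation of $\MM_{cub}$: a Weierstrass cubical curve over a scheme is an elliptic curve if and only if its discriminant is invertible, and the square simply encodes that fact. You instead stay entirely within the Hopf-algebroid presentation: $\Delta$ is primitive in $(A, \Gamma)$ (a fact the paper also records a bit later, just before defining $\omline$ on $\MM_{cub}$), hence $\eta_R^u(\Delta) = u^{12}\Delta$ in the ungraded version $(A, \Gamma[u^{\pm 1}])$, so $D(\Delta) \subset \Spec A$ is a saturated open of the presenting groupoid and descends to an open substack of $\MM_{cub}$ whose pullback to $\Spec A$ is $\Spec A[\Delta^{-1}]$; and the restricted Hopf algebroid $(A[\Delta^{-1}], \Gamma[\Delta^{-1}][u^{\pm 1}])$ is exactly the ungraded presentation of $\MM_{ell}$. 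Your version is more self-contained, since it avoids invoking the modular interpretation of $\MM_{cub}$ (which the paper cites from Mathew rather than proving), at the cost of being somewhat longer; the paper's version is shorter and makes the geometric content transparent. The final step, descending ``open immersion'' and ``affine'' along the fpqc cover $\Spec A \to \MM_{cub}$, is the same in both arguments.
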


\begin{proof}
The existence of the pullback square corresponds to the fact that a cubical curve given by a Weierstra\ss{} equation is an elliptic curve if and only if its discriminant $\Delta$ is invertible. 

As noted in the last subsection, $\Spec A \to \MM_{cub}$ is fpqc. Since both being open immersion and being affine can be checked after faithfully flat base change, the remaining claims follow. 
\end{proof}

\begin{Definition}
 We define the line bundle $\omline$ on $\MM_{cub}$ to be the one corresponding to the shift $A[1]$ under the equivalence between quasi-coherent sheaves on $\MM_{cub}$ and graded $(A,\Gamma)$-comodules. Here, $A[1]_i = A_{i+1}$ and the comodule structure is induced by $\eta_L\colon A[1] \to \Gamma[1] \cong \Gamma \tensor_A A[1]$.
\end{Definition}
By \cite[(1.2)]{dapres}, our definition of $\omline$ agrees with the more geometric definition of Deligne; in particular, our definition restricts to the corresponding definition on $\MMb_{ell}$ we give in the appendix. By the transformation formulae in \cite[Section III.1]{SilvermanAEC}, it is easy to see that $\eta_L$ and $\eta_R$ agree on $\Delta \in A_{12}$. Thus, $\Delta$ defines a map $A \to A[12]$ of graded $(A,\Gamma)$-comodules and hence a section of $\omline^{\tensor 12}$ on $\MM_{cub}$. \Cref{TateWeierstrass} implies that this $\Delta$ corresponds after restriction to $\MM_{ell}$ and under the isomorphism $H^0(\MM_{ell};\omline^{\tensor 12}) \cong \MFC_{12}(\SL_2(\Z); \Z)$ indeed to the modular form $\Delta$ that we introduced via its $q$-expansion in \cref{sec:analytic}.

Next, we will need the following easy lemma. 

\begin{lemma}\label{SpecAsmooth}
The morphism $\Spec A/\GG_m \to \MM_{cub}$ is smooth.
\end{lemma}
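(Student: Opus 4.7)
The plan is to check smoothness of $f^A\colon \Spec A/\GG_m \to \MM_{cub}$ after fpqc base change along the faithfully flat cover $\Spec A \to \MM_{cub}$ discussed in \cref{subsec:GeneralHopfAlgebroids}. Smoothness of a morphism of algebraic stacks is fpqc local on the target, so it suffices to verify that the pullback $\Spec A \times_{\MM_{cub}} \Spec A/\GG_m \to \Spec A$ is smooth as a morphism of schemes.

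By the identification of pullbacks established in \cref{lem:pullback} (applied with $B = C = A$ and the identity map), we have
\[
\Spec A \times_{\MM_{cub}} \Spec A/\GG_m \;\simeq\; \Spec \Gamma,
\]
and the first projection corresponds to the left unit $\eta_L\colon A \to \Gamma$. Now $\Gamma = A[r,s,t]$ is a polynomial ring in three variables over $A$, and $\eta_L$ is the canonical inclusion (this is the standard fact that $\eta_L$ sends the Weierstra{\ss} coefficients $a_i$ to themselves in the free polynomial algebra introducing the new coordinate-change parameters $r,s,t$). Hence the map $\Spec \Gamma \to \Spec A$ is isomorphic to the projection $\mathbb{A}^3_A \to \Spec A$, which is smooth of relative dimension $3$.

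Since $\Spec A \to \MM_{cub}$ is fpqc and the pulled-back morphism is smooth, we conclude that $\Spec A/\GG_m \to \MM_{cub}$ is smooth. There is really no main obstacle here: the only thing to keep track of is that the projection in the pullback square is $\eta_L$ rather than $\eta_R$, so that the fibers visibly form an affine space rather than something which only becomes smooth after using the antipode. Once that is clarified, the statement reduces to smoothness of a trivial affine bundle.
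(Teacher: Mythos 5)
Your proof is correct and takes essentially the same route as the paper: base change along the fpqc cover $\Spec A \to \MM_{cub}$, identify the pullback with $\Spec \Gamma$, and observe that $\Gamma = A[r,s,t]$ is smooth over $A$. The only minor difference is your parenthetical about $\eta_L$ versus $\eta_R$, which the paper does not bother with — and indeed this distinction is immaterial here, since the conjugation $c$ is a ring automorphism of $\Gamma$ with $c\circ\eta_L=\eta_R$, so $\eta_R$ is smooth precisely when $\eta_L$ is.
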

\begin{proof}
As noted in the last subsection, $\Spec A \to \MM_{cub}$ is fpqc. Moreover, $\Gamma = A[r,s,t]$ is smooth over $A$ and $\Spec \Gamma \simeq \Spec A \times_{\MM_{cub}} \Spec A/\GG_m$. As smoothness can be tested after base change along an fpqc morphism \cite[\href{https://stacks.math.columbia.edu/tag/02VL}{Tag 02VL}]{stacks-project}, we obtain the claim. 
\end{proof}

When working at the prime $3$, it turns out to be more convenient to work with a different smooth cover of $\MM_{cub} =\MM_{cub, \mathbb{Z}_{(3)}}$, namely with $\Spec \widetilde{A} \to \MM_{cub}$, where $\widetilde{A} :=\mathbb{Z}_{(3)}[\aq_2,\aq_4,\aq_6]$ and the morphism is given by composition of the canonical morphism $\Spec A\to \MM_{cub}$ with the one induced by the ring map $A \to \widetilde{A}$, given by 
\[
a_1, a_3\mapsto 0 \mbox{ and } a_i \mapsto \overline{a}_i \mbox{ for } i\in \{2,4,6\}.
\]
The cubical curve corresponding to the morphism $\Spec \widetilde{A}\to \MM_{cub}$ is
\[
y^2=x^3+\aq_2x^2+\aq_4x+\aq_6.
\]
Note that there are different conventions for simplifying the elliptic or cubical curves when $2$ is inverted. In particular, the convention used in \cite{SilvermanAEC} differs from ours. 

We want to show that this smooth cover induces a different Hopf algebroid $(\widetilde{A}, \widetilde{\Gamma})$ with $\widetilde{\Gamma}:=\widetilde{A} \otimes_A \Gamma \otimes_A \widetilde{A}$ representing  $\MM_{cub}$. For more details on the explicit description of $(\widetilde{A}, \widetilde{\Gamma})$, see \cite[Section 3]{Bauertmf}. We will first prove that it is indeed a presentation for $\mcub$, and then recall some of the structure maps we will be using later.

\begin{lemma}\label{AtildevsQCoh}
At the prime $3$, the stack associated to the graded Hopf algebroid $(\widetilde{A}, \widetilde{\Gamma})$ is equivalent to $\mcub$. In particular, there is an equivalence between quasi-coherent sheaves on $\MM_{cub}$ and graded  $(\widetilde{A},\widetilde{\Gamma})$-comodules. Moreover, the morphism $\Spec \widetilde{A}/\mathbb{G}_m \to \mcub$ is a smooth cover.
\end{lemma}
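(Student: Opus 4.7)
The plan is to apply the final item of \cref{lem:pullback} to the grading-preserving map $A \to \widetilde{A}$. For this I need to verify that the induced morphism $f^{\widetilde{A}}\colon \Spec \widetilde{A}/\GG_m \to \MM_{cub}$ is fpqc; concretely I will show it is smooth and surjective. This will automatically identify $\MM_{cub}$ with the stack associated to the Hopf algebroid $(\widetilde{A}, \widetilde{\Gamma})$, and the equivalence of $\QCoh(\MM_{cub})$ with graded $(\widetilde{A}, \widetilde{\Gamma})$-comodules then follows from \cref{associatedstackQCoh}, provided one checks that $(\widetilde{A}, \widetilde{\Gamma})$ is flat.

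First I would reduce smoothness and surjectivity of $f^{\widetilde{A}}$ to a property that can be checked after a faithfully flat base change. By \cref{SpecAsmooth} and the general discussion, $\Spec A/\GG_m \to \MM_{cub}$ is smooth and the map $\Spec A \to \MM_{cub}$ is fpqc, so it suffices to show that the pullback of $f^{\widetilde{A}}$ along $\Spec A \to \MM_{cub}$ is smooth and surjective. By \cref{lem:pullback}(1), this pullback is $\Spec(\widetilde{A}\otimes_A \Gamma) \to \Spec A$, where $\Gamma = A[r,s,t]$ is viewed as an $A$-module via $\eta_R$.

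Now I would compute $\widetilde{A}\otimes_A\Gamma$ explicitly. Imposing the relations $\eta_R(a_1) = 0$ and $\eta_R(a_3) = 0$ on $A[r,s,t]$ amounts (using the standard Weierstra\ss{} transformation formulae as recalled in \cref{StacksHopfElliptic}) to the two equations
\[
a_1 + 2s = 0, \qquad a_3 + ra_1 + 2t = 0.
\]
Since $2$ is invertible at the prime $3$, these uniquely solve for $s$ and $t$, and hence $\widetilde{A}\otimes_A\Gamma \cong A[r]$, a polynomial ring over $A$. In particular the pullback $\Spec A[r] \to \Spec A$ is smooth and surjective, which proves that $f^{\widetilde{A}}$ is smooth and surjective, hence fpqc. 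A completely analogous computation shows $\widetilde{\Gamma} = \widetilde{A}\otimes_A\Gamma\otimes_A\widetilde{A} \cong \widetilde{A}[r]$, which is flat (in fact smooth) over $\widetilde{A}$, so the Hopf algebroid $(\widetilde{A},\widetilde{\Gamma})$ is flat.

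Applying \cref{lem:pullback}(4) then identifies $\MM_{cub}$ (at the prime $3$) with the stack associated to $(\widetilde{A}, \widetilde{\Gamma})$, and \cref{associatedstackQCoh} supplies the equivalence between $\QCoh(\MM_{cub})$ and graded $(\widetilde{A},\widetilde{\Gamma})$-comodules. The only mildly delicate point is the explicit computation with $\eta_R$ above; this is the algebraic incarnation of the classical observation that, when $2$ is invertible, a Weierstra\ss{} cubic can be brought by a unique change of coordinates into the short form $y^2 = x^3 + \aq_2 x^2 + \aq_4 x + \aq_6$.
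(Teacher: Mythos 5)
Your proof is correct and takes essentially the same route as the paper's: reduce to checking that the pullback along the fpqc cover $\Spec A \to \MM_{cub}$ is smooth and surjective, identify that pullback with $\Gamma\otimes_A\widetilde{A}$ (with $\Gamma$ as an $A$-module via $\eta_R$), and use the formulae $\eta_R(a_1)=a_1+2s$, $\eta_R(a_3)=a_3+a_1r+2t$ together with invertibility of $2$ to solve for $s,t$ and land on $A[r]$. Your extra remark verifying flatness of $(\widetilde{A},\widetilde{\Gamma})$ via $\widetilde{\Gamma}\cong\widetilde{A}[r]$ is a sensible point of care; the paper obtains the same identification immediately after the lemma, and flatness is also implicit once $f^{\widetilde{A}}$ is known to be fpqc, since $\widetilde{\Gamma}$ is then the ring of functions on a flat base change.
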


\begin{proof}
We would like to apply \cref{lem:associated}. Thus, we only have to check that the composition $\Spec\widetilde{A}/\mathbb{G}_m \to \Spec A/\mathbb{G}_m \to \mcub$ is fpqc. As explained in \cref{subsec:GeneralHopfAlgebroids}, the map $\Spec A \to \mcub$ is fpqc, so by faithfully flat descent, it is enough to check that the pullback map $\Spec A\times_{\mcub} \Spec \widetilde{A}/\mathbb{G}_m \to \Spec A$ is smooth and surjective. By \cref{lem:associated}, we can identify the source with $\Spec \Gamma \otimes_A \widetilde{A}$. By inspection, 
we arrive at the isomorphism of $A$-modules
\[
\Gamma \otimes_A \widetilde{A} \cong A[r,s,t]/(\eta_R(a_1), \eta_R(a_3)).
\]
Using the right unit formulae
\begin{align*}
    \eta_R(a_1) &=a_1+2s,\\
    \eta_R(a_3) &=a_3+a_1r+2t,
\end{align*}
and the fact that we inverted $2$, we get $\Gamma \otimes_A \widetilde{A} \cong A[r]$. In particular, this is a smooth $A$-algebra and the claim follows.
\end{proof}

By the proof of the previous lemma we obtain 
\[\widetilde{\Gamma}\cong \tilde{A}\tensor_A A [r] \cong {\widetilde{A}}[r].\] 
The structure formulae for the Hopf algebroid $(A,\Gamma)$ determine under this identification the formulae
\[
 \begin{aligned}
 \eta_R(\aq_2)&=\aq_2+3r,\\
  \eta_R(\aq_4)&=\aq_4+2r\aq_2+3r^2,\\
  \eta_R(\aq_6)&=\aq_6+r\aq_4+r^2\aq_2+r^3,
 \end{aligned}
\]
whereas $\eta_L$ is the canonical inclusion of $\widetilde{A}$.

\begin{remark}
Note that there is an even easier smooth cover of $\MM_{cub}$ coming from $\MM_1(2)_{cub}$ and a corresponding graded Hopf algebroid yielding $\MM_{cub}$ again. For our approach, it has the following disadvantage: there is no meaningful module structure on $\MF_1(7)$ over the corresponding ring. On the contrary, $\MF_1(7)$ is a $\widetilde{A}$-module corresponding to a cubical curve discussed later.
\end{remark}

%%%%%%%%%%%%%%%%%%%%%%%%%%%%%%%%%%%%%%%%%%%%%%%%%%%%%%%%%%%%%%%%%%%%%%%%%%%%%%%%%%%%%%%%%%%%%%%%%%%%%%%%%%%%%%%%%%%%%%%%%%%

\section{The definition and properties of \texorpdfstring{$\MM_1(7)_{cub}$ and $\MM_0(7)_{cub}$}{cubical moduli stacks}}
\label{sec:Flatness}
Fix throughout the section an integer $n\geq 2$ and the notation 
$$A = \mathbb{Z}\left[\tfrac{1}{n}\right][a_1,a_2, a_3, a_4,a_6].$$

We want to extend the moduli stacks $\MM_0(n) = \MM_0(n)_{\mathbb{Z}\left[\frac{1}{n}\right]}$ and $\MM_1(n) = \MM_1(n)_{\mathbb{Z}\left[\frac{1}{n}\right]}$ to algebraic stacks that are finite over $\MM_{cub}$ via a normalization construction. 

\subsection{Definition and basic properties of \texorpdfstring{$\MM_1(n)_{cub}$ and $\MM_0(n)_{cub}$}{cubic moduli stacks}}
Let us recall the notion of normalization. Let $\XX$ be an Artin stack and $\AA$ a quasi-coherent sheaf of $\OO_\XX$-algebras. Let $\AA' \subset \AA$ be the presheaf that evaluated on any smooth $\Spec C\to \XX$ consists of those elements in $\AA(\Spec C)$ that are integral over $C$. This is an fpqc (and in particular \'etale) sheaf because being integral for an element can be tested fpqc-locally (as generating a finite module can be checked fpqc-locally). Thus, we obtain a sheaf on the subsite of the lisse-\'etale site of $\XX$ consisting of affine schemes (see \cite[Section 12]{Laumon} or \cite[Tag 0786]{stacks-project} for the definition). 

\begin{lemma}\label{lem:AAstrich}This construction has the following properties. 
\begin{enumerate}
    \item If $\XX = \Spec D$ is affine, then $\AA'$ is the quasi-coherent sheaf associated with the $D$-algebra that is the normalization of $D$ in $\AA(\Spec D)$.
    \item If $p\colon \YY \to \XX$ is a smooth morphism of Artin stacks, then the map $p^*(\AA') \to (p^*\AA)'$ is an isomorphism. 
    \item The sheaf $\AA'$ is quasi-coherent for general Artin stacks $\XX$.
\end{enumerate}
\end{lemma}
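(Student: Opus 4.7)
The entire lemma hinges on a single commutative algebra input that I would treat as a black box: if $\phi\colon A\to B$ is a ring map with integral closure $A'\subset B$ over $A$, and $A\to C$ is smooth, then $A'\tensor_A C$ is the integral closure of $C$ in $B\tensor_A C$. This follows from the fact that smooth morphisms preserve normality together with the fact that integrality can be tested on a faithfully flat cover; a reference can be found in the Stacks Project. This is really the only nontrivial obstacle; once it is in hand, each of (1)-(3) reduces to unwinding definitions.

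For (1), set $\XX=\Spec D$ and let $B=\AA(\Spec D)$ with integral closure $B'\subset B$ over $D$. For any smooth morphism $q\colon\Spec C\to\Spec D$, quasi-coherence of $\AA$ gives $\AA(\Spec C)\cong B\tensor_D C$, and the base change fact identifies the integral closure of $C$ in this algebra with $B'\tensor_D C$. Hence $\AA'(\Spec C)\cong B'\tensor_D C$, which is precisely saying that $\AA'$ is the quasi-coherent sheaf associated with $B'$ on the smooth affine site over $\Spec D$.

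For (3), I would pick a smooth affine atlas $u\colon U=\Spec B\to\XX$ and consider the groupoid presentation with projections $s,t\colon U\times_\XX U\to U$, both of which are smooth. The $B$-module $B'\subset\AA(U)$ of integral elements inherits descent data from $\AA$: the iso $s^*\AA(U)\cong t^*\AA(U)$ restricts to an iso $s^*B'\cong t^*B'$, because the base change fact identifies both $s^*B'$ and $t^*B'$ with the integral closure of $\OO_{U\times_\XX U}$ in $\AA(U\times_\XX U)$. Effective descent then yields a quasi-coherent sheaf on $\XX$, and I would identify it with $\AA'$ by checking on any smooth affine $\Spec C\to\XX$ that both yield the integral closure of $C$ in $\AA(\Spec C)$; this is verified by pulling back along the smooth cover $\Spec C\times_\XX U\to\Spec C$ (which lands in the situation of (1)) and appealing once more to the base change fact together with sheafiness of $\AA'$.

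Finally, (2) is formal once (3) is established: both $p^*(\AA')$ and $(p^*\AA)'$ are quasi-coherent sheaves on $\YY$, and for every smooth affine $\Spec C\to\YY$ the composite $\Spec C\to\XX$ is smooth, so both sheaves evaluate to the integral closure of $C$ in $\AA(\Spec C)$ and thus agree.
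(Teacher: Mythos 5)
Your proof is correct and relies on exactly the same commutative--algebra input as the paper (integral closure commutes with smooth base change, Stacks tag 03GG), and your part (1) is essentially identical to the paper's. The interesting difference is in the organization of (2) and (3): the paper proves (2) first and observes that it is essentially tautological (for a smooth affine $\Spec C\to\YY$ the composite $\Spec C\to\XX$ is smooth, so both $p^*(\AA')(\Spec C)$ and $(p^*\AA)'(\Spec C)$ compute the normalization of $C$ in $\AA(\Spec C)$ directly from the definition of $\AA'$, with no appeal to quasi-coherence needed), and then uses (1) together with (2) to deduce (3): given $\Spec C\to\Spec D$ over $\XX$ with both structure maps smooth, one restricts $\AA'$ to $\Spec D$ where it is quasi-coherent by (1) and (2), so the cartesian condition $\AA'(\Spec D)\otimes_D C\cong\AA'(\Spec C)$ follows from the compatibility of pullbacks with composition. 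You instead construct the quasi-coherent sheaf on $\XX$ by descending $B'$ from a smooth affine atlas and then identify it with $\AA'$, finally deriving (2) as a formal consequence of (3). Both routes work, but the paper's is a bit more economical: it avoids verifying the cocycle condition and working on $U\times_\XX U$, which for an Artin stack is only an algebraic space; it also works verbatim when $\XX$ is not quasi-compact, whereas your use of a single smooth affine atlas $U=\Spec B$ implicitly assumes quasi-compactness (this is easily repaired since quasi-coherence is a local condition, but it is worth noting). Your derivation of (2) from (3) is also unnecessarily strong---(2) is, as the paper shows, purely a matter of unwinding definitions and does not need (3) at all.
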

\begin{proof}
Let $\XX = \Spec D$ be affine and let $D \to C$ be a smooth map of rings. By \cite[Tag 03GG]{stacks-project} and using that $\AA$ is quasi-coherent we obtain that the canonical map $\AA'(\Spec D)\tensor_D C \to \AA'(\Spec C)$ is an isomorphism. This implies that $\AA'$ is quasi-coherent in this case. 

Let now $p\colon \YY \to \XX$ be a smooth morphism of Artin stacks with $\XX$ general again and let $\Spec C \xrightarrow{q} \YY$ be smooth as well. Then both $p^*(\AA')(\Spec C)$ and $(p^*\AA)'(\Spec C)$ are computed as the normalization of $C$ in $(q^*p^*\AA)(C) = \AA(C)$. 

Finally, let 
\[
\begin{tikzcd}
\Spec C \arrow[rr,"r"]\arrow[rd, "p" swap] && \Spec D\arrow[ld, "q"]\\
&\XX&
\end{tikzcd}
\]
be a $2$-commutative diagram where the vertical maps are smooth. To show the quasi-coherence of $\AA'$, we need to show that the natural map 
\begin{eqnarray}\label{eq:quasicoherence}\AA'(\Spec D)\tensor_D C \to \AA'(\Spec C)\end{eqnarray}
is an isomorphism. From the above, $q^*\AA'$ is quasi-coherent on $\Spec D$. Hence $(r^*q^*\AA')(\Spec C)$ can be computed as $(q^*\AA')(\Spec D)\tensor_D C = \AA'(\Spec D)\tensor_D C$. Thus we can identify the map \eqref{eq:quasicoherence} with the evaluation of the isomorphism $r^*q^*\A' \cong p^*\AA'$ on $\Spec C$.
\end{proof}

\begin{Definition}
We define the \emph{normalization of $\XX$ in $\AA$} to be the relative $\Spec$ of $\AA'$ over $\XX$. For a quasi-compact and quasi-separated morphism $p\colon \YY \to \XX$, we define the \emph{normalization} of $\XX$ in $\YY$ to be the 
normalization of $\XX$ in $p_*\OO_\YY$, where $p_*$ denotes the pushforward of quasi-coherent sheaves as in \cite[Tag 070A]{stacks-project}.
\end{Definition} 

Directly from \cref{lem:AAstrich} and \cite[Proposition 13.1.9]{Laumon} we obtain:
\begin{Lemma}\label{lem:smoothbasechange}
 Relative normalization commutes with smooth base change.
\end{Lemma}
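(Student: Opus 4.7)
The plan is to deduce this directly from part (2) of \cref{lem:AAstrich} combined with the standard fact that taking the relative $\underline{\Spec}$ of a quasi-coherent sheaf of algebras commutes with arbitrary base change on Artin stacks (this is essentially the content of \cite[Proposition 13.1.9]{Laumon}). Given a smooth morphism $p\colon \YY \to \XX$ and a quasi-coherent sheaf of $\OO_\XX$-algebras $\AA$, I would observe that the base-change isomorphism
\[
\YY \times_\XX \underline{\Spec}_\XX(\AA') \;\cong\; \underline{\Spec}_\YY(p^{*}\AA')
\]
together with the isomorphism $p^{*}\AA' \cong (p^{*}\AA)'$ from \cref{lem:AAstrich}(2) identifies the pullback with $\underline{\Spec}_\YY((p^{*}\AA)')$, which is by definition the normalization of $\YY$ in $p^{*}\AA$.

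For the more general case, where the normalization is defined along a quasi-compact and quasi-separated morphism $q\colon \mathcal{W} \to \XX$, I would additionally invoke flat base change for pushforward. Forming the pullback square
\[
\begin{tikzcd}
\mathcal{W} \times_\XX \YY \arrow[r] \arrow[d, "\tilde{q}"'] & \mathcal{W} \arrow[d, "q"]\\
\YY \arrow[r, "p"'] & \XX
\end{tikzcd}
\]
the smoothness (hence flatness) of $p$ yields $p^{*}(q_{*}\OO_{\mathcal{W}}) \cong \tilde{q}_{*}\OO_{\mathcal{W}\times_\XX\YY}$, so that applying the first paragraph to $\AA = q_{*}\OO_{\mathcal{W}}$ delivers the desired identification of the normalization of $\YY$ in $\mathcal{W}\times_\XX \YY$ with $\YY \times_\XX (\text{normalization of }\XX \text{ in }\mathcal{W})$.

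I do not anticipate a genuine obstacle here, since the real content — the compatibility $p^{*}\AA' \cong (p^{*}\AA)'$ — has already been isolated and proven as \cref{lem:AAstrich}(2); all that remains is formal bookkeeping with base change. The only mild care needed is to ensure that both the base-change compatibility of relative $\underline{\Spec}$ and flat base change for pushforward are applied in the correct generality of Artin stacks, for which the cited references suffice.
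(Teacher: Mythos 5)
Your proof follows the route the paper intends: the paper gives no written proof, only stating that the lemma follows directly from \cref{lem:AAstrich} and \cite[Proposition 13.1.9]{Laumon}, and you have correctly identified and assembled both ingredients, namely the isomorphism $p^{*}\AA' \cong (p^{*}\AA)'$ from \cref{lem:AAstrich}(2) and flat base change for quasi-coherent pushforward. The only minor inaccuracy is one of attribution: \cite[Proposition 13.1.9]{Laumon} is the flat base change statement for pushforward (the tool you invoke, without citation, in your second paragraph), not the compatibility of relative $\underline{\Spec}$ with base change, but this does not affect the correctness of your argument.
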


Recall that the compactifications $\MMb_0(n)$ and $\MMb_1(n)$ are defined as the normalizations of $\MMb_{ell}$ in $\MM_0(n)$ and $\MM_1(n)$, respectively. This motivates the following definition. 
\begin{Definition}
We define $\MM_0(n)_{cub}$ and $\MM_1(n)_{cub}$ as the normalizations of $\MM_{cub}$ in $\MM_0(n)$ and $\MM_1(n)$.
\end{Definition} 

 It remains an open problem to provide modular interpretations for $\MM_1(n)_{cub}$ and $\MM_0(n)_{cub}$ similar to those in \cite{Conrad} and \cite{Cesna} for $\MMb_1(n)$ and $\MMb_0(n)$. Moreover, we warn the reader that there is no reason to expect the map $\MM_1(n)_{cub} \to \MM_0(n)_{cub}$ to be the stack quotient by the natural $(\Z/n)^\times$-action on the source.

Note that the normalization maps are by definition affine and in the next lemma we will even show finiteness. 

\begin{Lemma}\label{lem:finitem1n}
 The maps $\MM_1(n)_{cub} \to \MM_{cub}$ and $\MM_0(n)_{cub} \to \MM_{cub}$ are finite.
\end{Lemma}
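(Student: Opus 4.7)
The plan is to use the smooth cover $\Spec A\to\MM_{cub}$ from \cref{SpecAsmooth} to reduce the finiteness assertion to a purely ring-theoretic statement, and then to dispatch that statement by invoking the Nagata property of the polynomial ring $A=\Z[\tfrac1n][a_1,\dots,a_6]$. Since normalization commutes with smooth base change (\cref{lem:smoothbasechange}), it suffices to show that the pullback $\MM_1(n)_{cub}\times_{\MM_{cub}}\Spec A\to\Spec A$ is finite. Using \cref{MellopenMcub} to identify $\MM_{ell}\times_{\MM_{cub}}\Spec A$ with $\Spec A[\Delta^{-1}]$, this pullback is the normalization of $\Spec A$ in
\[
\MM_1(n)\times_{\MM_{cub}}\Spec A \;\cong\; \MM_1(n)\times_{\MM_{ell}}\Spec A[\Delta^{-1}],
\]
which, since $\MM_1(n)\to\MM_{ell}$ is finite (in fact finite \'etale over $\Z[\tfrac1n]$ by \cite{DeligneRapoport}), takes the form $\Spec B$ for a finite $A[\Delta^{-1}]$-algebra $B$. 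The task thus reduces to showing that the integral closure $\widetilde{B}\subset B$ of (the image of) $A$ in $B$ is a finite $A$-module.

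For this I would exploit that $A$ is a smooth $\Z[\tfrac1n]$-algebra, hence an excellent Noetherian domain, and in particular a Nagata ring: the integral closure of $A$ in any finite field extension of $K=\mathrm{Frac}(A)$ is finite over $A$. Because $\MM_1(n)$ is smooth over $\Z[\tfrac1n]$, the pullback $\Spec B$ is smooth over $\Spec A[\Delta^{-1}]$ and in particular reduced, so $B$ embeds into its total ring of fractions $Q(B)$. Since $B$ is finite over the domain $A[\Delta^{-1}]$, $Q(B)$ decomposes as a finite product $L_1\times\cdots\times L_r$ of finite field extensions of $K$, and hence $\widetilde{B}$ embeds into $\prod_i \widetilde{A}_i$, where $\widetilde{A}_i$ denotes the integral closure of $A$ in $L_i$. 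By the Nagata property each $\widetilde{A}_i$ is a finite $A$-module, so the $A$-submodule $\widetilde{B}$ of the finite $A$-module $\prod_i\widetilde{A}_i$ is itself finite, since $A$ is Noetherian.

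The same argument applies verbatim to $\MM_0(n)_{cub}\to\MM_{cub}$, using that $\MM_0(n)$ is likewise smooth over $\Z[\tfrac1n]$ and finite over $\MM_{ell}$. The step that requires the most care is the embedding into the total ring of fractions: this rests on reducedness of $\Spec B$, which is precisely where the smoothness hypothesis is used. If one wished to avoid any smoothness input, a slightly longer argument handling the nilradical directly would still go through, but in our setting invoking smoothness of $\MM_1(n)$ and $\MM_0(n)$ over $\Z[\tfrac1n]$ is the cleanest route.
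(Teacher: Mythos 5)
Your proof takes essentially the same route as the paper: reduce to the smooth cover $\Spec A \to \MM_{cub}$, identify the pullback of $\MM_1(n)_{cub}$ (respectively $\MM_0(n)_{cub}$) as the normalization of $A$ in the finite-type $A$-algebra $B$ of global sections of $\MM_1(n) \times_{\MM_{cub}} \Spec A$, show $B$ is reduced, and conclude via the Nagata property of $A$. The paper packages the last step by citing \cite[Tag 03GR]{stacks-project}; you unwind that lemma by hand.

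Two remarks on your unwinding. First, the claim that $Q(B)$ decomposes into a product of finite field extensions of $K = \mathrm{Frac}(A)$ needs more than finiteness of $B$ over the domain $A[\Delta^{-1}]$: without going-down a minimal prime of $B$ may contract to a nonzero prime (e.g.\ $\Z \to \Z \times \Fb_p$). Flatness of $B$ over $A[\Delta^{-1}]$ fixes this, and it does hold here since $\MM_1(n) \to \MM_{ell}$ and $\MM_0(n) \to \MM_{ell}$ are finite flat (\cref{prop:Conrad}), but you should invoke it; the Stacks project reference is engineered precisely so that this hypothesis is not needed. Second, the stated deduction that $\Spec B$ is smooth over $\Spec A[\Delta^{-1}]$ \emph{because} $\MM_1(n)$ is smooth over $\Z[\tfrac1n]$ is not valid -- smoothness of two objects over a common base does not make the map between them smooth. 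For the reducedness of $B$ the cleaner route, as in the paper, is to base change the smooth map $\Spec A[\Delta^{-1}] \to \MM_{ell}$ (\cref{SpecAsmooth}) to obtain $\Spec B \to \MM_1(n)$ smooth, and then use that $\MM_1(n)$ is reduced and reducedness is smooth-local.
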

\begin{proof}
Let $\XX \to \MM_{ell}$ be an affine map of finite type from a reduced Artin stack. Note that reducedness is local in the smooth topology \cite[Tag 034E]{stacks-project} so that we can define an Artin stack to be reduced if it has a smooth cover by a reduced scheme. We want to show that the normalization $\XX'$ of $\MM_{cub}$ in $\XX$ is finite over $\MM_{cub}$. The relevant cases for us are $\XX = \MM_1(n)$ and $\XX = \MM_0(n)$. 

 Let $\Spec A \to \MM_{cub}$ be the usual smooth cover. Denote by $T$ the global sections of the pullback $\XX\times_{\MM_{cub}} \Spec A$, which is an affine scheme. By \cref{lem:smoothbasechange}, the pullback $\XX' \times_{\MM_{cub}} \Spec A$ is equivalent to the spectrum of the normalization $A'$ of $A$ in $T$. As finiteness can be checked after faithfully flat base change, it thus suffices to show that $A'$ is a finite $A$-module. 
 
 By \cite[\href{https://stacks.math.columbia.edu/tag/03GR}{Tag 03GR}]{stacks-project}, we just have to check that $A$ is a Nagata ring, $\Spec T \to \Spec A$ is of 
finite type and $T$ is reduced. As $A$ is a polynomial ring over a quasi-excellent ring, it is quasi-excellent again and hence Nagata \cite[Tag 07QS]{stacks-project}. The 
second point is clear by base change. For the last one note that $\Spec T$ is equivalent to $\Spec A[\Delta^{-1}] \times_{\MM_{ell}} \XX$ by \cref{MellopenMcub}, and also that this pullback is affine. Moreover, $\Spec A[\Delta^{-1}]\to \MM_{ell}$ is smooth by \cref{SpecAsmooth}. 
Since being reduced is local in the smooth topology, we conclude that $T$ is reduced. Hence, we obtain finiteness of $A'$ over $A$ and thus of $\XX'$ over $\MM_{cub}$.
\end{proof}

\subsection{Commutative algebra of rings of modular forms}
For structural results about $\MM_1(n)_{cub}$ we need some information about the commutative algebra of rings of modular forms. We will use the abbreviation $\MF_1(n)$ for $\MF(\Gamma_1(n);\Z[\tfrac{1}{n}])$. Recall from \cref{sec:summary} that $\MF_k(\Gamma_1(n);R) \cong H^0(\MMb_1(n)_R; \omline^{\tensor k})$ for every subring $R\subset \C$ and thus we set in general 
\[\MF_k(\Gamma_1(n);R) = H^0(\MMb_1(n)_R; \omline^{\tensor k})\]
for every $\Z[\frac1n]$-algebra $R$. 

Our first goal is to investigate when $\MF_1(n)$ is Cohen--Macaulay. This will be later the key to show that for many $n$ the map $\MM_1(n)_{cub} \to \MM_{cub}$ is flat. Recall that a noetherian commutative ring $R$ is called \emph{Cohen--Macaulay} if for every maximal ideal $\m\subset R$ the depth of the ideal $\m R_{\m}$ equals the Krull dimension of $R_{\m}$. 

On the other hand, $\MF_1(n)$ is a graded ring and it might appear more natural to consider a graded notion of being Cohen--Macaulay, where graded always means $\Z$-graded for us. An ideal is called \emph{graded} if it is generated by homogeneous elements; this results in the notion of a graded prime ideal and thus also in the notion of graded Krull dimension. Moreover, a graded ideal is called \emph{graded maximal} if it is maximal among all graded ideals. Note that with these definitions, the following lemma is not completely obvious, but also not hard to show: 
\begin{lemma}
Any graded maximal ideal in a graded ring is a graded prime ideal. 
\end{lemma}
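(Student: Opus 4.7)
The plan is to adapt the classical argument that maximal ideals are prime, but restricted to the poset of graded ideals. Let $I$ be a graded maximal ideal of the graded ring $R$. Suppose for contradiction that there exist homogeneous elements $a, b \in R$ with $ab \in I$, $a \notin I$ and $b \notin I$. The goal is to derive that $I$ is not proper, contradicting the definition of graded maximal (which I take to mean: maximal among proper graded ideals).

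The crucial observation, and the only place where the graded hypothesis really enters, is that $I + (a)$ and $I + (b)$ are again graded ideals: $I$ is generated by homogeneous elements by assumption, and adjoining the single homogeneous element $a$ (resp.\ $b$) keeps the generating set homogeneous. Since they strictly contain $I$, graded maximality forces $I + (a) = R$ and $I + (b) = R$. From this I would compute
\[
R \;=\; R\cdot R \;=\; (I + (a))(I + (b)) \;\subseteq\; I\cdot I + I\cdot(b) + (a)\cdot I + (a)(b) \;\subseteq\; I + (ab) \;\subseteq\; I,
\]
which contradicts $I$ being proper.

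There is no real obstacle; the only point that requires a moment's care is the first step of verifying that $I + (a)$ is graded when $a$ is homogeneous, because in a graded ring the sum of a graded ideal with the principal ideal generated by an inhomogeneous element is generally \emph{not} graded. The hypothesis that $a$ and $b$ are taken homogeneous (which is exactly what the definition of graded prime allows us to assume) is what makes the classical proof go through essentially verbatim.
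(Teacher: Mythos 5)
Your argument is correct. The paper itself leaves this lemma without proof (it is only remarked to be ``not hard to show''), so there is no proof in the paper to compare against. The key point you isolate --- that $I+(a)$ is again a graded ideal because $a$ is homogeneous --- is exactly the place where the graded hypothesis enters, and once $I+(a)=I+(b)=R$ is known, the product computation $R=(I+(a))(I+(b))\subseteq I$ finishes as in the ungraded case.

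One thing worth spelling out, depending on how ``graded prime ideal'' is read in the paper: your proof establishes the homogeneous form of primeness --- for \emph{homogeneous} $a,b$ with $ab\in I$ one has $a\in I$ or $b\in I$. The paper's phrase ``this results in the notion of a graded prime ideal'' is most naturally read as ``a graded ideal which is prime in the usual sense.'' To bridge the two one needs the standard fact that in a $\Z$-graded ring a graded ideal satisfying the homogeneous-primeness condition is prime outright: given arbitrary $a,b\notin I$ with $ab\in I$, discard the homogeneous components of $a$ and $b$ that already lie in $I$, and compare the top-degree components; this forces the product of the two top components into $I$, contradicting homogeneous primeness. This equivalence is routine, but it is quite plausibly the reason the paper flags the lemma as ``not completely obvious,'' so it merits at least a sentence.
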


Given a graded prime ideal $\p$ in a graded ring $R$, we denote by $R_{(\p)}$ the localization at the multiplicatively closed subset of all homogeneous elements not in $\p$. The \emph{graded depth} of an ideal $I\subset R$ is the maximal length of a regular sequence of homogeneous elements in $I$. 

\begin{Definition}
 A graded noetherian ring $R$ is \emph{graded Cohen--Macaulay} if for every graded maximal ideal $\m\subset R$ the graded depth of $\m R_{(\m)}$ equals the graded Krull dimension of $R_{(\m)}$.
\end{Definition} 
Note that the graded Krull dimension of $R_{(\m)}$ agrees with the Krull dimension of $R_{\m}$ by \cite[Theorem 1.5.8]{BrunsHerzog}.

\begin{lemma}\label{lem:CMgraded}
Let $R$ be a graded noetherian ring. Then $R$ is Cohen--Macaulay if it is graded Cohen--Macaulay. 
\end{lemma}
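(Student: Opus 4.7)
The plan is to deduce Cohen--Macaulayness of $R$ by showing that $R_\m$ is Cohen--Macaulay for every maximal ideal $\m$, and I would proceed in two stages: first handling graded maximal ideals (where the hypothesis applies almost by definition), then extending to arbitrary maximal ideals via the standard $\m^*$ construction from \cite[Section 1.5]{BrunsHerzog}.

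For the first stage I would fix a graded maximal ideal $\mathfrak{n}$. The already cited \cite[Theorem 1.5.8]{BrunsHerzog} equates the graded Krull dimension of $R_{(\mathfrak{n})}$ with $\dim R_\mathfrak{n}$, and an analogous comparison in the same chapter of Bruns--Herzog identifies the graded depth of $\mathfrak{n} R_{(\mathfrak{n})}$ with $\mathrm{depth}(\mathfrak{n} R_\mathfrak{n})$. Combined with the graded Cohen--Macaulay hypothesis, these directly give that $R_\mathfrak{n}$ is Cohen--Macaulay. For an arbitrary graded prime $\p$, picking a graded maximal $\mathfrak{n} \supseteq \p$ then exhibits $R_\p$ as a localization of the Cohen--Macaulay local ring $R_\mathfrak{n}$, hence Cohen--Macaulay itself.

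For the second stage I would consider an arbitrary maximal ideal $\m$ and let $\m^* \subseteq \m$ be the graded ideal generated by the homogeneous elements of $\m$. By \cite[Lemma 1.5.6]{BrunsHerzog}, $\m^*$ is a prime of $R$, and so $R_{\m^*}$ is Cohen--Macaulay by the previous step. The quantitative part of \cite[Theorem 1.5.8]{BrunsHerzog} then says that the two differences $\dim R_\m - \dim R_{\m^*}$ and $\mathrm{depth}(R_\m) - \mathrm{depth}(R_{\m^*})$ coincide (both are $0$ when $\m = \m^*$ and both are $1$ otherwise), from which Cohen--Macaulayness of $R_\m$ follows.

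The main obstacle is essentially bibliographic bookkeeping: one needs to pin down the precise Bruns--Herzog statements relating graded and ungraded dimension and depth, both at a graded prime $\mathfrak{n}$ and in the comparison between $R_\m$ and $R_{\m^*}$ for a non-graded $\m$. Given these standard comparisons, the two-stage reduction above is immediate.
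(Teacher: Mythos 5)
Your proof is correct and reaches the same conclusion, but it takes a somewhat more self-contained route than the paper's. The paper leans on \cite[Exercise 2.1.27]{BrunsHerzog} (that a graded Noetherian ring is Cohen--Macaulay if and only if the homogeneous localizations $R_{(\p)}$ at graded primes are), reduces to a $*$-local ring with graded maximal ideal $\m$, and then finishes by observing directly that a homogeneous $R_{(\m)}$-regular sequence of maximal length $d = \dim R_\m$ inside $\m$ remains regular on $R_\m$. You instead unwind the content of that exercise: stage one treats graded primes via the graded-to-ungraded comparison of dimension and depth at the homogeneous localization, and stage two handles a non-graded maximal ideal $\m$ by passing to $\m^*$ and invoking \cite[Theorem 1.5.8]{BrunsHerzog} for the matching one-unit shifts in both $\dim$ and $\operatorname{depth}$. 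That theorem does contain the depth statement you need, so stage two is sound; stage one's ``analogous comparison'' for depth at a graded maximal ideal is the one place you should track down a precise reference (it is the content of the $*$-local theory in \cite[Section 1.5]{BrunsHerzog}, and is exactly what the paper's ``straightforward check'' verifies). Net effect: you essentially re-prove \cite[Exercise 2.1.27]{BrunsHerzog} along the way, trading the paper's brevity for a more explicit reduction; both arguments ultimately rest on the same Bruns--Herzog facts.
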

\begin{proof}
According to \cite[Exercise 2.1.27]{BrunsHerzog}, $R$ is Cohen--Macaulay if and only if $R_{(\p)}$ is Cohen--Macaulay for every graded prime ideal $\p\subset R$. As by \cite[Theorem 2.1.3]{BrunsHerzog} being Cohen--Macaulay is preserved by localizations, this happens if and only if $R_{(\m)}$ is Cohen--Macaulay for every graded maximal ideal $\m \subset R$. Thus, we may assume that $R$ contains a unique graded maximal ideal $\m$. 

By \cite[Exercise 2.1.27]{BrunsHerzog} again, it suffices to show that $R_{\m}$ is Cohen--Macaulay. Let $d$ be the dimension of $R_{\m}$. By assumption, there is a $R_{(\m)}=R$-regular sequence $x_1,\dots, x_d$ in $\m$. A straightforward check shows that the same sequence is also regular on $R_{\m}$. This completes the proof. 

\end{proof}

\begin{remark}\label{rem:regular}
By \cite[Corollary 2.2.6]{BrunsHerzog} every regular ring is Cohen--Macaulay, but the ring $\MF_1(n)$ is not regular in general, even over the complex numbers. Indeed, a maximal ideal of $\MF_1(n)\tensor \C$ is generated by all elements of positive degree and thus needs at least $\dim_{\C}\MF_1(n)_1\tensor \C$ many generators. On the other hand, $\MF_1(n)\tensor \C$ has Krull dimension $2$ as in the proof of \cite[Theorem 5.14]{MeierDecomposition} and thus $\MF_1(n)\tensor \C$ can only be regular if $\MF_1(n)_1\tensor \C$ is of dimension at most $2$. This does not happen for $n \geq 7$ as the dimension of $\MF_1(n)_1 \tensor \C$ is at least half the number of regular cusps, i.e.\ at least $\frac14\sum_{d|n}\varphi(d)\varphi(\frac{n}d)$ by \cite[Theorem 3.6.1 and Figure 3.3]{DiamondShurman}, where $\varphi$ denotes Euler's totient function. On the other hand, the rings $\MF_1(n)$ are polynomial for $2\leq n\leq 6$ and in particular regular. 

There is the related property of being Gorenstein, which is stronger than Cohen--Macaulay, but weaker than regular. The values of $n$ for which $\MF_1(n)$ is Gorenstein were found by Dimitar Kodjabachev in his thesis. 
\end{remark}

\begin{Prop}\label{prop:CM}
The ring $\MF_1(n)$ is a Cohen--Macaulay ring if and only if the map $\MF_1(n)_1 \to \MF_1(\Gamma_1(n); \Fb_l)$ is surjective for all primes $l$ not dividing $n$. This happens if and only if $H^1(\MMb_1(n);\omline)$ is torsionfree. 
\end{Prop}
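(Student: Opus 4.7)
The plan is to use graded local cohomology to convert Cohen--Macaulayness of $R:=\MF_1(n)$ into torsion-freeness of $H^1(\MMb_1(n);\omline^{\tensor k})$ for $k\geq 0$, and then to observe that only $k=1$ can contribute. Each graded piece $R_k=H^0(\MMb_1(n);\omline^{\tensor k})$ is $\Z[\tfrac1n]$-torsion-free since $\omline^{\tensor k}$ is flat over the base, so every prime $l\nmid n$ acts on $R$ as a non-zero-divisor. By \cref{lem:CMgraded} it suffices to verify, for each such $l$, that $R/l$ has depth $2$ at its positive-degree ideal $\m_+$ (since $\dim R = 3$ and $l$ gives the first element of any maximal regular sequence in $\m_l:=(l)+\m_+$).

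To analyze $R/l$, I would compare it to $R(l):=\MF(\Gamma_1(n);\Fb_l)$ via the long exact sequence coming from $0\to\omline^{\tensor k}\xrightarrow{l}\omline^{\tensor k}\to\omline^{\tensor k}|_{\Fb_l}\to 0$ on $X:=\MMb_1(n)$. This yields an injection $R/l\hookrightarrow R(l)$ whose weight-$k$ cokernel is $H^1(X;\omline^{\tensor k})[l]$. Ampleness of $\omline$ and Serre vanishing make this inclusion an isomorphism in sufficiently high degrees, so $\Proj(R/l)\cong X_{\Fb_l}$; since $X_{\Fb_l}$ is smooth and geometrically connected, $R(l)$ is a $2$-dimensional graded $\Fb_l$-domain which is Cohen--Macaulay by Serre--Grothendieck and equal to its own saturation. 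Therefore the Serre--Grothendieck sequence for $R/l$ identifies $H^1_{\m_+}(R/l)$ with the cokernel $R(l)/(R/l)$, while the domain property of $R/l$ forces $H^0_{\m_+}(R/l)=0$. In summary, $R/l$ is Cohen--Macaulay iff $H^1(X;\omline^{\tensor k})[l]=0$ for every $k\geq 0$, and so $R$ is Cohen--Macaulay iff $H^1(X;\omline^{\tensor k})$ is torsion-free over $\Z[\tfrac1n]$ for every $k\geq 0$.

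The final step is to see that only $k=1$ can produce torsion. For $k=0$, the genus of the geometric fibers of $f\colon X\to\Spec\Z[\tfrac1n]$ is locally constant, so cohomology and base change force $R^1 f_*\OO_X$ to be locally free, hence torsion-free. For $k\geq 2$, the Kodaira--Spencer isomorphism $\omega_{X/\Z[1/n]}\cong\omline^{\tensor 2}(-\mathrm{cusps})$ combined with Serre duality on fibers gives
$$h^1(X_s;\omline^{\tensor k})=h^0(X_s;\omline^{\tensor 2-k}(-\mathrm{cusps}))=0,$$
since the displayed line bundle has strictly negative degree on each connected fiber (it is $\OO(-\mathrm{cusps})$ at $k=2$ and even more negative for $k>2$), so it has no global sections. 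Hence $R^1 f_*\omline^{\tensor k}=0$ for $k\geq 2$, and in particular it is torsion-free.

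Combining these reductions, $R$ is Cohen--Macaulay iff $H^1(\MMb_1(n);\omline)$ is torsion-free, which by the weight-$1$ piece of the original long exact sequence is exactly the surjectivity of $\MF_1(n)_1\to\MF_1(\Gamma_1(n);\Fb_l)$ for every prime $l\nmid n$. The main technical obstacle is ensuring that the Serre--Grothendieck correspondence and the Kodaira--Spencer identification are valid at the level of the Deligne--Mumford stack $\MMb_1(n)$ over the arithmetic base $\Z[\tfrac1n]$, especially for the small $n$ where $\MMb_1(n)$ has generic automorphisms.
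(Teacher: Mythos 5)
Your argument is a full reconstruction of what the paper dispatches in a single line by citing \cite[Theorem 5.14]{MeierDecomposition} together with \cref{lem:CMgraded}; the route via graded local cohomology and the Serre--Grothendieck exact sequence is essentially the standard proof of that cited theorem, so in substance you and the paper are doing the same thing, except that you carry out the computation rather than outsource it. The skeleton is sound: using that $l$ is a nonzerodivisor on $R=\MF_1(n)$ to reduce to $\operatorname{depth}_{\m_+}(R/l)=2$, identifying $H^1_{\m_+}(R/l)$ with $\bigoplus_k H^1(\MMb_1(n);\omline^{\otimes k})[l]$ via the saturation $R(l)=\MF(\Gamma_1(n);\Fb_l)$, and then killing the contributions with $k\neq 1$ by cohomology-and-base-change at $k=0$ and Kodaira--Spencer plus fiberwise Serre duality for $k\geq 2$.

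Three points deserve tightening. First, \cref{lem:CMgraded} only gives you \emph{graded Cohen--Macaulay implies Cohen--Macaulay}, which covers the ``if'' direction (surjectivity $\Rightarrow$ CM); for the ``only if'' direction you also need the converse passage from ordinary CM to depth $2$ of $(R/l)_{(\m_+)}$. This is standard (localize at $\m_l$, use that localization preserves CM and that modding out by the nonzerodivisor $l$ drops depth by one, and that for finitely generated graded modules graded depth at $\m_+$ agrees with local depth), but it is a different fact than \cref{lem:CMgraded} and should be stated. Second, for $n\leq 4$ the stack $\MMb_1(n)_{\Fb_l}$ is not a scheme so ``$\Proj(R/l)\cong X_{\Fb_l}$'' is not literally correct; the fix is to work with the $\GG_m$-torsor $\MMb_1^1(n)_{\Fb_l}$, which \emph{is} a scheme for all $n\geq 2$ by \cref{prop:coordinatesm1n}, and compute $H^i_{\m_+}$ from the punctured spectrum directly. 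Third, and related, you implicitly use that $\Spec(R/l)\setminus V(\m_+)$ coincides with the image of $\MMb_1^1(n)_{\Fb_l}$, i.e.\ that $V(c_4,\Delta)$ is exactly the vertex; this is not free, but follows from the properness of $\MMb_1(n)_{\Fb_l}$ together with the separatedness statement \cref{lem:separated} as in the end of the proof of \cref{prop:coordinatesm1n}. With these repairs the argument goes through; you have correctly anticipated the delicate points in your closing sentence, even if the phrase ``generic automorphisms'' only actually applies at $n=2$.
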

\begin{proof}
This follows from \cite[Theorem 5.14]{MeierDecomposition} and \cref{lem:CMgraded}. 
\end{proof}

\begin{example}\label{exa:CM}
As noted in \cite[Remark 3.14]{MeierDecomposition}, the condition of \cref{prop:CM} is equivalent to the existence of a \emph{cusp form} in $\MF_1(\Gamma_1(n); \Fb_l)$ that is not liftable to a cusp form in $\MF_1(n)_1$. For $n\leq 28$, Buzzard \cite{BuzzardWeight1} shows that only for $n=23$ there is a nonvanishing cusp form in $\MF(\Gamma_1(n);\Fb_l)$. But \cite[Corollary 5.8]{MeierTMFLevel} shows that on $\MMb_1(23)$ there is an isomorphism $\Omega^1_{\MMb_1(23)/\Z[\frac1n]} \cong \omline$. By \cite[Proposition 2.11]{MeierDecomposition}, this implies that we can identify the reduction map $\MF_1(23)_1 \to \MF_1(\Gamma_1(23); \Fb_l)$ with the surjection $\Z[\frac1{23}] \to \Fb_l$. (A similar argument also appears in \cite{BuzzardWeight1}.) 

Thus, $\MF_1(n)$ is Cohen--Macaulay for all $2\leq n\leq 28$. It is not Cohen--Macaulay for example for $n=74$ or $n=82$ (see \cite[Remark 3.14]{MeierDecomposition}). 
\end{example}

In the context of normalizations it is furthermore an important question whether the rings $\MF_1(n)$ are normal. 

\begin{Prop}\label{prop:normal}
The ring $\MF_1(n)$ is normal for every $n\geq 2$. 
\end{Prop}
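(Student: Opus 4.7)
The strategy is to realize $\MF_1(n)$ as the ring of global functions on a normal scheme and then to invoke the classical fact that such rings are automatically integrally closed in their fraction field.

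First, I would observe that the stack $\MMb_1(n)$ is integral: it is normal by construction (being the normalization of $\MMb_{ell}$ in $\MM_1(n)$) and connected since $\MM_1(n) \subset \MMb_1(n)$ is dense and $\MM_1(n)$ is well-known to be connected over $\Z[\tfrac{1}{n}]$. The $\GG_m$-torsor $\pi\colon \MMb_1^1(n) \to \MMb_1(n)$ of \cref{prop:coordinatesm1n} is smooth, so $\MMb_1^1(n)$ inherits normality and integrality. Moreover, \cref{prop:coordinatesm1n} identifies $\MMb_1^1(n)$ with an open subscheme of $\Spec \MF_1(n)$, the identification being induced by the equality $\MF_1(n) = H^0(\MMb_1^1(n), \OO_{\MMb_1^1(n)})$ that is already established in its proof. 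In particular, the fraction field of $\MF_1(n)$ coincides with the function field of $\MMb_1^1(n)$.

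Next, I would invoke the following general observation: for any integral normal scheme $U$, the ring $\Gamma(U, \OO_U)$ is integrally closed in its fraction field. Indeed, any element $\alpha$ of the fraction field that is integral over $\Gamma(U, \OO_U)$ is a fortiori integral over each stalk $\OO_{U,x}$; as these stalks are normal local rings, $\alpha \in \OO_{U,x}$ for every $x \in U$. Since $U$ is integral, $\Gamma(U, \OO_U) = \bigcap_{x \in U} \OO_{U,x}$ inside the function field, so $\alpha \in \Gamma(U, \OO_U)$. Applied to $U = \MMb_1^1(n)$, this yields the desired normality of $\MF_1(n)$.

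The only delicate point is the identification of $\MF_1(n)$ with the ring of global functions on $\MMb_1^1(n)$, but this is already packaged into \cref{prop:coordinatesm1n}; once it is in hand, everything else is purely formal. A possible alternative route would be to verify Serre's criterion ($R_1$ and $S_2$) for $\MF_1(n)$ directly, but this would require additionally checking that $V(c_4, \Delta) \subset \Spec \MF_1(n)$ has codimension at least $2$ and performing a depth computation at the vertex of the cone, both of which the present approach bypasses.
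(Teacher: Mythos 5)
Your argument is correct and proceeds by a genuinely different, more conceptual route than the paper's. The paper verifies Serre's criterion ($R_1$ and $S_2$) for $\MF_1(n)$ directly: for $R_1$ it shows that $c_4, \Delta$ form a regular sequence, so every height-one prime localizes to a stalk of the regular scheme $\MMb_1^1(n)$, and for $S_2$ it runs a two-case depth computation using that $\MF_1(n)_\Q$ is finite and free over $\Q[c_4, c_6]$. You bypass all of this by pairing the identification $\MF_1(n) = \Gamma(\MMb_1^1(n), \OO_{\MMb_1^1(n)})$ with the standard lemma that the ring of global functions of an integral normal scheme is integrally closed. Both arguments lean on \cref{prop:coordinatesm1n}: the paper for the $R_1$-step, you to recognize $\MMb_1^1(n)$ as a quasi-affine scheme sitting inside $\Spec\MF_1(n)$. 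What the paper's explicit approach buys in exchange for the extra work is concrete information about the singular locus and the regular sequence $c_4,\Delta$, which is reused elsewhere; your argument delivers normality alone, but more economically.

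Two small points would tighten the exposition. Normality of $\MMb_1(n)$ is cleanest to cite from Deligne--Rapoport, who show $\MMb_1(n)$ is smooth over $\Z[\tfrac1n]$, hence regular; the phrase ``normal by construction'' is not automatic for relative normalization of stacks without knowing $\MM_1(n)$ is itself normal. And the passage from integrality of $\MMb_1(n)$ to integrality of $\MMb_1^1(n)$ needs more than smoothness of the torsor (which gives regularity, hence reducedness, but not connectedness on its own); the quickest route is to note that $\MMb_1^1(n)$ is a nonempty open subscheme of $\Spec\MF_1(n)$, and $\MF_1(n)$ is already known to be an integral domain by \cite{MeierDecomposition} (a fact the paper's own proof also invokes), so $\Spec\MF_1(n)$ and hence $\MMb_1^1(n)$ are irreducible.
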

\begin{proof}
 We begin with some recollections on the general commutative algebra of $R$. By \cite[Proposition 2.13]{MeierDecomposition} the ring $R$ is an integral domain so that $(0)$ is its only prime ideal of height $0$. Moreover, $R/p$ is an integral domain for every prime number $p$ not dividing $n$ because $\MF(\Gamma_1(n); \mathbb{F}_p)$ is an integral domain by \cite[Proposition 2.13]{MeierDecomposition} and $R/p$ embeds into $\MF(\Gamma_1(n); \mathbb{F}_p)$ \cite[Remark 3.14]{MeierDecomposition}. The same \cite[Proposition 2.13]{MeierDecomposition} also says that $R_0 = \Z[\frac1n]$. We also note that $R$ is noetherian, e.g.\ as it is finitely generated over the noetherian ring $\MF(\SL_2(\Z);\Z[\frac1n]) \cong \Z[\frac1n][c_4,c_6,\Delta]/(1728\Delta = c_4^3-c_6^2)$ \cite[Corollary 1.4]{MeierDecomposition}. Thus Serre's normality criterion \cite[Th\'eor\`eme 5.8.6]{EGAIV} says that it suffices to show that $R = \MF_1(n)$ satisfies R1 and S2.

Condition R1 says that for every prime ideal $\mathfrak{p}\subset R$ of height $1$, the localization $R_{\mathfrak{p}}$ is regular. Let $\MMb_1^1(n) \to \MMb_1(n)$ be the $\mathbb{G}_m$-torsor over which the line bundle $\omline$ trivializes. By \cref{prop:coordinatesm1n}, the scheme $\MMb_1^1(n)$ is quasi-affine and can be identified as the complement of the common vanishing locus $V(c_4,\Delta)$ of $c_4$ and $\Delta$ in $\Spec R$. 

We claim that no point $\mathfrak{p}$ of height $1$ of $\Spec R$ can be in $V(c_4,\Delta)$, essentially as this is closed of codimension $2$. More precisely, we claim that $c_4,\Delta$ form a regular sequence and thus $c_4,\Delta \in\mathfrak{p}$ would imply that the height of $\mathfrak{p}$ is at least $2$ \cite[Proposition 1.2.14]{BrunsHerzog}. As $R$ is torsionfree and $\Delta$ is not divisible by any non-unit in $\Z[\frac1n]$ (as can be seen by its $q$-expansion $q-24q^2+\cdots$), it suffices to show that $c_4, \Delta$ forms a regular sequence in $R_{\Q}$. By \cite[Theorem 1.1]{MeierDecomposition}, $R_{\Q}$ is free of finite rank over the ring $S = \Q[c_4,c_6]$ of modular forms and we see indeed that $c_4$ and $\Delta = \frac1{1728}(c_4^3-c_6^2)$ form a regular sequence in $S$ and hence in $R_{\Q}$.  

We conclude that $\mathfrak{p} \in \MMb_1^1(n)$ and the localization $R_{\mathfrak{p}}$ can be identified with a stalk in $\MMb_1^1(n)$. But $\MMb_1^1(n)$ is smooth over $\Z[\frac1n]$ (as $\MMb_1(n)$ is smooth over $\Z[\frac1n]$ and $\mathbb{G}_m$-torsors are smooth) and thus regular. 

Condition S2 says that the depth of $R_{\mathfrak{p}}$ is for every prime ideal $\mathfrak{p}\subset R$ at least the height of $\mathfrak{p}$ or at least $2$. As $R$ is an integral domain, $\mathfrak{p}$ has at least depth $1$ if it is nonzero. Thus, we need to show that every prime ideal $\mathfrak{p}$ of height at least $2$ has depth at least two. 

Assume first that $p \in \mathfrak{p}$ for some prime number $p$. By Krull's principal ideal theorem, $\mathfrak{p}$ cannot be principal and thus it contains some $x \notin (p)$.  As $R/p$ is an integral domain, $p,x$ is a regular sequence of length $2$ in $\mathfrak{p}$. 

Assume now that no prime $p$ is in $\mathfrak{p}$. Then $\mathfrak{p}R_{\Q}$ is still a prime ideal and $\mathfrak{p}R_{\Q}\cap R = \mathfrak{p}$. Moreover $R_{\mathfrak{p}}$ is a $\Q$-algebra, namely the localization of $R_{\Q}$ at $\mathfrak{p}R_{\Q}$. As noted above, $R_{\Q}$ is finite and free over $S = \Q[c_4,c_6]$. By \cite[Theorem A.6]{BrunsHerzog}, the ideal $\mathfrak{p} \cap S$ has still height $2$. As $S$ is regular we can choose a regular sequence of length $2$ in $\mathfrak{p} \cap S$ that is automatically also regular in $R_{\Q}$ and hence in $R_{\mathfrak{p}}$.
\end{proof}

\begin{example}\label{exa:normal}
The ring $\MF_1(7) \cong \Z[\frac17][z_1,z_2,z_3]/(z_1z_2+z_2z_3+z_3z_1)$ is no longer regular by Remark \ref{rem:regular}, but still Cohen--Macaulay by Example \ref{exa:CM} and normal by the proposition above. 
\end{example}

\subsection{The flatness \texorpdfstring{of $\MM_1(n)_{cub} \to \MM_{cub}$}{results}}
The two aims of this section are to show that $\MM_1(n)_{cub}$ agrees with $\Spec \MF_1(n)/\GG_m$ and that under some conditions $\MM_1(n)_{cub} \to \MM_{cub}$ is flat. These claims will be special cases of the more general criterion \cref{lem:CMFlatness}. 

We will still use our convention that we work implicitly over $\Z[\frac1n]$ for a fixed $n\geq 2$, i.e.\ that $\MM_{cub}$ means $\MM_{cub,\Z[\frac1n]}$ and $A = \Z[\frac1n][a_1,a_2,a_3,a_4,a_6]$ etc. Note moreover that $\MM_{cub} \to \Spec \Z[\frac1n]$ is smooth as $\Spec A \to \Spec \Z[\frac 1n]$ is. In particular, $\MM_{cub}$ is reduced by \cite[Tag 034E]{stacks-project}.

\begin{Prop}\label{lem:finitenessandnormalization}\label{lem:CMFlatness}
Let $R$ be a graded ring with a graded ring map $A \to R$. Assume that the induced map
$$\Spec R[\Delta^{-1}]/\GG_m \to \Spec A[\Delta^{-1}]/\GG_m \to \MM_{ell}$$
is surjective and $R_A = \Gamma\tensor_A R$ is a finitely generated $A$-module. 
\begin{enumerate}[label=(\alph*)]
    \item\label{normal} If $R$ is a normal domain, the normalization of $\MM_{cub}$ in $\Spec R[\Delta^{-1}]/\GG_m$ is equivalent to $\Spec R/\GG_m$. 
    \item\label{CM} If $R$ is Cohen--Macaulay, then $R_A$ is a flat $A$-module and thus the morphism $\Spec R/\GG_m \to \MM_{cub}$ is flat as well.
\end{enumerate}
\end{Prop}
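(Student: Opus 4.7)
Both parts are verified after pulling back along the smooth fpqc cover $\Spec A \to \mcub$. By \cref{lem:pullback}, the base change of $\Spec R/\GG_m \to \mcub$ becomes the finite morphism $\Spec R_A \to \Spec A$ with structure map $\eta_L\colon A \to R_A$. A direct computation using the triangular formulas for $\eta_R$ (e.g.\ $\eta_R(a_1) = a_1 + 2s$, $\eta_R(a_3) = a_3 + a_1 r + 2t$, etc.) together with a Hilbert-series count shows that $\Gamma = A[r,s,t]$ is free as an $A$-module via $\eta_R$ with basis $\{r^i s^j t^k\}$; equivalently, $R_A \cong R[r,s,t]$ as an $R$-algebra. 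In particular, $R \to R_A$ is smooth of relative dimension $3$, being the base change of the smooth morphism $\eta_R\colon A \to \Gamma$, and smooth base change transfers both the normal-integral-domain and Cohen--Macaulay properties from $R$ to $R_A$. The surjectivity hypothesis, combined with the finiteness (hence closedness) of $\Spec R_A \to \Spec A$, also forces $A \to R_A$ to be injective: its image contains the dense open $\Spec A[\Delta^{-1}]$, hence all of the irreducible $\Spec A$, so the minimal prime $(0)\subset A$ is in the image and the kernel of $A \to R_A$ lies in $(0)$.

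For part (a), \cref{lem:smoothbasechange} reduces the claim to showing that $R_A$ is the normalization of $A$ in $R_A[\Delta^{-1}]$. Because $R_A$ is finite over $A$, every element is integral, and by transitivity the normalization of $A$ in $R_A[\Delta^{-1}]$ coincides with the normalization of $R_A$ in $R_A[\Delta^{-1}]$. The surjectivity hypothesis ensures $\Delta$ is a nonzerodivisor in the normal domain $R_A$, so $R_A[\Delta^{-1}]$ embeds into the fraction field of $R_A$; since a normal domain is integrally closed in its fraction field, its integral closure in $R_A[\Delta^{-1}]$ is $R_A$ itself.

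For part (b), flatness of $A \to R_A$ will descend along the fpqc cover $\Spec A \to \mcub$ to flatness of $\Spec R/\GG_m \to \mcub$. Since $A$ is regular and the finite $A$-algebra $R_A$ is Cohen--Macaulay, the miracle flatness criterion (cf.\ \cite[\href{https://stacks.math.columbia.edu/tag/00R4}{Tag 00R4}]{stacks-project}) reduces the question of flatness to the dimension equality $\dim (R_A)_{\mathfrak{p}} = \dim A_{\mathfrak{q}}$ at every prime $\mathfrak{p}\subset R_A$ with contraction $\mathfrak{q} = \mathfrak{p}\cap A$ (the fiber contribution vanishes because the map is finite). This equality is a consequence of Cohen--Seidenberg going-up and going-down applied to the integral extension $A \hookrightarrow R_A$ between the normal domain $A$ and the domain $R_A$.

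The main obstacle is the careful bookkeeping of the two distinct $A$-algebra structures on $R_A$ arising from $\eta_L$ and $\eta_R$: the tensor $\Gamma \tensor_A R$ is formed via $\eta_R$, but it is $\eta_L$ that provides the $A$-module structure relevant to the finiteness hypothesis and to the flatness conclusion. Once $R_A$ is concretely identified as $R[r,s,t]$ and $\eta_L$ is singled out as the structural map, both parts reduce to standard commutative algebra.
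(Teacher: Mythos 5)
Your argument for part (a) is valid and is essentially the paper's, packaged a bit more compactly: you reduce at once to the single cover $\Spec A \to \mcub$ (the paper instead verifies the normalization statement for an arbitrary smooth $\Spec C \to \mcub$), invoke transitivity of integral closure, and finish using that $R_A \cong R[r,s,t]$ is a normal domain with $\Delta$ a nonzerodivisor. One small slip in the justification: smooth base change preserves normality but does not in general preserve being an integral domain; here you get that $R_A$ is a domain because it is literally a polynomial ring over the domain $R$, not from smoothness per se.

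Part (b), however, has a genuine gap. To produce the dimension equality $\dim (R_A)_{\mathfrak{p}} = \dim A_{\mathfrak{q}}$ demanded by miracle flatness you invoke Cohen--Seidenberg going-down for the integral extension $A\hookrightarrow R_A$, explicitly calling $R_A$ ``the domain $R_A$''. Going-down for an integral extension requires the larger ring to be an integral domain (or the map to already be flat, which is what you are trying to prove). In part (b) the only hypothesis on $R$ is that it be Cohen--Macaulay; it need not be a domain, so $R_A\cong R[r,s,t]$ need not be one either. Cohen--Macaulayness alone does not force equidimensionality over $A$ (a product of Cohen--Macaulay rings of different dimensions is Cohen--Macaulay but fails the relevant height count), so your dimension argument does not go through from the stated hypotheses. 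The paper instead invokes Hironaka's flatness criterion in the form of Nagata's Theorem~25.16, which, as cited, asks only for the injectivity $A_{\mathfrak{p}}\subset R_{A_{\mathfrak{p}}}$, the Cohen--Macaulayness of $R_{A_{\mathfrak{p}}}$, and finiteness over the regular ring $A_{\mathfrak{p}}$, and hence does not pass through a separate dimension verification. To repair your version, you would either cite that form of the criterion directly, or restrict the going-down argument to the case where $R$ (hence $R_A$) is a domain --- which is indeed what holds in the paper's actual applications with $R=\MF_1(n)$, but is not assumed in part (b).
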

\begin{proof}
We begin with part \ref{normal}. Let $\Spec C \to \MM_{cub}$ be any smooth map. As $\Spec R / \GG_m \to \MM_{cub}$ is by \cref{lem:pullback} and fpqc descent an affine morphism, the fiber product $\Spec C \times_{\MM_{cub}} \Spec R/\GG_m$ is affine again and we denote it by $\Spec R_C$. With the same notation, the fiber product $\Spec C \times_{\MM_{cub}} \Spec R[\Delta^{-1}]/\GG_m$ is equivalent to $\Spec R_C[\Delta^{-1}]$. By the definition of the normalization, we thus must show that the normalization of $C$ in $R_C[\Delta^{-1}]$ equals $R_C$. 

As a first step, we will show that the natural maps $C \to R_C[\Delta^{-1}]$ (and hence $C \to R_C$) are injections. We can work locally and assume that the pullback of $\omline$ to $\Spec C$ is trivialized so that $\Delta\in H^0(\MM_{cub};\omline^{\tensor 12})$ defines an element of $C$, well-defined up to multiplication by a unit. Note further that $\Spec C$ is noetherian and hence it is the disjoint union of finitely many affine components; thus we can assume moreover that $\Spec C$ is connected. As $\Spec R_C[\Delta^{-1}]/\GG_m\to \MM_{ell}$ is surjective by assumption, its base change $\Spec R_C[\Delta^{-1}] \to \Spec C[\Delta^{-1}]$ along $\Spec C[\Delta^{-1}] \to \MM_{ell}$ is surjective as well. We see that the kernel of $C[\Delta^{-1}] \to R_C[\Delta^{-1}]$ lies in the intersection of all prime ideals, i.e.\ in the nilradical. As $\MM_{cub}$ is reduced and hence $C$ is reduced as well, it follows that $C[\Delta^{-1}] \to R_C[\Delta^{-1}]$ is injective. Thus, it suffices to show that $C \to C[\Delta^{-1}]$ is injective. Note that $\Spec C$ is smooth over $\Z[\frac1n]$ as $\MM_{cub}$ is. Thus $\Spec C$ is regular and its connectedness implies that $\Spec C$ is irreducible by \cite[Corollary 10.14]{Eisenbud} and \cite[Exercise 3.16]{GoertzWedhorn}; hence $C$ is an integral domain. 

Furthermore, we show that $\Delta \in C$ is nonzero. Indeed, we can check this in $\Spec C_A \cong \Spec C \times_{\MM_{cub}} \Spec A$. The non-vanishing locus of $\Delta$ in $A$ is dense (as $A$ is an integral domain) and $\Spec C_A \to \Spec A$ is smooth and hence open. Thus, the image of $\Delta$ in $C_A$ is nonzero and $\Delta\neq 0$ in $C$ as well. This implies that $C \to C[\Delta^{-1}]$ and hence $C\to R_C[\Delta^{-1}]$ are indeed injective. 

According to \cref{lem:pullback} again, the pullback $\Spec A \times_{\MM_{cub}} \Spec R/\GG_m$ is equivalent to $\Spec R_A$. Thus, $\Spec R/\GG_m$ is finite over $\MM_{cub}$. We see that $R_C$ is finite over $C$ and hence every element of $R_C$ is integral over $C$. As $R$ is normal and $R_C$ is smooth over $R$, also $R_C$ is normal \cite[Tag 033C]{stacks-project}. Thus, every element that is integral over $C$ (and hence $R_C$) in $R_C[\Delta^{-1}]$ is already in $R_C$. Thus, $R_C$ is the normalization of $C$ in $R_C[\Delta^{-1}]$ and we obtain part \ref{normal}. 

For part \ref{CM}, we observe first that the flatness of $\Spec R_A \to \Spec A$ indeed implies the flatness of $\Spec R/\GG_m \to \MM_{cub}$ as the former map is by \cref{lem:pullback} the base change of the latter map along the fpqc morphism $\Spec A \to \MM_{cub}$. Moreover it suffices to show the flatness of the base change $R_{A_{\mathfrak{p}}} = R_A \tensor_A A_{\mathfrak{p}}$ over $A_{\mathfrak{p}}$ for all prime ideals $\mathfrak{p}$ in $A$. We want to envoke Hironaka's flatness criterion \cite[25.16]{Nagata}, by which a ring extension $A_{\mathfrak{p}}\subset B$ is automatically flat if $B$ is Cohen--Macaulay and finite as an $A_{\mathfrak{p}}$-module. The ring $R_A$ is Cohen--Macaulay as it is by base change smooth over $R$ and being Cohen--Macaulay is local in the smooth topology \cite[\href{https://stacks.math.columbia.edu/tag/036B}{Tag 036B}]{stacks-project}. Moreover, any localization of a Cohen--Macaulay ring is Cohen--Macaulay again \cite[Theorem 2.1.3]{BrunsHerzog} and thus $R_{A_{\p}}$ is indeed Cohen--Macaulay. To apply Hironaka's criterion, it suffices thus to show that the map $A_{\mathfrak{p}} \to R_{A_{\p}}$ is injective. 

We argue similarly to part \ref{normal}. As $A$ is an integral domain, the map $A_{\mathfrak{p}} \to A_{\mathfrak{p}}[\Delta^{-1}]$ is an injection. Thus, it suffices to show the injectivity of $A_{\mathfrak{p}}[\Delta^{-1}] \to R_{A_{\mathfrak{p}}}[\Delta^{-1}]$. But $\Spec R_{A_{\mathfrak{p}}}[\Delta^{-1}] \to \Spec A_{\mathfrak{p}}[\Delta^{-1}]$ is surjective as the base change of $\Spec R/\GG_m \to \MM_{ell}$ along $\Spec A_{\mathfrak{p}}[\Delta^{-1}]\to \MM_{ell}$, and the reducedness of $A$ implies thus that $A_{\mathfrak{p}}[\Delta^{-1}] \to R_{A_{\mathfrak{p}}}[\Delta^{-1}]$ is indeed injective.  
\end{proof}

We will apply this criterion to $R = \MF_1(n)$. While it was already shown that $\MF_1(n)$ is always normal and often Cohen--Macaulay in the last section, the next thing to check is the finiteness of $R_A$ over $A$ in this case. A crucial ingredient is the following proposition. 

\begin{prop}\label{prop:Conrad}
The maps $\MMb_1(n) \to \MMb_{ell}$ and $\MMb_0(n) \to \MMb_{ell}$ are finite and flat.  In particular, also $\MM_1(n) \to \MM_{ell}$ and $\MM_0(n) \to \MM_{ell}$ are finite and flat. The degree $d_n$ of $\MMb_1(n) \to \MMb_{ell}$ satisfies $d_n = n^2\prod_{p|n}(1-\frac1{p^2})$ and $\MMb_0(n) \to \MMb_{ell}$ is of degree $\frac{d_n}{\varphi(n)}$ for $\varphi$ Euler's totient function. 
\end{prop}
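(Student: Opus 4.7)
The plan has four phases. First, I would establish the statement on the open substack $\MM_{ell} \subset \MMb_{ell}$, where the classical story applies. Since $n$ is invertible on $\MMb_{ell}$, the $n$-torsion $E[n]$ of the universal elliptic curve over $\MM_{ell}$ is finite \'etale of rank $n^2$, and $\MM_1(n)$ sits inside it as the open-and-closed substack parametrising sections of exact order $n$. A direct geometric fibre count (taking into account that $-1 \in \Aut(E)$ moves any point of order $\geq 3$) yields that $\MM_1(n) \to \MM_{ell}$ is finite \'etale of degree $d_n = n^2\prod_{p\mid n}(1-p^{-2})$. The natural $(\Z/n)^\times$-action on the marked section is free on geometric fibres, which exhibits $\MM_1(n) \to \MM_0(n)$ as a $(\Z/n)^\times$-torsor, whence $\MM_0(n) \to \MM_{ell}$ is finite \'etale of degree $d_n/\varphi(n)$.

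Second, I would upgrade this to finiteness after compactification. By definition, $\MMb_1(n) \to \MMb_{ell}$ and $\MMb_0(n) \to \MMb_{ell}$ are the relative normalizations in $\MM_1(n)$ and $\MM_0(n)$. The same reasoning as in \cref{lem:finitem1n}, but with a smooth affine cover of $\MMb_{ell}$ in place of $\Spec A \to \MM_{cub}$, produces finiteness: such a cover is smooth over $\Z[\tfrac1n]$ and hence Nagata, the pullback of $\MM_1(n)$ (respectively $\MM_0(n)$) is an affine scheme of finite type with reduced coordinate ring by Step 1, and then the standard normalization criterion \cite[Tag 03GR]{stacks-project} gives that the relative normalization is finite. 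Finiteness descends through the fpqc cover.

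The third step is the main one and will be the principal obstacle: passing from \emph{finite} to \emph{finite and flat} via miracle flatness. The stack $\MMb_{ell}$ is smooth of relative dimension one over $\Z[\tfrac1n]$, so every smooth affine cover $\Spec B \to \MMb_{ell}$ is regular of Krull dimension two. By construction $\MMb_1(n)$ is normal, and normalization is preserved by smooth base change (\cref{lem:smoothbasechange}), so the pullback $\Spec C = \Spec B \times_{\MMb_{ell}} \MMb_1(n)$ is a normal Noetherian scheme finite over $B$, hence of Krull dimension two. A normal Noetherian scheme of dimension at most two satisfies Serre's $S_2$ condition, and thus is Cohen--Macaulay. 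Hironaka's miracle flatness criterion (as already quoted in the paper just above, cf.\ \cite[25.16]{Nagata}) now applies to each finite local extension $B_{\mathfrak{n}} \to C_{\mathfrak{m}}$ and yields flatness; flatness then descends through the fpqc cover. The argument for $\MMb_0(n) \to \MMb_{ell}$ is identical.

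Finally, for the degrees: flatness gives local freeness of $f_*\OO$, and since $\MMb_{ell}$ is connected the rank is constant. It can therefore be read off on the open dense substack $\MM_{ell}$, where Step 1 gives $d_n$ and $d_n/\varphi(n)$ respectively. The crux of the whole argument is the miracle-flatness input of Step 3, which converts the purely set-theoretic normalization into a flat cover by exploiting the two-dimensional, regular, generically \'etale nature of the situation.
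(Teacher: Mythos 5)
Your proof is correct and takes a genuinely different, more self-contained route than the paper's. The paper outsources finiteness and flatness entirely to Conrad's Theorem 4.1.1, a substantial result built on the theory of generalized elliptic curves with level structure at the cusps, and contributes only the degree calculation (carried out analytically via $X_1(n)$ over $\C$). You instead re-derive finiteness and flatness directly from the definition of $\MMb_1(n)$ as a relative normalization, redeploying the paper's own toolbox: the Nagata criterion exactly as in \cref{lem:finitem1n}, and Hironaka's miracle flatness exactly as in \cref{lem:CMFlatness}. The extra observation that makes this work is that the pullback $\Spec C$ of $\MMb_1(n)$ to a smooth affine chart of $\MMb_{ell}$ is normal (relative normalization in a normal scheme yields a normal scheme, and normality ascends along smooth maps) and two-dimensional, so $S_2$ already forces Cohen--Macaulayness --- in pleasant contrast with the cubical case in \cref{lem:CMFlatness}, where the ring $R_A$ lives in high dimension and an explicit Cohen--Macaulay hypothesis on $\MF_1(n)$ is needed. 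Your degree computation is also cleaner: $f_*\OO$ is locally free over the connected $\MMb_{ell}$, so its rank can be read off over $\MM_{ell}$ where everything is finite \'etale. Two small inaccuracies are worth flagging: the parenthetical about $-1\in\Aut(E)$ moving points of order $\geq 3$ is a red herring --- the geometric fibre of $\MM_1(n)\to\MM_{ell}$ over a point classifying $E$ is the finite \emph{scheme} of exact-order-$n$ points on $E$, so the degree is $d_n$ regardless of $\Aut(E)$; the factor of $2$ from $\pm1$ only appears if one compares against the coarse curve $X_1(n)$, which you do not need to. And $C_{\mathfrak m}$ is not itself module-finite over $B_{\mathfrak n}$; apply the criterion to the semilocal ring $C\otimes_B B_{\mathfrak n}$ as the paper does, or supply the dimension equality $\dim C_{\mathfrak m}=\dim B_{\mathfrak n}$ via going-down before invoking the local version of miracle flatness.
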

\begin{proof}
 The first part is contained in Theorem 4.1.1 of \cite{Conrad}. For the formula for $d_n$ note that the degree of $\MMb_1(n) \to \MMb_{ell}$ agrees with that of $\MMb_1(n)_{\C} \to \MMb_{ell,\C}$ as $\MMb_{ell}$ is connected. As recalled in \cref{sec:Comparisons}, for $n\geq 5$ the analytification of $\MMb_1(n)_{\C}$ agrees with $X_1(n)$ and as the generic point of $\MMb_{ell,\C}$ has automorphism group of order $2$, the degree $d_n$ is twice the degree of $X_1(n) \to X_1(1)$, which is computed in \cite[Sections 3.8+3.9]{DiamondShurman}. The cases $n=2,3$ and $4$ are easily computed by hand. 
 
The degree of $\MMb_0(n) \to \MMb_{ell}$ agrees with that of $\MM_0(n) \to \MM_{ell}$. As $\MM_1(n) \to \MM_0(n)$ is a $(\Z/n)^\times$-Galois cover, it has degree $\varphi(n)$. The formula for the degree of $\MM_0(n) \to \MM_{ell}$ follows. 
\end{proof}

Before we come to the next proposition, consider again the $\GG_m$-torsor $\MMb_1^1(n) \to \MMb_1(n)$ that trivializes $\omline$. As $H^0(\MMb_1^1(n), \OO_{\MMb_1^1(n)}) = \MF_1(n)$, we obtain a $\GG_m$-equivariant map $\MMb_1^1(n) \to \Spec \MF_1(n)$ and thus $\MMb_1(n) \to \Spec \MF_1(n)/\GG_m$, where the $\GG_m$-action on $\Spec \MF_1(n)$ corresponds to the standard grading on the ring of modular forms. 

\begin{lemma}\label{cubic}
  The map $\MMb_1(n) \to \MMb_{ell} \to \MM_{cub}$ factors over $\Spec \MF_1(n)/\GG_m$, resulting in the following commutative square:
\[
 \begin{tikzcd}
 \MMb_1(n) \arrow[r, "j"]\arrow[d, "f" swap] & \Spec \MF_1(n)/\GG_m \arrow[d, "\widetilde{f}"]\\
 \MMb_{ell} \arrow[r, "i" swap] & \MM_{cub}
  \end{tikzcd}
\]
\end{lemma}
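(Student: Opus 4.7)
The plan is to construct the map $\widetilde{f}$ explicitly and then verify the square commutes by pulling back to the $\GG_m$-torsor $\pi\colon \MMb_1^1(n) \to \MMb_1(n)$, which is fpqc and hence enough to detect equality of maps into $\MM_{cub}$.

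First, I would define $\widetilde{f}$. By \cref{prop:coordinatesm1n}, the pullback $\pi^*\EE$ of the universal generalized elliptic curve is cut out by a Weierstra{\ss} equation with coefficients $a_1,\dots,a_6 \in \MF_1(n)$ of degrees $|a_i|=i$. The assignment $a_i \mapsto a_i$ therefore defines a grading-preserving ring map $A = \Z[\tfrac1n][a_1,\dots,a_6] \to \MF_1(n)$, hence a $\GG_m$-equivariant morphism $\Spec \MF_1(n) \to \Spec A$. Passing to stack quotients and composing with the canonical map $\Spec A/\GG_m \to \MM_{cub}$ from Section \ref{StacksHopfElliptic} gives the desired morphism $\widetilde{f}\colon \Spec \MF_1(n)/\GG_m \to \MM_{cub}$.

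Next, I would show the square commutes. Since $\pi$ is a $\GG_m$-torsor and in particular fpqc, it suffices to check that the two composites $\MMb_1^1(n) \to \MM_{cub}$ obtained by precomposition with $\pi$ agree. Going clockwise, the composite $\MMb_1^1(n) \to \Spec \MF_1(n) \to \Spec \MF_1(n)/\GG_m \xrightarrow{\widetilde{f}} \MM_{cub}$ classifies, by construction of $\widetilde{f}$ and the modular interpretation of $\Spec A \to \MM_{cub}$ from \cref{StacksHopfElliptic}, precisely the cubical curve defined over $\MMb_1^1(n)$ by the Weierstra{\ss} equation with coefficients $a_1,\dots,a_6$. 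Going counterclockwise, the composite $\MMb_1^1(n) \xrightarrow{\pi} \MMb_1(n) \xrightarrow{f} \MMb_{ell} \xrightarrow{i} \MM_{cub}$ classifies the underlying cubical curve of $\pi^*\EE$, which by \cref{prop:coordinatesm1n} is cut out by exactly the same Weierstra{\ss} equation. The two maps to $\MM_{cub}$ therefore agree.

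I do not expect a serious obstacle here: once $\widetilde{f}$ is set up from the Weierstra{\ss} coefficients produced by \cref{prop:coordinatesm1n}, the commutativity is essentially a tautology. The only point that requires a moment of care is keeping the $\GG_m$-equivariance straight, which is ensured by the fact that the weight grading on the $a_i$ inside $\MF_1(n)$ matches the grading $|a_i|=i$ on $A$ used to define $\Spec A/\GG_m \to \MM_{cub}$; and the use of fpqc descent to reduce the verification to $\MMb_1^1(n)$, which is legitimate because morphisms into an algebraic stack are determined by their restriction along any fpqc cover of the source.
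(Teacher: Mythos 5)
Your proof is correct and takes essentially the same approach as the paper's: both use \cref{prop:coordinatesm1n} to produce the grading-preserving ring map $A \to \MF_1(n)$ via the Weierstra{\ss} coefficients $a_i$, and both then pass to $\GG_m$-quotients (the paper phrases the descent as ``quotienting by $\GG_m$'' while you phrase it as checking commutativity after pulling back along the fpqc cover $\pi\colon \MMb_1^1(n)\to\MMb_1(n)$, but these are equivalent). The paper's proof is essentially the two-sentence version of what you wrote.
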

\begin{proof}
\cref{prop:coordinatesm1n} yields a commutative square
\[
\begin{tikzcd}
 \MMb_1^1(n) \arrow[r]\arrow[dd] & \Spec \MF_1(n) \arrow[d]\\
 &\Spec A \arrow[d]\\
 \MMb_{ell} \arrow[r] & \MM_{cub}
 \end{tikzcd}
\]

Quotiening by $\GG_m$ gives the result since the map $A \to \MF_1(n)$ is grading preserving (i.e.\ $|a_i| = i$). 
\end{proof}

\begin{prop}\label{prop:finiteness}
The $A$-module $R_A = \Gamma \tensor_A \MF_1(n)$ is finite. 
\end{prop}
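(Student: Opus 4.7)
My plan is to combine the identification $R_A \cong \MF_1(n)[r,s,t]$ with the finiteness of $f\colon \MMb_1(n) \to \MMb_{ell}$ away from the cuspidal locus and a codimension-two extension to glue across it. First, I would identify $R_A$ as a ring: the Weierstra{\ss} transformation formulae realize $\Gamma = A[r,s,t]$ as a polynomial algebra in the three variables $r,s,t$ over the subring $\eta_R(A)\subseteq \Gamma$, so tensoring over $A$ along $\eta_R$ with $\MF_1(n)$ produces $R_A \cong \MF_1(n)[r,s,t]$. Since $\MF_1(n)$ is normal by \cref{prop:normal}, $R_A$ is normal as well and in particular satisfies Serre's condition $S_2$.

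Next, I would show that $R_A[c_4^{-1}]$ and $R_A[\Delta^{-1}]$ are finite over $A[c_4^{-1}]$ and $A[\Delta^{-1}]$ respectively. A Weierstra{\ss} cubic with one of $c_4$ or $\Delta$ invertible is a generalized elliptic curve (either smooth or nodal), so both $\Spec A[c_4^{-1}]$ and $\Spec A[\Delta^{-1}]$ factor through $\MMb_{ell}\subseteq \MM_{cub}$. Combining \cref{cubic} with \cref{prop:coordinatesm1n} to identify $\MMb_1(n)$ as the open complement of $V(c_4,\Delta)/\GG_m$ inside $\Spec \MF_1(n)/\GG_m$, base change of the finite flat morphism of \cref{prop:Conrad} along these opens yields the desired finiteness (and even flatness) over each localized base.

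Finally, I would extend finiteness across the cuspidal locus $V(c_4,\Delta)\subseteq \Spec A$, which has codimension two because $c_4,\Delta$ form a regular sequence in $A$ (as recalled in the proof of \cref{prop:normal}). The two finite flat modules above glue to a locally free coherent sheaf $\FF$ on $U = \Spec A \setminus V(c_4,\Delta)$, and because $\Spec A$ is regular, the pushforward $j_*\FF$ along $j\colon U\hookrightarrow \Spec A$ is coherent by the standard extension result for reflexive sheaves across a codimension-two closed subset. Its global sections compute as $R_A[c_4^{-1}]\cap R_A[\Delta^{-1}]$, which equals $R_A$ by the $S_2$ property via the Mayer--Vietoris exact sequence
\[
0 \to H^0_{V(c_4,\Delta)}(R_A) \to R_A \to R_A[c_4^{-1}] \oplus R_A[\Delta^{-1}] \to R_A[c_4^{-1}\Delta^{-1}] \to H^1_{V(c_4,\Delta)}(R_A) \to 0
\]
whose outer terms vanish since $V(c_4,\Delta)$ still has height at least two in $R_A$ (by flatness of the polynomial extension $\MF_1(n)\hookrightarrow \MF_1(n)[r,s,t]$). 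The main subtlety I anticipate is ensuring that the various incarnations of $c_4$ and $\Delta$ (as elements of $A$, of $\MF_1(n)$, and of $R_A$) agree under the relevant structure maps, so that the gluing sheaf $\FF$ really is the restriction of $\widetilde{R_A}$ to $U$ and the $S_2$ intersection unambiguously recovers $R_A$.
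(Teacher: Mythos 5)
Your proposal is correct and follows the same overall strategy as the paper's: identify the restriction of $\widetilde{R_A}$ to $U = \Spec A \setminus V(c_4,\Delta)$ with the pullback of the finite flat vector bundle $f_*\OO_{\MMb_1(n)}$, then extend coherently across the codimension-two locus $V(c_4,\Delta)$ using reflexivity. However, the two proofs differ in how they pin down the identification $\Gamma(U,\FF) = R_A$. The paper identifies $\widetilde{R_A}$ with $k_*q^*f_*\OO_{\MMb_1(n)}$ as sheaves via the equality $j_*\OO_{\MMb_1(n)} \cong \OO_{\Spec R/\GG_m}$ (coming from the description of quasi-coherent sheaves on $\Spec R/\GG_m$ as graded modules) together with flat base change along $p\colon\Spec A \to \MM_{cub}$; no depth argument is needed. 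You instead use the ring isomorphism $R_A \cong \MF_1(n)[r,s,t]$ (valid because $\Gamma = A[r,s,t]$ is a polynomial algebra over $\eta_R(A)$), deduce normality and hence $S_2$ for $R_A$ from \cref{prop:normal}, and then invoke the local cohomology sequence at $(c_4,\Delta)$ to see $R_A \cong H^0(U,\widetilde{R_A})$. Both routes work; your $S_2$ argument is a slightly more commutative-algebraic replacement for the paper's sheaf-theoretic base change.

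Two small points where your sketch is more compressed than a full proof would require. First, your statement that $j_*\FF$ is coherent ``by the standard extension result for reflexive sheaves'' is exactly the content of the paper's last paragraph, which proceeds in two steps: first extend $\FF$ to a coherent reflexive sheaf $\EE$ on $\Spec A$ (the nontrivial step, cited to \cite[Lemma 3.2]{MeierVB}), and then invoke \cite[Proposition 1.6]{HartshorneReflexive} to conclude $j_*\FF \cong \EE$. The raw statement ``$j_*$ of a coherent sheaf on $U$ is coherent'' is false in general, so this decomposition matters. Second, the subtlety you flag about the various incarnations of $c_4$ and $\Delta$ is real but resolves exactly as you'd hope: since $\eta_L$ and $\eta_R$ agree on $c_4$ and $\Delta$, the $A$-action of these elements on $R_A = \Gamma\tensor_A\MF_1(n)$ is multiplication by their images in $\MF_1(n)$, so the localizations $R_A[c_4^{-1}]$ and $R_A[\Delta^{-1}]$ are unambiguous and compatible with the geometric pullbacks furnished by \cref{cubic} and \cref{prop:coordinatesm1n}.
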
 
\begin{proof}
Set $R = \MF_1(n)$. We will use the graded ring map $R \to A$ classifying a Weierstra{\ss} equation for the universal elliptic curve over $\MMb_1(n)$ and recall the resulting commutative square from \cref{cubic}:
\[
\begin{tikzcd}
 \MMb_1(n) \arrow[r, "j"]\arrow[d, "f" swap] & \Spec R/\GG_m \arrow[d, "\widetilde{f}"]\\
 \MMb_{ell} \arrow[r, "i" swap] & \MM_{cub}
 \end{tikzcd}
\]
As a quasi-coherent sheaf on $\Spec R/\GG_m$ is determined by its graded global sections as explained in \cref{GradedModulesQCoh}, we see that $j_*\OO_{\MMb_1(n)}$ is exactly $\OO_{\Spec R/\GG_m}$. Consider the cartesian square
\[
\begin{tikzcd}
 U \arrow[r, "k"]\arrow[d, "q" swap] & \Spec A \arrow[d, "p"] \\
 \MMb_{ell} \arrow[r, "i" swap] & \MM_{cub},
\end{tikzcd}
\]
where $k$ is an open immersion onto the complement of the common vanishing locus $V(c_4,\Delta)$ of $c_4$ and $\Delta$ by \cite[Proposition III.1.4]{SilvermanAEC}.
 As $\Spec R_A \cong \Spec A \times_{\MM_{cub}} \Spec R/\GG_m$ by \cref{lem:pullback}, we see that $R_A$ are the global sections of 
$$p^*\widetilde{f}_*\OO_{\Spec R/\GG_m} \cong p^*\widetilde{f}_*j_*\OO_{\MMb_1(n)} \cong p^*i_*f_*\OO_{\MMb_1(n)}.$$
As $p$ is flat, we have an isomorphism $p^*i_*f_*\OO_{\MMb_1(n)} \cong k_*q^*f_*\OO_{\MMb_1(n)}$. As $f$ is finite flat by the last proposition, $q^*f_*\OO_{\MMb_1(n)}$ is a vector bundle. We claim that for every reflexive sheaf $\FF$ on $U$ the pushforward $k_*\FF$ is reflexive and hence coherent. In particular, this would imply that $\Gamma(k_*q^*f_*\OO_{\MMb_1(n)}) = R_A$ is a finitely generated $A$-module if we apply the claim to $\FF = q^*f_*\OO_{\MMb_1(n)}$. 

To finish the proof, let $\FF$ be a reflexive sheaf on $U$. It is possible to extend $\FF$ to a reflexive sheaf $\EE$ on $\Spec A$ (see e.g.\ \cite[Lemma 3.2]{MeierVB}). By \cite[Proposition 1.6]{HartshorneReflexive}, we see that $k_*\FF \cong k_*k^*\EE \cong \EE$ as $A$ is normal and the complement $V(c_4,\Delta)$ of $U$ has codimension $2$. Thus, $k_*\FF$ is reflexive. 
\end{proof}

\begin{Theorem}\label{prop:flatnessm1n}\label{prop:Gamma1(n)flatness}
For every $n\geq 2$, we have an equivalence 
\[\MM_1(n)_{cub}\simeq \Spec \MF_1(n)/\GG_m\]
and more generally $\MM_1(n)_{cub, B}$ is equivalent to $\Spec \MF(\Gamma_1(n); B)/\GG_m$ for every flat $\Z[\frac1n]$-algebra $B$. 

Moreover, if $\MF_1(n)_1 \to \MF_1(\Gamma_1(n); \Fb_l)$ is surjective for all primes $l$ not dividing $n$, the map $\MM_1(n)_{cub}\to \MM_{cub}$ is flat. This is in particular true for all $n\leq 28$.
\end{Theorem}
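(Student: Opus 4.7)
The plan is to apply \cref{lem:CMFlatness} with $R = \MF_1(n)$, using the graded ring map $A \to \MF_1(n)$ provided by \cref{prop:coordinatesm1n}, which classifies the Weierstra\ss{} equation for the universal generalized elliptic curve over $\MMb_1^1(n)$. The crucial preliminary step is the identification $\Spec \MF_1(n)[\Delta^{-1}]/\GG_m \simeq \MM_1(n)$. Indeed, by \cref{prop:coordinatesm1n}, $\MMb_1(n)$ is equivalent to the quotient by $\GG_m$ of the open complement of the common vanishing locus of $c_4$ and $\Delta$ in $\Spec \MF_1(n)$. The non-cuspidal open substack $\MM_1(n) \subset \MMb_1(n)$ corresponds precisely to inverting $\Delta$, and non-vanishing of $\Delta$ automatically forces us to lie outside the simultaneous vanishing locus of $c_4$ and $\Delta$.

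With this identification in hand, I would verify the remaining hypotheses of \cref{lem:CMFlatness}. Normality of $\MF_1(n)$ for every $n \geq 2$ is \cref{prop:normal}, and finiteness of $\Gamma \tensor_A \MF_1(n)$ as an $A$-module is \cref{prop:finiteness}. For surjectivity of $\MM_1(n) \to \MM_{ell}$, I would invoke \cref{prop:Conrad}: the map is finite and flat, hence simultaneously closed (as any finite morphism is) and open (as any flat morphism locally of finite presentation is), so its image is a nonempty clopen substack of the connected stack $\MM_{ell}$ and hence all of it. Applying part~(a) of \cref{lem:CMFlatness} then yields $\MM_1(n)_{cub} \simeq \Spec \MF_1(n)/\GG_m$, since $\MM_1(n)_{cub}$ is by definition the normalization of $\MM_{cub}$ in $\MM_1(n)$.

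For the base change statement, the plan is to base change the equivalence $\MM_1(n)_{cub} \simeq \Spec \MF_1(n)/\GG_m$ along $\Spec B \to \Spec \Z[\tfrac1n]$ and use \cref{lem:basechange} to identify $\MF_1(n) \tensor_{\Z[\frac1n]} B$ with $\MF(\Gamma_1(n); B)$ under the flatness hypothesis on $B$. Finally, the flatness claim follows from part~(b) of \cref{lem:CMFlatness} once the hypothesis on surjectivity of $\MF_1(n)_1 \to \MF_1(\Gamma_1(n); \Fb_l)$ is translated into the Cohen--Macaulay property of $\MF_1(n)$ via \cref{prop:CM}; the range $n \leq 28$ is then covered by \cref{exa:CM}. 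Since every necessary ingredient is already established earlier in the paper, I anticipate no substantial obstacle beyond the surjectivity argument for $\MM_1(n) \to \MM_{ell}$, which is essentially formal.
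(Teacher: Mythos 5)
Your proposal is correct and follows essentially the same route as the paper's proof: both invoke \cref{lem:CMFlatness} with $R = \MF_1(n)$, checking its hypotheses via \cref{prop:normal}, \cref{prop:finiteness}, \cref{prop:CM} and \cref{exa:CM}, and then base-change using \cref{lem:basechange}. The only cosmetic difference is in how the identification $\MM_1(n)\simeq\Spec\MF_1(n)[\Delta^{-1}]/\GG_m$ is established---you derive it from the open-immersion clause of \cref{prop:coordinatesm1n} by restricting to $D(\Delta)$, whereas the paper goes through the affineness of $\MM_1^1(n)$ together with the identification $\MFC_1(n)\cong\MF_1(n)[\Delta^{-1}]$ from \cref{sec:analytic}; both are valid and lead to the same conclusion.
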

\begin{proof}
The stack $\MM_1^1(n)$ is representable by an affine scheme for $n\geq 2$ (see e.g\ \cite[Proposition 2.4, Example 2.5]{MeierDecomposition}). As $\MFC_1(n) = \MFC(\Gamma_1(n);\Z[\frac1n])$ coincides with the global sections of $\OO_{\MM_1^1(n)}$, we obtain $\MM_1^1(n) \simeq \Spec \MFC_1(n)$ and thus $\MM_1(n) = \Spec \MFC_1(n)/\GG_m$ for all $n\geq 2$. As $\MM_1(n) \to \MM_{ell}$ is surjective, we see that $\Spec \MFC_1(n)/\GG_m \to \MM_{ell}$ is surjective as well. Thus the identification $\MF_1(n)[\Delta^{-1}] \cong \MFC_1(n)$ from \cref{sec:analytic} together with the commutative diagram from the proof of \cref{cubic} shows the first condition of \cref{lem:finitenessandnormalization}.

\Cref{prop:finiteness} and \cref{prop:normal} imply that we can apply the criterion \cref{lem:finitenessandnormalization} to conclude that $\MM_1(n)_{cub}\simeq \Spec \MF_1(n)/\GG_m$. This implies $\MM_1(n)_{cub, B} \simeq\Spec \MF(\Gamma_1(n); B)/\GG_m$ for any flat $\Z[\frac1n]$-algebra $B$ as $\MF(\Gamma_1(n); B) \cong \MF_1(n)\tensor B$ by \cref{lem:basechange}.

The statement about flatness follows again from \cref{lem:finitenessandnormalization} if we use \cref{prop:finiteness}, \cref{prop:CM} and \cref{exa:CM}.
\end{proof}

Whether a corresponding flatness result exists for $\MM_0(n)_{cub}$ remains open, although our main algebraic result \cref{thm:MainTheorem} provides such a result in a special case. 

Next we will fix the notation already introduced in the proof of \cref{lem:finitenessandnormalization}, specializing to $R = \MF_1(n)$.  
\begin{Notation}\label{notation}
For a given $n$ and an $A$-algebra $C$, we define $R_C$ and $S_C$ by the equivalences 
\begin{alignat*}{2}
\Spec R_C &\simeq&& \Spec C\times_{\MM_{cub}} \MM_1(n)_{cub}\\
\Spec S_C &\simeq&& \Spec C\times_{\MM_{cub}}\MM_0(n)_{cub} .
\end{alignat*}
Here we use that by construction $\MM_1(n)_{cub} \to \MM_{cub}$ and $\MM_0(n)_{cub} \to \MM_{cub}$ are affine. 
We will show in the next lemma that $S_C = (R_C)^{(\Z/n)^\times}$. 
\end{Notation}

As normalization commutes with smooth base change by \cref{lem:smoothbasechange}, the stack $\MM_1(n)_{cub}\times_{\MM_{cub}} \MM_{ell}$ is the normalization of $\MM_{ell}$ in $\MM_1(n)$, which is $\MM_1(n)$ itself as $\MM_1(n) \to \MM_{ell}$ is finite by \cref{prop:Conrad}. Thus
\[\Spec C \times_{\MM_{cub}} \MM_1(n) \simeq \Spec R_C \times_{\MM_{cub}} \MM_{ell}.\]
As $R_C$ is an $A$-algebra and $\Spec A \times_{\MM_{cub}}\MM_{ell} \simeq \Spec A[\Delta^{-1}]$ by \cref{MellopenMcub}, we obtain an equivalence $\Spec C \times_{\MM_{cub}} \MM_1(n) \simeq \Spec R_C[\Delta^{-1}]$ that forms a commutative square with defining equivalence of $R_C$ and the obvious maps. Similarly, we obtain $\Spec C \times_{\MM_{cub}} \MM_0(n) \simeq \Spec S_C[\Delta^{-1}]$ with the analogous property. 

\begin{Lemma} \label{SCasFP}
Let $C$ be an $A$-algebra such that the composite $\Spec C \to \Spec A \to \MM_{cub}$ is smooth.
\begin{enumerate}
    \item The map $S_C \to R_C^{(\Z/n)^\times}$ is an isomorphism.
    \item The ring of invariants $R_{C[\Delta^{-1}]}^{(\Z/n)^\times}$ is projective over $C[\Delta^{-1}]$. Its rank is precisely the degree of the map $\MMb_0(n)\to \MMb_{ell}$.  
\end{enumerate}
\end{Lemma}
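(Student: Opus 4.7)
The plan is as follows. First, the action of $(\Z/n)^\times$ on $\MM_1(n)$ over $\MM_{ell}$ extends, by functoriality of normalization (cf.\ \cref{lem:AAstrich}), to an action on $\MM_1(n)_{cub}$ over $\MM_{cub}$, and the map $\MM_1(n)_{cub}\to\MM_0(n)_{cub}$ is then equivariant with respect to the trivial action on the target. Base-changing along $\Spec C \to \MM_{cub}$ provides a $(\Z/n)^\times$-action on $R_C$ fixing $C$ together with the $C$-algebra map $S_C \to R_C^{(\Z/n)^\times}$ whose bijectivity is to be shown.

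For part (1) the key intermediate step is the identification $R_C[\Delta^{-1}]^{(\Z/n)^\times} = S_C[\Delta^{-1}]$. Under the equivalences $\Spec R_C[\Delta^{-1}] \simeq \Spec C\times_{\MM_{cub}}\MM_1(n)$ and the analogous one for $\Spec S_C[\Delta^{-1}]$, this reduces to the fact that $\MM_1(n)\to\MM_0(n)$ is an \'etale $(\Z/n)^\times$-torsor, a property preserved under base change. Given this, the rest is a formal manipulation with integral closures. By \cref{lem:AAstrich}, $R_C$ sits inside $R_C[\Delta^{-1}]$ as the integral closure of $C$, and analogously for $S_C$; moreover, the $(\Z/n)^\times$-action on $R_C[\Delta^{-1}]$ preserves integrality, so it preserves the subring $R_C$. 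Any $r \in R_C^{(\Z/n)^\times}$ is then integral over $C$ and lies in $R_C[\Delta^{-1}]^{(\Z/n)^\times} = S_C[\Delta^{-1}]$, forcing $r \in S_C$; conversely, any $s \in S_C \subseteq S_C[\Delta^{-1}] = R_C[\Delta^{-1}]^{(\Z/n)^\times}$ is integral over $C$, hence sits in $R_C$, and is manifestly invariant.

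For part (2), part (1) yields $R_{C[\Delta^{-1}]}^{(\Z/n)^\times}\cong S_{C[\Delta^{-1}]}$. Because $\MM_{ell} \hookrightarrow \MM_{cub}$ is an open immersion and normalization commutes with smooth base change (\cref{lem:smoothbasechange}), $\MM_0(n)_{cub}\times_{\MM_{cub}}\MM_{ell}$ is the normalization of $\MM_{ell}$ in the already finite morphism $\MM_0(n)\to \MM_{ell}$, which is simply $\MM_0(n)$ itself. Therefore $\Spec S_{C[\Delta^{-1}]} \simeq \Spec C[\Delta^{-1}]\times_{\MM_{ell}}\MM_0(n)$. Since $\MM_0(n)\to\MM_{ell}$ is finite and flat by \cref{prop:Conrad}, its base change $S_{C[\Delta^{-1}]}$ is finitely presented and flat, hence projective, over $C[\Delta^{-1}]$; its rank equals the degree of $\MM_0(n)\to\MM_{ell}$, which coincides with the degree of $\MMb_0(n)\to\MMb_{ell}$ since the former is the restriction of the latter to the dense open $\MM_{ell}$.

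The main obstacle I expect is pinning down $R_C[\Delta^{-1}]^{(\Z/n)^\times} = S_C[\Delta^{-1}]$ cleanly; once the torsor structure of $\MM_1(n) \to \MM_0(n)$ is harnessed and combined with the observation that $\MM_0(n)_{cub}\times_{\MM_{cub}}\MM_{ell} \simeq \MM_0(n)$, the remainder of the proof reduces to standard behavior of integral closures and of finite flat morphisms under base change.
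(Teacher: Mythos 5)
Your proposal is correct and follows essentially the same strategy as the paper: first identify $S_C[\Delta^{-1}]$ with $R_C[\Delta^{-1}]^{(\Z/n)^\times}$ via the $(\Z/n)^\times$-torsor property of $\MM_1(n)\to\MM_0(n)$, then descend this identification to the rings themselves using that relative normalization commutes with smooth base change. The one small deviation is in how integrality is secured: you observe directly that, by smooth base change of normalization, $R_C$ \emph{is} the integral closure of $C$ in $R_C[\Delta^{-1}]$, so every element of $R_C^{(\Z/n)^\times}$ is automatically integral over $C$; the paper instead proves this integrality by first establishing finiteness of $R_C$ over $C$ (via \cref{prop:finiteness} and noetherianity of $C$). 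Your shortcut is legitimate and a bit leaner, since the needed integrality statement is already built into the definition of $R_C$ via \cref{lem:AAstrich} and \cref{lem:smoothbasechange}.
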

\noindent A formula for the degree of $\MMb_0(n)\to \MMb_{ell}$ was recalled in \cref{prop:Conrad}.
\begin{proof}
We start by analyzing the situation after inverting $\Delta$. As the map $\MM_1(n) \to \MM_0(n)$ is a $(\Z/n)^\times$-torsor, the pullback 
\[\Spec C \times_{\MM_{cub}}\MM_1(n) \to \Spec C \times_{\MM_{cub}} \MM_0(n)\]
is a $(\Z/n)^\times$-torsor as well. This map can be identified with $\Spec R_C[\Delta^{-1}] \to \Spec S_C[\Delta^{-1}]$ and thus the map $S_C[\Delta^{-1}] \to R_C[\Delta^{-1}]^{(\Z/n)^\times}$ is an isomorphism. As $\Spec S_C[\Delta^{-1}] \to \Spec C[\Delta^{-1}]$ agrees with the base change of $\MM_0(n) \to \MM_{ell}$ along $\Spec C[\Delta^{-1}]\to \MM_{ell}$, we see that $S_C[\Delta^{-1}]$ is projective module over $C[\Delta^{-1}]$, whose rank agrees with the degree of $\MM_0(n) \to \MM_{ell}$ (or equivalently of $\MMb_0(n) \to \MMb_{ell}$).

Next we want to show that the map $S_C \rightarrow (R_C)^{(\Z/n)^\times}$ is an isomorphism. By \cref{lem:smoothbasechange}, $S_C$ consists of those elements in $S_C[\Delta^{-1}]$ that are integral over $C$ and in particular $S_C \to S_C[\Delta^{-1}]$ is an injection. For analogous reasons $R_C \to R_C[\Delta^{-1}]$ is injective as well. 
Thus, the two maps in the composition
\begin{align*}S_C \to R_C^{(\Z/n)^\times} \to R_C^{(\Z/n)^\times}[\Delta^{-1}]\cong S_C[\Delta^{-1}]\end{align*}
are injections. Hence, it remains to show that every element in $R_C^{(\Z/n)^\times}$ is integral over $C$ to obtain that $S_C \to R_C^{(\Z/n)^\times}$ is surjective as well. 

As $R_A$ is a finite $A$-module by \cref{prop:Gamma1(n)flatness}, $R_C \cong C \tensor_A R_A$ is a finite $C$-module. Moreover, $C$ is noetherian as it is smooth and hence finitely presented over the stack $\MM_{cub}$ and the latter is noetherian because $A$ is. Hence, $(R_C)^{(\Z/n)^\times}$ is finite over $C$ and thus every element of it is indeed integral over $C$. 
\end{proof}

\section{Computation of invariants}
\label{sec:Splitting}
In order to understand $\MM_0(7)_{cub}$ and $\MM_1(7)_{cub}$, we will perform in this section crucial preliminary calculations. 
Recall that at the prime $3$, there is a smooth cover 
$\Spec \widetilde{A} \to \MM_{cub}$, where $ \widetilde{A} :=\mathbb{Z}_{(3)}[\aq_2,\aq_4,\aq_6]$ as discussed in \cref{StacksHopfElliptic}. This defines rings $R_{\widetilde{A}}$ and $S_{\widetilde{A}}$ as in \cref{notation}. \cref{SCasFP} identifies $S_{\widetilde{A}}$ with the invariants $R_{\widetilde{A}}^{(\Z/7)^\times}$. Here and in the following we consider the case $n=7$ so that \[\Spec R_{\widetilde{A}} \simeq \Spec \widetilde{A}\times_{\MM_{cub}}\MM_1(7)_{cub}.\]
Our main goal in this section is to compute explicitly $R_{\widetilde{A}}$ together with its $(\Z/7)^\times$-action and especially the invariants $S_{\widetilde{A}}$. Later we will see that the pushforward of $\OO_{\MM_0(n)_{cub}}$ to $\MM_{cub}$ corresponds under the equivalence from \cref{AtildevsQCoh} to an $(\widetilde{A},\widetilde{\Gamma})$-comodule structure on $S_{\widetilde{A}}$. Thus, the computation of $S_{\widetilde{A}}$ will be key to our splitting result \cref{thm:MainTheorem}. Throughout this section, we localize implicitly at the prime $3$.

 We recall from \cref{StacksHopfElliptic} the graded Hopf algebroid $(\widetilde{A}, \widetilde{\Gamma})$. We have an isomorphism $\widetilde{\Gamma}\cong {\widetilde{A}}[r]$ and $\eta_R$ is determined under this identification by
\[
 \begin{aligned}
 \eta_R(\aq_2)&=\aq_2+3r,\\
  \eta_R(\aq_4)&=\aq_4+2r\aq_2+3r^2,\\
  \eta_R(\aq_6)&=\aq_6+r\aq_4+r^2\aq_2+r^3,
 \end{aligned}
\]
whereas $\eta_L$ is the canonical inclusion of $\widetilde{A}$.

We transform the Tate normal form of the universal cubical curve over 
\[
\MF_1(7)\cong\Z_{(3)}[z_1,z_2,z_3]/(z_1z_2+z_2z_3+z_3z_1)
\]
into the Weierstra\ss{} form
 \[
y^2=x^3+\kappa(\aq_2)x^2+\kappa(\aq_4)+\kappa(\aq_6),
\]
determining a map $\kappa\colon\widetilde{A}\to \mf$ which makes the diagram 
\[
\begin{tikzcd}
\Spec \MF_1(7) \arrow[r, "\kappa"] \arrow[d] & \Spec \widetilde{A}\arrow[d]\\
\MM_1(7)_{cub}\arrow[r] & \MM_{cub}
\end{tikzcd}
\]
commutative. Using \cref{Alphas}, we compute this map $\kappa\colon\widetilde{A}\to \mf$  to be given by 
\begin{equation*}\label{Betas}
 \begin{aligned}
  \aq_2&\mapsto\frac{1}{4}\alpha_1^2+\alpha_2=\frac{1}{4}(z_1-z_2+z_3)^2-z_2z_3,\\
  \aq_4&\mapsto\frac{1}{2}\alpha_1\alpha_3=\frac{1}{2}z_1z_3^2(z_1-z_2+z_3),\\
  \aq_6&\mapsto\frac{1}{4}\alpha_3^2=\frac{1}{4}z_1^2z_3^4.
 \end{aligned}
\end{equation*}

Using \cref{lem:pullback} and \cref{AtildevsQCoh}, the map $\widetilde{A} \to \mf$ allows us to rewrite $R_{\widetilde{A}}$ as follows:
\[
R_{\widetilde{A}}\cong\widetilde{\Gamma} { }_{\eta_R}\underset{\widetilde{A}}{\otimes} \mf .
\] 

\begin{prop}\label{RAtildefree}
 $R_{\widetilde{A}}$ is a free $\widetilde{A}$-module of rank $48$.
 \end{prop}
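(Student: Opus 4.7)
The plan is to obtain freeness by combining the finiteness and flatness results of the previous section with a graded Nakayama argument, and then to pin down the rank by a generic-point computation. Finiteness of $R_{\widetilde{A}}$ over $\widetilde{A}$ is immediate from \cref{lem:finitem1n}: since $\MM_1(7)_{cub} \to \MM_{cub}$ is finite, its base change $\Spec R_{\widetilde{A}} = \Spec \widetilde{A} \times_{\MM_{cub}} \MM_1(7)_{cub}$ is finite over $\Spec \widetilde{A}$. Flatness is supplied by \cref{prop:Gamma1(n)flatness}, which applies since $7 \leq 28$; base change then gives flatness of $R_{\widetilde{A}}$ over $\widetilde{A}$.

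To upgrade flatness to freeness, note that $\widetilde{A}$ is a Noetherian non-negatively graded ring with $\widetilde{A}_0 = \Z_{(3)}$ a local ring, and $R_{\widetilde{A}}$ is a finitely generated flat graded $\widetilde{A}$-module supported in non-negative degrees. The quotient $R_{\widetilde{A}}/\widetilde{A}_{+} R_{\widetilde{A}} = R_{\widetilde{A}} \tensor_{\widetilde{A}} \Z_{(3)}$ is then a finitely generated flat module over the PID $\Z_{(3)}$, hence free of some finite rank $N$. Lifting a homogeneous $\Z_{(3)}$-basis gives, by graded Nakayama, a graded surjection $\phi \colon \bigoplus_{i=1}^{N} \widetilde{A}(-d_i) \twoheadrightarrow R_{\widetilde{A}}$. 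Flatness of $R_{\widetilde{A}}$ then forces $\ker(\phi) \tensor_{\widetilde{A}} \Z_{(3)} = 0$, and a second application of graded Nakayama (noting that $\ker(\phi)$ is finitely generated and bounded below) yields $\ker(\phi) = 0$, so $R_{\widetilde{A}}$ is free of rank $N$.

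Finally, we pin down $N = 48$ by inverting $\Delta$. Since $\widetilde{A}$ is an integral domain in which $\Delta$ is nonzero, $\Spec \widetilde{A}[\Delta^{-1}]$ is a dense open of $\Spec \widetilde{A}$ that maps through $\MM_{ell} \subset \MM_{cub}$. As normalization commutes with smooth base change (\cref{lem:smoothbasechange}) and $\MM_1(7)$ is itself normal (being étale over $\MM_{ell}$), the pullback of $\MM_1(7)_{cub}$ to $\MM_{ell}$ is simply $\MM_1(7)$. Hence $R_{\widetilde{A}}[\Delta^{-1}]$ agrees with the coordinate ring of the base change of the finite flat morphism $\MM_1(7) \to \MM_{ell}$ along $\Spec \widetilde{A}[\Delta^{-1}] \to \MM_{ell}$, whose degree is $d_7 = 7^2(1 - 1/49) = 48$ by \cref{prop:Conrad}. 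Since the rank of a free module is preserved under localization, $N = 48$. The only subtlety in the argument is the graded Nakayama bookkeeping, which proceeds routinely thanks to the inputs already in place.
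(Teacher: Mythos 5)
Your argument is correct and reaches the same conclusion, but by a genuinely different route for the freeness step. The paper first observes that $R_{\widetilde{A}}$ is finite projective over $\widetilde{A}$ (finiteness from \cref{lem:finitem1n}, flatness from \cref{prop:flatnessm1n} together with \cref{exa:normal}), and then invokes the Quillen--Suslin theorem, using that $\widetilde{A}$ is a polynomial ring over the DVR $\Z_{(3)}$, to conclude freeness. You instead exploit the grading: since $\widetilde{A}$ is nonnegatively graded with $\widetilde{A}_0 = \Z_{(3)}$ local, you lift a homogeneous basis of $R_{\widetilde{A}} \tensor_{\widetilde{A}} \Fb_3$ to a graded surjection from a finite free module, and then use flatness plus a second application of \cref{GradedNakayama} to kill the kernel. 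This is essentially the standard fact that a finitely generated projective graded module over a $*$-local ring is free; it is more elementary than Quillen--Suslin (which is a hard theorem and is not really needed here precisely because the module is graded), and it fits the graded context of the paper well. For the rank, both proofs invert $\Delta$ and appeal to \cref{prop:Conrad}; your version spells out the reduction to the degree of $\MM_1(7) \to \MM_{ell}$ by identifying $\MM_1(7)_{cub} \times_{\MM_{cub}} \MM_{ell}$ with $\MM_1(7)$ via \cref{lem:smoothbasechange}, which is essentially the observation the paper makes just after \cref{notation}.

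One small inaccuracy: you justify the normality of $\MM_1(7)$ by saying it is \emph{\'etale} over $\MM_{ell}$, but this is false --- the map $\MM_1(7) \to \MM_{ell}$ is finite flat but ramified over the elliptic points. The correct reason $\MM_1(7)$ is normal is that it is smooth over $\Spec \Z[\tfrac17]$, hence regular. This does not affect the rest of the argument.
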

 \begin{proof}
Recall that $\Spec R_{\widetilde{A}}$ is the pullback $\Spec \widetilde{A}\times_{\MM_{cub}}\MM_1(7)_{cub}$. The map $\MM_1(7)_{cub} \to \MM_{cub}$ is finite and flat by \cref{lem:finitem1n}, \cref{prop:flatnessm1n} and \cref{exa:normal}. Thus, $R_{\widetilde{A}}$ is a finite projective module over $\widetilde{A}$. As $\widetilde{A}$ is a polynomial ring over a discrete valuation ring, the Quillen--Suslin Theorem \cite{QuillenProjective}, \cite{SuslinProjective} implies that $R_{\widetilde{A}}$ is already free. Its rank coincides with the degree of the map $\MM_1(7)_{cub} \to \MM_{cub}$ that coincides with that of the restriction $\MMb_1(7) \to \MMb_{ell}$. By \cref{prop:Conrad}, this is $7^2-1 = 48$. 
 \end{proof}

We want to identify $R_{\widetilde{A}}$ with  $\mf[r]$. The tensor product $R_{\widetilde{A}}\cong\widetilde{\Gamma} { }_{\eta_R}\underset{\widetilde{A}}{\otimes} \mf$ can be described as
\[
R_{\widetilde{A}} \cong \mf[\aq_2, \aq_4, \aq_6, r]/(\eta_R(\aq_2)=\kappa(\aq_2), \eta_R(\aq_4)=\kappa(\aq_4), \eta_R(\aq_6)=\kappa(\aq_6)). 
\]
Looking closely at the formulae, we can eliminate $\aq_2, \aq_4, \aq_6$ and this yields a ring isomorphism to $\mf[r]$. The resulting composite
\[\lambda\colon \widetilde{A} \xrightarrow{\eta_L\tensor 1} \widetilde{\Gamma}\tensor_{\widetilde{A}} \mf \cong \mf[r] \]
defines a rather complicated $\widetilde{A}$-module structure. Concretely it is given by:
\[
 \begin{aligned}
\aq_2&\mapsto&&\frac{1}{4}(z_1-z_2+z_3)^2-z_2z_3-3r,\\
 \aq_4 &\mapsto&& \frac{1}{2}z_1z_3^2(z_1-z_2+z_3)-2r(\frac{1}{4}(z_1-z_2+z_3)^2-z_2z_3-3r) -3r^2,\\
  \aq_6&\mapsto&&\frac{1}{4}z_1^2z_3^4-r(\frac{1}{2}z_1z_3^2(z_1-z_2+z_3)
  -2r(\frac{1}{4}(z_1-z_2+z_3)^2-z_2z_3-3r)-3r^2)\\
  &&&-r^2(\frac{1}{4}(z_1-z_2+z_3)^2-z_2z_3-3r) -r^3. 
 \end{aligned}
\]
The map $\lambda$ corresponds to the projection $\Spec \widetilde{A}\times_{\MM_{cub}}\MM_1(7)_{cub} \to \Spec \widetilde{A}$ under the identification of the source with $\Spec \mf[r]$.

  Our next aim is to make \cref{RAtildefree} explicit. More precisely, we claim that there is an $\widetilde{A}$-basis of $R_{\widetilde{A}}$ of the form $X\sqcup Xr\sqcup Xr^2$, where $X$ is a $16$-element subset of the image of $\mf$ in $R_{\widetilde{A}}$. To prove this, we will use the following graded version of the Nakayama lemma.

\begin{lemma} \label{GradedNakayama}
Let $R$ be a nonnegatively graded commutative ring such that $R_0$ is local with maximal ideal $\mathfrak{m}_0$. Let $\mathfrak{m}$ be the homogeneous ideal generated by $\mathfrak{m}_0$ and the ideal of all homogeneous elements of positive degree. Let furthermore $M$ and $N$ be nonnegatively graded $R$-modules that are finitely generated over $R_0$ in every degree. Then a map $M \to N$ of graded $R$-modules is surjective if $M/\mathfrak{m} \to N/\mathfrak{m}$ is surjective.
\end{lemma}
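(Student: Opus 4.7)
The plan is to pass to the cokernel and argue degree-by-degree via ordinary (ungraded) Nakayama applied to the local ring $R_0$. Let $f\colon M\to N$ be the given graded $R$-module map and set $C = N/\operatorname{im}(f)$. Then $C$ is a nonnegatively graded $R$-module, and since taking quotients preserves the hypothesis, each $C_n$ is a finitely generated $R_0$-module. The assumption that $M/\mathfrak{m}\to N/\mathfrak{m}$ is surjective translates to $C = \mathfrak{m} C$, i.e.\ $C_n = (\mathfrak{m}C)_n$ for every $n\geq 0$. The goal is to show $C = 0$, and I will prove $C_n = 0$ by induction on $n$.

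For the base case $n=0$, note that $\mathfrak{m}$ is generated by $\mathfrak{m}_0 \subset R_0$ together with elements of strictly positive degree; the latter contribute nothing to $(\mathfrak{m}C)_0$ because $C$ is nonnegatively graded. Hence $C_0 = \mathfrak{m}_0 C_0$. Since $C_0$ is a finitely generated $R_0$-module and $R_0$ is local with maximal ideal $\mathfrak{m}_0$, classical Nakayama forces $C_0 = 0$. For the inductive step, assume $C_k = 0$ for all $k < n$. Decomposing $(\mathfrak{m}C)_n$ into contributions from $\mathfrak{m}_0$ and from homogeneous elements of positive degree gives
\[
C_n \;=\; (\mathfrak{m}C)_n \;=\; \mathfrak{m}_0 C_n \;+\; \sum_{i>0} R_i\cdot C_{n-i} \;=\; \mathfrak{m}_0 C_n,
\]
where the last equality uses the inductive hypothesis. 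Again, $C_n$ is finitely generated over $R_0$, so classical Nakayama yields $C_n = 0$.

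There is no real obstacle here; the only point requiring any care is to verify that the hypothesis ``finitely generated over $R_0$ in every degree'' is inherited by the cokernel (clear, as quotients of finitely generated $R_0$-modules are finitely generated) and to make sure the decomposition of $(\mathfrak{m}C)_n$ genuinely leaves no positive-degree term when $C$ vanishes in all degrees below $n$. With these checks, the induction runs cleanly and yields $C = 0$, i.e.\ $f$ is surjective.
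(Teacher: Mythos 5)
Your proof is correct and is essentially the same argument as the paper's: both reduce to the cokernel and show it vanishes by applying ordinary Nakayama to $R_0$ degree by degree, using that the positive-degree part of $\mathfrak{m}$ contributes nothing to the lowest nonzero degree. The paper phrases this as a minimal-counterexample argument rather than an explicit induction, but the content is identical.
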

\begin{proof}
It suffices to show that $N = 0$ if $N/\mathfrak{m} = 0$. By the usual Nakayama lemma it suffices to show that $N/(\mathfrak{m}_0)$ is zero. Assume otherwise and let $i$ be the minimal non-vanishing degree of $N/(\mathfrak{m}_0)$ and hence of $N$. As $(\mathfrak{m}N)_i = \mathfrak{m}_0N_i$, we have $(N/(\mathfrak{m}_0))_i \cong (N/\mathfrak{m})_i$. Thus $(N/(\mathfrak{m}_0))_i $ vanishes as well.
\end{proof}
 
 Recall the notation $\sigma_1=z_1+z_2+z_3$ and $\sigma_3=z_1z_2z_3$ for elementary symmetric polynomials in $z_i$. 
 \begin{lemma}
The subset 
  \[
  \begin{aligned}
   X=&\{1\}\cup\{\sigma_1, z_2, z_3\}\cup\{ \sigma_1^2, \sigma_1z_2, \sigma_1z_3, z_2z_3\}\cup\{\sigma_1^3, \sigma_1^2z_2, \sigma_1^2z_3, \sigma_3\}\\
   &\cup\{  \sigma_1^4, \sigma_1^3z_2, \sigma_1^3z_3\} \cup \{\sigma_1^4z_2\}.
   \end{aligned}
  \]
  of $R_{\widetilde{A}}$ gives an $\widetilde{A}$-basis of $R_{\widetilde{A}}$ of the form $X\sqcup Xr\sqcup Xr^2$.
  \end{lemma}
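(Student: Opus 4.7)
My plan is to apply the graded Nakayama Lemma \cref{GradedNakayama} to reduce the basis claim to a finite $\mathbb{F}_3$-linear algebra check in the fiber $R_{\widetilde{A}}/\mathfrak{m}$, and then to identify this fiber explicitly as a ring whose dimension forces the conclusion.

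First, I observe that by \cref{RAtildefree} the module $R_{\widetilde{A}}$ is free of rank $48$ over $\widetilde{A}$, and the candidate set $X \sqcup Xr \sqcup Xr^2$ has exactly $48$ elements. Consider the graded $\widetilde{A}$-module map $\varphi\colon F \to R_{\widetilde{A}}$ from the free graded $\widetilde{A}$-module $F$ with one basis vector in the appropriate degree for each element of $X \sqcup Xr \sqcup Xr^2$ (with the grading $|z_i| = |r| = 1$ on the target). Since any surjective $\widetilde{A}$-linear map between two free modules of equal finite rank is automatically an isomorphism (by Vasconcelos' theorem, applied after choosing an isomorphism $F \cong \widetilde{A}^{\oplus 48}$), it suffices to show that $\varphi$ is surjective. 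Applying \cref{GradedNakayama} with $\mathfrak{m} = (3, \aq_2, \aq_4, \aq_6)$ reduces this to showing that the images of $X \sqcup Xr \sqcup Xr^2$ span $R_{\widetilde{A}}/\mathfrak{m}$ as an $\mathbb{F}_3$-vector space.

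Next, I would compute $R_{\widetilde{A}}/\mathfrak{m}$ explicitly. Using the ring isomorphism $R_{\widetilde{A}} \cong \mf[r]$ and the given formulas for $\lambda(\aq_i)$, reducing modulo $3$ yields $\lambda(\aq_2) \equiv (z_1-z_2+z_3)^2 - z_2 z_3$; modulo this relation, $\lambda(\aq_4)$ simplifies to $2 z_1 z_3^2(z_1-z_2+z_3)$; and modulo both relations, $\lambda(\aq_6)$ collapses to $z_1^2 z_3^4 - r^3$. Setting
\[
B := \mathbb{F}_3[z_1,z_2,z_3] \bigm/ \bigl(\sigma_2,\; (z_1-z_2+z_3)^2 - z_2 z_3,\; z_1 z_3^2 (z_1-z_2+z_3)\bigr),
\]
I obtain $R_{\widetilde{A}}/\mathfrak{m} \cong B[r]/(r^3 - z_1^2 z_3^4)$, which is free of rank $3$ over $B$ with basis $\{1, r, r^2\}$. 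Since $\dim_{\mathbb{F}_3} R_{\widetilde{A}}/\mathfrak{m} = 48$, this forces $\dim_{\mathbb{F}_3} B = 16 = |X|$, and the claim reduces to showing that $X$ is an $\mathbb{F}_3$-basis of $B$.

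The final step is a direct degree-by-degree verification, which I expect to be the main obstacle. The three defining relations of $B$ have degrees $2, 2, 4$; since the quotient $B$ is finite-dimensional and $\mathbb{F}_3[z_1,z_2,z_3]$ is Cohen--Macaulay of Krull dimension $3$, these relations form a regular sequence, giving $B$ the Hilbert series $(1+t)^3(1+t^2) = 1 + 3t + 4t^2 + 4t^3 + 3t^4 + t^5$. This matches the graded sizes $|X_d| = 1, 3, 4, 4, 3, 1$ of $X$ for $d = 0, \dots, 5$, so it suffices to verify in each degree that $X_d$ spans $B_d$ using the defining relations and derived identities such as $\sigma_1^2 \equiv z_3(z_2-z_1)$ in $B$. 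Because the quartic relation $z_1 z_3^2(z_1-z_2+z_3)$ breaks the $S_3$-symmetry of the $z_i$, this normal-form computation is asymmetric and somewhat delicate, but it is elementary and finite.
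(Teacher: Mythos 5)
Your outline follows the paper's strategy through the first two steps --- both invoke \cref{RAtildefree} and \cref{GradedNakayama} with $\mathfrak{m}=(3,\aq_2,\aq_4,\aq_6)$ to reduce to a span-check in $R_{\widetilde{A}}/\mathfrak{m}$, and both use the ``$48$ generators of a free rank-$48$ module form a basis'' observation (the paper via Noetherianity, you via Vasconcelos; either works). Where you part ways with the paper is in how the fiber computation is carried out. The paper delegates it entirely to \texttt{MAGMA}: it computes $\dim_{\mathbb{F}_3}R_{\widetilde{A}}/\mathfrak{m}=48$ and then checks linear independence of the $48$ proposed generators directly. You instead \emph{structure} the fiber: you correctly observe that after quotienting, the three $\lambda(\aq_i)$ generate the same ideal as $(z_1-z_2+z_3)^2-z_2z_3$, $z_1z_3^2(z_1-z_2+z_3)$, and $r^3-z_1^2z_3^4$, so $R_{\widetilde{A}}/\mathfrak{m}\cong B[r]/(r^3-z_1^2z_3^4)$ is free of rank $3$ over the $16$-dimensional Artinian ring $B$. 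From Artinian-ness you correctly infer that the three relations cutting out $B$ form a regular sequence (since $\mathbb{F}_3[z_1,z_2,z_3]$ is Cohen--Macaulay of dimension $3$), and the Hilbert series $(1+t)^3(1+t^2)$ of the resulting complete intersection matches the graded sizes $(1,3,4,4,3,1)$ of $X$. This collapses the $48$-dimensional check to a $16$-dimensional one and gives conceptual control (e.g.\ $B$ is Gorenstein with socle in degree $5$, consistent with $|X_5|=1$). What your approach buys is transparency and a much smaller verification; what you lose is that the paper's \texttt{MAGMA} check is actually \emph{done}, whereas your final ``degree-by-degree'' spanning verification in $B$ is announced as delicate but not carried out. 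To be a complete proof you would need to exhibit normal forms in each of the degrees $2$ through $5$ using the relations (e.g.\ $\sigma_1^2\equiv z_1(z_3-z_2)=z_3(z_2-z_1)$ in $B$, as you note), or admit the residual computer check; as a proof \emph{plan} the route is sound and a genuine simplification of the paper's brute-force approach.
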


\begin{proof}
As we already know by \cref{RAtildefree} that $R_{\widetilde{A}}$ is a free $\widetilde{A}$-module of rank $48$ and $\widetilde{A}$ is noetherian, it is enough to show that $X\sqcup Xr\sqcup Xr^2$ is a generating system (since it has precisely $48$ elements).  

We want to apply the graded Nakayama \cref{GradedNakayama} to the ideal $I = (3,\aq_2,\aq_4, \aq_6)$ in the ring $\widetilde{A}$. 
Thus, it is enough to show that the images of $X\sqcup Xr\sqcup Xr^2$ form a basis of $R_{\widetilde{A}}/I$. This is done by the following \texttt{MAGMA} code. 

\begin{verbatim}
F3:=FiniteField(3);
M<z1, z2, z3, r>:=PolynomialRing(F3,4);
ka2:=(z1-z2+z3)^2/4-z2*z3;
ka4:=z1*z3^2*(z1-z2+z3)/2;
ka6:=z1^2*z3^4/4;
la2:=ka2-3*r;
la4:=ka4-2*r*la2-3*r^2;
la6:=ka6-r*la4-r^2*la2-r^3;
RAtildeModI:=quo<M|z1*z2+z2*z3+z3*z1,la2,la4,la6>;
RAtildeModIasVSp, pr:=VectorSpace(RAtildeModI);
Dimension(RAtildeModIasVSp);
sigma1:=z1+z2+z3;
sigma3:=z1*z2*z3;
X:={1, sigma1, z2, z3, sigma1^2, sigma1*z2, sigma1*z3, z2*z3,
	sigma1^3, sigma1^2*z2, sigma1^2*z3, sigma3, 
	sigma1^4, sigma1^3*z2, sigma1^3*z3, sigma1^4*z2};
Xr:={x*r: x in X};
Xr2:={x*r^2: x in X};
IsIndependent(pr(X) join pr(Xr) join pr(Xr2));
\end{verbatim}

Here \texttt{ka2} denotes $\kappa(\aq_2)$ and $\texttt{la2}$ denotes $\lambda(\aq_2)$ etc. We first check the quotient $R_{\widetilde{A}}/I$ to be $48$-dimensional as an $\mathbb{F}_3$vector space and then show that $X\sqcup Xr\sqcup Xr^2$ is linearly independent. 
\end{proof}

Now that we have some understanding of $R_{\widetilde{A}}$ as an ${\widetilde{A}}$-module, we can look at the $(\Z/7)^{\times}$-action on it and the invariants under this action. 
Recall we have chosen the generator $\tau = [3] \in \left(\Z/7\right)^{\times}$ and shown it to act on the $z_i \in \MF_1(7)$ via
\[
 \begin{aligned}
  \tau(z_1)=-z_3,\\
  \tau(z_2)=-z_1,\\
  \tau(z_3)=-z_2. 
 \end{aligned}
\]
This grading-preserving action induces an action on $R_{\widetilde{A}}$ by the identification of its spectrum with $\Spec \widetilde{A}\times_{\MM_{cub}}(\Spec \mf/\GG_m)$ via \cref{prop:Gamma1(n)flatness} and thus on $\mf[r]$ as well. By definition, the projections onto both factors are $(\Z/7)^\times$-equivariant with the trivial action on $\widetilde{A}$ and the action above on $\mf$. Thus, the obvious inclusion $\mf \to \mf[r]$ is $(\Z/7)^\times$-equivariant and so is $\lambda\colon \widetilde{A} \to \mf[r]$. In particular, $\tau(\lambda(\aq_2)) = \lambda(\aq_2)$ enforces $\tau(r)=r+z_2z_3$. 

The computation of invariants relies again on \texttt{MAGMA} computations. We will list now some elements which can be checked to be invariant, and the remainder of the section is devoted to the proof that these elements actually form a basis of $S_{\widetilde{A}}$ as an $\widetilde{A}$-module, in particular proving that this module is free. 

We consider the elements $1, \sigma_1^2, \sigma_1^4, \sigma_3^2$ as well as 
\[
\begin{aligned}
 n_4&:=\sigma_1^2r -z_1^3z_3-z_1z_2^3-z_1^2z_3^2,\\
 \sigma_1^2n_4 &=\sigma_1^4r-\sigma_1^2\cdot (z_1^3z_3+z_1z_2^3+z_1^2z_3^2),\\
 n_6 &:=\sigma_1^2r^2-2z_1^3z_3r-2z_1z_2^3r-2z_1^2z_3^2r+2z_1^3z_3^3-z_1^2z_3^4\\
 &=2n_4r-\sigma_1^2r^2+2z_1^3z_3^3-z_1^2z_3^4,\\
 \sigma_1^2n_6 &=\sigma_1^2\cdot(\sigma_1^2r^2-2z_1^3z_3r-2z_1z_2^3r-2z_1^2z_3^2r+2z_1^3z_3^3-z_1^2z_3^4),
 \end{aligned}
\]
which we claim to elements in $S_{\widetilde{A}}$.

Indeed, to check that the non-obvious elements $n_4$ and $n_6$ are invariant, we use \texttt{MAGMA}. For the example of $n_4$, we have used the following code: 
\begin{verbatim}
QQ:=RationalField();
M<z1, z2, z3, r>:=PolynomialRing(QQ,4);
RAtildeQ:=quo<M|z1*z2+z2*z3+z3*z1>;
tau:=hom<M ->RAtildeQ| -z3, -z1, -z2, r+z2*z3>;
proj:=hom<M ->RAtildeQ| z1, z2, z3, r>;
sigma1:=z1+z2+z3;
n4:=sigma1^2*r -z1^3*z3-z1*z2^3-z1^2*z3^2;
tau(n4)-proj(n4);
\end{verbatim}

Our aim is to prove the following proposition. 
\begin{prop}\label{SBbasis}
 The elements
 \[
  1, \sigma_1^2, \sigma_1^4, n_4, \sigma_1^2n_4, n_6, \sigma_1^2n_6, \sigma_3^2
 \]
 form a $\widetilde{A}$-basis of $S_{\widetilde{A}}$; in particular, $S_{\widetilde{A}}$ is a free $\widetilde{A}$-module of rank $8$. 
\end{prop}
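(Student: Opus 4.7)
First, check that each of the eight listed elements is $(\Z/7)^\times$-invariant. The formula $\tau(r) = r + z_2 z_3$ is forced by $\tau$-equivariance of $\lambda\colon \widetilde{A} \to R_{\widetilde{A}}$ applied to $\lambda(\aq_2) = \kappa(\aq_2) - 3r$; combined with $\tau(\sigma_1) = -\sigma_1$ and $\tau(\sigma_3) = -\sigma_3$, invariance of $\sigma_1^{2k}$ and $\sigma_3^2$ is immediate, while invariance of $n_4$ and $n_6$ becomes a polynomial identity in $\Z[z_1, z_2, z_3, r]/(z_1 z_2 + z_2 z_3 + z_3 z_1)$ verified by a \texttt{MAGMA} script analogous to the one displayed above for $n_4$.

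Let $\psi\colon \widetilde{A}^8 \to S_{\widetilde{A}}$ be the $\widetilde{A}$-linear map sending the standard basis to the eight elements, with image $M \subseteq S_{\widetilde{A}}$. To show $\psi$ is an isomorphism, the plan is to apply the graded Nakayama lemma (\cref{GradedNakayama}) with $\mathfrak{m} = (3, \aq_2, \aq_4, \aq_6)$. The core computational input, parallel to the argument used for the basis $X \sqcup Xr \sqcup Xr^2$ in the previous lemma, is a \texttt{MAGMA} check that $(R_{\widetilde{A}}/\mathfrak{m}R_{\widetilde{A}})^{(\Z/7)^\times}$ is exactly $8$-dimensional over $\mathbb{F}_3$ and admits the reductions of the eight listed elements as an $\mathbb{F}_3$-basis; this uses the explicit $\tau$-action on the basis $X \sqcup Xr \sqcup Xr^2$.

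From this, $\mathbb{F}_3$-linear independence of the reductions upgrades (via the grading and $\widetilde{A}$-freeness of $R_{\widetilde{A}}$, applied degree by degree) to $\widetilde{A}$-linear independence of the eight elements, giving injectivity of $\psi$ and freeness of $M$ of rank $8$. For surjectivity, any $f \in S_{\widetilde{A}}$ reduces to the invariant subspace, so there exists $m \in M$ with $f - m \in S_{\widetilde{A}} \cap \mathfrak{m}R_{\widetilde{A}}$. The \textbf{main obstacle} is then to verify $S_{\widetilde{A}} \cap \mathfrak{m}R_{\widetilde{A}} = \mathfrak{m}S_{\widetilde{A}}$: once this is established, graded Nakayama applied to $S_{\widetilde{A}}/M$ forces $M = S_{\widetilde{A}}$ and concludes the proof. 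This last identity can plausibly be handled by a further degree-wise \texttt{MAGMA} computation, using the explicit generators of $\mathfrak{m}R_{\widetilde{A}}$ and of $M$ to produce an explicit lift of a basis of $S_{\widetilde{A}}/\mathfrak{m}S_{\widetilde{A}}$ from the eight reductions; alternatively, one can exploit the semidirect structure $(\Z/7)^\times \cong \Z/2 \times \Z/3$, the averaging under the order-$2$ subgroup being harmless mod $3$.
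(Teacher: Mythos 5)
Your plan takes a genuinely different route from the paper, but it runs into a concrete obstruction. You propose to run graded Nakayama against the maximal homogeneous ideal $\mathfrak{m} = (3, \aq_2, \aq_4, \aq_6)$, which requires the eight listed elements to reduce to a basis of $\bigl(R_{\widetilde{A}}/\mathfrak{m}R_{\widetilde{A}}\bigr)^{(\Z/7)^\times}$. This fails: reading off the explicit expansion of $\sigma_3^2$ in the basis $X \sqcup Xr \sqcup Xr^2$ (Step 1 of the paper's proof), every monomial not involving some $\aq_i$ carries a coefficient whose numerator ($81$, $3$, $33$, $537$, $69$, $51$, $3$) is divisible by $3$ while the denominator is a power of $2$; hence $\sigma_3^2 \in \mathfrak{m}R_{\widetilde{A}}$, i.e.\ $\sigma_3^2 \equiv 0 \pmod{\mathfrak{m}R_{\widetilde{A}}}$. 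So the eight reductions span a subspace of dimension at most $7$, the \texttt{MAGMA} check you propose would come back negative, and the ``main obstacle'' you flag, $S_{\widetilde{A}} \cap \mathfrak{m}R_{\widetilde{A}} = \mathfrak{m}S_{\widetilde{A}}$, is in fact false (granting the statement to be proved): $\sigma_3^2$ lies in the left side but, being a purported $\widetilde{A}$-basis element, not in the right.

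The underlying issue is that taking $(\Z/7)^\times$-invariants is not exact after reduction modulo $3$ (let alone modulo $\mathfrak{m}$), because $3 \mid \lvert(\Z/7)^\times\rvert$. Your semidirect-product remark only handles the $\Z/2$-direction, not the troublesome $\Z/3$. The paper's proof is designed precisely around this failure. It reduces only modulo $3$, not modulo $\mathfrak{m}$: there the relevant $8 \times 8$ minor has determinant a nonzero rational multiple of $\aq_2$, which is nonzero in the integral domain $\Fb_3[\aq_2,\aq_4,\aq_6]$ although not a unit, so $V \otimes \Fb_3 \to R_{\widetilde{A}} \otimes \Fb_3$ is injective. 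This is combined with a rational computation (where invariants are exact and Quillen--Suslin gives freeness of rank $8$) and an induction on divisibility by $3$ to force $V = S_{\widetilde{A}}$. Graded Nakayama is used only in the preceding lemma, for the full comodule $R_{\widetilde{A}}$, where no invariants are being taken and it applies without issue.
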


\begin{proof}
The proof will proceed in several steps. 

\begin{enumerate}[wide, labelwidth=!, labelindent=0pt, label=\textbf{Step \arabic{enumi}:}]
    \item As a first step, we compute using \texttt{MAGMA} the following expressions for the listed invariants in terms of the basis $X\sqcup Xr\sqcup Xr^2$: 
\begin{align*}
 1=& 1\\
 \sigma_1^2=& \sigma_1^2\\
%%%%%%%%%%%%
 \sigma_1^4=& \sigma_1^4\\
%%%%%%%%%%%%
 n_4=&\frac{1}{2}\sigma_1^3z_3+4\sigma_1^2r-6\sigma_1z_3r
-2\overline{a}_2\sigma_1z_3+\overline{a}_2\sigma_1^2-4\overline{a}_2^2+12\overline{a}_4\\
%%%%%%%%%%%%
n_6=&-\frac{33}{32}  \sigma_1^4 r+\frac{3}{8}  \sigma_1^3  z_2 r+\frac{13}{4}  \sigma_1^3  z_3 r
+\frac{233}{8}  \sigma_1^2 r^2-\frac{21}{4}  \sigma_1  z_2 r^2-42  \sigma_1  z_3 r^2\\
&+\frac{3}{2}  z_2  z_3 r^2-18 \overline{a}_6
-\frac{7}{8}  \overline{a}_4  \sigma_1^2-\frac{1}{4}  \overline{a}_4  \sigma_1  z_2- \overline{a}_4  \sigma_1  z_3+\frac{13}{2}  \overline{a}_4  z_2  z_3
+\frac{123}{2}  \overline{a}_4 r\\
&-\frac{11}{2}  \overline{a}_2^3+\frac{11}{4}  \overline{a}_2^2  \sigma_1^2-\frac{1}{2}  \overline{a}_2^2  \sigma_1  z_2-3  \overline{a}_2^2  \sigma_1  z_3-2  \overline{a}_2^2  z_2  z_3-\frac{41}{2}  \overline{a}_2^2 r\\
&+\frac{37}{2}  \overline{a}_2 \overline{a}_4-\frac{11}{32}  \overline{a}_2  \sigma_1^4+\frac{1}{8} \overline{a}_2  \sigma_1^3  z_2+\frac{3}{4}  \overline{a}_2  \sigma_1^3  z_3+\frac{67}{4} \overline{a}_2  \sigma_1^2 r\\
&-\frac{7}{2} \overline{a}_2  \sigma_1  z_2 r-24  \overline{a}_2 \sigma_1  z_3 r+\overline{a}_2  z_2 z_3 r\\
%%%%%%%%%%%%%%%%%
\sigma_1^2n_4=&
-8 \sigma_1^4 r+6 \sigma_1^3 z_2 r+24 \sigma_1^3 z_3 r+252 \sigma_1^2 r^2-336 \sigma_1 z_3 r^2\\
&+24 \overline{a}_4 \sigma_1 z_2-16 \overline{a}_4 \sigma_1 z_3+96 \overline{a}_4 z_2 z_3+576 \overline{a}_4 r\\
&-64 \overline{a}_2  ^3+28 \overline{a}_2  ^2 \sigma_1^2-8 \overline{a}_2  ^2 \sigma_1 z_2
-32 \overline{a}_2  ^2 \sigma_1 z_3-32 \overline{a}_2  ^2 z_2 z_3-192 \overline{a}_2  ^2 r\\
&+192 \overline{a}_2   \overline{a}_4-3 \overline{a}_2   \sigma_1^4+2 
 \overline{a}_2   \sigma_1^3 z_2+8 \overline{a}_2   \sigma_1^3 z_3+168 \overline{a}_2   \sigma_1^2 r-224 \overline{a}_2   \sigma_1 z_3r\\
%%%%%%%%%%%%%%%%%%
\sigma_3^2=&  \frac{81}{64}\sigma_1^4r-\frac{3}{16}\sigma_1^3z_2r-\frac{33}{8}\sigma_1^3 z_3 r
-\frac{537}{16} \sigma_1^2 r^2+\frac{69}{8} \sigma_1 z_2 r^2+51 \sigma_1 z_3 r^2-\frac{3}{4} z_2 z_3 r^2\\
&-9\overline{a}_6-\frac{17}{16} \overline{a}_4 \sigma_1^2 +\frac{17}{8} \overline{a}_4 \sigma_1 z_2  +\frac{1}{2} \overline{a}_4 \sigma_1 z_3  -\frac{13}{4} \overline{a}_4 z_2 z_3
 -\frac{267}{4} \overline{a}_4 r\\
 &+\frac{27}{4} \overline{a}_2^3-\frac{27}{8} \overline{a}_2^2 \sigma_1^2+\frac{1}{4} \overline{a}_2^2 \sigma_1 z_2+
 \frac{11}{2}\overline{a}_2^2 \sigma_1 z_3
+\overline{a}_2^2 z_2 z_3 
+\frac{89}{4} \overline{a}_2^2 r \\
&-\frac{77}{4} \overline{a}_2 \overline{a}_4
+\frac{27}{64} \overline{a}_2 \sigma_1^4
-\frac{1}{16} \overline{a}_2 \sigma_1^3 z_2
-\frac{11}{8} \overline{a}_2 \sigma_1^3 z_3\\
&-\frac{179}{8} \overline{a}_2 \sigma_1^2 r+\frac{23}{4} \overline{a}_2 \sigma_1 z_2 r +34 \overline{a}_2 \sigma_1 z_3 r-\frac{1}{2} \overline{a}_2 z_2 z_3 r\\
%%%%%%%%%%%%%%%%
\sigma_1^2n_6=& \frac{5933}{3488} \sigma_1^4 r^2+\frac{7599}{872}  \sigma_1^3  z_2 r^2-\frac{255}{872}  \sigma_1^3  z_3 r^2\\
&+\frac{2997}{218}  \overline{a}_6  \sigma_1^2-\frac{11475}{109} \overline{a}_6  \sigma_1  z_2+\frac{816}{109} \overline{a}_6  \sigma_1  z_3\\
&-\frac{2339}{436}  \overline{a}_4  \sigma_1^4 +\frac{4267}{1744}  \overline{a}_4  \sigma_1^3  z_2 +\frac{21951}{1744}  \overline{a}_4  \sigma_1^3  z_3
+\frac{52113}{436}  \overline{a}_4  \sigma_1^2 r\\
&-\frac{28203}{436}  \overline{a}_4  \sigma_1  z_2 r-\frac{64187}{436}  \overline{a}_4  \sigma_1  z_3 r+\frac{2397}{109}  \overline{a}_4  z_2  z_3 r
-\frac{13005}{218}  \overline{a}_4 r^2 \\
&+\frac{16659}{109}  \overline{a}_4^2
+\frac{11279}{1744}  \overline{a}_2  \sigma_1^4 r+\frac{789}{436}  \overline{a}_2  \sigma_1^3  z_2 r-\frac{7061}{436}  \overline{a}_2  \sigma_1^3  z_3 r-168  \overline{a}_2  \sigma_1^2 r^2 \\
&+224  \overline{a}_2  \sigma_1  z_3 r^2
+\frac{15373}{436}  \overline{a}_2  \overline{a}_4  \sigma_1^2-\frac{1077}{436}  \overline{a}_2  \overline{a}_4  \sigma_1  z_2-\frac{17833}{436}  \overline{a}_2  \overline{a}_4  \sigma_1  z_3\\
&-\frac{6177}{109}  \overline{a}_2  \overline{a}_4  z_2  z_3-\frac{46191}{109}  \overline{a}_2  \overline{a}_4 r
+\frac{13485}{3488}  \overline{a}_2^2  \sigma_1^4-\frac{2059}{1744}  \overline{a}_2^2  \sigma_1^3  z_2\\
&-\frac{16675}{1744}  \overline{a}_2^2  \sigma_1^3  z_3-\frac{66203}{436}  \overline{a}_2^2  \sigma_1^2 r+\frac{9401}{436}  \overline{a}_2^2  \sigma_1  z_2 r+\frac{86505}{436}  \overline{a}_2^2  \sigma_1  z_3 r\\
&-\frac{799}{109}  \overline{a}_2^2  z_2  z_3 r+\frac{4335}{218}  \overline{a}_2^2 r^2-\frac{51561}{218}  \overline{a}_2^2  \overline{a}_4
+\frac{13485}{218}  \overline{a}_2^4-\frac{13485}{436}  \overline{a}_2^3  \sigma_1^2\\
&+\frac{2059}{436}  \overline{a}_2^3  \sigma_1  z_2+\frac{16675}{436}  \overline{a}_2^3 \sigma_1  z_3+\frac{2059}{109}  \overline{a}_2^3 z_2 z_3+\frac{15397}{109} \overline{a}_2^3 r
\end{align*}
 \item We want to show that the 8 invariants listed in the statement of the proposition are $\widetilde{A}$-linearly independent elements of $R_{\widetilde{A}}$. 
 Since $\widetilde{A}$ is torsion-free, it is enough to check linearly independency over $\widetilde{A}\otimes_{\mathbb{Z}}\mathbb{Q}$. We observe that there is a non-vanishing $8\times 8$-minor in the  $48\times 8$-matrix corresponding to the map $\widetilde{A}^8 \to R_{\widetilde{A}}\cong \widetilde{A}^{48}$ given by the invariants above. More precisely, after tensoring with $\mathbb{Q}$, the determinant of the following matrix
\[
\renewcommand\arraystretch{1.8}
\begin{blockarray}{ccccccccc}
& 1 & \sigma_1^2 & \sigma_1^4 & n_4 & n_6 & \sigma_1^2n_4 &\sigma_3^2 & \sigma_1^2n_6 \\
\begin{block}{c(cccccccc)}
  1 & 1 & * & * & * & * & * & * & * \\
  \sigma_1^2 & 0 & 1 & * & * & * & * & * & * \\
  \sigma_1^4 & 0 & 0 & 1 & * & * & * & * & *\\ 
  \sigma_1^2r & 0 & 0 & 0 & 4 & * & * & * & *\\
  \sigma_1^4r & 0 & 0 & 0& 0 & -\frac{33}{32} & -8 & \frac{81}{64} & * \\
  \sigma_1z_2r^2 & 0 & 0& 0& 0& -\frac{21}{4} & 0 & \frac{69}{8} & *\\
  z_2z_3r^2 & 0& 0& 0& 0& \frac{3}{2} & 0 & -\frac{3}{4} & *\\
  \sigma_1^4r^2 & 0 & 0& 0& 0& 0& 0& 0& \frac{5933}{3488}\\
\end{block}
\end{blockarray}
 \]
 is invertible in $\widetilde{A}\otimes_{\mathbb{Z}} \mathbb{Q}$. On the left, we recorded the elements of our chosen basis to which the selected $8$ out of $48$ rows correspond. This shows that the map $(\widetilde{A}\otimes_{\mathbb{Z}} \mathbb{Q})^8 \to R_{\widetilde{A}}\otimes_{\mathbb{Z}} \mathbb{Q}$ given by the invariants above is an inclusion of a direct $\widetilde{A}\otimes_{\mathbb{Z}} \mathbb{Q}$-summand.

Thus, we have shown that the $8$ invariants listed above are $\widetilde{A}$-linearly independent elements of $R_{\widetilde{A}}$, so they generate a free sub-$\widetilde{A}$-module of $R_{\widetilde{A}}$ of rank $8$, which we denote by $V$. 

\item Our next goal is to show that this module $V$ is already all of $S_{\widetilde{A}}$ when tensored with $\Q$. Recall that we identify $S_{\widetilde{A}}$ with $R_{\widetilde{A}}^{(\Z/7)^\times}$ as in \cref{SCasFP} and likewise $S_{\widetilde{A}\tensor \Q}$ with $R_{\widetilde{A}\tensor\Q}^{(\Z/7)^\times}$ .

Moreover, there is an isomorphism $S_{\widetilde{A}\otimes \mathbb{Q}} \cong S_{\widetilde{A}}\otimes_{\mathbb{Z}} \mathbb{Q}$ since $R_{\widetilde{A}}\otimes_{\mathbb{Z}}\mathbb{Q}$ can be written as a directed colimit of the form
\[
R_{\widetilde{A}} \xrightarrow{\cdot 2 \hphantom{\cdot}} R_{\widetilde{A}} \xrightarrow{\cdot 3 \hphantom{\cdot}} \ldots 
\]
and directed colimits commute with finite limits in the finitely presentable category of abelian groups (see e.g.\ \cite[Proposition 1.59]{AdamekRosicky}).

 As the order of $(\Z/7)^{\times}$ is invertible in $\widetilde{A}\otimes\Q$, the invariants $S_{\widetilde{A}\otimes \Q}$ are a direct summand  of the free $\widetilde{A}\otimes \Q$-module $R_{\widetilde{A}\otimes \Q}$ and thus projective. By the Quillen--Suslin Theorem, it implies that $S_{\widetilde{A}\otimes \Q}$ is also free, automatically of rank $8$ as this is true after inverting $\Delta$ by the second part of \cref{SCasFP}. 

Since the map $V\otimes \Q \to R_{\widetilde{A}}\otimes \Q$ is split injective, so is the map $V\otimes \Q \to S_{\widetilde{A}}\otimes \Q$. Since we have shown now both sides to be free $\widetilde{A}\otimes_{\mathbb{Z}} \mathbb{Q}$-modules of rank $8$, this map is also surjective and thus an isomorphism. 

\item In this step, we reduce the proof of the proposition to showing that the map $V\to S_{\widetilde{A}}$ (or to $R_{\widetilde{A}}$) 
is injective when we tensor it with $\mathbb{F}_3$. This will imply the surjectivity of $V\to S_{\widetilde{A}}$. Indeed, let $x\in S_{\widetilde{A}}$ be some element. By the rational statement, we know that there is an element $y$ in $V$  
and $k\in \mathbb{N}$ such that $3^kx=y$. If $k=0$, we are done; otherwise we can conclude that $y$ is mapped to $0$ in $R_{\widetilde{A}}$ after tensoring with $\mathbb{F}_3$, so by injectivity of the map $V\otimes \mathbb{F}_3\to R_{\widetilde{A}}\otimes \mathbb{F}_3$ it can be divided by $3$ in $V$. 
Inductively, this implies the claim.

\item Finally, we show that the map $V\to R_{\widetilde{A}}$ is still injective after tensoring $\mathbb{F}_3$. We do a similar computation for $\mathbb{F}_3$ as we did above rationally. This time, we consider the following $8\times 8$-minor of the $48\times 8$-matrix describing the inclusion $V\to R_{\widetilde{A}}$, again with the corresponding basis elements displayed on the left:
 \[
\renewcommand\arraystretch{1.8}
\begin{blockarray}{ccccccccc}
& 1 & \sigma_1^2 & \sigma_1^4 & n_4 & n_6 & \sigma_1^2n_4 &\sigma_3^2 & \sigma_1^2n_6 \\
\begin{block}{c(cccccccc)}
  1 & 1 & * & * & * & * & * & * & * \\
  \sigma_1^2 & 0 & 1 & * & * & * & * & * & * \\
  \sigma_1^4 & 0 & 0 & 1 & * & * & * & * & *\\ 
  \sigma_1^2r & 0 & 0 & 0 & 4 & \frac{67}{4} \overline{a}_2 & 168 \overline{a}_2 & -\frac{179}{8} \overline{a}_2 & *\\
  \sigma_1^3z_3& 0& 0& 0& \frac{1}{2}& \frac{3}{4}  \overline{a}_2 & 8 \overline{a}_2 &-\frac{11}{8} \overline{a}_2 &*\\
  \sigma_1^4r & 0 & 0 & 0& 0 & -\frac{33}{32} & -8 & \frac{81}{64} & * \\
  \sigma_1^3z_3r & 0 & 0& 0& 0& \frac{13}{4} & 24 & -\frac{33}{8} & *\\
  \sigma_1^4r^2 & 0 & 0& 0& 0& 0& 0& 0& \frac{5933}{3488}\\
\end{block}
\end{blockarray}
 \]
 Its determinant is a rational multiple of $\overline{a}_2$ not divisible by $3$. It shows that the map $V\to R_{\widetilde{A}}$ is still injective after tensoring with $\mathbb{F}_3$, as desired. This completes the proof of $V=S_{\widetilde{A}}$. 
 \end{enumerate}
 \end{proof}
%%%%%%%%%%%%%%%%%%%%%%%%%%%%%%%%%%%%%%%%%%%%%%%%%%%%%%%%%%%%%%%%%%%%%%%%%%%%%%%%%%%%%%%%%%%%%%%%%%%%%%%%%%%%%%%%%%%%%%%%%%%%%%%%%%%%%%%%%%%%%%%%%

\section{Comodule Structures}\label{sec:Comodule}
\addtocontents{toc}{\protect\setcounter{tocdepth}{1}}
Recall from \cref{StacksHopfElliptic} that we denote by $\widetilde{A}$ the ring $\widetilde{A}=\Z_{(3)}[\overline{a}_2,\overline{a}_4,\overline{a}_6]$. Recall moreover that we obtain a graded Hopf algebroid $(\widetilde{A},\widetilde{\Gamma})$ representing $\MM_{cub, \Z_{(3)}}$, and that quasi-coherent sheaves on $\MM_{cub, \Z_{(3)}}$ are equivalent to graded $(\widetilde{A},\widetilde{\Gamma})$-comodules. Thus it suffices for our main algebraic theorem to provide an isomorphism of certain comodules, which will describe explicitly.  

Throughout this section we will again (implicitly) localize everything at the prime $3$. Moreover, we will denote by $f_n$ the natural map $\MM_1(n) \to \MM_{ell}$ and by $f'_n$ the resulting map $\MM_1(n)_{cub} \to \MM_{cub}$ from the normalization. In the case $n=2$, we will use the abbreviations $f = f_2$ and $f' = f_2'$. Lastly, we use $h_n$ for the natural map $\MM_0(n) \to \MM_{ell}$ and $h'_n$ for the resulting map $\MM_0(n)_{cub} \to \MM_{cub}$.

\subsection{The comodule corresponding to $f_*f^*\OO$}
We will use $\OO$ as a shorthand notation for the structure sheaf on $\mell$ or $\mcub$. Recall that the map~$f'\colon \MM_1(2)_{cub}\to \MM_{cub}$ is affine by construction. This implies that the pushforward sheaf $(f')_*(f')^*\OO$ is quasi-coherent. By the discussion above, it is equivalent to a certain $(\widetilde{A},\widetilde{\Gamma})$-comodule. 

At the prime $3$, the universal elliptic curve with a $\Gamma_1(2)$-structure has an equation of the form 
\[y^2 = x^3+b_2x^2+b_4x \]
with $(0,0)$ being the chosen point of order $2$, resulting in an identification $\MM_1(2) \simeq \Spec \Z_{(3)}[b_2,b_4,\Delta^{-1}]/\GG_m$ (see e.g.\ \cite[Section 1.3]{BehrensModular}). As in \cite[Example 2.1]{MeierDecomposition} one can deduce $\MF_1(2) \cong \Z_{(3)}[b_2,b_4]$. The resulting $\widetilde{A}$-module structure is given by
\[
 \overline{a}_2\mapsto b_2 \mbox{ and } \overline{a}_4 \mapsto b_4 \mbox{ and } \overline{a}_6\mapsto 0.
\]
The corresponding $(\widetilde{A},\widetilde{\Gamma})$-comodule is given by $\widetilde{\Gamma}\otimes_{\widetilde{A}} \MF_1(2)$ with extended comodule structure by \cref{lem:associated}. In this tensor product, we use the right $\widetilde{A}$-module structure of $\widetilde{\Gamma}$. A similar computation to the following appears in \cite{Bauertmf}. 

\begin{Lemma}\label{mf12comodule}
 There is a ring isomorphism $\widetilde{\Gamma} \otimes_{\widetilde{A}} \MF_1(2)\cong \widetilde{A}[r]/(\overline{a}_6+\overline{a}_4r+\overline{a}_2r^2+r^3)$, and the comodule structure is an $\widetilde{A}$-module map determined by $r\mapsto 1\otimes r+r\otimes 1$ (and $\overline{a}_i \mapsto \overline{a}_i\otimes 1$).
 
 Forgetting the ring structure, we can identify this comodule with the free $\widetilde{A}$-module $\widetilde{A}w_1\oplus \widetilde{A}w_2 \oplus \widetilde{A}w_3$ with $(\widetilde{A},\widetilde{\Gamma})$-comodule structure given by
 \[
  \begin{aligned}
   w_1 &\mapsto &&1\otimes w_1,\\
   w_2 &\mapsto && 1\otimes w_2 +r\otimes w_1,\\
   w_3 &\mapsto && 1\otimes w_3+2r\otimes w_2+r^2\otimes w_1. 
  \end{aligned}
 \]

\end{Lemma}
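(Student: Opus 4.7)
The plan is to proceed in three short steps: unfold the tensor product as a ring, identify the comultiplication on $\widetilde{\Gamma}$, and then read off the comodule structure in the $w_i$-basis.

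First I would compute $\widetilde{\Gamma}\otimes_{\widetilde{A}}\MF_1(2)$ as a ring. Since the right $\widetilde{A}$-module structure on $\widetilde{\Gamma} = \widetilde{A}[r]$ factors through $\eta_R$, and $\MF_1(2) = \Z_{(3)}[b_2,b_4]$ is the $\widetilde{A}$-algebra defined by $\overline{a}_2\mapsto b_2$, $\overline{a}_4\mapsto b_4$, $\overline{a}_6\mapsto 0$, the tensor product is the ring on generators $\overline{a}_2,\overline{a}_4,\overline{a}_6,r,b_2,b_4$ modulo the relations
\begin{align*}
b_2 &= \overline{a}_2+3r,\\
b_4 &= \overline{a}_4+2r\overline{a}_2+3r^2,\\
0 &= \overline{a}_6+r\overline{a}_4+r^2\overline{a}_2+r^3,
\end{align*}
the three equations being the formulae for $\eta_R(\overline{a}_i)$ recalled in \cref{StacksHopfElliptic}. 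The first two permit eliminating $b_2$ and $b_4$, leaving $\widetilde{\Gamma}\otimes_{\widetilde{A}}\MF_1(2)\cong \widetilde{A}[r]/(\overline{a}_6+\overline{a}_4r+\overline{a}_2r^2+r^3)$.

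Second, I would invoke \cref{lem:associated} to identify the comodule structure as the extended one, namely $\psi\otimes\id$ for $\psi\colon\widetilde{\Gamma}\to\widetilde{\Gamma}\otimes_{\widetilde{A}}\widetilde{\Gamma}$ the comultiplication. Since $\widetilde{\Gamma}$ was obtained from the Weierstra{\ss} Hopf algebroid $(A,\Gamma)$ by killing $a_1,a_3$ and setting $s=t=0$, the generator $r\in \widetilde{\Gamma}$ corresponds to the translation $x\mapsto x+r$ and composition of such translations forces $\psi(r)=1\tensor r+r\tensor 1$ (equivalently, one reads this directly off the formula for $\psi(r)$ in $(A,\Gamma)$, see \cite{Bauertmf}). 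Consequently, the comodule structure on $\widetilde{\Gamma}\otimes_{\widetilde{A}}\MF_1(2)$ is the $\widetilde{A}$-linear ring map determined by $r\mapsto 1\tensor r+r\tensor 1$ (and $\overline{a}_i\mapsto \overline{a}_i\tensor 1$), as claimed.

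Third, since the cubic relation is monic in $r$, the quotient is a free $\widetilde{A}$-module of rank $3$ with basis $w_1=1$, $w_2=r$, $w_3=r^2$. The images under the comodule structure are then immediate: $w_1\mapsto 1\tensor w_1$, $w_2\mapsto 1\tensor r+r\tensor 1 = 1\tensor w_2+r\tensor w_1$, and
\[w_3 = r^2 \mapsto (1\tensor r+r\tensor 1)^2 = 1\tensor r^2+2r\tensor r+r^2\tensor 1 = 1\tensor w_3+2r\tensor w_2+r^2\tensor w_1.\]
There is no real obstacle in any of this; the only point one needs to be careful about is the identification of $\psi(r)$, which I would either justify by a short direct check of the translation formula or by a pointer to the explicit formulae in \cite{Bauertmf}.
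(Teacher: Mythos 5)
Your proof is correct and follows essentially the same route as the paper's: compute the tensor product as a quotient of $\widetilde{A}[r,b_2,b_4]$ using the $\eta_R$-formulae, eliminate $b_2,b_4$, note the comodule structure is extended, and then compute on the monomial basis $1,r,r^2$ using multiplicativity. The only small addition is that you spell out why $\psi(r)=1\otimes r+r\otimes 1$ (via the translation interpretation or by citing the explicit formulae in Bauer), whereas the paper leaves this implicit as part of "the extended comodule structure is straightforward."
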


\begin{pf}
 Using the formulae for $\eta_R$, we obtain a ring isomorphism 
 \[
  \widetilde{\Gamma} \otimes_{\widetilde{A}} \MF_1(2)\cong \widetilde{A}[r,b_2,b_4]/(R), 
 \]
where the relations $R$ are generated by 
\[
\begin{aligned}
&\overline{a}_2+3r=b_2,\\
&\overline{a}_4+2\overline{a}_2r+3r^2=b_4,\\
&\overline{a}_6+\overline{a}_4r+\overline{a}_2r^2+r^3=0.
\end{aligned}
\]
This immediately implies the first claim. 

The first statement about the comodule structure is straightforward since~$\widetilde{\Gamma} \otimes_{\widetilde{A}} \MF_1(2)$ carries the extended comodule structure.

For the second description of the comodule structure, observe that there is an isomorphism of $\widetilde{A}$-modules
\[
\widetilde{A}w_1\oplus \widetilde{A}w_2 \oplus \widetilde{A}w_3 \to \widetilde{A}[r]/(\overline{a}_6+\overline{a}_4r+\overline{a}_2r^2+r^3)
\]
given by $w_i\mapsto r^{i-1}$. Thus, to identify the comodule structure, we only need to compute it on $1,r,r^2$ on the right-hand side and transfer it via this isomorphism, using the compatibility of comodule structure with the ring structure of $\widetilde{A}[r]/(\overline{a}_6+\overline{a}_4r+\overline{a}_2r^2+r^3)$. This yields the claim. 
\end{pf}

We will now identify the dual $\mathcal{H}om_{\OO}(f_*f^*\OO, \OO)$ of the vector bundle $f_*f^*\OO$ on $\MMb_{ell}$, which will be useful in the next section. Actually, we will identify rather $\mathcal{H}om_{\OO}(f'_*(f')^*\OO, \OO)$ instead, working on $\MM_{cub}$. As $\MMb_{ell}$ sits as an open substack in $\MM_{cub}$, this directly implies the computation of $\mathcal{H}om_{\OO}(f_*f^*\OO, \OO)$.

For the identification, we use the translation into comodules. While this translation is valid for an arbitrary graded Hopf algebroid, we formulate it in terms of $(\widetilde{A}, \widetilde{\Gamma})$. Given a graded left $(\widetilde{A},\widetilde{\Gamma})$-comodule $M$ that is finitely generated free as a $\widetilde{A}$-module, we can define a right comodule structure on $\Hom_{\widetilde{A}}(M,\widetilde{A})$, whose coaction is given as in \cite[Definition A1.1.6]{RavenelAlt} by the composite of
\[\Hom_{\widetilde{A}}(M,\widetilde{A}) \xrightarrow{\widetilde{\Gamma} \tensor (-)} \Hom_{\widetilde{A}}(\widetilde{\Gamma}\tensor_{\widetilde{A}} M, \widetilde{\Gamma}) \xrightarrow{\Psi_M^*}\Hom_{\widetilde{A}}(M, \widetilde{\Gamma}) \]
with the inverse of the isomorphism 
\[\Hom_{\widetilde{A}}(M, \widetilde{A}) \tensor_{\widetilde{A}} \widetilde{\Gamma} \xrightarrow{\cong}\Hom_{\widetilde{A}}(M, \widetilde{\Gamma}).\]
Using the conjugation $c$ on $\widetilde{\Gamma}$ we can transform this into a left comodule. 

\begin{lemma}
Let $(M, \psi_M)$ be the graded left $(\widetilde{A},\widetilde{\Gamma})$-comodule corresponding to a vector bundle $\FF$ on $\MM_{cub}$ under the equivalence from \cref{AtildevsQCoh}. Then the graded left $(\widetilde{A},\widetilde{\Gamma})$-comodule $\Hom_{\widetilde{A}}(M,\widetilde{A})$ corresponds under the same equivalence to the sheaf $\mathcal{H}om_{\OO_{\MM_{cub}}}(\FF,\OO_{\MM_{cub}})$. 
\end{lemma}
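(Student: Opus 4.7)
The plan is to reduce the statement to the monoidality of the equivalence from \cref{associatedstackQCoh} (specialized to $(\widetilde{A},\widetilde{\Gamma})$ in \cref{AtildevsQCoh}) together with a direct identification of the comodule-theoretic dual.

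First, I verify that both sides have the same underlying graded $\widetilde{A}$-module. Since $\FF$ is a vector bundle, $\mathcal{H}om_{\OO_{\MM_{cub}}}(\FF,\OO_{\MM_{cub}})$ is again a vector bundle, and formation of internal Hom commutes with pullback along the smooth cover $f^{\widetilde{A}}\colon \Spec\widetilde{A}/\GG_m \to \MM_{cub}$. Combined with the equivalence $\QCoh(\Spec\widetilde{A}/\GG_m) \simeq \text{graded }\widetilde{A}\text{-modules}$ from \cref{GradedModulesQCoh}, this identifies the underlying graded $\widetilde{A}$-module of $\mathcal{H}om_{\OO}(\FF,\OO)$ with $\Hom_{\widetilde{A}}(M,\widetilde{A})$. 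Note that $M$ is in fact finitely generated and free over $\widetilde{A}$ by Quillen--Suslin (as in the proof of \cref{RAtildefree}), so the hypothesis justifying the formula in the statement is satisfied.

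Second, I identify the comodule structures. The equivalence in \cref{associatedstackQCoh} is symmetric monoidal, so it sends dualizable objects to dualizable objects and preserves their duals. As $\FF$ is a vector bundle and hence dualizable in $\QCoh(\MM_{cub})$, the comodule $M$ is dualizable in graded $(\widetilde{A},\widetilde{\Gamma})$-comodules, with dual having $\Hom_{\widetilde{A}}(M,\widetilde{A})$ as underlying module. What remains is to verify that the comodule structure described immediately before the lemma---namely, the right comodule structure from \cite[Definition A1.1.6]{RavenelAlt} twisted by the conjugation $c$ to produce a left comodule---is precisely this abstract dual structure.

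This final verification is a direct computation checking that the evaluation $M \otimes_{\widetilde{A}} \Hom_{\widetilde{A}}(M,\widetilde{A}) \to \widetilde{A}$ and the coevaluation $\widetilde{A} \to \Hom_{\widetilde{A}}(M,\widetilde{A}) \otimes_{\widetilde{A}} M$ (available because $M$ is finite projective) are morphisms of left graded $(\widetilde{A},\widetilde{\Gamma})$-comodules when $\Hom_{\widetilde{A}}(M,\widetilde{A})$ carries the specified structure; the triangle identities are then automatic. The main obstacle, and really the only delicate point, is the bookkeeping for the conjugation: $c$ enters precisely because \cite[Definition A1.1.6]{RavenelAlt} naturally produces a \emph{right} comodule structure on the dual, and converting to a left comodule structure is a standard maneuver relying on the fact that $c$ intertwines $\eta_L$ and $\eta_R$ and squares to the identity.
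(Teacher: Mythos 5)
Your proposal is correct, but it takes a genuinely different route from the paper. The paper's proof explicitly unravels the equivalence of \cref{AtildevsQCoh}: it writes down the structure map $\psi_{\Hom}$ that $\mathcal{H}om_{\OO}(\FF,\OO)$ inherits, observes that the isomorphism $\Gt \tensor_{\At} M \cong M \tensor_{\At} \Gt$ built from the swap and the conjugation identifies with the inverse of the comparison isomorphism $T$, and then performs a diagram chase to match $\psi_{\Hom}$ with the Ravenel-style formula. You instead appeal to the monoidality of the equivalence from \cref{associatedstackQCoh}: since strong monoidal equivalences preserve dualizable objects and their duals, it suffices to check that the specified comodule structure on $\Hom_{\widetilde{A}}(M,\widetilde{A})$ makes it \emph{a} dual of $M$ in graded comodules (via the evaluation/coevaluation check, with the triangle identities inherited from the underlying module level because the forgetful functor is faithful), and then invoke uniqueness of duals. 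Your framing is more conceptual and likely produces a cleaner write-up, at the cost of implicitly using that the comodule tensor product is \emph{symmetric} monoidal (the paper only verbally asserts \emph{monoidal}, though symmetry is standard and easy to check), whereas the paper's more hands-on diagram chase avoids invoking abstract duality. Both versions defer the genuinely laborious verification -- yours to ``a direct computation'' that ev/coev are comodule maps, the paper's to ``a lengthy diagram chase'' -- so neither is more complete than the other; but the reduction you give is a legitimate and arguably preferable reorganization. One small quibble: citing Quillen--Suslin for freeness of $M$ is more than needed; since $M$ is a nonnegatively graded finitely generated projective module over a polynomial ring over a local ring, graded Nakayama already forces it to be free, and in any case finite projectivity over $\widetilde{A}$ would suffice for the dualizability argument.
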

\begin{proof}
Recall that $M = \FF(\Spec \At)$ and the maps $\eta_L, \eta_R\colon \widetilde{A} \to \widetilde{\Gamma}$ define isomorphisms 
\[\Gt\tensor_{\At} M \xrightarrow{\cong} \FF(\Spec \Gt) \xleftarrow{\cong} M \tensor_{\At} \Gt.\]
The composite of the isomorphisms $T$ (from right to left) with the natural morphism $i\colon M\to M\tensor_{\At} \Gt$ is by construction $\psi_M$.

One can show that the sheaf $\mathcal{H}om_{\OO_{\MM_{cub}}}(\FF,\OO_{\MM_{cub}})$ corresponds to the comodule $\Hom_{\At}(M, \At)$ with the structure map $\psi_{\Hom}$ that makes the following diagram commute:
\[
\begin{tikzcd}
\Hom_{\At}(M, \At) \arrow[d, dashed, "\psi_{\Hom}"] \arrow[r]& \Hom_{\Gt}(M\tensor_{\At}\Gt, \Gt) \arrow[d, "(T^{-1})^*"]\\
\Gt\tensor_{\At}\Hom_{\At}(M, \Gt) \arrow[r, "\cong"]&\Hom_{\Gt}(\Gt\tensor_{\At} M, \Gt)
\end{tikzcd}
\]

There is a further isomorphism $\Gt \tensor_{\At} M \cong M \tensor_{\At} \Gt$, interchanging the two factors and applying the conjugation to $\Gt$. As the conjugation on $\Gt$ interchanges the roles of $\eta_L$ and $\eta_R$, we can identify the composite
\[\Gt \tensor_{\At} M  \cong M \tensor_{\At} \Gt \xrightarrow{T} \Gt \tensor_{\At} M \cong M \tensor_{\At} \Gt\]
with $T^{-1}$. 

Using this ingredient, a lengthy diagram chase shows that $\psi_{\Hom}$ agrees with the comodule structure map of $\Hom_{\widetilde{A}}(M,\widetilde{A})$ described above. 
\end{proof}

\begin{lemma}\label{lem:dual}
The dual of $f'_*(f')^*\OO$ is isomorphic to $f'_*(f')^*\omline^{\tensor (-4)}$. 
\end{lemma}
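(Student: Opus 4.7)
The plan is to translate the claim into one about $(\At,\Gt)$-comodules and verify it by an explicit calculation. By \cref{AtildevsQCoh}, combined with the preceding lemma identifying the dual of the sheaf associated to a comodule $M$ with $\Hom_{\At}(M,\At)$, and the projection formula $f'_*(f')^*\FF\cong f'_*\OO\tensor\FF$, it suffices to construct an isomorphism
\[
M^\vee \;\cong\; M\tensor_{\At}\omline^{\tensor(-4)}
\]
of graded $(\At,\Gt)$-comodules, where $M=\Gt\tensor_{\At}\MF_1(2)$ is the rank-three comodule of \cref{mf12comodule} corresponding to $f'_*(f')^*\OO$.

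My first step would be to make the left coaction on $M^\vee$ explicit in the dual basis $w_1^*,w_2^*,w_3^*$ of the basis $w_1,w_2,w_3$ of $M$ given in \cref{mf12comodule}. Starting from the upper triangular coaction matrix on $M$ whose columns are $(1,0,0)$, $(r,1,0)$, $(r^2,2r,1)$, one dualises the right coaction and converts it into a left coaction using the antipode on $\Gt$, which is determined by $c(r)=-r$ (since $\eta_L(\overline{a}_2)=\overline{a}_2$ while $\eta_R(\overline{a}_2)=\overline{a}_2+3r$). This produces an explicit lower triangular matrix on $M^\vee$ whose entries involve $\pm r$ and $r^2$.

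On the other side, $M\tensor\omline^{\tensor(-4)}$ is a degree-shifted copy of $M$ with basis $v_1,v_2,v_3$ and the same coaction matrix as $M$, since tensoring with the line bundle $\omline^{\tensor(-4)}$ does not alter the coaction formulas. I would then write down an explicit isomorphism $\phi\colon M^\vee \to M\tensor\omline^{\tensor(-4)}$ of the form $w_j^*\mapsto \lambda_j v_{4-j}$ for suitable scalars $\lambda_j$, chosen so that the degrees match and the order of basis elements is reversed. Compatibility with the two coactions imposes a small linear system on the $\lambda_j$ whose solution involves integer factors such as $\pm 2$; since $2$ is a unit after localising at the prime~$3$, this produces an isomorphism of graded comodules.

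The main obstacle is the sign bookkeeping while applying the antipode to compute the coaction on $M^\vee$, together with the subsequent resolution of the constraints determining the scalars $\lambda_j$. Once these are pinned down, the verification that $\phi$ respects both coactions is a direct check on the three basis elements.
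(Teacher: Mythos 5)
Your proposal is correct and takes essentially the same route as the paper's proof: pass to $(\At,\Gt)$-comodules, compute the dual left coaction on $M^\vee$ via the conjugation $c(r)=-r$, and then exhibit an explicit order-reversing isomorphism (the paper writes $w_1\mapsto w_3^*$, $w_2\mapsto -\tfrac{1}{2}w_2^*$, $w_3\mapsto w_1^*$) whose scalars indeed require $2$ to be invertible; the degree shift by $4$ is then matched to $\omline^{\tensor(-4)}$ and one concludes by the monoidality of the comodule equivalence and the projection formula, exactly as you outline.
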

\begin{proof}
Observe that the conjugation on $\widetilde{\Gamma}$ is given by $\aq_i \mapsto \eta_R(\aq_i)$ and $r \mapsto -r$ because it corresponds on the level of represented functors to inverting an isomorphism between Weierstra\ss{} curves. Inserting this into the above description of the internal hom and using \cref{mf12comodule}, we arrive at the following left comodule structure on $\widetilde{A}w_1^*\oplus \widetilde{A}w_2^* \oplus \widetilde{A}w_3^*$: 
\begin{alignat*}{3}
\widetilde{A}w_1^*\oplus \widetilde{A}w_2^* \oplus \widetilde{A}w_3^* &\longrightarrow &\widetilde{\Gamma}\otimes_{\widetilde{A}} \left(\widetilde{A}w_1^*\oplus \widetilde{A}w_2^* \oplus \widetilde{A}w_3^*\right)&\\
w_1^* &\mapsto & 1\otimes w_1^* - r\otimes w_2^* +r^2 \otimes w_3^*&\\
w_2^* &\mapsto & \hphantom{1\otimes w_1^* +}1\otimes w_2^* - 2r\otimes w_3^*&\\
w_3^*&\mapsto & \hphantom{1\otimes w_1^* +1\otimes w_2^* +}1 \otimes w_3^* &.
\end{alignat*}
Looking more closely shows that this comodule is actually isomorphic to $\widetilde{A}w_1\oplus \widetilde{A}w_2 \oplus \widetilde{A}w_3$ as an ungraded comodule from \cref{mf12comodule} via the following isomorphism:
\[
w_1 \mapsto w_3^*, \quad w_2 \mapsto -\frac{1}{2}w_2^*, \quad w_3 \mapsto w_1^*. 
\]
This map shifts grading by $4$. As $\omline^{\tensor (-4)}$ is the line bundle corresponding to the shift $A[-4]$ under the equivalence between quasi-coherent sheaves on $\MM_{cub}$ and graded $(A,\Gamma)$-comodules, this implies by the monoidality of the equivalence in \cref{associatedstackQCoh} an isomorphism $\mathcal{H}om_{\OO}(f_*'(f')^*\OO, \OO) \cong f_*'(f')^*\OO \tensor \omline^{\tensor (-4)}$. By the projection formula this yields the result. 
\end{proof}

\subsection{The comodules corresponding to $(f_7)_*(f_7)^*\OO$ and $(h_7)_*(h_7)^*\OO$}
Recall from \Cref{prop:IdentifyMF} the $3$-local isomorphism $\MF_1(7)\cong \Z_{(3)}[z_1,z_2,z_3]/(\sigma_2)$. Again by \cref{lem:associated} we obtain that the quasi-coherent sheaf $(f'_7)_*(f'_7)^*\OO$ on $\MM_{cub}$ corresponds to the extended $(\widetilde{A},\widetilde{\Gamma})$-comodule structure on $R_{\widetilde{A}} \cong \widetilde{\Gamma} \otimes_{\widetilde{A}} \MF_1(7)$. Recall further from the beginning of \cref{sec:Splitting} the induced $\widetilde{A}$-module structure on $\mf[r]$ from the identification $R_{\widetilde{A}} \cong \mf[r]$.
\begin{Lemma}\label{RBcomodule}
Consider the natural morphism $f_7\colon \MM_1(7)_{cub} \to \MM_{cub}$. 
 Under the ring isomorphism $R_{\widetilde{A}}\cong \Z_{(3)}[z_1,z_2,z_3,r]/(\sigma_2)$ the $(\widetilde{A},\widetilde{\Gamma})$-comodule structure corresponding to $(f_7)_*(f_7)^*\OO$ is completely determined by
 \[
    \begin{aligned}
   z_i &\mapsto &&1\otimes z_i, \mbox{ for }i\in\{1,2,3\},\\
   r &\mapsto && 1\otimes r +r\otimes 1.  
  \end{aligned}
 \]
\end{Lemma}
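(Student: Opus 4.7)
The plan is to leverage \cref{lem:associated} and then compute directly on ring generators. First, I would recall that by \cref{lem:associated}, the sheaf $(f_7)_*(f_7)^*\OO$ corresponds to the \emph{extended} $(\widetilde{A},\widetilde{\Gamma})$-comodule structure on $R_{\widetilde{A}}\cong\widetilde{\Gamma}\otimes_{\widetilde{A}}\MF_1(7)$, which by definition has coaction $\psi\otimes \id_{\MF_1(7)}$. Since $\psi\colon\widetilde{\Gamma}\to\widetilde{\Gamma}\otimes_{\widetilde{A}}\widetilde{\Gamma}$ is a ring homomorphism, so is this coaction, and it therefore suffices to compute it on a set of ring generators of $R_{\widetilde{A}}$. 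Under the isomorphism $R_{\widetilde{A}}\cong\Z_{(3)}[z_1,z_2,z_3,r]/(\sigma_2)$ established at the beginning of \cref{sec:Splitting} by eliminating $\aq_2,\aq_4,\aq_6$ via the relations $\eta_R(\aq_i)=\kappa(\aq_i)$, the elements $z_1,z_2,z_3,r$ play exactly this role.

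Next I would trace each generator through the identifications. The $z_i$ originate from $\MF_1(7)$, so they correspond to the simple tensors $1\otimes z_i\in \widetilde{\Gamma}\otimes_{\widetilde{A}}\MF_1(7)$; applying $\psi\otimes\id_{\MF_1(7)}$ yields $\psi(1)\otimes z_i=(1\otimes 1)\otimes z_i$, which translates to $1\otimes z_i$ in $\widetilde{\Gamma}\otimes_{\widetilde{A}}R_{\widetilde{A}}$. The element $r$ comes from the factor $\widetilde{\Gamma}=\widetilde{A}[r]$ and corresponds to $r\otimes 1$; using the formula $\psi(r)=r\otimes 1+1\otimes r$ for the comultiplication in the Weierstra\ss{} Hopf algebroid $(\widetilde{A},\widetilde{\Gamma})$ (which encodes additivity of the translation parameter under composition of the change-of-coordinates $x\mapsto x+r$), the coaction sends $r\otimes 1$ to $\psi(r)\otimes 1$, which translates to $r\otimes 1+1\otimes r$ in $\widetilde{\Gamma}\otimes_{\widetilde{A}}R_{\widetilde{A}}$.

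The only real obstacle is bookkeeping. One must be careful about the two distinct $\widetilde{A}$-module structures on $\widetilde{\Gamma}$ (via $\eta_L$ and $\eta_R$) which enter in forming the tensor products, and about the fact that the identification $R_{\widetilde{A}}\cong\Z_{(3)}[z_1,z_2,z_3,r]/(\sigma_2)$ encodes the elimination of $\aq_2,\aq_4,\aq_6$, so that the images in $R_{\widetilde{A}}$ of the $\aq_i\in\widetilde{A}$ agree with the $\lambda(\aq_i)$ rather than the $\kappa(\aq_i)$. Once these identifications are pinned down, the comodule structure is determined by the listed values on generators and no further computation is needed.
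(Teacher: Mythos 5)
Your proof is correct and matches the approach the paper uses (the paper itself gives no explicit proof for this lemma, but your argument is exactly parallel to the paper's proof of the analogous \cref{mf12comodule} for $\MF_1(2)$: identify the coaction as the extended comodule structure $\psi\otimes\id$, note it is a ring map, and compute on ring generators using $\psi(r)=r\otimes 1+1\otimes r$). Your closing remark about the two $\widetilde{A}$-module structures via $\eta_L$ and $\eta_R$ and about the $\aq_i$ mapping to $\lambda(\aq_i)$ in $R_{\widetilde{A}}$ correctly identifies the one point where bookkeeping errors would arise.
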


Recall that we identified $S_{\widetilde{A}}\cong (R_{\widetilde{A}})^{(\Z/7)^{\times}}$ (cf.\ \cref{SCasFP}) in \cref{SBbasis} as ${\widetilde{A}}$-module  with a free $8$-dimensional ${\widetilde{A}}$-module with basis 
\[
1, \sigma_1^2, \sigma_1^4, n_4, \sigma_1^2n_4, n_6, \sigma_1^2n_6, \sigma_3^2. 
\]
As $(h_7')_*(h_7')^*\OO$ injects into $(f_7)_*(f_7)^*\OO$, the corresponding comodule structure on $S_{\widetilde{A}}$ is that of a sub comodule of $R_{\widetilde{A}}$. Concretely, we obtain the following:
\begin{Lemma}\label{SBcomodule}
 The graded $(\widetilde{A},\widetilde{\Gamma})$-comodule structure on $S_{\widetilde{A}}$ is given by
  \[
  \begin{aligned}
   1 &\mapsto&& 1\otimes 1, \\
   \sigma_1^2 &\mapsto&& 1\otimes \sigma_1^2,\\
   \sigma_1^4  &\mapsto&& 1\otimes \sigma_1^4,\\
   n_4    &\mapsto&& 1\otimes n_4+r\otimes\sigma_1^2, \\
   \sigma_1^2n_4&\mapsto&& 1\otimes \sigma_1^2n_4+r\otimes\sigma_1^4,\\
   n_6  &\mapsto&& 1\otimes n_6 + 2r\otimes n_4 +r^2 \otimes \sigma_1^2,\\
   \sigma_1^2n_6 &\mapsto&& 1\otimes \sigma_1^2n_6 + 2r\otimes \sigma_1^2n_4 +r^2 \otimes \sigma_1^4,\\
   \sigma_3^2 &\mapsto&& 1\otimes \sigma_3^2.
  \end{aligned}
 \]

\end{Lemma}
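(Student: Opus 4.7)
The plan is to compute the coaction directly by restricting the comodule structure on $R_{\widetilde{A}}$ to its subring of invariants. By \Cref{SCasFP}, $S_{\widetilde{A}} = (R_{\widetilde{A}})^{(\Z/7)^\times}$, and since the $(\Z/7)^\times$-action commutes with the coaction (both coming from maps of stacks over $\MM_{cub}$), the inclusion $S_{\widetilde{A}} \hookrightarrow R_{\widetilde{A}}$ is a map of $(\widetilde{A},\widetilde{\Gamma})$-comodules. Thus it suffices to evaluate the coaction of \Cref{RBcomodule} on the explicit polynomial expressions for the basis elements given in \Cref{SBbasis}.

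The coaction $\psi\colon R_{\widetilde{A}} \to \widetilde{\Gamma}\tensor_{\widetilde{A}} R_{\widetilde{A}}$ is a ring homomorphism determined by $z_i \mapsto 1\tensor z_i$ and $r \mapsto 1\tensor r + r\tensor 1$. First I would handle the four elements $1$, $\sigma_1^2$, $\sigma_1^4$, $\sigma_3^2$, which are polynomials in the $z_i$ alone; these are therefore fixed by $\psi$ in the sense that $\psi(x) = 1\tensor x$, giving the first three and the last lines of the claim directly. For $n_4 = \sigma_1^2 r - z_1^3 z_3 - z_1 z_2^3 - z_1^2 z_3^2$, I would substitute and obtain
\[\psi(n_4) = (1\tensor\sigma_1^2)(1\tensor r + r\tensor 1) - 1\tensor(z_1^3z_3+z_1z_2^3+z_1^2z_3^2) = 1\tensor n_4 + r\tensor\sigma_1^2,\]
and the analogous computation for $\sigma_1^2 n_4$ follows by multiplying by $1\tensor\sigma_1^2$ (using that $\psi$ is a ring map).

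For $n_6$ the computation is similar but requires squaring: in the ring $\widetilde{\Gamma}\tensor_{\widetilde{A}} R_{\widetilde{A}}$ one has
\[(1\tensor r + r\tensor 1)^2 = 1\tensor r^2 + 2r\tensor r + r^2\tensor 1,\]
so substituting into the formula for $n_6 = \sigma_1^2 r^2 - 2(z_1^3z_3+z_1z_2^3+z_1^2z_3^2)r + 2z_1^3 z_3^3 - z_1^2 z_3^4$ and regrouping terms according to powers of $r$ on the left-hand tensor factor produces
\[\psi(n_6) = 1\tensor n_6 + 2r\tensor n_4 + r^2\tensor \sigma_1^2,\]
and again multiplying by $1\tensor \sigma_1^2$ yields the formula for $\sigma_1^2 n_6$. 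As a sanity check, each right-hand factor that appears is one of the eight invariants from \Cref{SBbasis}, confirming a posteriori that $S_{\widetilde{A}}$ is closed under $\psi$; the main (and only) obstacle is bookkeeping care with the multiplication in $\widetilde{\Gamma}\tensor_{\widetilde{A}} R_{\widetilde{A}}$, which is entirely routine.
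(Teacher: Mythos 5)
Your proposal is correct and takes the same approach as the paper, whose proof is only the one-line remark that the lemma follows from \Cref{RBcomodule} and \Cref{SBbasis} by a straightforward computation. You have simply carried out that computation explicitly, and the details (using that the coaction on $R_{\widetilde{A}}$ is a ring map with $\psi(z_i)=1\otimes z_i$, $\psi(r)=1\otimes r+r\otimes 1$, and substituting the polynomial expressions from \Cref{SBbasis}) check out.
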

\begin{proof}
 This follows from \Cref{RBcomodule} and \Cref{SBbasis} by a straightforward computation.
\end{proof}

\subsection{The conclusion}
We continue to work $3$-locally.
\begin{prop}\label{prop:comoduleiso}
 There is an isomorphism of graded $(\widetilde{A},\widetilde{\Gamma})$-comodules 
 \[
  \widetilde{A}\oplus (\widetilde{\Gamma}\otimes_{\widetilde{A}} \MF_1(2))[2] \oplus (\widetilde{\Gamma}\otimes_{\widetilde{A}} \MF_1(2))[4]\oplus \widetilde{A}[6] \to S_{\widetilde{A}}, 
 \]
given by 
\[
 \begin{aligned}
  1_{\widetilde{A}} &\mapsto && 1, \\
  w_1[2] &\mapsto && \sigma_1^2,\\
  w_2[2] &\mapsto && n_4,\\
  w_3[2] &\mapsto && n_6,\\
  w_1[4] &\mapsto && \sigma_1^4,\\
  w_2[4] &\mapsto && \sigma_1^2n_4,\\
  w_3[4] &\mapsto && \sigma_1^2n_6,\\
  1_{\widetilde{A}}[6] &\mapsto && \sigma_3^2. 
 \end{aligned}
\]

\end{prop}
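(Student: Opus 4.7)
The plan is to verify the proposed map summand by summand, showing it is both an isomorphism of graded $\widetilde{A}$-modules and compatible with the comodule coactions, with essentially no work beyond matching formulas that have already been computed.

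First I would check that the map is well-defined and grading-preserving. Since $|\overline{a}_i|=i$ and $\eta_R(\overline{a}_2)=\overline{a}_2+3r$ forces $|r|=2$, the identification $w_i = r^{i-1}$ from \cref{mf12comodule} gives $|w_i|=2(i-1)$. Combined with the internal grading $|z_i|=1$ (so $|\sigma_1^2|=2$, $|n_4|=4$, $|n_6|=6$, $|\sigma_3^2|=6$), one immediately verifies that each basis element on the left is sent to an element of matching degree on the right; for instance $w_2[2]$ has degree $2+2=4$, matching $|n_4|=4$, and $w_3[4]$ has degree $4+4=8$, matching $|\sigma_1^2n_6|=8$.

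Next I would invoke \cref{SBbasis} to conclude that the map is an isomorphism of $\widetilde{A}$-modules. Indeed, both sides are free $\widetilde{A}$-modules of rank $8$, and the image of the obvious $\widetilde{A}$-basis of the left-hand side is precisely the $\widetilde{A}$-basis of $S_{\widetilde{A}}$ exhibited in \cref{SBbasis}. Thus it remains only to check compatibility with the comodule structures.

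For comodule compatibility I would compare the formulas in \cref{mf12comodule} and \cref{SBcomodule} summand by summand. The summand $\widetilde{A}$ is trivial since $1\mapsto 1\otimes 1$. For the two copies of $\widetilde{\Gamma}\otimes_{\widetilde{A}}\MF_1(2)$, the coaction of \cref{mf12comodule} sends $w_1,w_2,w_3$ to $1\otimes w_1$, $1\otimes w_2+r\otimes w_1$, and $1\otimes w_3+2r\otimes w_2+r^2\otimes w_1$, respectively. This matches exactly the coactions computed in \cref{SBcomodule} on the triples $(\sigma_1^2,n_4,n_6)$ and $(\sigma_1^4,\sigma_1^2n_4,\sigma_1^2n_6)$. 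Finally, $\sigma_3^2\mapsto 1\otimes \sigma_3^2$ shows the last summand $\widetilde{A}[6]$ is a sub-comodule matching $1_{\widetilde{A}}[6]$. Hence the map is a comodule isomorphism.

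There is essentially no hard step here: all the substantive work has been done in establishing the explicit basis of $S_{\widetilde{A}}$ in \cref{SBbasis} and in computing the coactions in \cref{mf12comodule} and \cref{SBcomodule}. The only point that requires minor care is keeping track of the internal grading shifts, which is why I would begin by unpacking $|r|=2$ and the degrees of the $n_i$.
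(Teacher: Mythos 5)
Your proof is correct and follows exactly the same route as the paper, which simply observes that the result is immediate by comparing the coaction formulas of \cref{mf12comodule} with those of \cref{SBcomodule}. You spell out the comparison in more detail (grading check via $|r|=2$ and $|w_i|=2(i-1)$, freeness of both sides of rank $8$ from \cref{SBbasis}, and the summand-by-summand matching of coactions), but these are precisely the verifications implicit in the paper's one-line ``by inspection'' proof.
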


\begin{proof}
 This follows by inspection from \Cref{mf12comodule} and \Cref{SBcomodule}.
\end{proof}

This implies our main algebraic theorem by the equivalence of $(\widetilde{A},\widetilde{\Gamma})$-comodules and quasi-coherent sheaves on $\MM_{cub,\mathbb{Z}_{(3)}}$ from \cref{AtildevsQCoh}:
\begin{Theorem}\label{thm:AlgMain}
There is $3$-locally an isomorphism 
\[
\begin{aligned}
(h_7')_*\OO_{\MM_0(7)_{cub}} \cong& \OO_{\MM_{cub}} \oplus \omline^{\tensor (-6)} \oplus \left((f')_*\OO_{\MM_1(2)_{cub}} \tensor \omline^{\tensor (-2)}\right)\\
&\oplus \left((f')_*\OO_{\MM_1(2)_{cub}} \tensor\omline^{\tensor (-4)}\right)
\end{aligned}
\]
of vector bundles on $\MM_{cub}$.
\end{Theorem}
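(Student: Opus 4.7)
My plan is to reduce the theorem to an equivalent statement about graded $(\widetilde{A},\widetilde{\Gamma})$-comodules and to produce an explicit isomorphism there. By \cref{AtildevsQCoh}, the category of quasi-coherent sheaves on $\MM_{cub,\Z_{(3)}}$ is equivalent to graded $(\widetilde{A},\widetilde{\Gamma})$-comodules, and by \cref{associatedstackQCoh} this equivalence is monoidal. Under this correspondence, $\OO_{\MM_{cub}}$ corresponds to $\widetilde{A}$ with trivial (extended) comodule structure, the line bundle $\omline^{\tensor (-k)}$ corresponds to the graded shift $\widetilde{A}[k]$, and tensoring with $\omline^{\tensor (-k)}$ translates to shifting a comodule by $k$. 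Because both $f'\colon \MM_1(2)_{cub}\to\MM_{cub}$ and $h_7'\colon \MM_0(7)_{cub}\to\MM_{cub}$ are affine morphisms (by construction via normalization), we can identify $(f')_*\OO_{\MM_1(2)_{cub}}$ and $(h_7')_*\OO_{\MM_0(7)_{cub}}$ with the quasi-coherent sheaves associated to the pullback algebras over $\Spec \widetilde{A}$.

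Next I would identify the four summands on the right-hand side explicitly as comodules. \cref{lem:associated} shows that $(f')_*\OO_{\MM_1(2)_{cub}}$ corresponds to the extended comodule $\widetilde{\Gamma}\otimes_{\widetilde{A}} \MF_1(2)$, for which \cref{mf12comodule} gives an explicit free $\widetilde{A}$-basis $w_1,w_2,w_3$ in degrees $0,2,4$ together with its coaction. Similarly, $(h_7')_*\OO_{\MM_0(7)_{cub}}$ corresponds to $S_{\widetilde{A}}$, whose $\widetilde{A}$-module structure was computed in \cref{SBbasis} (it is free of rank $8$ on the explicit invariants $1,\sigma_1^2,\sigma_1^4,n_4,\sigma_1^2 n_4,n_6,\sigma_1^2 n_6,\sigma_3^2$) and whose comodule structure was computed in \cref{SBcomodule}.

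The core of the proof is then \cref{prop:comoduleiso}, which defines a degree-preserving $\widetilde{A}$-linear map
\[
 \widetilde{A}\oplus \bigl(\widetilde{\Gamma}\otimes_{\widetilde{A}} \MF_1(2)\bigr)[2] \oplus \bigl(\widetilde{\Gamma}\otimes_{\widetilde{A}} \MF_1(2)\bigr)[4]\oplus \widetilde{A}[6] \longrightarrow S_{\widetilde{A}}
\]
by sending the obvious generators to the eight basis invariants according to the degree pattern $(0;2,4,6;4,6,8;6)$. Since \cref{SBbasis} already shows this map is an $\widetilde{A}$-module isomorphism, it only remains to verify compatibility with the comodule structures, which is a direct comparison of the formulas in \cref{mf12comodule} and \cref{SBcomodule}: on $1$, $\sigma_1^2$, $\sigma_1^4$, $\sigma_3^2$ both sides act trivially by $x\mapsto 1\otimes x$, while the triples $(\sigma_1^2,n_4,n_6)$ and $(\sigma_1^4,\sigma_1^2 n_4,\sigma_1^2 n_6)$ manifestly match the coaction formulas on $(w_1,w_2,w_3)$ in the two shifted copies of $\widetilde{\Gamma}\otimes_{\widetilde{A}} \MF_1(2)$.

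The main obstacle here is not the last step, which is essentially tautological given what precedes it, but rather the earlier work of producing the explicit basis of invariants in \cref{SBbasis} together with the exact coaction formulas in \cref{SBcomodule}: these depended critically on the explicit computation of $R_{\widetilde{A}}\cong \MF_1(7)[r]$, the determination of the $(\Z/7)^\times$-action on $r$ forced by $\tau(\lambda(\aq_2))=\lambda(\aq_2)$, and the careful guess-and-verify of the invariants $n_4, n_6$. Once those are in hand, the splitting itself drops out by matching generators degree by degree.
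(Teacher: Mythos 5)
Your proposal is correct and follows exactly the route the paper takes: the theorem is indeed deduced from \cref{prop:comoduleiso} via the monoidal equivalence \cref{AtildevsQCoh} between quasi-coherent sheaves on $\MM_{cub,\Z_{(3)}}$ and graded $(\widetilde{A},\widetilde{\Gamma})$-comodules, with \cref{lem:associated} and \cref{mf12comodule} supplying the identifications of the summands. Your narrative correctly locates the real work in \cref{SBbasis} and \cref{SBcomodule} rather than in the final comparison of coactions, which is the same assessment the paper makes by giving the proofs of \cref{prop:comoduleiso} and \cref{thm:AlgMain} essentially as one-line verifications.
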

By restricting to the open substack $\MMb_{ell,(3)}$, this implies Theorem \ref{thm:MainTheorem}. 

\section{Topological conclusions}\label{sec:TopConclusions}
\addtocontents{toc}{\protect\setcounter{tocdepth}{2}}
Recall from \cite{TMFbook} that one obtains the spectrum $\Tmf$ as the global sections of a sheaf of $E_\infty$-ring spectra $\OO^{top}$ on the \'etale site of $\MMb_{ell}$. Given any sheaf of spectra $\FF$ on the \'etale site of any Deligne--Mumford stack $\XX$, there is a \emph{descent spectral sequence}
\[H^q(\XX; \pi_p\FF) \Rightarrow \pi_{p-q}(\FF(\XX)),\]
where $\pi_*\FF$ denotes the \emph{sheafification} of the naive presheaf of homotopy groups \cite[Chapter 5]{TMFbook}. We have $\pi_{2p-1}\OO^{top}= 0$ and $\pi_{2p}\OO^{top} \cong \omline^{\tensor p}$ and in particular $\pi_0\OO^{top} \cong \OO_{\MMb_{ell}}$. Thus the descent spectral sequence takes the form
\[H^q(\MMb_{ell}; \omline^{\tensor p}) \Rightarrow \pi_{2p-q}\Tmf.\]
In general, the edge homomorphism takes the form $\pi_n(\FF(\XX)) \to (\pi_n\FF)(\XX)$. In the case of $\OO^{top}$, this produces a morphism $\pi_{2n}\Tmf \to \MF_n(\SL_2(\Z);\Z)$, which is not an isomorphism integrally even for $n\geq 0$.

Actually, the approach of \cite[Chapter 12]{TMFbook} defines sheaves of $E_\infty$-ring spectra $\OO^{top}_R$ on $\MMb_{ell,R}$ for every localization $R$ of the integers by varying the set of primes in the arithmetic square following Remark 1.6 in op.\ cit. By construction, $\pi_*\OO^{top}_R$ is again concentrated in even degrees with $\pi_{2k}\OO^{top}_R$ being the pullback of $\omline^{\tensor k}$ to $\MMb_{ell,R}$. As $R$ is a filtered colimit over the integers, we can form the analogous filtered homotopy colimit over $\Tmf$ to obtain a spectrum $\Tmf_R$ with $\pi_*\Tmf_R \cong (\pi_*\Tmf)\tensor R$. As homotopy colimits do not commute with global sections in general, we have to prove the following lemma about the global sections $\Gamma(\OO^{top}_R) = \OO^{top}_R(\MMb_{ell,R})$. 
\begin{lemma}
The map $\Tmf \to \Gamma(\OO^{top}_R)$ factors over an equivalence $\Tmf_R \to \Gamma(\OO^{top}_R)$. 
\end{lemma}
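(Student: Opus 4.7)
The plan is to compare the descent spectral sequences on both sides of the map. First I would establish that the target $\Gamma(\OO^{top}_R)$ is already $R$-local, so that the canonical map $\Tmf \to \Gamma(\OO^{top}_R)$ factors through the $R$-localization $\Tmf_R$. To this end I would use the descent spectral sequence
\[E_2^{p,q} = H^q(\MMb_{ell,R}; \omline^{\tensor p}) \Rightarrow \pi_{2p-q}\Gamma(\OO^{top}_R);\]
its $E_2$-page consists of $R$-modules since $\MMb_{ell,R}$ is a stack over $\Spec R$, and it admits a horizontal vanishing line at a finite page (a classical fact, a consequence of the bounded cohomological dimension of $\MMb_{ell}$ together with the known shape of the $E_2$-term). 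Strong convergence then propagates the $R$-module structure from $E_\infty$ to the abutment, so the map factors.

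Next I would identify the two spectral sequences. Flat base change along $\Spec R \to \Spec \Z$ supplies a natural isomorphism
\[H^q(\MMb_{ell,R}; \omline^{\tensor p}) \cong H^q(\MMb_{ell}; \omline^{\tensor p}) \tensor_\Z R.\]
Because $R$ is flat over $\Z$ and the descent spectral sequence for $\Tmf$ has a horizontal vanishing line at a finite page, I can tensor the whole spectral sequence termwise with $R$, obtaining a strongly convergent spectral sequence whose $E_2$-page matches the one above and whose abutment is $\pi_*\Tmf \tensor_\Z R \cong \pi_*\Tmf_R$. The factored map $\Tmf_R \to \Gamma(\OO^{top}_R)$ induces a map of these two convergent spectral sequences which is the identity on $E_2$, and hence an isomorphism on $E_\infty$ and on abutments, yielding the desired equivalence.

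The main obstacle is verifying the sheaf-theoretic compatibility needed to make the comparison map of spectral sequences well-defined. Specifically, one must check that the sheaf $\OO^{top}_R$ built in \cite[Chapter 12]{TMFbook} via the arithmetic square is equivalent to a suitable pullback of $\OO^{top}$ along the flat morphism $\MMb_{ell,R} \to \MMb_{ell}$, so that the induced map on descent spectral sequences is indeed the one coming from flat base change. Once this bookkeeping is done, the remainder of the argument is formal.
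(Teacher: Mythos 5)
Your proposal is correct and follows essentially the same route as the paper's proof: compare the two descent spectral sequences via flat base change of the $E_2$-pages, use the horizontal vanishing line (concentration in finitely many filtrations) to conclude the comparison converges to an isomorphism of abutments, and invoke $R$-locality of $\Gamma(\OO^{top}_R)$ to factor the map through $\Tmf_R$. The only minor difference is that you derive $R$-locality of $\Gamma(\OO^{top}_R)$ from the spectral sequence whereas the paper simply asserts it (it is immediate from $\OO^{top}_R$ being a sheaf of $R$-local spectra and global sections being a limit), and the ``main obstacle'' you flag is handled in the paper by appealing to the map $(\MMb_{ell,R},\OO^{top}_R)\to(\MMb_{ell},\OO^{top})$ supplied directly by the construction of $\OO^{top}_R$ in \cite{TMFbook}.
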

\begin{proof}
The map $(\MMb_{ell, R},\OO^{top}_R) \to (\MMb_{ell},\OO^{top})$ induces a map of descent spectral sequences in the opposite direction. As $R$ is flat over $\Z$ and cohomology commutes with flat base change, this map of spectral sequences is just tensoring with $R$. The map converges moreover to a map $\pi_*\Tmf \to \pi_*(\OO^{top}_R(\MMb_{ell,R}))$ and we claim that the induced map 
\[\pi_*\Tmf \tensor R \cong \pi_*\Tmf_R \to \pi_*\Gamma(\OO^{top}_R)\]
is an isomorphism. This is true because the $E_\infty$-pages of these descent spectral sequences are concentrated in finitely many lines, either by computation \cite{Konter} or conceptually as in \cite[Theorem 3.14]{MathewThick}. As $\Gamma(\OO^{top}_R)$ is an $R$-local spectrum, the map $\Tmf \to \Gamma(\OO^{top}_R)$ factors over a map $\Tmf_R \to \Gamma(\OO^{top}_R)$ that induces exactly the isomorphism above on $\pi_*$ and is thus an equivalence.
\end{proof}

To avoid cluttering the notation, we will set $\MMb_{ell} = \MMb_{ell,R}$ and $\Tmf = \Tmf_R$ etc. in the following.

We will work in the homotopy category of $\OO^{top}$-modules. We denote the derived smash product over $\OO^{top}$ by $\tensor_{\OO^{top}}$ and the internal Hom in this category by $\mathcal{H}om_{\OO^{top}}$ (see \cite[Section 2.2]{MeierTMFLevel} for details on the latter). Given two $\OO^{top}$-modules $\FF$ and $\cG$, we denote by  $[\FF,\cG]^{\OO^{top}}$ the morphism set in the homotopy category and this coincides with $\pi_0$ of the global sections $\mathrm{Hom}_{\OO^{top}}(\FF,\cG)$ of the sheaf of spectra $\mathcal{H}om_{\OO^{top}}(\FF,\cG)$. 

\begin{Definition}
 An $\OO^{top}$-module $\F$ is \emph{locally free of rank $n$} if there is an \'etale covering $\{U_i \to \MMb_{ell}\}$ such that $\F$ restricted to $U_i$ is equivalent to $\bigoplus_n \OO^{top}|_{U_i}$. 
\end{Definition}

\begin{lemma}\label{lem:locfree}
Let $\F$ and $\cG$ be $\OO^{top}$-module and assume $\FF$ to be locally free.
\begin{enumerate}
 \item The homotopy groups $\pi_p\FF$ are zero for $p$ odd and isomorphic to $\pi_0\FF \tensor \omline^{\tensor \frac{p}2}$ for $p$ even. 
 \item The map $\pi_p\mathcal{H}om_{\OO^{top}}(\FF,\cG) \to \mathcal{H}om_{\OO_{\MMb_{ell}}}(\pi_0\FF, \pi_p\cG)$ is an isomorphism for every $p\in\Z$.
\end{enumerate}
\end{lemma}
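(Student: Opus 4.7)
My plan for both parts is to reduce to the case $\FF = \OO^{top}$ via the local freeness hypothesis, exploiting that the natural maps in question are compatible with finite direct sums and that the relevant conditions are local on the \'etale site of $\MMb_{ell}$.

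For part (1), I would use the $\OO^{top}$-module structure on $\FF$ to produce a natural pairing
\[\mu_p\colon \pi_0\FF \tensor_{\OO_{\MMb_{ell}}} \pi_p\OO^{top} \to \pi_p\FF\]
on sheaves. Since $\pi_p\OO^{top}$ is zero for $p$ odd and isomorphic to $\omline^{\tensor p/2}$ for $p$ even, this will give the desired form once $\mu_p$ is shown to be an isomorphism. When $\FF = \OO^{top}$, $\mu_p$ is tautologically the identity on $\pi_p\OO^{top}$, and the formation of $\mu_p$ commutes with finite direct sums, so $\mu_p$ is an isomorphism for any $\FF$ that is a finite sum of copies of $\OO^{top}$. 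Being an isomorphism of sheaves is \'etale-local on $\MMb_{ell}$, and the local freeness hypothesis then promotes this to a global isomorphism.

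For part (2), I would construct the map by taking $\pi_0$ of representing morphisms: a class in $\pi_p\mathcal{H}om_{\OO^{top}}(\FF,\cG)$ over some \'etale $U \to \MMb_{ell}$ is represented by a map $\FF|_U \to \Sigma^{-p}\cG|_U$ of $\OO^{top}|_U$-modules, and taking $\pi_0$ yields an $\OO_{\MMb_{ell}}|_U$-linear map $\pi_0\FF|_U \to \pi_p\cG|_U$. Both sides of the alleged isomorphism are sheaves of abelian groups on the \'etale site, so it suffices to check the assertion locally. When $\FF|_U \cong \OO^{top}|_U$, both sides reduce to $\pi_p\cG|_U$ and the map is the identity. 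Using that $\mathcal{H}om_{\OO^{top}}(-,\cG)$ sends finite direct sums to finite products, that the same is true for $\mathcal{H}om_{\OO_{\MMb_{ell}}}(-,\pi_p\cG)$, and that $\pi_p$ commutes with finite products of spectra, the isomorphism extends to the case $\FF|_U \cong \bigoplus_n \OO^{top}|_U$. Local freeness of $\FF$ then gives the claim.

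The main subtlety I anticipate is ensuring the naturality of the constructions in part (2) is robust enough to commute with restriction to an \'etale open, i.e.\ that $\mathcal{H}om_{\OO^{top}}(\FF,\cG)|_U$ is computed as $\mathcal{H}om_{\OO^{top}|_U}(\FF|_U,\cG|_U)$ and that $\pi_p$ commutes with restriction to such opens; both follow from general sheaf theory for sheaves of spectra but should be explicitly invoked. Once this bookkeeping is in place, the argument reduces to the tautological identifications for the free case and sheafification via local freeness.
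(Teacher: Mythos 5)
Your proposal is correct and follows essentially the same strategy as the paper's proof: in both parts, produce a natural comparison map (your $\mu_p$ in part (1) is just the structure map coming from $\pi_0(\Sigma^{-p}\OO^{top}\wedge_{\OO^{top}}\FF)$ that the paper writes down, and the map of part (2) is the same edge map) and then observe that being an isomorphism of sheaves is \'etale-local, so it suffices to check the case $\FF \simeq (\OO^{top})^n$ where everything is tautological. Your extra care about compatibility of $\mathcal{H}om_{\OO^{top}}(-,\cG)$ and of $\pi_p$ with restriction to \'etale opens is sound and only makes explicit what the paper leaves implicit.
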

\begin{proof}
The sheaf $\pi_p\FF$ vanishes for $p$ odd as it vanishes locally. For $p$ even, we can write $\pi_p\FF \cong \pi_0 \Sigma^{-p}\FF \cong \pi_0\left(\Sigma^{-p}\OO^{top} \wedge_{\OO^{top}}\FF\right)$. The map 
\[\omline^{\tensor \frac{-p}2}\tensor \pi_0\FF \cong \pi_0\Sigma^{-p}\OO^{top} \tensor_{\pi_0\OO^{top}} \pi_0\FF \to  \pi_0\left(\Sigma^{-p}\OO^{top} \wedge_{\OO^{top}}\FF\right) \cong \pi_0 \Sigma^{-p}\FF\]
is an isomorphism as it is an isomorphism locally when $\FF \simeq (\OO^{top})^n$.

For the second part, we argue similarly that the map 
\[\pi_p\mathcal{H}om_{\OO^{top}}(\FF,\cG) \to \mathcal{H}om_{\OO_{\MMb_{ell}}}(\pi_0\FF, \pi_p\cG)\]
is an isomorphism as it is one locally when $\FF \simeq (\OO^{top})^n$.
\end{proof}
A related lemma to the following already appears in \cite[Lemma 2.2.2]{BehrensOrmsby}.

\begin{lemma}\label{lem:trace}
Let $\AA$ be a sheaf of $\OO^{top}$-algebras on $\MMb_{ell}$ that is locally free of rank $n$ as an $\OO^{top}$-module. There is a trace map
$$\tr_{\AA}\colon \AA \to \OO^{top}$$
such that the composite $\tr_{\AA}u$ with the unit map 
$$u\colon \OO^{top} \to \AA$$
equals multiplication by $n$. 
\end{lemma}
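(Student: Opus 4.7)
The plan is to construct $\tr_{\AA}$ via the categorical trace associated with the dualizability of $\AA$. Since $\AA$ is locally free of rank $n$, it is dualizable as an $\OO^{top}$-module, with dual $\AA^\vee := \mathcal{H}om_{\OO^{top}}(\AA, \OO^{top})$. The natural comparison map $\AA \tensor_{\OO^{top}} \AA^\vee \to \mathcal{H}om_{\OO^{top}}(\AA, \AA)$ is an equivalence (both sides satisfy \'etale descent and agree on a trivializing cover), yielding evaluation and coevaluation maps and thus a categorical trace
\[\tau\colon \mathcal{H}om_{\OO^{top}}(\AA, \AA) \to \OO^{top}\]
sending $\id_{\AA}$ to the categorical dimension of $\AA$.

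The multiplication $m\colon \AA \tensor_{\OO^{top}} \AA \to \AA$ is adjoint under the tensor--Hom adjunction to a left regular representation $L\colon \AA \to \mathcal{H}om_{\OO^{top}}(\AA, \AA)$. I would define
\[\tr_{\AA} := \tau \circ L \colon \AA \to \OO^{top}.\]
The composite $L \circ u\colon \OO^{top} \to \mathcal{H}om_{\OO^{top}}(\AA, \AA)$ corresponds under the adjunction to $m \circ (u \tensor_{\OO^{top}} \id_{\AA}) = \id_{\AA}$ (using that $u$ is a unit), so $L \circ u$ picks out $\id_{\AA}$. Therefore $\tr_{\AA} \circ u = \tau(\id_{\AA}) = \dim \AA$. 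This dimension is \'etale-local: on a cover where $\AA \simeq (\OO^{top})^n$, the trace of the identity equals $n \cdot \id_{\OO^{top}}$, and since maps $\OO^{top} \to \OO^{top}$ satisfy descent, $\tr_{\AA} \circ u = n$ globally.

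The main obstacle is making the duality equivalence $\AA \tensor_{\OO^{top}} \AA^\vee \simeq \mathcal{H}om_{\OO^{top}}(\AA, \AA)$ and the trace $\tau$ rigorous in the homotopy category of sheaves of $\OO^{top}$-modules, where one does not have access to the convenient $\infty$-categorical calculus directly. However, on an \'etale cover trivializing $\AA$ everything reduces to the standard finite-dimensional categorical trace, and functoriality of the comparison map in $\AA$ makes the sheaf-theoretic gluing formal. Once these pieces are in place, the identity $\tr_{\AA} \circ u = n$ becomes a direct consequence of the elementary fact that the categorical dimension of $(\OO^{top})^n$ is $n$.
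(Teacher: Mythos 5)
Your construction of $\tr_{\AA}$ is essentially the same as the paper's: the paper defines $\tr_{\AA}$ as the composite $\AA \to \mathcal{H}om_{\OO^{top}}(\AA,\AA) \xleftarrow{\simeq} \AA \tensor_{\OO^{top}} \mathcal{H}om_{\OO^{top}}(\AA,\OO^{top}) \xrightarrow{\ev} \OO^{top}$, which is your $\tau\circ L$ unwound. The worry you voice at the end about making the duality equivalence rigorous in the homotopy category is not the real obstacle; that part is fine and matches the paper.

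The genuine gap is in the last step: ``since maps $\OO^{top} \to \OO^{top}$ satisfy descent, $\tr_{\AA}\circ u = n$ globally.'' Knowing that $\tr_{\AA}u$ restricted to each $U_i$ in an \'etale cover is homotopic to $n\cdot\id$ does \emph{not} formally give a global homotopy. Descent for mapping spectra tells you that $[\OO^{top},\OO^{top}]^{\OO^{top}} = \pi_0\Tmf$ is computed by a descent (homotopy fixed point / \v{C}ech) spectral sequence, and $\pi_0$ of a homotopy limit is not the naive limit of $\pi_0$'s; there are potential contributions from $H^s(\MMb_{ell};\omline^{\tensor s/2})$ with $s>0$, and two maps that agree on an open cover could a priori differ by such a higher-filtration class. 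The paper closes this gap by observing that the edge homomorphism $[\OO^{top},\OO^{top}]^{\OO^{top}} = \pi_0\Tmf \to \Hom_{\OO_{\MMb_{ell}}}(\pi_0\OO^{top},\pi_0\OO^{top})$ is a bijection, citing Konter's computation of the $\Tmf$ descent spectral sequence (the zeroth column of the $E_\infty$-page contains only a $\Z$ in line $0$). Once maps are detected on $\pi_0$, the statement becomes one about a map of quasi-coherent sheaves of $\OO_{\MMb_{ell}}$-modules, where equality genuinely is a local condition, and there the identification with the classical trace of the vector bundle $\pi_0\AA$ and the rank-$n$ computation is straightforward. You need to import (or re-prove) this detection-on-$\pi_0$ input; it is a nontrivial fact about $\Tmf$, not a formal consequence of descent.
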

\begin{proof}
Consider the composite
$$\tr_{\AA}\colon \AA \to \mathcal{H}om_{\OO^{top}}(\AA,\AA) \xleftarrow{\simeq} \AA \tensor_{\OO^{top}} \mathcal{H}om_{\OO^{top}}(\AA,\OO^{top}) \xrightarrow{\ev} \OO^{top}.$$
Here, the middle map is an equivalence because $\AA$ is locally free of finite rank. We claim that the composite 
$$\tr_{\AA}u\colon \OO^{top} \to \OO^{top}$$ 
equals multiplication by $n$. 

Note first that the map
\[\pi_0\colon [\OO^{top},\OO^{top}]^{\OO^{top}} \to \Hom_{\pi_0\OO^{top}}(\pi_0\OO^{top}, \pi_0\OO^{top})
\]
is a bijection. Indeed, the source agrees with $\pi_0\Gamma(\OO^{top}) = \pi_0\Tmf$ and the morphism is the edge homomorphism of the descent spectral sequence for 
\[\mathcal{H}om_{\OO^{top}}(\OO^{top},\OO^{top}) \simeq \OO^{top}.\] 
It can be deduced from \cite[Section 3, Figure 11, Figure 26]{Konter} 
that this edge homomorphism is an isomorphism, i.e.\ that the $E_\infty$-term contains in the zeroth column only a $\Z$ in line $0$ and nothing above it (see also the proof of \cite[Lemma 4.9]{HillMeier} for a different approach).  Thus, it is enough to show that $\tr_{\AA}u$ is multiplication by $n$ on $\pi_0$.

As $\AA$ is locally free, 
\[\pi_0\AA \tensor_{\OO_{\MMb_{ell}}}\pi_0\mathcal{H}om_{\OO^{top}}(\AA,\OO^{top}) \to \pi_0 (\AA \tensor_{\OO^{top}} \mathcal{H}om_{\OO^{top}}(\AA,\OO^{top})) \]
is locally and hence globally an isomorphism. Note that the source is naturally isomorphic to $\pi_0\AA \tensor_{\OO_{\MMb_{ell}}}\mathcal{H}om_{\OO_{\MMb_{ell}}}(\pi_0\AA,\OO_{\MMb_{ell}})$ by the discussion by \cref{lem:locfree}.
Using these isomorphisms it can be checked that $\pi_0\tr_{\AA}\colon \pi_0\AA \to \pi_0\OO^{top}$ agrees with the trace map of $\pi_0\AA$ over $\pi_0\OO^{top} = \OO_{\MMb_{ell}}$. Its precomposition with $\pi_0u$ equals $n$ as it does locally (since we get exactly the trace of the identity map of a free module of rank $n$). This shows the claim. 
\end{proof}

Now we assume that $\frac12\in R$. We will need the following variant of \cite[Lemma 5.2.2]{MeierDoktorarbeit}. Recall that we denote by $f$ the natural map $\MMb_1(2) \to \MMb_{ell}$. By \cite{HillLawson}, we have a sheaf of $E_\infty$-ring spectra $\OO^{top}_{\MMb_1(n)}$ on the \'etale site of every $\MMb_1(n)$. 
We denote by $f_*f^*\OO^{top}$ the sheaf $f_*\OO^{top}_{\MMb_1(n)}$ on $\MMb_{ell}$, i.e.\ the one associating with every \'etale map $U \to \MMb_{ell}$ the $E_\infty$-ring spectrum $\OO^{top}_{\MMb_1(2)}(U\times_{\MMb_{ell}}\MMb_1(2))$. By the proof of \cite[Theorem 3.5]{MeierTMFLevel} the odd homotopy of $f_*f^*\OO^{top}$ vanishes and $\pi_{2i}f_*f^*\OO^{top} \cong f_*f^*\omline^{\tensor i}$. This implies in paticular that $f_*f^*\OO^{top}$ is locally free.

\begin{lemma}\label{lem:canrealize}
 Let $\F$ be a locally free $\OO^{top}$-module on $\MMb_{ell}$ of finite rank. Let $g_{alg}\colon f_*f^*\omline^{\tensor (-i)} \to \pi_0\F$ be a split injection. Then $g_{alg}$ can be uniquely realized by a split map
 \[g\colon \Sigma^{2i}f_*f^*\OO^{top} \to \F\]
  with $\pi_0 g = g_{alg}$. 
\end{lemma}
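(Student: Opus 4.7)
The plan is to deduce both the existence/uniqueness and the splitness from the collapse of the descent spectral sequence
\[E_2^{s,t} = H^s(\MMb_{ell}; \pi_t \mathcal{H}om_{\OO^{top}}(\Sigma^{2i}f_*f^*\OO^{top}, \F)) \Longrightarrow \pi_{t-s}\mathrm{Hom}_{\OO^{top}}(\Sigma^{2i}f_*f^*\OO^{top}, \F).\]
Since $\Sigma^{2i}f_*f^*\OO^{top}$ is locally free with $\pi_0 \cong f_*f^*\omline^{\tensor(-i)}$, \cref{lem:locfree}(2) identifies
\[\pi_t\mathcal{H}om_{\OO^{top}}(\Sigma^{2i}f_*f^*\OO^{top}, \F) \cong \mathcal{H}om_{\OO_{\MMb_{ell}}}(f_*f^*\omline^{\tensor(-i)}, \pi_t\F),\]
which vanishes whenever $t$ is odd. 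Moreover, $\MMb_{ell}$ is proper of relative dimension $1$ over the affine base $\Spec R$, so a Leray argument shows that its quasi-coherent cohomology vanishes in degrees $\geq 2$, giving $E_2^{s,t}=0$ for $s\geq 2$.

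Next I would combine these two vanishing statements to force the edge homomorphism
\[\pi_0\mathrm{Hom}_{\OO^{top}}(\Sigma^{2i}f_*f^*\OO^{top}, \F) \longrightarrow E_2^{0,0} = \Hom_{\OO_{\MMb_{ell}}}(f_*f^*\omline^{\tensor(-i)}, \pi_0\F)\]
to be an isomorphism. Any differential $d_r\colon E_r^{0,0}\to E_r^{r,r-1}$ has zero target (odd $t$-coordinate when $r$ is even, $s\geq 3$ when $r\geq 3$ is odd), and $E_2^{s,s}=0$ for $s\geq 1$ by the same dichotomy. This immediately gives a unique $g$ realizing $g_{alg}$.

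For the splitness, I would pick an algebraic splitting $s_{alg}\colon \pi_0\F \to f_*f^*\omline^{\tensor(-i)}$ of $g_{alg}$ and run the same descent argument with the roles of source and target swapped, applied to
\[\mathrm{Hom}_{\OO^{top}}(\F, \Sigma^{2i}f_*f^*\OO^{top}).\]
The hypothesis that $\F$ is locally free lets \cref{lem:locfree}(2) apply with $\F$ as the first argument, so the same collapse argument produces a unique $s\colon \F \to \Sigma^{2i}f_*f^*\OO^{top}$ with $\pi_0 s = s_{alg}$. Then $\pi_0(s\circ g) = s_{alg}\circ g_{alg} = \id$, and by applying the uniqueness part to endomorphisms of $\Sigma^{2i}f_*f^*\OO^{top}$, we conclude $s\circ g = \id$, so $g$ is split.

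The main obstacle is justifying the spectral sequence collapse cleanly; the key global input is the cohomological dimension bound $\leq 1$ for quasi-coherent sheaves on $\MMb_{ell}$, which reduces to the vanishing of $R^q\pi_*$ above the relative dimension together with the affineness of $\Spec R$. With that in hand, the $E_2$-page combinatorics and the uniqueness-based deduction of splitness are formal.
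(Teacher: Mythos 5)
Your overall strategy — collapse of the descent spectral sequence for $\mathcal{H}om_{\OO^{top}}$, edge homomorphism, then run the same argument for the would-be splitting and invoke uniqueness — is the same as the paper's. However, the crucial technical step is wrong.

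You justify $E_2^{s,t}=0$ for $s\geq 2$ by asserting that, because $\MMb_{ell}$ is proper of relative dimension $1$ over $\Spec R$, quasi-coherent cohomology vanishes in degrees $\geq 2$. This is false at the prime $3$, which is the case of interest. Grothendieck vanishing in this form applies to schemes, not to Deligne--Mumford stacks: the stack $\MMb_{ell,(3)}$ has points with stabilizer group containing $\Z/3$, and the corresponding group cohomology produces nonzero $H^s$ in arbitrarily high degree. This is exactly what underlies the classes $\beta^k$ in high filtration in the descent spectral sequence for $\Tmf_{(3)}$, so the vanishing you want cannot hold for an arbitrary quasi-coherent sheaf on $\MMb_{ell}$.

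The paper's proof avoids this by exploiting additional structure: it first uses \cref{lem:dual} to identify
\[
\mathcal{H}om_{\OO_{\MMb_{ell}}}\bigl(f_*f^*\OO_{\MMb_{ell}}, \pi_p\FF\bigr) \cong f_*f^*\bigl(\omline^{\tensor(-4)}\tensor\pi_p\FF\bigr),
\]
so the sheaves appearing in the $E_2$-page are pushforwards along the \emph{affine} map $f\colon\MMb_1(2)\to\MMb_{ell}$. Since $f$ is affine, $H^s(\MMb_{ell};f_*\mathcal{G})\cong H^s(\MMb_1(2);\mathcal{G})$, and one then cites that every quasi-coherent sheaf on $\MMb_1(2)$ has cohomology concentrated in degrees $0$ and $1$ (\cite[Proposition 2.4(4)]{MeierDecomposition}). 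This is where the two-line bound on the spectral sequence actually comes from. Your argument is missing both the identification of the dual of $f_*f^*\OO$ and the reduction to cohomology on $\MMb_1(2)$, and so the claimed collapse is not justified. The same issue recurs in your treatment of the splitting (the target there is also a pushforward from $\MMb_1(2)$, via the projection formula, but you would again need to say this rather than appeal to a false general bound).
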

\begin{proof}
Using \cref{lem:locfree} and the projection formula, we reduce to the case $i=0$ by possibly suspending $\FF$ to simplify notation. 

 We will use our earlier identification in \cref{lem:dual} of the dual of the vector bundle $f_*f^*\OO_{\MMb_{ell}} \cong f_*\OO_{\MMb_1(2)}$ with $\omline^{\tensor (-4)} \tensor_{\OO_{\MMb_{ell}}} f_*\OO_{\MMb_1(2)} \cong f_*f^*\omline^{\tensor (-4)}$. 
As $\pi_p\FF$ is locally free and using \cref{lem:locfree}, this implies
\begin{align*}
\pi_{p}\mathcal{H}om_{\OO^{top}}(f_*f^*\OO^{top},\F) &\cong \mathcal{H}om_{\OO_{\MMb_{ell}}}(f_*f^*\OO_{\MMb_{ell}}, \pi_p\FF)\\
&\cong f_*f^*\omline^{\tensor (-4)}\tensor_{\MMb_{ell}}\pi_p\FF \\
&\cong f_*f^*(\omline^{\tensor (-4)}\tensor_{\MMb_{ell}}\pi_p\FF).
\end{align*}
As $f$ is affine and every quasi-coherent sheaf on $\MMb_1(2)$ has cohomology at most in degrees $0$ and $1$ by \cite[Proposition 2.4(4)]{MeierDecomposition}, the descent spectral sequence 
$$H^q(\MMb_{ell}; \pi_p\mathcal{H}om_{\OO^{top}}(f_*f^*\OO^{top},\F)) \Rightarrow \pi_{p-q}\Hom_{\OO^{top}}(f_*f^*\OO^{top},\F)$$
is concentrated in the lines $0$ and $1$. Moreover, the $E_2$-term is zero for $p$ odd and thus the edge homomorphism 
$$[f_*f^*\OO^{top},\FF]^{\OO^{top}} = \pi_0\Hom_{\OO^{top}}(f_*f^*\OO^{top},\F) \to \Hom_{\OO_{\MMb_{ell}}}(f_*f^*\OO_{\MMb_{ell}},\pi_0\F)$$
is an isomorphism. 

Similarly, one shows that 
$$[\F,f_*f^*\OO^{top}]^{\OO^{top}} = \pi_0\Hom_{\OO^{top}}(\F, f_*f^*\OO^{top}) \to \Hom_{\OO_{\MMb_{ell}}}(\pi_0\F,f_*f^*\OO_{\MMb_{ell}})$$
is an isomorphism. The lemma follows.
\end{proof}

Recall that we denote the natural map $\MMb_0(7) \to \MMb_{ell}$ by $h$. By the work of \cite{HillLawson}, we have a sheaf of $E_\infty$-ring spectra $\OO^{top}_{\MMb_0(7)}$ on the \'etale site of $\MMb_0(7)$ and we denote by $h_*h^*\OO^{top}$ its pushforward to $\MMb_{ell}$ along $h$. 

\begin{Theorem}\label{thm:topmain}
We can decompose $\Tmf_0(7)_{(3)}$ as a $\TMF_{(3)}$-module into
\[\Tmf_{(3)} \oplus \Sigma^4\Tmf_1(2)_{(3)} \oplus \Sigma^8\Tmf_1(2)_{(3)}\oplus L,\]
where $L \in \Pic(\Tmf_{(3)})$, i.e.\ $L$ is an invertible $\Tmf_{(3)}$-module. There is a corresponding splitting
\[ h_*h^*\OO^{top}_{(3)} \simeq \OO^{top}_{(3)} \oplus \Sigma^4f_*f^*\OO^{top}_{(3)}\oplus \Sigma^8f_*f^*\OO^{top}_{(3)} \oplus \L\]
for a certain invertible $\OO^{top}_{(3)}$-module $\L$.
\end{Theorem}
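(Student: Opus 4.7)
The plan is to first establish the sheaf-level splitting
\[ h_*h^*\OO^{top}_{(3)} \simeq \OO^{top}_{(3)} \oplus \Sigma^4 f_*f^*\OO^{top}_{(3)} \oplus \Sigma^8 f_*f^*\OO^{top}_{(3)} \oplus \L \]
on $\MMb_{ell,(3)}$, and then obtain the $\Tmf_{(3)}$-module decomposition of $\Tmf_0(7)_{(3)}$ by taking global sections (using that $\Gamma(f_*f^*\OO^{top}_{(3)}) \simeq \Tmf_1(2)_{(3)}$ and $\Gamma(h_*h^*\OO^{top}_{(3)}) \simeq \Tmf_0(7)_{(3)}$).

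By \Cref{thm:MainTheorem}, $\pi_0 h_*h^*\OO^{top}_{(3)} \cong (h_7)_*\OO_{\MMb_0(7)_{(3)}}$ splits as
\[ \OO \oplus f_*f^*\omline^{\tensor(-2)} \oplus f_*f^*\omline^{\tensor(-4)} \oplus \omline^{\tensor(-6)}, \]
so in particular $h_*h^*\OO^{top}_{(3)}$ is locally free of rank $8$. To realize the two middle summands topologically, I would apply \Cref{lem:canrealize} to the split inclusions $f_*f^*\omline^{\tensor(-2)} \hookrightarrow \pi_0 h_*h^*\OO^{top}_{(3)}$ and $f_*f^*\omline^{\tensor(-4)} \hookrightarrow \pi_0 h_*h^*\OO^{top}_{(3)}$ coming from the algebraic splitting; this produces unique split maps $\Sigma^{2i} f_*f^*\OO^{top}_{(3)} \to h_*h^*\OO^{top}_{(3)}$ for $i = 2, 4$.

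For the $\OO^{top}_{(3)}$ summand I would invoke \Cref{lem:trace}: the sheaf $h_*h^*\OO^{top}_{(3)}$ is a sheaf of $\OO^{top}_{(3)}$-algebras, locally free of rank $8$, and the composite $\tr \circ u$ of the unit with the trace is multiplication by $8$. Since $8$ is a unit in $\Z_{(3)}$, this splits off an $\OO^{top}_{(3)}$ summand. Define $\L$ as the residual summand; its homotopy is concentrated in even degrees with $\pi_{2i}\L \cong \omline^{\tensor(i-6)}$.

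The main obstacle is to identify $\L$ as locally free of rank $1$, so that $L := \Gamma(\L)$ is invertible as a $\Tmf_{(3)}$-module. I would argue locally: on an \'etale cover $U \to \MMb_{ell,(3)}$ trivializing $\omline$, the homotopy of $\L|_U$ is concentrated in even degrees and isomorphic to $\OO_U$ in each such degree, matching $\pi_*\Sigma^{12}\OO^{top}|_U$. A descent-spectral-sequence argument parallel to the one in \Cref{lem:canrealize}, applied to the unit element of $\pi_0(\L \tensor_{\OO^{top}} \Sigma^{-12}\OO^{top})|_U$, would lift to an equivalence $\Sigma^{12}\OO^{top}|_U \xrightarrow{\simeq} \L|_U$. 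Once local freeness is established, taking global sections yields the asserted splitting, and invertibility of $L$ in $\Tmf_{(3)}$-modules follows by the standard comparison between line bundles of $\OO^{top}_{(3)}$-modules on $\MMb_{ell,(3)}$ and $\Pic(\Tmf_{(3)})$ (in the spirit of Mathew--Stojanoska). Pinning down the precise class of $L$ in $\Pic(\Tmf_{(3)})$ is a separate task that invokes the computations of \Cref{AppB}, since $\Pic(\Tmf_{(3)})$ is known to contain elements beyond suspensions of the unit.
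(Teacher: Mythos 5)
Your overall strategy matches the paper's: use the trace construction of \cref{lem:trace} for the unit summand, \cref{lem:canrealize} for the two $f_*f^*$ summands, invoke \cref{thm:MainTheorem} for the algebraic input, and pass to global sections via the Mathew--Meier equivalence. The one point that needs tightening is the \emph{order} in which you peel off summands. You apply \cref{lem:canrealize} directly to $h_*h^*\OO^{top}$ to obtain two separate split maps $\Sigma^{2i}f_*f^*\OO^{top}\to h_*h^*\OO^{top}$, separately use the trace to split off $\OO^{top}$, and then declare $\L$ to be ``the residual summand''. But three independent retractions do not automatically assemble into a four-term direct sum decomposition: you would still need to check that the combined map $\OO^{top}\oplus\Sigma^4 f_*f^*\OO^{top}\oplus\Sigma^8 f_*f^*\OO^{top}\to h_*h^*\OO^{top}$ is a split injection whose complement has $\pi_0\cong\omline^{\tensor(-6)}$. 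This can be arranged (for instance the composite of the unit with the projections to the $f_*f^*$ summands is detected by $\pi_0$ by the uniqueness clause of \cref{lem:canrealize}, hence vanishes), but it needs to be said. The paper sidesteps the issue by splitting off $\OO^{top}$ \emph{first} via the trace and then applying \cref{lem:canrealize} iteratively to the cofiber $\FF$, whose $\pi_0$ is, by \cref{thm:AlgMain}, exactly the direct sum of the three remaining algebraic pieces; the residual $\L$ is then what is left by construction. The paper also gets local freeness of $\L$ more cheaply than you do: as a direct summand of the locally free module $h_*h^*\OO^{top}$ with $\pi_0\L$ a line bundle, $\L$ is locally free of rank $1$, hence invertible, without needing to produce an explicit local equivalence with $\Sigma^{12}\OO^{top}$.
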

\begin{proof}Throughout this proof, we will implicitly localize at $3$. 
By Lemma \ref{lem:trace}, the unit map $\OO^{top} \to h_*h^*\OO^{top}$ splits off as an $\OO^{top}$-module; denote the cofiber by $\FF$. Note that $\pi_k\FF = 0$ for $k$ odd. By Theorem \ref{thm:AlgMain}, 
$$\pi_0\FF \cong \omline^{\tensor (-6)}\oplus f_*f^*\omline^{\tensor (-2)} \oplus f_*f^*\omline^{\tensor (-4)}.$$
By Lemma \ref{lem:canrealize}, we obtain a decomposition $$\F \cong \L \oplus \Sigma^4f_*f^*\OO^{top} \oplus \Sigma^8f_*f^*\OO^{top}$$
with $\pi_0\L \cong \omline^{\tensor (-6)}$. As a summand of a locally free module, $\L$ is locally free as well and thus an invertible $\OO^{top}$-module as it has rank $1$. We obtain our result by taking global sections because the global sections of $h_*h^*\OO^{top}$ are $\Tmf_0(7)$. To see that $L = \Gamma(\L)$ is an invertible $\Tmf$-module, we use that the global sections functor
$$\Gamma\colon \mathrm{QCoh}(\MMb_{ell},\OO^{top}) \to \Tmf\mathrm{-mod}$$
is a symmetric monoidal equivalence of $\infty$-categories by one of the main results of \cite{MathewMeier}. 
\end{proof}

In \cref{AppB} we will identify this invertible $\TMF_{(3)}$-module $L$ precisely. In particular, we will show that $L_{\Q} \simeq \Sigma^{12}\Tmf_{\Q}$ and $L\wedge_{\Tmf}\TMF \simeq \Sigma^{36}\TMF$. This implies directly that $L$ is not just a suspension of $\Tmf_{(3)}$. Moreover, we obtain together with the $8$-fold periodicity of $\TMF_1(2)$ the following corollary. 

\begin{Cor}
We can decompose $\TMF_0(7)_{(3)}$ as a $\TMF_{(3)}$-module into
\[ \TMF_{(3)} \oplus \Sigma^4\TMF_1(2)_{(3)} \oplus \TMF_1(2)_{(3)} \oplus \Sigma^{36}\TMF_{(3)}.\]
\end{Cor}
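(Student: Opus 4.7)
The plan is to obtain this as an immediate consequence of \cref{thm:topmain} by inverting the discriminant $\Delta$, together with the two auxiliary facts on $L$ and on $\TMF_1(2)$ stated in the preceding paragraph. Specifically, the passage from $\Tmf$ to $\TMF$ corresponds to restriction from $\MMb_{ell}$ to the open substack $\MM_{ell}$, which on module spectra is effected by smashing with $\TMF_{(3)}$ over $\Tmf_{(3)}$. Since this construction is compatible with direct sums, applying it to the splitting
\[
\Tmf_0(7)_{(3)} \simeq \Tmf_{(3)} \oplus \Sigma^4\Tmf_1(2)_{(3)} \oplus \Sigma^8\Tmf_1(2)_{(3)} \oplus L
\]
from \cref{thm:topmain} yields a splitting
\[
\TMF_0(7)_{(3)} \simeq \TMF_{(3)} \oplus \Sigma^4\TMF_1(2)_{(3)} \oplus \Sigma^8\TMF_1(2)_{(3)} \oplus \bigl(L \wedge_{\Tmf_{(3)}} \TMF_{(3)}\bigr).
\]

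To finish, I would substitute in the two identifications. First, $\TMF_1(2)_{(3)}$ is $8$-fold periodic (this is classical, arising from the fact that $\omline$ is trivial on $\MMb_1(2)$ after inverting $\Delta$, which corresponds to the $v_1$-periodicity of $\TMF_1(2)$ detected by a unit in $\pi_8\TMF_1(2)_{(3)}$). Hence $\Sigma^8\TMF_1(2)_{(3)} \simeq \TMF_1(2)_{(3)}$. Second, by the statement preceding the corollary one has $L \wedge_{\Tmf_{(3)}} \TMF_{(3)} \simeq \Sigma^{36}\TMF_{(3)}$, which is precisely the content of the computation of $L$ carried out in \cref{AppB} via Olbermann's calculation. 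Putting these together produces the claimed decomposition.

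There is no real obstacle here: the entire proof is a substitution. The only arguably nontrivial point is appealing to the fact that $\TMF_0(7)_{(3)} \simeq \Tmf_0(7)_{(3)} \wedge_{\Tmf_{(3)}} \TMF_{(3)}$, but this is built into the construction of $\Tmf_0(7)$ from the sheaf $\OO^{top}_{\MMb_0(7)}$ of \cite{HillLawson}, since $\MM_0(7)$ is the preimage of $\MM_{ell}$ under $\MMb_0(7) \to \MMb_{ell}$.
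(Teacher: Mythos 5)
Your proposal is correct and follows the paper's own route: pass from $\Tmf$ to $\TMF$ by inverting $\Delta$ (equivalently smashing with $\TMF_{(3)}$ over $\Tmf_{(3)}$), substitute the identification $L\wedge_{\Tmf_{(3)}}\TMF_{(3)}\simeq\Sigma^{36}\TMF_{(3)}$ from \cref{AppB}, and use $8$-fold periodicity to replace $\Sigma^{8}\TMF_1(2)_{(3)}$ by $\TMF_1(2)_{(3)}$. One small imprecision in your parenthetical: it is not $\omline$ but $\omline^{\tensor 4}$ that trivializes over $\MM_1(2)$ once $\Delta$ is inverted (via $b_4$ becoming a unit in $\pi_8\TMF_1(2)_{(3)}$), but the conclusion you draw is the same.
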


%%%%%%%%%%%%%%%%%%%%%%%%%%%%%%%%%%%%%%%%%%%%%%%%%%%%%%%%%%%%%%%%%%%%%%%%%%%%%%%%%%%%%%%%%%%%%%%%%%%%%%%%%%%%%%%%%%%%%%%%%%%%%%%%%%%%%%%%%%%%%%%%%

\appendix

\section{Modular forms and $q$-expansions}\label{qExpAppendix}
The aim of this appendix is to review several different definitions of modular forms (complex-analytic, in the sense of Katz and via stacks) and compare them via explicit isomorphisms. Moreover, we will repeat this for modular forms with respect to the congruence subgroup $\Gamma_1(n)$ and the corresponding algebraic definition via the moduli stack of elliptic curves with level structure. We have no claim of originality here. The main reason for writing this appendix anyhow is the existence of two different versions of level structures, often called \emph{naive} and \emph{arithmetic}, whose precise relationship has confused at least the authors in the past. In particular, we will deduce a $q$-expansion principle for the naive level structure, namely Theorem \ref{thm:q-exp}. 

We have based our treatment on \cite{DiamondIm}, \cite{DiamondShurman}, \cite{Katz} and \cite[Section 2]{KatzRealEisenstein}, of which we recommend especially the first two as an introduction to modular forms. We also refer to \cite{ConradRamanujan} for a thorough treatment of the geometry on the analytic side.

\subsection{Modular forms}\label{sec:ModForms}
In this section, we will give three definitions of modular forms and compare them. 

\subsubsection{Analytic definition of modular forms}\label{sec:analytic}
We start by recalling the classical definition of modular forms. Let $f$ be first any $1$-periodic holomorphic function $\bH \to \C$. Then there is a well-defined holomorphic function $g\colon \mathbb{D}\setminus\{0\} \to \C$  satisfying $f(z)=g(e^{2\pi i z})$, where $\mathbb{D}$ denotes the open unit disk. We say that $f$ is \emph{holomorphic/meromorphic at $\infty$} if and only if $g$ can be extended holomorphically/meromorphically to $0$. In these cases, we call the Laurent expansion of $g$ at $0$ the \emph{classical $q$-expansion of $f$} (at $\infty$).

Given a matrix $\gamma=\begin{pmatrix}a & b\\ c &d\end{pmatrix} \in \operatorname{GL}_2(\R)$ with positive determinant, an integer $k$ and an arbitrary function $f\colon \mathbb{H}\to \mathbb{C}$, one defines a new function $f[\gamma]_k$ as follows:
\begin{alignat*}{2}
f[\gamma]_k\colon&\mathbb{H}\to \mathbb{C}\\
 &z \mapsto (cz+d)^{-k}f\left(\frac{az+b}{cz+d}\right).
\end{alignat*}
By \cite[Lemma 1.2.2]{DiamondShurman}, we have $(f[\gamma]_k)[\gamma'_k] = f[\gamma\gamma']_k$ for $\gamma, \gamma'\in \SL_2(\Z)$ and the same proof works actually for arbitrary $\gamma, \gamma'\in \operatorname{GL}_2(\R)$ of positive determinant. We say that a holomorphic function $f\colon \bH \to \C$ for a fixed $k$ is \emph{holomorphic/meromorphic at all cusps} if $f[\gamma]_k$ is $1$-periodic and holomorphic/meromorphic at $\infty$ for all $\gamma \in \SL_2(\Z)$. 

Let $\Gamma_1(n) \subset \SL_2(\Z)$ be the subgroup of matrices that reduce to a matrix of the form $\begin{pmatrix}1&\ast\\0 & \ast\end{pmatrix}$ modulo $n$. Note that for $n=1$, we obtain $\Gamma_1(1) = \SL_2(\Z)$. We denote by $\MF_k(\Gamma_1(n); \mathbb{C})$  the set of holomorphic functions $f\colon\bH\to\C$ that satisfy
\begin{align}\label{eq:transformation}
 f\left(\frac{az+b}{cz+d}\right)=(cz+d)^kf(z) \quad \text{ for every } z\in \bH \text{ and } \begin{pmatrix} a & b\\ c& d\end{pmatrix}\in \Gamma_1(n)
\end{align}
and are holomorphic at all cusps. Note that it is automatic that $f[\gamma]_k$ is $1$-periodic for all $\gamma \in \SL_2(\Z)$ as $\gamma\begin{pmatrix}1&1\\0&1\end{pmatrix}\gamma^{-1} \in \Gamma_1(n)$. Elements of $\MF_k(\Gamma_1(n); \mathbb{C})$ are called \emph{holomorphic modular forms of weight $k$} for $\Gamma_1(n)$. If we instead require $f$ to be meromorphic at all cusps, we speak of \emph{meromorphic modular forms of weight $k$} and denote the set of these by $\MFC_k(\Gamma_1(n);\mathbb{C})$.

For a subring $R_0 \subset \C$, we denote by $\MFC_k(\Gamma_1(n); R_0)$ the subset of $\MFC_k(\Gamma_1(n); \mathbb{C})$ of modular forms with coefficients of classical $q$-expansion of $f$ lying in $R_0$ and we use the notation $\MF_k(\Gamma_1(n);R_0)$ analogously. 

We note that the multiplication of functions induces a multiplication on the direct sum $\MFC(\Gamma_1(n);R_0) = \bigoplus_{k\in \Z} \MFC_k(\Gamma_1(n); R_0)$, making it into a graded ring of modular forms. The $q$-expansion defines a ring homomorphism $\MFC(\Gamma_1(n); R_0) \to R_0((q))$.

An important example of a modular form is the modular discriminant $\Delta \in \MF_{12}(\SL_2(\Z);\Z)$ with $q$-expansion $q -24q^2 + \cdots$. From the $q$-expansion we see that for every meromorphic modular form $f\in \MFC_k(\SL_2(\Z); R_0)$, there is a $k>0$ such that $\Delta^kf$ is a holomorphic modular form. Moreover, $\Delta$ vanishes nowhere on the upper half-plane \cite[Corollary 1.4.2]{DiamondShurman} so that $\Delta^{-1}$ is a meromorphic modular form over $\Z$ again. We see that $\MF(\SL_2(\Z);R_0)[\Delta^{-1}] \to \MFC(\SL_2(\Z);R_0)$ is an isomorphism. As $\Delta[\gamma]_{12} = \Delta$ for all $\gamma \in \SL_2(\Z)$, we can repeat the argument above to see that $\MF(\Gamma_1(n);R_0)[\Delta^{-1}] \to \MFC(\Gamma_1(n);R_0)$ is an isomorphism for all $n$ and similarly for other congruence subgroups of $\SL_2(\Z)$.

\subsubsection{Algebro-geometric definitions of modular forms}
For the algebro-geometric definitions of modular forms, we will concentrate in this part on the situation without level, i.e.\ the one corresponding to modular forms for $\SL_2(\Z)$. We denote for a (generalized) elliptic curve $p\colon E \to T$ the quasi-coherent sheaf $p_*\Omega^1_{E/T}$ by $\omega_E$. For the definition of a generalized elliptic curve see \cite[Definition 1.12]{DeligneRapoport}.
\begin{Prop}[{\cite[Proposition II.1.6]{DeligneRapoport}}]\label{prop:omegaE}
 Let $p\colon E\to T$ be a generalized elliptic curve, and denote its chosen section by $e\colon T\to E$. Then the sheaf $\omega_E = p_*\Omega^1_{E/T}$ is a line bundle on $T$. Moreover, the adjunction counit
\[
 p^*p_*\Omega^1_{E/T}\to \Omega^1_{E/T}
\]
is an isomorphism and thus $p_*\Omega^1_{E/T} \cong e^*\Omega^1_{E/T}$.
\end{Prop}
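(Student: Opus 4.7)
The plan is to reduce to a local statement on $T$, prove the key invertibility on fibers using cohomology and base change, and then deduce the counit isomorphism from the existence of a global invariant differential; part (c) of the statement will then fall out formally.

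First I would localize: the claim is Zariski-local on $T$, so I may assume $T = \Spec R$ affine. The main input is that $p\colon E\to T$ is flat and proper of relative dimension $1$ with geometric fibers that are either smooth elliptic curves or Néron $1$-gons, and for both of these the relative dualizing sheaf $\omega^{\mathrm{rel}}_{E/T}$ restricts to the trivial line bundle on every geometric fiber, with $H^0$ and $H^1$ of that trivial bundle being $1$-dimensional. Cohomology and base change then yields that $p_*\omega^{\mathrm{rel}}_{E/T}$ is a line bundle on $T$ whose formation commutes with arbitrary base change on $T$.

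Second, I would reconcile $\Omega^1_{E/T}$ with $\omega^{\mathrm{rel}}_{E/T}$. Along the relative smooth locus $E^{\mathrm{sm}} \subset E$ (which by the very definition of a generalized elliptic curve contains the image of $e$), the two sheaves agree and are invertible on $E$; they can differ only at the nodal section in the Néron case. Since an invariant relative differential extends uniquely across the node (it is a section of the dualizing sheaf, which is invertible there) and since $p_*\Omega^1_{E/T}$ sees only global sections, the natural inclusion $p_*\Omega^1_{E/T} \hookrightarrow p_*\omega^{\mathrm{rel}}_{E/T}$ is an isomorphism. This gives part (a) of the proposition.

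Third, for the counit $p^*p_*\Omega^1_{E/T} \to \Omega^1_{E/T}$, I would argue locally. After shrinking $T$ we can pick a trivializing section $\omega$ of the line bundle $\omega_E = p_*\Omega^1_{E/T}$; this corresponds to a global nowhere-vanishing invariant differential on $E^{\mathrm{sm}}$. The counit is then the $\OO_E$-linear map sending $1\otimes \omega \mapsto \omega$, and since $\omega$ trivializes $\Omega^1_{E/T}$ on $E^{\mathrm{sm}}$ (and generates the dualizing sheaf elsewhere), this map is an isomorphism fiberwise and hence globally. Finally, applying $e^*$ to the isomorphism $p^*p_*\Omega^1_{E/T} \xrightarrow{\cong} \Omega^1_{E/T}$ and using $p \circ e = \id_T$, so that $e^*p^* = \id$, immediately yields $p_*\Omega^1_{E/T} \cong e^*\Omega^1_{E/T}$.

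The main obstacle I anticipate is the careful handling of the Néron $1$-gon case: $\Omega^1_{E/T}$ has torsion at the node and is not everywhere invertible on $E$, so any direct application of cohomology and base change to $\Omega^1_{E/T}$ itself would need to be replaced by an argument passing through the dualizing sheaf, or restricted to the smooth locus. Since $e$ factors through $E^{\mathrm{sm}}$, this is a surmountable but genuinely subtle point; the cleanest route is to prove everything for $\omega^{\mathrm{rel}}_{E/T}$ first and then observe that pushforward cannot distinguish it from $\Omega^1_{E/T}$.
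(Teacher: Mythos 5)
The paper gives no proof of this proposition; it simply cites Deligne--Rapoport, so the only question is whether your argument is correct on its own terms. Your strategy for showing $p_*\Omega^1_{E/T}$ is a line bundle --- passing through the relative dualizing sheaf $\omega^{\mathrm{rel}}_{E/T}$, which is invertible on all of $E$, invoking cohomology and base change on geometric fibers, and then observing that pushforward does not distinguish $\Omega^1_{E/T}$ from $\omega^{\mathrm{rel}}_{E/T}$ --- is sound and is the standard route. Likewise the final step, applying $e^*$ and using $p\circ e = \mathrm{id}_T$, is fine once one knows the counit is an isomorphism in a neighborhood of $e(T)\subset E^{\mathrm{sm}}$.

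The difficulty is in your third paragraph, and your own final paragraph makes it visible. You correctly observe that $\Omega^1_{E/T}$ has torsion at the node and ``is not everywhere invertible on $E$''; indeed, for a N\'eron $1$-gon the fiber of $\Omega^1_{E/T}$ at the node is $2$-dimensional (locally the curve is $\mathrm{Spec}\,k[x,y]/(xy)$, and $(\Omega^1)\otimes k$ at the origin is $k\,dx\oplus k\,dy$). On the other hand $p^*p_*\Omega^1_{E/T}$ is a line bundle. Consequently the counit $p^*p_*\Omega^1_{E/T}\to\Omega^1_{E/T}$ \emph{cannot} be surjective at a node, hence cannot be an isomorphism on all of $E$. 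Your phrase ``since $\omega$ trivializes $\Omega^1_{E/T}$ on $E^{\mathrm{sm}}$ (and generates the dualizing sheaf elsewhere)'' in fact proves that the map $p^*p_*\Omega^1_{E/T}\to\omega^{\mathrm{rel}}_{E/T}$ is an isomorphism, not the map into $\Omega^1_{E/T}$. What is true, and what suffices for the deduction $p_*\Omega^1_{E/T}\cong e^*\Omega^1_{E/T}$ because $e$ factors through $E^{\mathrm{sm}}$, is that the counit restricts to an isomorphism over $E^{\mathrm{sm}}$. The statement as printed in the paper, following Deligne--Rapoport, should be read with this caveat (or with $\Omega^1_{E/T}$ interpreted as the invariant/dualizing differentials); as written it fails at the nodes of the singular fibers, and your third paragraph reproduces rather than resolves that discrepancy. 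You should replace ``is an isomorphism fiberwise and hence globally'' with ``is an isomorphism over $E^{\mathrm{sm}}$, which contains $e(T)$, so applying $e^*$ yields the claim.''
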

An \emph{invariant differential} for $E$ is a nowhere vanishing section of $\Omega^1_{E/T}$ or equivalently a trivialization of $\omega_E$. 

Our second definition of modular forms will define them as a certain kind of natural transformations. Fix a commutative ring $R_0$. For any $R_0$-algebra $R$, denote by $\Ell^1(R)$ the set of isomorphism classes of pairs $(E,\omega)$ consisting of an elliptic curve $E$ over $R$ together with an invariant differential. This defines (together with pullback of elliptic curves and of invariant differentials) a functor 
\[
 \Ell^1(-)\colon (\operatorname{AffSch}/\Spec(R_0))^{\operatorname{op}} \to \operatorname{Sets}. 
\]
As in \cite[ Section 1.1]{Katz}, we can consider a notion of a modular form of level $1$ and weight $k$ over $R_0$ as the subset of the set of natural transformations $f\in \Nat^{R_0}(\Ell^1(-), \Gamma(-))$ with the following scaling property: For any $R_0$-algebra $R$, elliptic curve with chosen invariant differential $(E,\omega)$ and any $\lambda\in R^{\times}$, we have 
\begin{align}\label{eq:scaling}
 f(E,\lambda\omega)=\lambda^{-k}f(E,\omega). 
\end{align}
Denote the set of such natural transformations by $\Nat^{R_0}_{k}(\Ell^1(-), \Gamma(-))$. Also here, the direct sum $\bigoplus_{k \in \Z}\Nat^{R_0}_{k}(\Ell^1(-), \Gamma(-))$ carries a multiplication by multiplying values in the target. This multiplication gives again a definition of a graded ring of modular forms.\\

For the third definition, let $\MM_{ell,R_0}$ be the moduli stack of elliptic curves over $\Spec(R_0)$ (see e.g.\ \cite{DeligneRapoport} or \cite{M-OMell}). On its big \'{e}tale site, one defines a line bundle $\omline = \omline_{R_0}$ as follows. For a morphism $t\colon T\to \MM_{ell,R_0}$ from a scheme $T$, let $p\colon E\to T$ be the corresponding elliptic curve with unit section $e$. We associate with $(T,t)$ the line bundle $\omega_E$ on $T$. To check that this actually defines a line bundle consider a cartesian square
\[
\begin{tikzcd}
E'\arrow[d,"p'" swap] \arrow[r, "\widetilde{f}"]  & E \arrow[d, "p"]\\
T'\arrow[r, "f" swap] & T
\end{tikzcd}
\]
with unit section $e'\colon T' \to E'$. We obtain a chain of natural isomorphisms
\begin{equation}\label{eq:canonicaliso}
    f^*\omega_E \cong f^*e^*\Omega^1_{E/T} \cong (e')^*\tilde{f}^*\Omega^1_{E/T} \cong (e')^*\Omega^1_{E'/T'} \cong \omega_{E'}
\end{equation}
as required. 

The third definition of the meromorphic modular forms over $R_0$ of weight $k$ is $H^0(\MM_{ell,R_0};\omline^{\tensor k}_{R_0})$. Here, the direct sum $\bigoplus_{k \in\Z} H^0(\MM_{ell,R_0};\omline^{\tensor k}_{R_0})$ carries a multiplication inherited from the tensor algebra $\bigoplus_{k\in\Z} \omline^{\tensor k}_{R_0}$, defining also here a graded ring of modular forms. Sometimes it is convenient to reinterpret this ring as $H^0(\MM_{ell,R_0}^1, \OO_{\MM_{ell,R}^1})$, where $\MM_{ell,R_0}^1$ is the relative spectrum of $\bigoplus_{i\in\Z}\omline_{R_0}^{\tensor i}$ \cite[Section 12.1]{GoertzWedhorn}.

\subsubsection{Comparision of definitions of modular forms}\label{sec:compare}
We start by comparing the two algebro-geometric definitions.
\begin{Prop}\label{sectionsvsnat}
There is a natural isomorphism 
\[
\alpha\colon H^0(\MM_{ell,R_0}, \omline_{R_0}^{\otimes k}) \to \Nat^{R_0}_{k}(\Ell^1(-), \Gamma(-)).
\]
Moreover, on the direct sum for all $k\in \Z$, the map $\alpha$ induces an isomorphism of graded rings.
\end{Prop}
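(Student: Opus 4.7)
The strategy is to recognise both sides as different presentations of the same object, namely the weight-$k$ part of the $\mathbb{G}_m$-graded ring of global functions on the $\mathbb{G}_m$-torsor $\pi\colon \MM_{ell,R_0}^1 \to \MM_{ell,R_0}$ that trivialises $\omline$, where
\[\MM_{ell,R_0}^1 := \underline{\Spec}\left(\bigoplus_{i\in\Z}\omline_{R_0}^{\tensor i}\right).\]
By construction $H^0(\MM_{ell,R_0}^1, \OO) \cong \bigoplus_{i\in\Z} H^0(\MM_{ell,R_0}, \omline^{\tensor i})$ as graded rings, the grading on the left-hand side coming from the $\mathbb{G}_m$-action on the torsor.

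I would first construct $\alpha$ directly. Given $s \in H^0(\MM_{ell,R_0}, \omline^{\tensor k})$ and $(E,\omega) \in \Ell^1(R)$, let $t\colon \Spec R \to \MM_{ell,R_0}$ be the classifying morphism of $E$; then by construction of $\omline$ there is a canonical isomorphism $t^*\omline \cong \omega_E$, hence $t^*s \in \Gamma(R, \omega_E^{\tensor k})$. Since $\omega$ trivialises $\omega_E$ (by \cref{prop:omegaE}), $\omega^{\tensor k}$ trivialises $\omega_E^{\tensor k}$ and we may write $t^*s = \alpha(s)(E,\omega) \cdot \omega^{\tensor k}$ uniquely with $\alpha(s)(E,\omega)\in R$. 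The identity $(\lambda\omega)^{\tensor k} = \lambda^k \omega^{\tensor k}$ yields the scaling property \eqref{eq:scaling}, and naturality in $R$ is the functoriality of pullback.

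For the inverse, the key observation is that $\MM_{ell,R_0}^1$ is representable by a scheme whose $R$-points are in natural bijection with $\Ell^1(R)$. The only subtlety is that $\MM_{ell,R_0}$ is a stack with non-trivial automorphism groups, but an invariant differential rigidifies: on any Weierstrass model one checks that $[-1]$ acts on $\omega_E$ by $-1$ and the extra automorphisms at $j=0,1728$ act by non-trivial roots of unity, so the only automorphism of $E$ fixing $\omega$ is the identity. Hence the groupoid $\MM_{ell,R_0}^1(R)$ is discrete and coincides with $\Ell^1(R)$. By Yoneda, an element $f \in \Nat^{R_0}(\Ell^1(-), \Gamma(-))$ corresponds to a global function $g\in H^0(\MM_{ell,R_0}^1, \OO)$, and the $\mathbb{G}_m$-action on the torsor sends $(E,\omega)\mapsto(E,\lambda\omega)$, so the scaling condition \eqref{eq:scaling} is exactly the requirement that $g$ lie in the weight-$k$ summand $H^0(\MM_{ell,R_0}, \omline^{\tensor k})$ of $H^0(\MM_{ell,R_0}^1, \OO)$. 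This provides the inverse of $\alpha$.

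Compatibility with multiplication is then immediate: $t^*$ is a ring map on $\bigoplus_i\omline^{\tensor i}$, and $(a\,\omega^{\tensor k})(b\,\omega^{\tensor k'}) = ab\,\omega^{\tensor(k+k')}$, so summing over $k \in \Z$ gives the claimed graded ring isomorphism. The main point requiring care is the representability of $\MM_{ell,R_0}^1$ and the verification that the weight convention on the $\mathbb{G}_m$-torsor matches the grading sign implicit in the scaling property; both amount to short checks on Weierstrass models.
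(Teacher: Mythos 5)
Your construction of $\alpha$ and the verification of the scaling property and the ring structure agree with the paper. The gap is in the construction of the inverse. You assert that $\MM^1_{ell,R_0}$ is representable by a scheme because ``the only automorphism of $E$ fixing $\omega$ is the identity,'' arguing that $[-1]$ and the extra automorphisms at $j=0,1728$ act on $\omega_E$ by nontrivial roots of unity. That reasoning is correct in characteristic $\ne 2,3$, but the proposition is stated for an arbitrary $\Z$-algebra $R_0$, and in characteristics $2$ and $3$ the claim fails. For example, over $\Fb_3$ the supersingular curve $E\colon y^2 = x^3 - x$ has the automorphism $(x,y) \mapsto (x+1,y)$ of order $3$; since this is a Weierstra{\ss} coordinate change with $u=1$ it fixes the invariant differential $dx/y$, so the pair $(E,\omega)$ has a nontrivial automorphism group $\Z/3$ and $\MM^1_{ell,\Fb_3}$ is a genuine Deligne--Mumford stack, not a scheme. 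The analogous phenomenon occurs at the supersingular point over $\Fb_2$. The Yoneda step as you have written it therefore does not apply.

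The conclusion you are after, however, does not require representability and is easy to repair. For \emph{any} stack $\XX$, the global sections $H^0(\XX,\OO_\XX)$ are by definition a limit over affine $T \to \XX$, and the $2$-morphisms in that indexing category force the resulting compatible family to factor through isomorphism classes; hence $H^0(\XX,\OO_\XX)$ agrees with natural transformations from $\pi_0\XX(-)$ (restricted to affines) to $\Gamma(-)$, with no discreteness hypothesis. Replacing ``Yoneda'' with this observation makes your argument correct and essentially a repackaging, via the $\GG_m$-torsor $\MM^1_{ell,R_0}$, of the paper's descent argument: a section of $\omline^{\tensor k}$ is determined by its restrictions to affine $T$ with $\omega_E$ trivialized, where it is precisely a rule $(E,\omega) \mapsto f(E,\omega)\in R$ satisfying the scaling law.
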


\begin{proof}
There is an easy map 
\[
\alpha\colon H^0(\MM_{ell,R_0}, \omline_{R_0}^{\otimes k}) \to \Nat^{R_0}_{k}(\Ell^1(-), \Gamma(-)),
\]
constructed as follows. Start with an element $f\in H^0(\MM_{ell,R_0}, \omline_{R_0}^{\otimes k})$, an $R_0$-algebra $R$ and an elliptic curve $E/R$ together with an invariant differential $\omega$. If $E$ is classified by $\varphi\colon\Spec(R)\to \MM_{ell,R_0}$, we have $\varphi^*(\omline_{R_0}^{\otimes k}) = \omega_E^{\otimes k}$. By pulling back, $f$ defines an element in $\Gamma(\varphi^*(\omline_{R_0}^{\otimes k}))$, which via the isomorphism $\omega^{\otimes k}$ from $\OO_R^{\otimes k}$ to $\omega_E^{\otimes k}$ is identified with 
\[
 \Gamma(\varphi^*(\omline_{R_0}^{\otimes k})) = \Gamma(\omega_E^{\otimes k})\cong \Gamma(\OO_R^{\otimes k})\cong \Gamma(\OO_R)=R. 
\]
Define $\alpha(f)(E,\omega)$ to be the image in $R$ of the element defined by $f$ in the left-hand side. The naturality of $\alpha(f)$ is clear. 
Replacing $\omega$ by $\lambda\omega$ for $\lambda\in R^{\times}$ multiplies the chosen isomorphism above by $\lambda^k$, so we obtain
\[
 \alpha(f)(E, \lambda\omega)=\lambda^{-k}\alpha(f)(E,\omega).
\]

Let us sketch why $\alpha$ is an isomorphism. By definition, the section $f$ corresponds to a compatible choice of sections in $H^0(T;\omega_E^{\tensor k})$ for all $T \to \MM_{ell,R_0}$ classifying an elliptic curve $E/T$. 
As $\omega_E$ is locally trivial, $f$ is uniquely determined by its values on those $T$ where $\omega_E$ is already trivial and $T = \Spec R$ is affine and every coherent choice of values on such $T$ induces a section of $\omline^{\tensor k}_{R_0}$. For such $T$, a section of $\omega_E^{\tensor k}$ corresponds exactly to associating with each trivialization $\omega$ of $\omega_E$ an element $f(E, \omega)$ such that $f(E,\lambda\omega) = \lambda^{-k}f(E,\omega)$. This describes $\Nat^{R_0}_{k}(\Ell^1(-), \Gamma(-))$. 
\end{proof}

Next, we exhibit the map which will turn out to be an isomorphism between the algebraic geometric definitions and the complex analytic ones. 
\begin{Prop}
For any subring $R_0$ of $\C$ define
\[
\beta\colon  \Nat^{R_0}_{k}(\Ell^1(-), \Gamma(-))\to \MFC_k(\SL_2(\Z), R_0), 
\] as follows. For any $f\in  \Nat^{R_0}_{k}(\Ell^1(-), \Gamma(-))$ and any $\tau\in \bH$, set 
\[
\beta(f)(\tau)=f(\C/\Z \oplus \Z\tau, dz) \in \C.
\] 
Then $\beta$ is a natural isomorphism, and induces an isomorphism of graded rings on the direct sum for all $k\in\Z$. 
\end{Prop}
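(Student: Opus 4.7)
The plan is to first verify that $\beta(f) \in \MFC_k(\SL_2(\Z); R_0)$, and then to establish bijectivity by reducing to the classical case $R_0 = \C$ and using the $q$-expansion principle to descend.

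For well-definedness, fix $\gamma = \begin{pmatrix}a & b \\ c & d\end{pmatrix} \in \SL_2(\Z)$ and observe that multiplication by $c\tau+d$ gives an isomorphism $\C/(\Z \oplus \Z\gamma\tau) \xrightarrow{\cong} \C/(\Z \oplus \Z\tau)$ of complex tori under which $dz$ pulls back to $(c\tau+d)\, dz$. Combined with the naturality of $f$ and the scaling identity \eqref{eq:scaling}, this immediately yields $\beta(f)(\gamma\tau) = (c\tau+d)^k \beta(f)(\tau)$. Holomorphy of $\beta(f)$ on $\bH$ follows from \Cref{sectionsvsnat}: viewing $f$ as a section of $\omline^{\tensor k}$ on $\MM_{ell,R_0}$ and pulling back along the analytic classifying map $\bH \to \MM_{ell,\C}^{an}$ sending $\tau$ to $\C/(\Z \oplus \Z\tau)$ produces a holomorphic function, which by construction is exactly $\beta(f)$. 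Meromorphy at the cusp is checked by evaluating $f$ on the Tate curve $(\Tate(q), \eta^{can})$; the analytic comparison $\Tate(q) \simeq \C^\times/q^\Z$ for $q = e^{2\pi i \tau}$ together with the rescaling to $\C/(\Z \oplus \Z\tau)$ used in the proof of \Cref{lem:holomorphicitycriterion} identifies this evaluation with the classical $q$-expansion of $\beta(f)$. Since the Tate curve is defined over $\Z[[q]][q^{-1}]$ and $f$ takes values in $R_0$-algebras, the resulting expansion lies in $R_0((q))$, which proves both meromorphy at $\infty$ and that the expansion has coefficients in $R_0$. Compatibility with the ring structures is immediate from the definition of the products on both sides.

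For bijectivity, the key input is compatibility with base change $R_0 \hookrightarrow \C$. \Cref{sectionsvsnat} identifies the source with $H^0(\MM_{ell, R_0}, \omline^{\tensor k})$, and since $\omline^{\tensor k}$ is locally free and $\C$ is flat over $R_0$, the pullback $H^0(\MM_{ell, R_0}, \omline^{\tensor k}) \hookrightarrow H^0(\MM_{ell, \C}, \omline^{\tensor k})$ is injective; likewise $\MFC_k(\SL_2(\Z); R_0) \hookrightarrow \MFC_k(\SL_2(\Z); \C)$ by definition, and $\beta$ respects both inclusions. Over $\C$, the analytic uniformisation $\MM_{ell, \C}^{an} \simeq [\bH/\SL_2(\Z)]$ combined with GAGA realises $\beta$ as the classical identification of sections of $\omline^{\tensor k}$ on this quotient stack with weakly modular meromorphic functions of weight $k$ that are meromorphic at the cusp, and is therefore a bijection. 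This immediately gives injectivity of $\beta$ for every $R_0$. Surjectivity then reduces to showing that any $F \in \MFC_k(\SL_2(\Z); R_0)$, viewed via the $\C$-equivalence as $\widetilde{F} \in H^0(\MM_{ell, \C}, \omline^{\tensor k})$, descends to $H^0(\MM_{ell, R_0}, \omline^{\tensor k})$. This is the content of the $q$-expansion principle: a section over $\MM_{ell, \C}$ is defined over $R_0$ precisely when its Tate curve evaluation has coefficients in $R_0$, which is exactly the hypothesis $F \in \MFC_k(\SL_2(\Z); R_0)$.

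The main obstacle is the rigorous invocation of the $q$-expansion principle controlling descent from $\C$ to $R_0$. Making this precise requires passing to the compactification $\MMb_{ell, R_0}$ so that the Tate curve extends to a map $\Spec R_0[[q]] \to \MMb_{ell, R_0}$ hitting the cuspidal locus, together with the fact that this map is sufficiently faithfully flat (near the cusp) to detect descent. Once this classical ingredient is in place, the equivalence of the three definitions follows formally, and the subsequent subsections of the appendix will upgrade the same strategy to $\Gamma_1(n)$-level structures, where the additional subtlety of distinguishing naive from arithmetic level structures becomes the main new issue.
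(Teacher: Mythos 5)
Your proof follows essentially the same route as the paper: you verify the transformation law directly via the torus rescaling and the scaling axiom, then defer holomorphy and meromorphy at the cusp to the identification of sections of $\omline^{\tensor k}$ on the analytified moduli stack with weakly modular functions (GAGA plus Conrad's comparison), and establish bijectivity by reducing to $R_0 = \C$ and then descending via the $q$-expansion principle. This matches the structure of the paper's own argument, which checks the transformation formula in place and defers the remaining points to \cref{sec:Comparisons} and \cref{sec:qexpansions}, where GAGA, Conrad's analytic comparison, and the Diamond--Im $q$-expansion principle supply the same ingredients you invoke.
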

We will check \eqref{eq:transformation} for $\beta(f)$. Let $\begin{pmatrix} a & b\\ c& d\end{pmatrix}\in \SL_2(\Z)$ be given. Observe that we have a biholomorphism
\[
 \begin{aligned}
  \psi \colon \C/\left(\Z\cdot 1\oplus \Z\tau\right) &\to&& \C/\left(\Z\cdot 1\oplus \Z\frac{a\tau+b}{c\tau+d}\right),\\
 {[z]} &\mapsto && \left[\frac{z}{c\tau+d}\right]
 \end{aligned}
\]
and by GAGA thus an isomorphism of the associated algebraic curves. 
Since $f$ is well-defined on isomorphism classes, the scaling property implies
\[
\begin{aligned}
 \beta(f)\left(\frac{a\tau+b}{c\tau+d}\right)&= f\left(\C/\left(\Z\cdot 1\oplus \Z\frac{a\tau+b}{c\tau+d}\right), dz\right)\\
&=(c\tau+d)^{k} f(\C/\Z\cdot 1\oplus \Z\tau, dz)=(c\tau+d)^{k} \beta(f)(\tau).
\end{aligned}
\]
We will come back to the question why $\beta(f)$ is a holomorphic in the interior and meromorphic at the cusps and why $\beta$ is an isomorphism in \cref{sec:Comparisons} and \cref{sec:qexpansions}. 

\subsection{Level structures}
Throughout this section, let $R_0$ be a $\Z[\frac1n]$-algebra. While we gave the analytic definition of modular forms for $\Gamma_1(n)$ already above, there are two different corresponding algebro-geometric notions, based on \emph{naive} and \emph{arithmetic} level structures. 

\subsubsection{Naive level structures}\label{sec:naive}
\begin{Definition}[{\cite[Construction 4.8]{DeligneRapoport}}]
 For an $R_0$-algebra $R$, let $\Ell^1_{\Gamma_1(n)}(R)$ denote the set of isomorphism classes of triples $(E, \omega, j)$, where $E$ is an elliptic curve over $R$, further $\omega$ is a chosen trivialization of the line bundle $\omega_E$, and $j\colon \mathbb{Z}/n\mathbb{Z}_{R} \to E$ is a morphism of group schemes over $\Spec(R)$ and a closed immersion. This morphism $j$ is called a \emph{$\Gamma_1(n)$-level structure}. 
\end{Definition} 

Recall that $\mathbb{Z}/n\mathbb{Z}_{R}=\coprod_{\mathbb{Z}/n\mathbb{Z}}\Spec(R)$ as a scheme, with the obvious map to $\Spec R$ and group structure coming from the group structure on $\mathbb{Z}/n\mathbb{Z}$. The group structure on the elliptic curve is explained in \cite[Section 2.1]{KatzMazur}. We can identify $j$ with the image $P = j(1)\in E(R)$ since it determines $j$ completely.

\begin{remark}
We should remark that this variant of level structures is often called ``naive'' in the literature. Note also that the analogous definition in \cite[Section 8.2]{DiamondIm}, looks slightly different, but is equivalent by using that being closed immersion can be checked for proper schemes on geometric points. 
\end{remark}

Using again the scaling condition \eqref{eq:scaling} we can define $\Nat^{R_0}_k(\Ell^1_{\Gamma_1(n)}(-), \Gamma(-))$ analogously to our definition without level in Section \ref{sec:ModForms}.

We can also define a moduli stack $\MM_1(n)$ classifying elliptic curves over $\Z[\frac1n]$-schemes with $\Gamma_1(n)$-level structure. We obtain a morphism $f_n\colon \MM_1(n) \to \MM_{ell}$ by forgetting the level structure. As in Section \ref{sec:compare} we obtain a comparison isomorphism
$$\alpha\colon H^0(\MM_1(n); \omline^{\tensor k}) \to \Nat^{R_0}_k(\Ell^1_{\Gamma_1(n)}(-), \Gamma(-));$$
here and in the following we will abuse notation to denote the pullback of $\omline$ to $\MM_1(n)$ by $\omline$ as well. 
 
There are different ways to compare modular forms with and without level structure. The particular form of compatibility we want to use is expressed in the following commutative diagram.

\begin{equation*}
 \begin{tikzcd}[column sep=2cm]
\Nat^{R_0}_k(\Ell^1(-), \Gamma(-))\arrow[d, "{(\mathbb{C}/\mathbb{Z}+\tau\mathbb{Z}, dz)}" ] \arrow[r, "{(E, P) \mapsto E/\langle P \rangle}" ] & \Nat^{R_0}_k(\Ell^1_{\Gamma_1(n)}(-), \Gamma(-)) \arrow[d, "{(\mathbb{C}/\mathbb{Z}+n\tau\mathbb{Z}, dz, \tau)}" ]\\
 \MFC_k(\SL_2(\Z), R_0) \arrow[r, hook]& \MFC(\Gamma_1(n), R_0) 
 \end{tikzcd}
\end{equation*}
We refer to \cite[Example 4.40]{AbelianVarieties} for the fact that the quotient of an elliptic curve by a finite subgroup scheme is an elliptic curve again. Moreover, we will denote the right vertical morphism by $\beta_1$. The reason for our particular choice of $\beta_1$ might become clearer in the next subsection and even clearer when we discuss $q$-expansions. That $\beta_1$ actually lands in $\MFC(\Gamma_1(n), R_0)$ will follow from \cref{sec:Comparisons} and \cref{sec:qexpansions}.

\begin{remark}\label{rem:equivariance}
 The group $(\Z/n)^\times$ acts on $\Ell^1_{\Gamma_1(n)}(-)$ by multiplication on the point of order $n$. Moreover, if we denote by $\Gamma_0(n)\subset \SL_2(\Z)$ the subgroup of matrices $\begin{pmatrix}a&b\\c&d\end{pmatrix}$ with $c$ divisible by $n$, the quotient group  $\Gamma_1(n)\!\setminus\! \Gamma_0(n)$ acts on $\MFC(\Gamma_1(n), \C)$ as follows. For $g\in \MFC_k(\Gamma_1(n), \C)$ and $\gamma \in \Gamma_0(n)$, we define the action by $g. [\gamma] = g[\gamma]_k$ in the sense of \cref{sec:analytic}. The map 
 \[\Gamma_1(n)\!\setminus\! \Gamma_0(n) \to (\Z/n)^\times, \qquad \begin{pmatrix}a&b\\c&d\end{pmatrix} \mapsto a\]
 is an isomorphism and under this isomorphism $\beta_1$ is equivariant.
 
 To be compatible with \cite[Section 5.2]{DiamondShurman}, we will actually work with the \emph{opposite} convention though. This means that we will act with the \emph{inverse} of an element of $(\Z/n)^\times$ on $\Ell^1_{\Gamma_1(n)}(-)$ and $\MM_1(n)$ and use the identification 
  \[\Gamma_1(n)\!\setminus\! \Gamma_0(n) \xrightarrow{\cong} (\Z/n)^\times, \qquad \begin{pmatrix}a&b\\c&d\end{pmatrix} \mapsto d.\]
  By the above, this makes $\beta_1$ into an equivariant map as well and this will the equivariance we will use throughout this document. 
\end{remark}
 
\subsubsection{Arithmetic level structures}\label{sec:Arithmetic}
Now we would like to discuss a different variant of level structures, called ``arithmetic'' in the literature. 
\begin{Definition}
 For an $R_0$-algebra $R$, let $\Ell^1_{\Gamma_{\mu}(n)}(R)$ denote the set of isomorphism classes of triples $(E, \omega, \iota)$, where $E$ is an elliptic curve over $R$, again $\omega$ is a chosen trivialization of the line bundle $\omega_E$, and $\iota \colon \mu_{n,R} \to E$ is a morphism of group schemes over $\Spec(R)$ and a closed immersion. Here, $\mu_{n,R}$ is a group scheme given by the spectrum of the bialgebra $R[t]/(t^n-1)$ with comultiplication determined by $t\mapsto t\otimes t$. The morphism $\iota$ is called an \emph{arithmetic (or $\Gamma_{\mu}(n)$-) level structure} on $E$. 
\end{Definition}
 For a $\Z\left[\frac{1}{n}, \zeta_n\right]$-algebra $R$, the group schemes $\mu_{n,R}$ and $\mathbb{Z}/n\mathbb{Z}_R$ are isomorphic, but this is not true in general. 
 
 We can define the set of weight $k$ modular forms with arithmetic level structure to be $ \Nat^{R_0}_k(\Ell^1_{\Gamma_{\mu}(n)}(-), \Gamma(-))$ with the same scaling condition as before. Likewise, we can define a moduli stack $\MM_{\mu}(n)$ of elliptic curves with $\Gamma_{\mu}(n)$-level structure (over bases with $n$ invertible). As before we obtain a comparison isomorphism
 $$\alpha\colon H^0(\MM_{\mu}(n); \omline^{\tensor k}) \to \Nat^{R_0}_k(\Ell^1_{\Gamma_{\mu}(n)}(-), \Gamma(-)),$$
 where we abuse notation again to denote the pullback of $\omline$ to $\MM_{\mu}(n)$ by $\omline$ as well.

We need to discuss a relation between $\Gamma_1(n)$- and $\Gamma_{\mu}(n)$-level structures. After base change to a $\Z\left[\frac{1}{n}, \zeta_n\right]$-algebra $R$, the stacks $\MM_{\mu}(n)$ and $\MM_1(n)$ become equivalent over $\MM_{ell, R}$ via the isomorphism $\mu_{n,R} \cong \mathbb{Z}/n\mathbb{Z}_R$. Less obviously, there is also a different equivalence between $\MM_{\mu}(n)$ and $\MM_1(n)$ that does not require any base change, but changes the underlying elliptic curve. To that purpose we recall the Weil paring \cite[Section 2.8]{KatzMazur}
\[e_n\colon E[n](S)\times E[n](S) \to \mathbb{G}_{m,S}(S)\]
for an elliptic curve $E/S$.
Here, $E[n]$ denotes the $n$-torsion $E\times_E S$, using the multiplication-by-$n$ morphism $[n]\colon E \to E$ and the unit morphism $S\to E$ in the pullback. Using these ingredients, we add in the following lemma some details to the treatment in \cite[Section 2.3]{KatzRealEisenstein}.
\begin{lemma}\label{lem:phiequivalence}
There is an equivalence $\varphi\colon \MM_{1}(n) \to \MM_{\mu}(n)$ sending $(E\to S,P)$ to $(E/\langle P \rangle\to S, \delta)$, where $\delta$ can be described as follows: For $\zeta\in \mu_n(S)$, choose $Q\in E[n](S)$ such that $e_n(P,Q) = \zeta^{-1}$. Then $\delta(\zeta) = \pi(Q)$ for $\pi\colon E \to E/\langle P\rangle$.
\end{lemma}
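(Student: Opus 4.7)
\medskip

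\noindent\textit{Proof plan.}
The plan is to produce $\varphi$ as a morphism of stacks and then construct an inverse. The main ingredient is the nondegeneracy of the Weil pairing in the form that, for an elliptic curve $E/S$ with $n$ invertible on $S$ and a chosen $\Gamma_1(n)$-level structure given by $P\colon S\to E$, the homomorphism
\[
e_n(P,-)\colon E[n] \longrightarrow \mu_{n,S}
\]
is surjective with kernel exactly $\langle P\rangle$, so that it induces an isomorphism
\[
\bar{e}\colon E[n]/\langle P\rangle \xrightarrow{\ \cong\ } \mu_{n,S}.
\]
This is a standard consequence of the fact that the Weil pairing is a perfect pairing on $E[n]$ once $n$ is invertible (see \cite[Section 2.8]{KatzMazur}); combined with the isomorphism $E[n]/\langle P\rangle\xrightarrow{\cong} (E/\langle P\rangle)[n]$ coming from the quotient $\pi\colon E\to E/\langle P\rangle$ (both sides are finite locally free group schemes of order $n$ over $S$, and the map is visibly an isomorphism after passing to geometric points since $E[n]\cong (\Z/n)^2$ étale locally), we get a canonical closed immersion
\[
\delta\colon \mu_{n,S} \xrightarrow{\bar{e}^{-1}\circ (-)^{-1}} E[n]/\langle P\rangle \xrightarrow{\ \cong\ } (E/\langle P\rangle)[n] \hookrightarrow E/\langle P\rangle,
\]
which is exactly the morphism described in the statement (choosing $Q$ is precisely choosing an étale-local preimage under $\bar{e}$). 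Functoriality in $S$ is automatic from the naturality of the Weil pairing, so this assembles into the asserted morphism $\varphi\colon \MM_1(n)\to \MM_\mu(n)$ of stacks over $\MM_{ell}\otimes \Z[\tfrac1n]$.

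To produce an inverse $\psi\colon \MM_\mu(n)\to \MM_1(n)$, I would run the same construction on the other side. Given $(E',S,\iota)$, form $E := E'/\iota(\mu_{n,S})$ with quotient map $\pi'\colon E'\to E$. By the Cartier/Weil duality pairing between a finite locally free commutative group scheme and its Cartier dual, combined with the Weil pairing on $E'[n]$, one obtains a canonical closed immersion $\delta'\colon (\Z/n)_S \hookrightarrow E$ mirroring $\delta$ above, yielding the $\Gamma_1(n)$-structure on $E$. The key identity to verify is that the composite isogeny $E\xrightarrow{\pi} E/\langle P\rangle \xrightarrow{?} E$ recovered through $\psi\circ\varphi$ is the identity up to canonical isomorphism: this follows from the standard relation $\hat{\pi}\circ\pi = [n]_E$ between an isogeny and its dual, together with the identification of $\ker(\hat{\pi})$ with the image of $\delta$, which is built into the Weil pairing formalism. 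Running the analogous verification on $\varphi\circ\psi$ shows both composites are naturally isomorphic to the identity.

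The main obstacle will be keeping track of signs and dualities, i.e.\ the appearance of the inversion $\zeta\mapsto \zeta^{-1}$ in the definition of $\delta$ and checking that with this convention the two composites $\psi\circ\varphi$ and $\varphi\circ\psi$ really are naturally the identity (as opposed to differing by the $(\Z/n)^\times$-action on $\MM_1(n)$ or by an Atkin--Lehner-type twist). The sign convention chosen in the statement is precisely the one that makes the dualities cancel, and this compatibility is what is spelled out carefully in the references \cite[Section 2.3]{KatzRealEisenstein} and \cite[Section VII.6]{LangMod}; once that sign is pinned down, the remaining checks reduce to the functoriality of the Weil pairing under isogenies and to the standard identities $\hat{\pi}\circ\pi=[n]$, $\pi\circ\hat{\pi}=[n]$ for a degree-$n$ isogeny. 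Fully faithfulness and essential surjectivity of $\varphi$ both follow from the existence of $\psi$ together with these compatibilities.
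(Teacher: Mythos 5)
The proposed $\delta$ is the right map and the overall strategy (Weil pairing for $E[n]$, quotient by $\langle P\rangle$, construct the inverse analogously) agrees in substance with the paper's proof. However, there is a concrete error in the step that makes $\delta$ land in a subgroup of $E/\langle P\rangle$: you assert that $\pi$ induces an isomorphism $E[n]/\langle P\rangle \xrightarrow{\cong} (E/\langle P\rangle)[n]$ because ``both sides are finite locally free group schemes of order $n$''. That is false: $(E/\langle P\rangle)[n]$ is the full $n$-torsion of an elliptic curve and has order $n^2$, not $n$. What is true is that $\pi$ carries $E[n]$ onto $\ker(\hat\pi)\subset (E/\langle P\rangle)[n]$, where $\hat\pi\colon E/\langle P\rangle\to E$ is the dual isogeny of degree $n$ (indeed $\hat\pi\circ\pi=[n]_E$ kills $E[n]$), and the induced map $E[n]/\langle P\rangle \to \ker(\hat\pi)$ is an isomorphism of finite \'etale group schemes of order $n$. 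This is precisely the shape of the paper's argument: it works with $\ker(\pi^t)$ and the Katz--Mazur pairing $\langle-,-\rangle_\pi\colon \ker(\pi)\times\ker(\pi^t)\to\GG_m$ rather than with the full $n$-torsion of the quotient, and then translates $\langle-,-\rangle_\pi$ into $e_n$ via $\pi\pi^t=[n]$. With this correction your composite $\delta$ still factors through the closed subgroup scheme $\ker(\hat\pi)$ and is therefore a closed immersion, but the reasoning via the claimed isomorphism onto $(E/\langle P\rangle)[n]$ as stated would not give that. Separately, your treatment of the inverse $\psi$ is much more compressed than the paper's: the paper explicitly checks $(E/\langle P\rangle)/\delta \cong E/E[n]$ and uses the canonical isomorphism $E/E[n]\cong E$ induced by $[n]$, which pins down exactly why $\psi\circ\varphi$ is naturally isomorphic to the identity rather than a twist; your sketch alludes to the relations $\hat\pi\pi=[n]$, $\pi\hat\pi=[n]$ but leaves the actual verification (where, as you rightly observe, the sign $\zeta\mapsto\zeta^{-1}$ must be matched) as an assertion.
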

\begin{proof}With notation as in the statement of the lemma, we define $\delta\colon \mu_{n,S} \to E/\langle P \rangle$ as follows: As explained in \cite[Section 2.8]{KatzMazur} there is a bilinear pairing
\begin{equation}\langle -,-\rangle_{\pi}\colon \ker(\pi)\times \ker(\pi^t) \to \mathbb{G}_{m,S}\end{equation}
 of abelian group schemes for $\pi\colon E \to E/\langle P\rangle$ the projection and $\pi^t$ the dual isogeny. By \cite[2.8.2.1]{KatzMazur} and because $\ker(\pi) = \langle P\rangle \cong (\Z/n)_S$, this induces a chain of isomorphisms 
 \begin{equation}\label{eq:pitcomp}\ker(\pi^t) \to \Hom_{S-\mathrm{gp}}(\ker(\pi),\mathbb{G}_{m,S})\xrightarrow{\ev_P} \mu_{n,S}.\end{equation}
 The map $\delta$ is the composition of the inverse of this isomorphism with the natural inclusion $\ker(\pi^t) \to E/\langle P \rangle$ \emph{composed with $[-1]$}. The reasons for composing with $[-1]$ will be apparent in the example below. 
 
An analogous construction dividing out $\mu_{n,S}$ provides an inverse of $\varphi$. To see this, we are using that in the situation above, $(E/\langle P \rangle)/\delta \cong E/E[n]$, and the isomorphism $E/E[n] \cong E$ induced by $[n]$, the multiplication-by-$n$ morphism. Thus, $\varphi\colon \MM_1(n) \to \MM_{\mu}(n)$ is an equivalence of stacks.

One can compute $\varphi$ in terms of the Weil pairing as follows: As $\pi\pi^t = [n]$, we obtain from \cite[2.8.4.1]{KatzMazur} that $\langle P, \pi(Q) \rangle_{\pi}$ for $Q\in E[n](S)$ can be computed as $e_n(P,Q)$. Consider now $\zeta \in \mu_n(S)$. The inverse of the composition \eqref{eq:pitcomp} sends $\zeta$ to $\pi(Q)$ for some $Q \in E[n](S)$ with $e_n(P,Q) = \zeta$. We obtain $e_n(P,-Q) = \zeta^{-1}$ showing the result.
 \end{proof}

\begin{example}\label{exa:Tate}
Let $E = \C/(\Z +n \tau\Z)$ be an elliptic curve over $\Spec \C$ with chosen $n$-torsion point $\tau$. We claim that $\varphi(E,\tau) = (\C/\Z+\tau\Z, \zeta_n\mapsto \frac1n)$ with $\zeta_n = e^{\frac{2\pi i}{n}}$. 
Indeed, we have $e_n(\tau, \frac1n) = \zeta_n^{-1}$ by \cite[2.8.5.3]{KatzMazur} and thus $\zeta_n$ has to be send to $\frac1n$ as claimed by the preceding lemma. 
\end{example}

The example implies directly the following lemma. 

\begin{lemma}\label{KatztoComplex}
The following diagram commutes: 
\begin{equation*}
 \begin{tikzcd}[column sep=1cm]
  H^0(\MM_{\mu}(n)_{R_0}, \omline^{\otimes k}) \arrow[r, "\varphi^*"]  \arrow[d, "\alpha"] & H^0(\MM_{1}(n)_{R_0}, \omline^{\otimes k})\arrow[d, "\alpha"] \\
\Nat^{R_0}_k(\Ell^1_{\Gamma_{\mu}(n)}(-), \Gamma(-))  \arrow[r, "\varphi^*"]  \arrow[dr, "{(\mathbb{C}/\mathbb{Z}+\tau\mathbb{Z}, dz, \zeta_n \mapsto \frac{1}{n})}" swap] & \Nat^{R_0}_k(\Ell^1_{\Gamma_1(n)}(-), \Gamma(-)) \arrow[d, "{(\mathbb{C}/\mathbb{Z}+n\tau\mathbb{Z}, dz, \tau)}" ]\\
 &\MFC(\Gamma_1(n); R_0)
 \end{tikzcd}
\end{equation*}
\end{lemma}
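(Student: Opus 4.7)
The diagram consists of two pieces: the top square involving the $\alpha$-isomorphisms, and the bottom triangle evaluating natural transformations at specific complex elliptic curves. I would handle them separately.

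First, for the top square, the plan is to invoke the naturality of $\alpha$. Recall from the proof of \cref{sectionsvsnat} that $\alpha$ is defined purely by pulling back a section $f \in H^0(\XX, \omline^{\tensor k})$ along the classifying morphism $\varphi_{E,\omega}\colon \Spec R \to \XX$ associated with a pair $(E,\omega)$ and trivializing $\omega_E^{\tensor k}$ using $\omega^{\tensor k}$. Now, for an object $(E, \omega, \iota) \in \Ell^1_{\Gamma_\mu(n)}(R)$, the classifying morphism $\Spec R \to \MM_\mu(n)$ composed with $\varphi\colon \MM_1(n) \to \MM_\mu(n)$ is by definition the classifying morphism of $\varphi^*(E,\omega,\iota)$, where here we are abusing notation to also denote by $\varphi^*$ the map $\Ell^1_{\Gamma_1(n)}(-) \to \Ell^1_{\Gamma_\mu(n)}(-)$ induced by $\varphi$ on isomorphism classes. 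Thus pulling back $f \in H^0(\MM_\mu(n),\omline^{\tensor k})$ along these two routes gives the same element. The only subtle point is that $\varphi$ does not fix the underlying elliptic curve, so I need to verify that the canonical isomorphism $\pi^*\omega_{E/\langle P\rangle} \cong \omega_E$ (coming from the \'etale isogeny $\pi\colon E \to E/\langle P\rangle$) is used consistently on both sides; this is automatic from the fact that $\varphi$, as a morphism of stacks, comes with the required coherence data.

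Second, for the bottom triangle, the content is the identity
\[\varphi^*\bigl(\C/(\Z+\tau\Z),\, dz,\, \zeta_n \mapsto \tfrac{1}{n}\bigr) = \bigl(\C/(\Z+n\tau\Z),\, dz,\, \tau\bigr)\]
of objects of $\Ell^1_{\Gamma_1(n)}(\C)$. The level-structure part of this identity is precisely \cref{exa:Tate}. For the differential, the map $\pi\colon \C/(\Z+n\tau\Z) \to \C/(\Z+\tau\Z)$ in the proof of \cref{lem:phiequivalence} is induced by the identity on $\C$ (it is just the further quotient by $\tau$), so $\pi^*(dz) = dz$; equivalently, the canonical isomorphism of \eqref{eq:canonicaliso} identifies the chosen trivializations.

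The main (and only nontrivial) point is book-keeping the two different conventions for $\varphi$: the inverse sign in $e_n(P,Q) = \zeta^{-1}$ in \cref{lem:phiequivalence} and the composition with $[-1]$ in the construction of $\delta$. I would first verify carefully that with these conventions \cref{exa:Tate} correctly produces $\zeta_n \mapsto \frac{1}{n}$ (as opposed to $\zeta_n \mapsto -\frac{1}{n}$), which amounts to tracking one sign through the Weil pairing formula \cite[2.8.5.3]{KatzMazur}. Once this is in place the proof is essentially a diagram chase: the top square is naturality, and the triangle is the computation in \cref{exa:Tate} together with the trivial observation about $dz$ pushing forward to $dz$.
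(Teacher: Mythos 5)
Your proof is correct and follows essentially the same route as the paper, which states the lemma immediately after \cref{exa:Tate} with the single remark that the example implies it; the two places you flag (the coherence of $\alpha$ with the identification $\pi^*\omega_{E/\langle P\rangle}\cong\omega_E$, and the explicit Tate-curve calculation plus $dz\mapsto dz$) are exactly the unstated content. One small notational caveat: since $\varphi\colon\MM_1(n)\to\MM_\mu(n)$, precomposition induces $\Nat(\Ell^1_{\Gamma_\mu(n)},\Gamma)\to\Nat(\Ell^1_{\Gamma_1(n)},\Gamma)$ via a map $\Ell^1_{\Gamma_1(n)}\to\Ell^1_{\Gamma_\mu(n)}$ on objects, so the identity to verify in the triangle is $\varphi(\C/(\Z+n\tau\Z),dz,\tau)=(\C/(\Z+\tau\Z),dz,\zeta_n\mapsto\tfrac1n)$ — you have written the direction reversed in a couple of spots, but this is just slippage and does not affect the argument.
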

We will denote the diagonal arrow by $\beta_{\mu}$ and it will follow from \cref{sec:Comparisons} and \cref{sec:qexpansions} that $\beta_{\mu}$ actually lands in $\MFC(\Gamma_1(n); R_0)$.

\subsubsection{Compactifications and comparison of algebraic and analytic theory}\label{sec:Comparisons}
In this section we discuss the comparison of the algebraic and the analytic theory. The basic sources are \cite{DeligneRapoport} and \cite{ConradRamanujan} and we will just give a short summary. We will use the compactifications $\MMb_1(n)$ of $\MM_1(n)$ as recalled in the beginning of \cref{sec:ModularForms}. It is shown in \cite[Section IV]{DeligneRapoport} that $\MMb_1(n) \to \Spec \Z[\tfrac1n]$ is proper and smooth of relative dimension $1$. 

For $n\geq 5$, the stack $\MMb_1(n)$ is representable by a projective scheme (see e.g.\ \cite{MeierDecomposition}). It is shown in \cite[Thm. 2.2.2.1]{ConradRamanujan} that the Riemann surface associated with $\MMb_1(n)_{\C}$ is isomorphic to a more classical construction, namely the compactification $X_1(n)$ of the quotient $Y_1(n)$ of the upper half-plane $\mathbb{H}$ by $\Gamma_1(n)$. Indeed, Conrad shows that both $\MMb_1(n)_{\C}$ and $X_1(n)$ classify generalized elliptic curves over complex analytic spaces with $\Gamma_1(n)$-level structure. The family of elliptic curves $(\C/\Z+n\tau\Z, \tau)$ with $\Gamma_1(n)$-level structure over $\mathbb{H}$ descends to $Y_1(n)$ and extends to $X_1(n)$. (Indeed, Conrad considers the universal family $(\C/\Z+\tau\Z, \frac1n)$ as in \cite[Section 2.1.3]{ConradRamanujan}, but the choice of $e^{2\pi i/n}$ as an $n$-th root of unity allows us to consider the automorphism $\MM_1(n)_{\C} \xrightarrow{\varphi} \MM_{\mu}(n)_{\C} \simeq \MM_1(n)_{\C}$ that carries one family of elliptic curves into the other as follows from Example \ref{exa:Tate}.) This specifies an isomorphism from $X_1(n)$ to the Riemann surface associated with $\MMb_1(n)_{\C}$, and by restriction to the locus where the fibers of the universal generalized elliptic curve are smooth, also an isomorphism from $Y_1(n)$ to the Riemann surface associated with $\MM_1(n)_{\C}$. More information about $Y_1(n)$ and $X_1(n)$ can be found in \cite{ConradRamanujan} and in \cite[Chapter 2]{DiamondShurman}.

We will abuse notation again and denote by $\omline$ the line bundle on $X_1(n)$ corresponding to the analytification of $\omline$ on $\MMb_1(n)_{\C}$ under the isomorphism above and likewise its restriction to $Y_1(n)$. By GAGA \cite[Th\'eor\`eme 1]{GAGA}, the morphism $H^0(\MMb_1(n)_{\C}; \omline^{\tensor k}) \to H^0(X_1(n);\omline^{\tensor k})$ is an isomorphism. 
Given a section of $\omline^{\tensor k}$ on $Y_1(n)$ we can pull it back along $\pi\colon \mathbb{H} \to Y_1(n)$ and obtain a holomorphic function on $\mathbb{H}$ by trivializing $\pi^*\omline$ via $dz$. It is shown in \cite[Lemma 1.5.7.2]{ConradRamanujan} that the image consists exactly of those holomorphic functions on $\mathbb{H}$ satisfying the transformation formula \eqref{eq:transformation} for modular forms of weight $k$ for $\Gamma_1(n)$. Moreover, Conrad shows that the image of $H^0(X_1(n);\omline^{\tensor k}) \hookrightarrow H^0(Y_1(n);\omline^{\tensor k})$ corresponds exactly to the \emph{holomorphic} modular forms of weight $k$ for $\Gamma_1(n)$.

In summary, we obtain an isomorphism $\psi\colon H^0(\MMb_1(n)_{\C};\omline^{\tensor k}) \cong \MF_k(\Gamma_1(n);\C)$. Unraveling the definitions from \cite[Section 1.5.1 and 1.5.2]{ConradRamanujan} shows that this is compatible with our comparison map
$$\beta_1\alpha\colon H^0(\MM_1(n)_{\C}; \omline^{\tensor k} )\xrightarrow{\cong} \Nat^{\C}_k(\Ell_{\Gamma_1(n)}(-),\Gamma(-)) \to \{\text{functions on }\H\}.$$
We will argue why $\beta_1\alpha$ actually takes values in $\MFC_k(\Gamma_1(n);\C)$ (as claimed before) and why with this target $\beta_1\alpha$ becomes an isomorphism. 

The modular form $\Delta\in \MF_{12}(\SL_2(\Z);\Z) \subset \MF_{12}(\Gamma_1(n);\C)$ (see \cref{sec:analytic}) corresponds to a holomorphic section of $\omline^{\tensor 12}$ on $X_1(n)$ with simple zeros at all cusps, i.e.\ at all those points in the complement of $Y_1(n)$; this can be seen by considering the $q$-expansion of $\Delta$ and the construction of the $X_1(n)$. Thus, $H^0(X_1(n); \omline^{\tensor *})[\Delta^{-1}]$ corresponds exactly to those holomorphic sections of $\omline^{\tensor *}$ on $Y_1(n)$ that can be meromorphically extended to $X_1(n)$. This in turn corresponds exactly to the (algebraic) sections of $\omline^{\tensor *}$ on $\MM_1(n)_{\C}$. This implies an identification $H^0(\MM_1(n)_{\C}; \omline^{\tensor *}) \cong H^0(\MMb_1(n)_{\C}; \omline^{\tensor *})[\Delta^{-1}]$. Under this identification, $\beta_1\alpha$ can be written as
\[H^0(\MMb_1(n)_{\C};\omline^{\tensor *})[\Delta^{-1}] \underset{\psi}{\xrightarrow{\cong}} \MF_*(\Gamma_1(n); \omline^{\tensor *})[\Delta^{-1}] \cong \MFC_*(\Gamma_1(n);\C),\]
(followed by the inclusion into $\{\text{functions on }\H\}$). This shows our claims.  

For $n<5$, $\MMb_1(n)$ is no longer a scheme. In this case, one can analogously use a GAGA theorem for stacks as, for example, proven in \cite{PortaYu}. In our situation the proof should be considerably simplified though as $\MMb_1(n)_{\C}$ has a finite faithfully flat cover by a scheme (e.g.\ by $\MMb_1(5n)_{\C}$) and one should be able to deduce a sufficiently strong GAGA theorem just by descent from the scheme case. 

\subsection{The Tate curve}\label{sec:AppTate}
In this section, we will discuss the Tate curve, which will give us an algebraic way to define $q$-expansions of modular forms. For an alternative treatment we refer e.g.\ to \cite[Section 8.8]{KatzMazur}. We first discuss the situation over the complex numbers. 

\begin{Theorem}[{\cite[Theorem V.1.1]{SilvermanAdvanced}}] \label{TateWeierstrass}
For any $q,u\in \C$ with $|q|<1$, define the following quantities:
\[
 \begin{aligned}
  \sigma_k(n)&=&&\sum_{d|n} d^k,\\
  s_k(q)&=&&\sum_{n\geq 1} \sigma_k(n)q^n=\sum_{n\geq 1} \frac{n^kq^n}{1-q^n},\\
  a_4(q)&=&& -5s_3(q),\\
  a_6(q)&=&&-\frac{5s_3(q)+7s_5(q)}{12},\\
  X(u,q)&=&&\sum_{n\in\Z} \frac{q^nu}{(1-q^nu)^2}-2s_1(q),\\
  Y(u,q)&=&&\sum_{n\in\Z} \frac{(q^nu)^2}{(1-q^nu)^3}+s_1(q).
 \end{aligned}
\]

\begin{enumerate}
 \item  Then the equation
\begin{equation}\label{TateWeierstrassFormula}
 y^2+xy=x^3+a_4(q)x+a_6(q)
\end{equation}
defines an elliptic curve $E_q$ over $\C$, and $X,Y$ define a complex analytic isomorphism 
\[
 \begin{aligned}
  \C^{\times}/q^{\Z} &\to && E_q\\
  u &\mapsto && \begin{cases}
                 (X(u,q), Y(u,q)), \mbox{ if } u\notin q^{\Z},\\
		   O,\mbox{ if }u \in q^{\Z}
                \end{cases}
 \end{aligned}
\]
\item The power series $a_4(q)$ and $a_6(q)$ define holomorphic functions on the open unit disk $\mathbb{D}$.
\item As power series in $q$, both $a_4(q),a_6(q)$ have integer coefficients. 
 
\item The discriminant of $E_q$ is given by 
\[
 \Delta(q)=q\prod_{n\geq 1} (1-q^n)^{24} \in \mathbb{Z}\llbracket q\rrbracket.
\]

\item Every elliptic curve over $\C$ is isomorphic to $E_q$ for some $q$ with $|q|<1$. 

\end{enumerate}
\end{Theorem}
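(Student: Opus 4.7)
My plan is to treat the five assertions in an order that makes each depend only on earlier ones: first prove the analytic/integrality assertions (2) and (3), then construct the isomorphism (1), then deduce the discriminant (4), and finally establish surjectivity of the construction on moduli (5).

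For (2) the only point is that each $s_k(q)=\sum_{n\ge 1}\sigma_k(n)q^n$ converges absolutely and locally uniformly on $|q|<1$, which follows from the crude bound $\sigma_k(n)\le n^{k+1}$ and comparison with a geometric series. For (3), $s_k(q)\in\Z\llbracket q\rrbracket$ by inspection, so only $a_6$ requires work. The content is the congruence
\[
5d^3+7d^5\equiv 0\pmod{12}\qquad\text{for every }d\in\Z,
\]
which I would verify separately modulo $4$ (split into $d$ even/odd, using $d^3\equiv 0\pmod 8$ in the even case and $d^2\equiv 1\pmod 4$ in the odd case) and modulo $3$ (writing the expression as $d^3(2+d^2)$ and using Fermat). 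Summing over divisors gives $12\mid 5\sigma_3(n)+7\sigma_5(n)$, hence $a_6(q)\in\Z\llbracket q\rrbracket$.

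The heart of the theorem is (1). I would follow the classical $\wp$-function route. Fix $\tau\in\bH$ with $q=e^{2\pi i\tau}$ and the lattice $\Lambda_\tau=\Z+\tau\Z$; the exponential $z\mapsto u=e^{2\pi iz}$ identifies $\C/\Lambda_\tau$ with $\C^\times/q^{\Z}$. Setting
\[
\widetilde X(u,q)=\tfrac{1}{(2\pi i)^2}\bigl(\wp(z)+\tfrac{1}{12}\bigr),\qquad \widetilde Y(u,q)=\tfrac{1}{(2\pi i)^3}\cdot\tfrac12\bigl(\wp'(z)-\widetilde X\bigr),
\]
one transforms the standard Weierstrass equation $\wp'^2=4\wp^3-g_2\wp-g_3$ into a cubic in $\widetilde X,\widetilde Y$ of the form $y^2+xy=x^3+a_4x+a_6$ whose coefficients one identifies with the stated Eisenstein series $a_4(q),a_6(q)$ by means of the classical Lambert-series identities for $G_{2k}$ (this is where the explicit coefficients $-5$ and $-\tfrac{5s_3+7s_5}{12}$ appear). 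The remaining step is to prove the Kronecker-type identity
\[
\widetilde X(u,q)=\sum_{n\in\Z}\frac{q^nu}{(1-q^nu)^2}-2s_1(q)=X(u,q),
\]
together with the analogous one for $Y$, which is obtained by expanding $\wp$ as a double sum, interchanging the two summations (justified by absolute convergence away from the lattice), and summing the inner geometric series. Checking $q^{\Z}$-invariance of the right-hand sides amounts to a trivial index shift; holomorphy away from $u\in q^{\Z}$ and a simple pole structure at $u=1$ are read off from the series. This is the step I expect to be the main obstacle, since the formal manipulation of the doubly-indexed series requires care with convergence.

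Given (1), assertion (4) reduces to a direct computation: for the Weierstra\ss{} form $y^2+xy=x^3+a_4(q)x+a_6(q)$ one has $b_2=1,\ b_4=2a_4,\ b_6=4a_6,\ b_8=a_4-a_4^2$, and $\Delta=-b_2^2b_8-8b_4^3-27b_6^2+9b_2b_4b_6$. Substituting the $q$-expansions of $a_4,a_6$ and matching them with the logarithmic derivative of $\eta(q)^{24}=q\prod(1-q^n)^{24}$ via the identity $q\tfrac{d}{dq}\log\eta^{24}=1-24s_1(q)$ yields the product formula; alternatively one invokes Jacobi's product formula for $\Delta$ on the $\wp$-side and transports it across the isomorphism constructed in (1). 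Finally, for (5), the $j$-invariant $j(E_q)=c_4^3/\Delta=1/q+744+\cdots$ is a holomorphic function on the punctured disk with a simple pole at $q=0$, so it surjects onto $\C$; combined with the classical fact that elliptic curves over $\C$ are classified by their $j$-invariant, every $E/\C$ is isomorphic to some $E_q$.
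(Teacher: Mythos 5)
The paper does not actually prove this statement: it is quoted verbatim from Silverman, \emph{Advanced Topics in the Arithmetic of Elliptic Curves}, Theorem~V.1.1, as the citation attached to the theorem indicates, and the authors use it as a black box. There is therefore no in-paper argument to compare your sketch against; the relevant comparison is with the standard proof in the cited reference, which your outline essentially reproduces (the classical route via the Weierstra\ss{} $\wp$-function and the Kronecker--Eisenstein identity relating $\wp$ to the doubly indexed series for $X$).

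Two small slips worth fixing. First, in part~(4) the $b$-quantity is $b_8 = a_6 - a_4^2$ (not $a_4 - a_4^2$): for $y^2 + a_1 xy + a_3 y = x^3 + a_2 x^2 + a_4 x + a_6$ one has $b_8 = a_1^2 a_6 + 4a_2 a_6 - a_1 a_3 a_4 + a_2 a_3^2 - a_4^2$, which with $a_1 = 1$, $a_2 = a_3 = 0$ gives $a_6 - a_4^2$. Second, your formula for $\widetilde Y$ is dimensionally off: the term $-\widetilde X$ should not sit inside the $\tfrac{1}{(2\pi i)^3}$ normalization, since $\widetilde X$ is already a dimensionless coordinate; the correct shape is $\widetilde Y = -\tfrac{1}{2}\bigl(\tfrac{1}{(2\pi i)^3}\wp'(z) + \widetilde X\bigr)$ up to sign conventions. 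Neither affects the soundness of the approach. One further note on (5): a simple pole at $q=0$ does not by itself force surjectivity of the restriction to the punctured disk, so the conclusion should rest (as you indicate in your second clause) on the classical surjectivity of the $j$-function $\bH \to \C$ together with the identification $E_q \cong \C/(\Z + \tau\Z)$ for $q = e^{2\pi i\tau}$.
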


Let $\Conv \subset \Z ((q))$ be the subset of ``convergent'' Laurent series, i.e.\ those that define meromorphic functions on $\mathbb{D}$ that are holomorphic away from $0$; in particular, $a_4, a_6\in\Conv$. As $\Delta(q)$ is non-vanishing for $q\ne 0$ in $\mathbb{D}$, it defines an invertible element in $\Conv$ and thus we can use the Weierstra\ss{} equation \eqref{TateWeierstrassFormula} to define an elliptic curve $\Tate(q)$ over $\Conv$. For our computations in \cref{sec:TateCurve} it will be convenient to consider the analogously defined ring $\Conv_{q^n} \subset \Z((q^n))$ with the Tate curve $\Tate(q^n)$ defined by $a_4(q^n)$ and $a_6(q^n)$ over it.

Let $q_0\in\mathbb{D}$ be a nonzero point and consider the morphism $\ev_{q_0}\colon \Conv \to \C$. By the theorem above, we see that the analytic space associated with $\ev_{q_0}^*\Tate(q)$ is isomorphic to $\C^\times/q_0^\Z$. The invariant differential $\eta^{can}$ associated to the Weierstra\ss{} equation corresponds under this isomorphism to $\frac{du}{u}$, as can be shown by elementary manipulations using \cite[Section V.1]{SilvermanAdvanced}.

Next, we want to describe a group homomorphism $\iota\colon \mu_{n,\Conv[\frac1n]} \to \Tate(q)_{\Z[\frac1n]}$ for $n\geq 2$. We first define a morphism $\iota_{\zeta}\colon \mu_{n,\Conv[\frac1n, \zeta_n]} \to \Tate(q)_{\Z[\frac1n, \zeta_n]}$. As $\mu_n$ is isomorphic to $\Z/n$ over $\Z[\frac1n,\zeta_n]$, it suffices to specify an $n$-torsion point in $\Tate(q)(\Conv[\frac1n, \zeta_n])$ as the image of $\zeta_n$; we take $(X(\zeta_n,q), Y(\zeta_n, q))$. As $X$ and $Y$ have integer coefficients, we see that for every ring automorphism $\sigma$ of $\Z[\frac1n,\zeta_n]$, we have $\iota_{\zeta}(\sigma(\zeta_n)) = \sigma(\iota_{\zeta}(\zeta_n))$. Thus, Galois descent implies that $\iota_{\zeta}$ descends to a morphism $\iota\colon \mu_{n,\Conv[\frac1n]} \to \Tate(q)_{\Z[\frac1n]}$. Note that we can check that this is indeed a group homomorphism into the $n$-torsion by evaluating at infinitely many points in $\mathbb{D}$. For a nonzero $q_0\in\mathbb{D}$, this $\iota$ corresponds under the 
isomorphism of $\ev_{q_0}^*\Tate(q)$ with $\C^\times/q_0^\Z$ exactly to the composite $\mu_n(\C) \to \C^\times \to \C^\times/q_0^\Z$. Note that 
$\iota$ defines a $\Gamma_{\mu}(n)$-structure on $\Tate(q)_{\Z[\frac1n]}$. 

As a last point, we mention that for a subring $R\subset \C$ containg $\zeta_n$, the $n$-torsion $\Tate(q^n)_R[n]$ is isomorphic to $(\Z/n)_{\Conv_{q^n,R}}^2$ as it has rank $n^2$ over $\Conv_R$ \cite[Theorem 2.3.1]{KatzMazur} and we can specify $n^2$ points by $(X(\zeta_n^aq^b, q^n), Y(\zeta_n^aq^b, q^n))$, where $0\leq a, b \leq n-1$.

\subsection{$q$-expansions}\label{sec:qexpansions}
Our goal in this subsection is to define the $q$-expansion both in the holomorphic and in the algebraic context, to compare them and to obtain a $q$-expansion principle. 

Consider a modular form $f$ in $\MFC(\Gamma_1(n);\C)$ for $n\geq 1$. We recall that $f$ factors through a meromorphic function $g\colon \mathbb{D} \to \C$ on the open unit disk with only possible pole in 0; more precisely, we have $g(q) = f(z)$, where $q = q(z) = e^{2\pi i z}$. Taylor expansion of $g$ at $0$ yields the classical $q$-expansion
$$\Phi^{hol}\colon \MFC_k(\Gamma_1(n);\C) \to \C((q)).$$

Let us fix for the whole subsection a $\Z[\frac1n]$-subalgebra $R_0\subset \C$. On the algebraic side, we obtain a map 
$$\Phi^{\mu, R_0}\colon \Nat^{R_0}_k(\Ell^1_{\Gamma_{\mu}(n)}(-), \Gamma(-)) \to R_0((q))$$
by evaluating the natural transformation at the pullback to $R_0((q))$ of the Tate curve $(\Tate(q), \eta_{can}, \iota)$ from the last section. 

We want to show that $\Phi^{hol}$ and $\Phi^{\mu,\C}$ correspond to each other under $\beta_{\mu}$. Both have actually image in $\Conv_{R_0} \subset \C((q))$. Thus we can check the agreement of $\Phi^{hol}\beta_{\mu}$ with $\Phi^{\mu,\C}$ after postcomposing these two maps with $\ev_{q_0}\colon \Conv_{R_0} \to  \C$ for infinitely many $q_0 \in \mathbb{D} \setminus \{0\}$. 

To that purpose, choose $h \in \Nat^{R_0}_k(\Ell^1_{\Gamma_{\mu}(n)}(-), \Gamma(-))$ and $\tau_0\in\H$ with $e^{2\pi i \tau_0} = q_0$. The exponential defines an isomorphism
\[(\C/(\Z+\tau_0\Z), dz, \zeta_n\mapsto \frac1n) \cong (\C^\times/q_0^\Z, \frac{du}u, \iota^{can}),\]
of elliptic curves with invariant differential and arithmetic level structure, where $\iota^{can}$ denotes the composition $\mu_n(\C) \to \C^\times \to \C^\times/q_0^\Z$.
This implies
\[\ev_{q_0}\Phi^{hol}\beta_{\mu}(h) = h(\C^\times/q_0^\Z, \frac{du}u,\iota^{can}).\]

On the other hand, $\ev_{q_0}\Phi^{\mu,\C}(h)$ is by definition the evaluation of $h$ at
\[
(\ev_{q_0}^*\Tate(q), \ev_{q_0}^*\eta^{can}, \ev_{q_0}^*\iota)
\] 
and we have seen in the last section that this triple is isomorphic to $(\C^\times/q_0^\Z, \frac{du}u, \iota^{can})$. Thus, the following triangle commutes indeed:

\[
\begin{tikzcd}
\Nat^{R_0}_k(\Ell^1_{\Gamma_{\mu}(n)}(-), \Gamma(-)) \arrow[r, "{\Phi^{\mu,\C}}"]\arrow[d, "\beta_{\mu}"]& \C((q)) \\
\MFC_k(\Gamma_1(n),\C) \arrow[ur, "\Phi^{hol}" swap]
\end{tikzcd}
\]

We obtain the $q$-expansion morphism
$$\Phi^{1,R_0}\colon \Nat^{R_0}_k(\Ell^1_{\Gamma_1(n)}(-), \Gamma(-)) \to \Conv_{R_0}$$
as the composition $\Phi^{\mu,R_0}\alpha(\varphi^*)^{-1}\alpha^{-1}$, where $\varphi$ is as in Subsection \ref{sec:Arithmetic}. 

\begin{lemma}
Assume that $R_0 \subset \C$ and let $q_0\neq 0$ be a point in the open unit disk. Evaluating at $q_0$ yields a morphism $\ev_{q_0}\colon \Conv_{R_0} \to \C$. Then
$$\ev_{q_0}\Phi^{1,R_0}(h) = h(\C^\times/q_0^{n\Z}, \frac{du}u, q_0)$$ 
for every $h \in \Nat^{R_0}_k(\Ell^1_{\Gamma_1(n)}(-), \Gamma(-))$ and thus $\Phi^{1,R_0}(h)$ is the Taylor expansion of 
\[q \mapsto h(\C^\times/q^{n\Z}, \frac{du}u, q)\]
at $0$.
\end{lemma}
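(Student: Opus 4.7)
The starting point is to unpack the definition $\Phi^{1,R_0} = \Phi^{\mu,R_0}\circ\alpha\circ(\varphi^*)^{-1}\circ\alpha^{-1}$. Let $\tilde h := \alpha((\varphi^*)^{-1}\alpha^{-1}(h)) \in \Nat^{R_0}_k(\Ell^1_{\Gamma_\mu(n)}(-),\Gamma(-))$; since the equivalences $\alpha$ and $\varphi^*$ of \Cref{sectionsvsnat} and \Cref{KatztoComplex} commute with pullback, $\tilde h$ is characterized by $\tilde h(E',\omega',\iota) = h(\varphi^{-1}(E',\omega',\iota))$ whenever these values make sense. By definition of $\Phi^{\mu,R_0}$, we then have
\[\ev_{q_0}\Phi^{1,R_0}(h) = \ev_{q_0}\Phi^{\mu,R_0}(\tilde h) = \tilde h\bigl(\ev_{q_0}^*(\Tate(q),\eta^{can},\iota)\bigr),\]
so matters reduce to identifying $\varphi^{-1}$ applied to the analytification of the Tate curve.

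By the discussion at the end of \Cref{sec:AppTate}, the exponential $u = e^{2\pi i z}$ yields an isomorphism
\[\ev_{q_0}^*(\Tate(q),\eta^{can},\iota) \;\cong\; \bigl(\C^\times/q_0^{\Z},\,\tfrac{du}{u},\,\iota^{can}\bigr),\]
where $\iota^{can}\colon \mu_n(\C) \to \C^\times/q_0^{\Z}$ is the composition of the canonical inclusion with the quotient map. Choose $\tau_0 \in \H$ with $e^{2\pi i \tau_0} = q_0$. The exponential gives further isomorphisms $\C/(\Z+\tau_0\Z) \xrightarrow{\cong} \C^\times/q_0^{\Z}$ carrying $\frac{1}{n} \mapsto \zeta_n$, and $\C/(\Z+n\tau_0\Z) \xrightarrow{\cong} \C^\times/q_0^{n\Z}$ carrying $\tau_0 \mapsto q_0$. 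Under these identifications, \Cref{exa:Tate} yields
\[\varphi\bigl(\C^\times/q_0^{n\Z},\,q_0\bigr) \;\cong\; \bigl(\C^\times/q_0^{\Z},\,\iota^{can}\bigr).\]
The quotient map $\pi\colon\C^\times/q_0^{n\Z}\to\C^\times/q_0^{\Z}$ is simply $u\mapsto u$, so $\pi^*\tfrac{du}{u}=\tfrac{du}{u}$, which shows that the chosen invariant differentials also match under $\varphi$.

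Combining these identifications gives
\[\ev_{q_0}\Phi^{1,R_0}(h) \;=\; \tilde h\bigl(\C^\times/q_0^{\Z},\tfrac{du}{u},\iota^{can}\bigr) \;=\; h\bigl(\varphi^{-1}(\C^\times/q_0^{\Z},\tfrac{du}{u},\iota^{can})\bigr) \;=\; h\bigl(\C^\times/q_0^{n\Z},\tfrac{du}{u},q_0\bigr),\]
proving the main equality. For the final assertion, $\Phi^{1,R_0}(h)$ lies in $\Conv_{R_0}$ and hence defines a meromorphic function on $\mathbb{D}$ holomorphic on $\mathbb{D}\smallsetminus\{0\}$; since its value at every nonzero $q_0\in\mathbb{D}$ agrees with $h(\C^\times/q_0^{n\Z},\tfrac{du}{u},q_0)$ by the above, $\Phi^{1,R_0}(h)$ is precisely the Laurent expansion of the holomorphic function $q\mapsto h(\C^\times/q^{n\Z},\tfrac{du}{u},q)$ at $0$.

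\textbf{Main obstacle.} The crucial step is the identification $\varphi^{-1}(\C^\times/q_0^{\Z},\tfrac{du}{u},\iota^{can}) \cong (\C^\times/q_0^{n\Z},\tfrac{du}{u},q_0)$. The sign conventions built into $\varphi$ (the factor of $[-1]$ in the definition of $\delta$, the Weil pairing normalization, and the opposite $(\Z/n)^\times$-action convention of \Cref{rem:equivariance}) must all be tracked through \Cref{exa:Tate} and through the exponential to be sure that the canonical arithmetic structure $\iota^{can}$ on the analytic Tate curve is genuinely the one obtained from the $\Gamma_1(n)$-structure $q_0\in\C^\times/q_0^{n\Z}$ by applying $\varphi$.
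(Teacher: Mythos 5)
Your proof is correct and follows the same route as the paper's: reduce to $\varphi(\C^\times/q_0^{n\Z},\tfrac{du}{u},q_0)=(\C^\times/q_0^{\Z},\tfrac{du}{u},\iota^{can})$ and invoke \cref{exa:Tate}. The paper states this more tersely; your explicit check that the differential $\tfrac{du}{u}$ matches under the \'etale quotient $\pi$ (so that $\pi^*\colon\omega_{E'}\to\omega_E$ carries it to itself) and the final analyticity argument for the Taylor-expansion claim are exactly the details the paper leaves implicit.
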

\begin{proof}
It suffices to show that
$$\varphi(\C^\times/q_0^{n\Z}, \frac{du}u, q_0) = (\C^\times/q_0^\Z, \frac{du}u, \iota^{can}).$$
This follows from Example \ref{exa:Tate}.
\end{proof}

Note that these discussions show that $\beta_1$ and $\beta_{\mu}$ actually have target in the ring $\MFC(\Gamma_1(n); R_0)$, i.e.\ that the classical $q$-expansion of $\beta_1$ of a modular form over $R_0$ actually has coefficients in $R_0$ and similarly for $\beta_{\mu}$. 

\begin{Theorem}[$q$-expansion principle]\label{thm:q-exp}
Let $R_0$ be a subring of $\C$. 
The morphisms
$$\beta_{\mu}\colon \Nat^{R_0}_k(\Ell^1_{\Gamma_{\mu}(n)}(-), \Gamma(-)) \to \MFC(\Gamma_1(n); R_0)$$
and  
$$\beta_1\colon \Nat^{R_0}_k(\Ell^1_{\Gamma_1(n)}(-), \Gamma(-)) \to \MFC(\Gamma_1(n); R_0)$$
are isomorphisms. In other words: If the coefficients of the $q$-expansion of a complex modular form are in $R_0$, it is actually already defined over $R_0$. 
\end{Theorem}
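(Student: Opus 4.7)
My plan is the following. The two claimed isomorphisms differ only by the isomorphism $\varphi^*$ induced by the equivalence of \cref{lem:phiequivalence}, as recorded in the commutative triangle of \cref{KatztoComplex}, so it suffices to prove the statement for $\beta_1$. The argument proceeds in three stages: first the case $R_0 = \C$ via GAGA, then a flat base change relating $R_0$ and $\C$, and finally the descent of a complex-analytic modular form with $R_0$-rational $q$-expansion to a form already defined over $R_0$.

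For $R_0 = \C$, I compose $\beta_1$ with the inverse of the comparison isomorphism $\alpha$ from \cref{sectionsvsnat}. The resulting map $H^0(\MM_1(n)_\C; \omline^{\tensor k}) \to \MFC_k(\Gamma_1(n); \C)$ coincides, by the discussion in \cref{sec:Comparisons}, with the GAGA isomorphism $\psi$ applied to $\MMb_1(n)_\C$ (using GAGA for stacks in the range $n<5$) together with the identification of its analytification with $X_1(n)$, followed by the inversion of $\Delta$ that passes from holomorphic to meromorphic forms. Hence $\beta_1$ is an isomorphism over $\C$.

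For a general subring $R_0 \subset \C$, the ring $\C$ is flat over $R_0$: the fraction field $K$ of $R_0$ is a localization of $R_0$ and $\C$ is a free $K$-module. Thus \cref{lem:basechange} provides an isomorphism $H^0(\MMb_1(n)_{R_0}; \omline^{\tensor k}) \tensor_{R_0} \C \cong H^0(\MMb_1(n)_\C; \omline^{\tensor k})$; inverting $\Delta$ yields the same statement for $\MM_1(n)$. Setting $M = H^0(\MM_1(n)_{R_0}; \omline^{\tensor k})$, the natural map $M \to M \tensor_{R_0} \C$ is injective because $M$ is $R_0$-torsion-free, embedding into $R_0((q))$ via the Katz $q$-expansion $\Phi^{1,R_0}$. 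Combined with the $\C$-case this gives injectivity of $\beta_1$ over $R_0$. For surjectivity, take $f \in \MFC_k(\Gamma_1(n); R_0)$; by the $\C$-case it corresponds to an element $s_\C \in M \tensor_{R_0} \C$ whose $q$-expansion in $\C((q))$ is exactly the classical $q$-expansion $\Phi^{hol}(f) \in R_0((q))$.

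The main obstacle is the resulting descent step: showing that an element of $M \tensor_{R_0} \C$ whose $q$-expansion has coefficients in $R_0$ must lie in the image of $M$. For the arithmetic level structure $\Gamma_\mu(n)$ this is the Katz $q$-expansion principle, whose proof rests on the fact that the Tate curve $\Tate(q^n)$ with its canonical arithmetic level structure gives, after passage to formal completion at the cusp, a faithfully flat cover of a formal neighborhood of a cuspidal locus of $\MMb_\mu(n)_{R_0}$, along which pullback of sections of $\omline^{\tensor k}$ is precisely the $q$-expansion. The conclusion transfers to $\Gamma_1(n)$ via the algebraic isomorphism $\varphi$ of \cref{lem:phiequivalence}, whose description in terms of the Weil pairing is compatible with arbitrary base change.
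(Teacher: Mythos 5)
Your proposal follows essentially the same route as the paper: reduce between the two statements via the equivalence $\varphi^*$ of \cref{lem:phiequivalence}, handle $R_0=\C$ via GAGA as in \cref{sec:Comparisons}, and invoke the Katz $q$-expansion principle for the descent to a general $R_0\subset\C$ --- the paper simply cites \cite[Theorem 12.3.4]{DiamondIm} for this last step, whereas you sketch the idea of its proof and insert an intermediate flat base change (and you reduce to $\beta_1$ rather than to $\beta_\mu$ as the paper does, which is mildly awkward since your descent step is really carried out on the $\Gamma_\mu(n)$ side and then transferred back through $\varphi$). One minor caveat: your justification that $M\to M\tensor_{R_0}\C$ is injective appeals to the injectivity of $\Phi^{1,R_0}$, which is itself part of the $q$-expansion principle being established; the circle is easily avoided by noting that $M$ is the section group of a line bundle on the stack $\MM_1(n)_{R_0}$, which is smooth and hence flat over the domain $R_0$, so $M$ is automatically $R_0$-torsion-free.
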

\begin{proof}
By the considerations above, it suffices to show the first statement. For $R_0 = \C$, this was discussed in Subsection \ref{sec:Comparisons}. The general case follows by the $q$-expansion principle as stated in \cite[Theorem 12.3.4]{DiamondIm}. 
\end{proof}

\subsection{Summary}\label{sec:summary} 
Let $R$ be any $\Z[\frac1n]$-algebra. We can define holomorphic modular forms for $\Gamma_1(n)$ of weight $k$ over $R$ as $H^0(\MMb_1(n)_R; \omline^{\tensor k})$ and meromorphic modular forms as $H^0(\MM_1(n)_R;\omline^{\tensor k})$. We have a morphism 
$\Spec \C \to \MM_1(n)$ classifying the elliptic curve $\C/\Z+n\tau\Z$ with chosen point $\tau$ of order $n$. Pulling $f \in H^0(\MM_1(n);\omline^{\tensor k})$ back to $\Spec \C$ and using the trivialization $\omline^{\tensor k}$ induced by the choice of differential $dz$, defines a holomorphic function of $\tau \in \mathbb{H}$ that is a meromorphic modular form for $\Gamma_1(n)$ in the classical sense. This defines isomorphisms
$$\beta_1\colon H^0(\MM_1(n)_{\C}; \omline^{\tensor k}) \to \MFC_k(\Gamma_1(n);\C)$$ 
and 
\[\beta_1\colon H^0(\MMb_1(n)_{\C}; \omline^{\tensor k}) \to \MF_k(\Gamma_1(n);\C).\]
 The $q$-expansion of $\beta_1(f)$ lies in $R\subset \C$ if and only if $f$ is in the image of the injection
 $$H^0(\MM_1(n)_R; \omline^{\tensor k}) \to H^0(\MM_1(n)_{\C}; \omline^{\tensor k}).$$
 As a last point, we consider the $\mathbb{G}_m$-torsor $\MMb_1^1(n) \to \MMb_1(n)$ that is the relative $\Spec$ of the quasi-coherent algebra
 $\bigoplus_{k\in\Z} \omline^{\tensor k}$. By construction, 
 \[H^0(\MMb_1^1(n)_{R};\OO_{\MMb_1^1(n)}) \cong \bigoplus_{k\in\Z}H^0(\MMb_1(n)_{R}; \omline^{\tensor k}) \cong \bigoplus_{k\in\Z} \MF_k(\Gamma_1(n); R). \]

\section{The invertible summand \texorpdfstring{in $\Tmf_0(7)$}{}\\ (joint with Martin Olbermann)}\label{AppB}
We recall from \cref{thm:topmain} that $\Tmf_0(7)_{(3)}$ splits as a $\Tmf$-module as
\[\Tmf_{(3)} \oplus \Sigma^4\Tmf_1(2)_{(3)} \oplus \Sigma^8\Tmf_1(2)_{(3)}\oplus L,\]
where $L \in \Pic(\Tmf_{(3)})$. The goal of this appendix is to determine $L$. The necessary computations of $\pi_*\Tmf_0(7)$ were obtained by Martin Olbermann. It turns out that for the purposes of this article, we need only a small part of these computations, which the authors of the main part of this article extracted from Olbermann's computations. 

We recall from \cite{MathewStojanoska} that $\Pic(\Tmf_{(3)}) \cong \Z \oplus \Z/3$. More precisely, their computation shows that the morphisms
\[\Z/72 \to \Pic(\TMF_{(3)}), \quad [k] \mapsto \Sigma^k\TMF\]
and
\[\Z \to \Pic(\Tmf_{\Q}),\quad k \mapsto \Sigma^k \Tmf_{\Q}\]
are isomorphisms and moreover that 
\[\Pic(\Tmf_{(3)}) \to \Z/72 \times_{\Z/24} \Z \subset \Pic(\TMF_{(3)}) \times \Pic(\Tmf_{\Q})\]
is an isomorphism as well. (While this last fact is not explicitly stated in \cite{MathewStojanoska}, it is clearly visible in the proof of their Theorem B.)

As described in \cref{thm:topmain}, we obtain $L$ as the global sections of an invertible $\OO^{top}$-module $\mathcal{L}$ on $\MMb_{ell,(3)}$ with $\pi_0\mathcal{L} \cong \omline^{\tensor (-6)}$. As $\pi_0\Sigma^{12}\OO^{top} \cong \omline^{\tensor (-6)}$ as well and global sections define an equivalence
$$\Gamma\colon \mathrm{QCoh}(\MMb_{ell, (3)},\OO^{top}) \to \Tmf_{(3)}\mathrm{-mod}$$
of $\infty$-categories \cite{MathewMeier}, we see that the image of $L$ in $\Pic(\Tmf_{\Q})$ is $\Sigma^{12}\Tmf_{\Q}$. We will show:

\begin{Prop}
The image $L[\Delta^{-1}]$ of $L$ in $\Pic(\TMF_{(3)})$ is $\Sigma^{36}\TMF_{(3)}$ and hence $L \simeq \Sigma^{36}\Gamma(\mathcal{J}^{\tensor (-1)})$ in the notation from \cite[Construction 8.4.2]{MathewStojanoska}.
\end{Prop}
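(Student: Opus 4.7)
The plan is to pin down $L[\Delta^{-1}] \in \Pic(\TMF_{(3)}) \cong \Z/72$ by combining the already-known rational data with an explicit homotopy-groups computation, and then lift the answer from $\Pic(\TMF_{(3)})$ to $\Pic(\Tmf_{(3)})$ using the pullback description from \cite{MathewStojanoska}.

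First I would record what is immediate. Because $\pi_0\mathcal{L} \cong \omline^{\otimes -6}$, the image of $L$ in $\Pic(\Tmf_\Q) \cong \Z$ is $12$. Under the reduction map $\Pic(\TMF_{(3)}) \cong \Z/72 \to \Pic(\TMF_\Q) \cong \Z/24$, the class of $L[\Delta^{-1}]$ must therefore reduce to $12$, leaving exactly three candidates: $12$, $36$, and $60 \pmod{72}$.

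Second, to single out the correct class, I would invoke Olbermann's computation of $\pi_*\Tmf_0(7)_{(3)}$ presented in \cref{AppB} below. Combining \cref{thm:topmain} with the (well-known) homotopy of $\Tmf_{(3)}$ and $\Tmf_1(2)_{(3)}$ lets one subtract off the contributions of the three standard summands and isolate $\pi_*L$. After inverting $\Delta$ and using that $\TMF_{(3)}$ is $72$-periodic while the $\TMF_1(2)_{(3)}$-summands are $8$-periodic, the $72$-periodic pattern of generators in $\pi_*\TMF_0(7)_{(3)}$ not accounted for by the $\TMF \oplus \Sigma^4\TMF_1(2) \oplus \TMF_1(2)$ piece should have leading generator precisely in degree $36$, and this uniquely identifies $L[\Delta^{-1}] \simeq \Sigma^{36}\TMF_{(3)}$. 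The decisive point is the degree modulo $72$, not merely modulo $24$, of a class detected in $\pi_*\Tmf_0(7)_{(3)}$.

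Third, once $L[\Delta^{-1}] \simeq \Sigma^{36}\TMF_{(3)}$ is in hand, the full identification of $L$ follows from the pullback isomorphism $\Pic(\Tmf_{(3)}) \cong \Z/72 \times_{\Z/24} \Z$ of \cite{MathewStojanoska}: the element $\Sigma^{36}\Gamma(\mathcal{J}^{\otimes(-1)})$ is the unique class whose images in $\Pic(\TMF_{(3)})$ and $\Pic(\Tmf_\Q)$ are $\Sigma^{36}\TMF_{(3)}$ and $\Sigma^{12}\Tmf_\Q$ respectively, matching $L$ on both sides.

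The main obstacle is the bookkeeping in step two: one must carefully match classes in Olbermann's computation with the contributions of the three standard summands and verify that a generator of $L$ is detected in degree $\equiv 36 \pmod{72}$, as opposed to $12$ or $60$. This distinction cannot be made on the level of abstract homotopy groups as abelian groups alone (they agree up to a suspension by a multiple of $24$), so one genuinely needs the Picard-refined output of the descent computation, which is precisely what \cref{AppB} provides.
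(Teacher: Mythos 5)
Your outer skeleton---narrow the candidates for $L[\Delta^{-1}]$ to $\Sigma^{12},\Sigma^{36},\Sigma^{60}$ via the rational image, resolve the ambiguity, then lift through the pullback description of $\Pic(\Tmf_{(3)})$---matches the paper, and your first and third steps are correct. Step two, though, is where you take a genuinely different route, and as written it leaves the essential work undone.

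You propose to take the full graded abelian group $\pi_*\Tmf_0(7)_{(3)}$ from Olbermann, subtract the three recognized summands, and read off degrees. In principle this can work, since $\TMF_{(3)}$ is $72$- but not $24$-periodic and the three candidates therefore have distinct graded homotopy. (This makes your parenthetical---that the distinction ``cannot be made on the level of abstract homotopy groups as abelian groups alone''---misleading; taken literally it would make your own plan impossible.) The real problem is that this route requires the full $\pi_*\Tmf_0(7)_{(3)}$ as input, and \cref{AppB} deliberately does \emph{not} reproduce it: the text states that only a small part of Olbermann's computation is extracted, namely a single differential. Deferring the decisive step to ``what \cref{AppB} provides'' is circular, since the proposition you are asked to prove \emph{is} \cref{AppB}. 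You would also need the explicit identification of which classes belong to which summand, not just the abstract splitting---this is nontrivial bookkeeping you do not address.

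The paper's argument is far more economical and bypasses $\pi_*\Tmf_0(7)$ entirely. It notes that the descent spectral sequence for $L[\Delta^{-1}]$ embeds as a summand of the one for $\TMF_0(7)$, and identifies the generator in topological degree $12$ with $\sigma_3^2\in H^0((\Z/7)^\times;\MFC_1(7))$ using the comodule splitting of \cref{prop:comoduleiso}. Because $d_5(1)=0$ while $d_5(\Delta)=\alpha\beta^2$ and $d_5(\Delta^2)=-\Delta\alpha\beta^2$ are nonzero in the descent spectral sequence for $\TMF$, the entire question reduces to whether $d_5(\Delta\sigma_3^2)$ vanishes. That vanishing is then proved by a short explicit calculation: the image of $\Delta$ in $H^0((\Z/7)^\times;\MFC_1(7))$ is $-\sigma_3^3p-8\sigma_3^4$; the element $\sigma_3p$ lies in the image of the transfer from $\MM_1(7)$ and is therefore a permanent cycle annihilating all higher cohomology; and the Leibniz rule together with $3$-torsion of the relevant groups forces $d_5(\Delta\sigma_3^2)=9\alpha\beta^2\sigma_3^2=0$. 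That targeted differential, not the full homotopy of $\Tmf_0(7)$, is the actual content of the appendix.
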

In the following, we will leave the localization at $3$ for the moduli stacks, rings of modular forms and variants of $\TMF$ implicit to avoid clutter in the notation. We already know from the discussion above that $L[\Delta^{-1}] \simeq \Sigma^k\TMF$ for $k=12, 36$ or $60$. Moreover, the descent spectral sequence for $L[\Delta^{-1}]$ embeds into that of $\TMF_0(7)$ as a summand. Recall that the latter has $E_2$-term $H^*(\MM_0(7);\omline^{\tensor *})$. Since $\MM_0(7)$ has the $(\Z/7)^\times$-Galois cover $\MM_1(7)$, we use the definition of \v{C}ech cohomology to identify this $E_2$-term with 
$H^*((\Z/7)^\times; \MFC_1(7))$, where $\MFC_1(7)$ is used as our abbreviation for $\MFC(\Gamma_1(7);\Z[\frac17])$.
Actually, as 
\[\OO^{top}(\MM_1(7)^{\times_{\MM_0(7)}k}) \simeq \prod_{(\zsk)^{\times k}}\TMF_1(7)\]
and $\MM_1(7)$ is affine, we obtain even an identification of the cosimplicial objects defining the descent spectral sequence for $\TMF_0(7)$ and the homotopy fixed point spectral sequence for $\TMF_1(7)^{h\zsk} \simeq \TMF_0(7)$, and hence an isomorphism of these spectral sequences. 

Moreover, the descent spectral sequence for $L[\Delta^{-1}]$ has $E_2$-term isomorphic to $H^0(\MM_{ell}; \omline^{\tensor (* - 6)})$. Under these identifications, the embedding of descent spectral sequences sends $1 \in H^0(\MM_{ell}; \omline^{\tensor (6-6)})$ to $\sigma_3^2 \in H^0((\Z/7)^\times; \MFC_1(7)_6)$. This follows after identification of source and target with the primitive elements in the $(\widetilde{A},\widetilde{\Gamma})$ comodules $\widetilde{A}$ and $S_{\widetilde{A}}$ from \cref{prop:comoduleiso}. As $d_5(\Delta) = \alpha\beta^2$ and $d_5(\Delta^2) = -\Delta\alpha\beta^2$, while $d_5(1) = 0$ in the descent spectral sequence for $\TMF$ itself \cite{Konter}, it suffices to show the following lemma. 

\begin{lemma}
In the descent spectral sequence for $\TMF_0(7)$, the class $\Delta\sigma_3^2 \in H^0(\MM_0(7); \omline^{\tensor (-6)})$ has a trivial $d_5$-differential.
\end{lemma}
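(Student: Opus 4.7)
The plan is to exploit the splitting of the descent spectral sequence for $\Tmf_0(7)_{(3)}$ corresponding to the decomposition of $(h_7')_*\OO_{\MM_0(7)_{cub,(3)}}$ from \cref{thm:MainTheorem}, thereby reducing the computation to the summand spectral sequence for the invertible module $L$. Under this splitting, $\sigma_3^2 \in H^0(\MM_0(7);\omline^{\tensor 6})$ corresponds to the generator $1_L \in E_2^{0,12}$ of the $L$-summand SS (the class $1 \in H^0(\MMb_{ell};\omline^{\tensor 0})$ placed in the shifted summand $\pi_{12}\mathcal{L} \cong \omline^{\tensor 0}$). Accordingly $\Delta\sigma_3^2$ becomes the class $\Delta \cdot 1_L \in E_2^{0,36}$, and the lemma reduces to showing that its $d_5$-differential in the $L$-SS vanishes.

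The descent spectral sequence for $L$ is a module over that for $\Tmf_{(3)}$, so the Leibniz rule combined with the known differential $d_5(\Delta)=\alpha\beta^2$ in the $\Tmf$-SS gives
\[
d_5(\Delta \cdot 1_L) \;=\; \alpha\beta^2 \cdot 1_L \;+\; \Delta \cdot d_5(1_L).
\]
For the second term, note that $d_5(1_L)$ lives in $E_5^{5,16}$, a subquotient of $H^5(\MMb_{ell,(3)};\omline^{\tensor 2})$. One verifies directly (using either the explicit description of $H^*(\MMb_{ell,(3)};\omline^{\tensor *})$ afforded by the Hopf algebroid $(\At,\Gt)$ from \cref{StacksHopfElliptic}, or the standard chart of the $E_2$-page of the $\Tmf_{(3)}$ descent SS) that this group is zero, so $d_5(1_L)=0$.

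It therefore remains to prove the vanishing of $\alpha\beta^2 \cdot 1_L$ on the $E_5$-page of the $L$-SS. This is the main obstacle: at $E_2$, the target lies in $H^5(\MMb_{ell,(3)}; \omline^{\tensor 14})$, where the class $\alpha\beta^2$ is nonzero, and since the $L$-SS is only a module over the $\Tmf$-SS rather than a ring in its own right, we cannot just invoke the relation $d_5(\Delta) = \alpha\beta^2$ to kill the image of $\alpha\beta^2$ (the class $\Delta\cdot 1_L$ whose differential \emph{would} hit it is exactly the class under scrutiny). Instead, the plan is to appeal to Olbermann's explicit computation of the descent SS for $\Tmf_0(7)_{(3)}$ reported in the ensuing portion of this appendix, which produces an earlier differential hitting $\alpha\beta^2 \cdot 1_L$; this is the delicate step and constitutes the principal input beyond purely formal manipulation.

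Combining the two vanishings yields $d_5(\Delta\sigma_3^2)=0$. Feeding this back into the argument preceding the lemma then identifies $L[\Delta^{-1}] \simeq \Sigma^{36}\TMF_{(3)}$, which together with the constraint on the image in $\Pic(\Tmf_\Q)$ and the Mathew--Stojanoska description of $\Pic(\Tmf_{(3)})$ pins down $L$ as $\Sigma^{36}\Gamma(\J^{\tensor(-1)})$.
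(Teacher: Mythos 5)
Your plan founders on a key miscalculation, and the intended fix cannot work. You claim $d_5(1_L)=0$ on the grounds that the target $E_5^{5,16}$ is a subquotient of $H^5(\MMb_{ell,(3)};\omline^{\tensor 2})$, which you assert vanishes. But this group does \emph{not} vanish 3-locally (the stacky point of $\MMb_{ell,(3)}$ with $\mu_3$ in its automorphism group contributes nontrivial torsion to $H^5$ in this weight; after inverting $\Delta$ the nonzero class can be written as $\alpha\beta^2\Delta^{-1}$). In fact the paper's own computation shows $\sigma_3^2 d_5(\sigma_3^2) = -\alpha\beta^2 \neq 0$, which forces $d_5(1_L) = d_5(\sigma_3^2) \neq 0$. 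So the first leg of your Leibniz decomposition is simply false. If it were true, the argument before the lemma would give $L[\Delta^{-1}] \simeq \Sigma^{12}\TMF_{(3)}$, which is precisely the case the appendix is trying to \emph{rule out}; the whole content of the lemma is that $d_5(1_L)$ is nonzero in just such a way that the two Leibniz terms $\alpha\beta^2\cdot 1_L$ and $\Delta\cdot d_5(1_L)$ cancel.

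The second leg of your plan is also untenable: there can be no ``earlier differential hitting $\alpha\beta^2\cdot 1_L$,'' because (as the paper notes explicitly before the lemma) the splitting and the known structure of the $\TMF$-SS show there are no differentials shorter than $d_5$. Hence $E_5 = E_2$ in this range, and on $E_2$ the class $\alpha\beta^2\cdot 1_L$ is nonzero. Moreover, appealing to Olbermann's computation as a black box defeats the purpose: this lemma \emph{is} the distillation of the minimal piece of that computation one needs, and it is proved self-containedly. The paper's proof takes an orthogonal route that your proposal does not touch: it stays in the $\TMF_0(7)$-SS as a ring spectral sequence identified with $H^*((\Z/7)^\times;\MFC_1(7))$, plugs in the explicit formula $\Delta \mapsto -\sigma_3^3 p - 8\sigma_3^4$, and exploits two facts about the transfer (that transferred classes are permanent cycles, and that multiplication by transferred classes annihilates positive-degree group cohomology) to produce the identity $\sigma_3^2 d_5(\sigma_3^2) = -\alpha\beta^2$, from which the cancellation in $d_5(\Delta\sigma_3^2)$ follows. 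You would need to supply an input of this kind rather than hope for vanishing term by term.
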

\begin{proof}
Our first tool is the map of descent spectral sequences from that for $TMF$ to that for $TMF_0(7)$, which on the $0$-line of the $E_2$-term is a map 
\begin{equation}\label{eq:DSSmap} H^0(\MM_{ell}; \omline^{\otimes *}) \to H^0(\MM_0(7); \omline^{\otimes *}). \end{equation}
Recall from above that
$H^0(\MM_0(7); \omline^{\otimes *}) \cong H^0(\zsk; \MFC_1(7))$. \Cref{Z6invariants} implies that we can express every element in the invariants as a polynomial in $\sigma_1, \sigma_3$ and $p=z_1^2z_2+z_2^2z_3+z_3^2z_1$. As every element in $H^0(\MM_{ell}; \omline^{\otimes *})$ is a polynomial in the $a_i$, \cref{prop:coordinatesm1n} and \cref{Alphas} give us a concrete way to calculate the map  \eqref{eq:DSSmap}. In particular, we obtain
\[\Delta \mapsto -\sigma_3^3p-8\sigma_3^4. \]
By the splitting from \cref{thm:MainTheorem} and the known differentials from the descent spectral sequence of $\TMF$, we see that there are no differentials shorter than a $d_5$ in the descent spectral sequence for $\TMF_0(7)$. In particular, we obtain that $\Delta$ is a $d_i$-cycle for $i<5$ in the descent spectral sequence for $\TMF_0(7)$, but $d_5(\Delta) = \alpha\beta^2$ (where we use the same notation for the images of $\Delta$ and $\alpha\beta^2$ in the descent spectral sequence for $\TMF_0(7)$ as in that for $\TMF$).

Our second tool is the transfer
\[\Tr\colon \MFC_1(7) = H^0(\{e\}; \MFC_1(7)) \to H^0((\Z/7)^\times; \MFC_1(7)), \qquad x \mapsto \sum_{g \in (\Z/7)^\times} gx. \]
We have $\Tr(x)y = \Tr(x \res(y)) = 0$ for all $x \in \MFC_1(7)$ and $y \in H^*((\Z/7)^\times, \MFC_1(7))$ with $\ast > 0$ by \cite[Formula V.3.8]{Brown}.
In particular, these elements act trivially on $H^*((\Z/7)^\times, \MFC_1(7))$ for $\ast > 0$. As $3$ is in the image of $\Tr$, we see in particular that  $H^*((\Z/7)^\times, \MFC_1(7))$ for $\ast > 0$ is $3$-torsion.

Moreover we claim that all elements in the image of the transfer $\Tr$
are permanent cycles in the homotopy fixed point spectral sequence (or, equivalently, the descent spectral sequence) converging to $\pi_*\TMF_0(7) = \pi_*\TMF_1(7)^{h(\Z/7)^\times}$. Indeed: Consider the $(\Z/7)^\times$-equivariant map $a\colon \zsk_+ \wedge \TMF_1(7) \to \TMF_1(7)$ induced by $\id_{\TMF_1(7)}$, where the action on the source is only on $\zsk_+$. On homotopy groups, this induces the map $(x_g)_{g\in\zsk} \mapsto \sum_{g\in\zsk} gx_g$. Thus, the map that $a$ induces on homotopy fixed point spectral sequences agrees in the $0$-line exactly with the transfer $\Tr$ under the identification $\MFC_1(7) \cong H^0(\zsk; \bigoplus_{\zsk} \pi_*\TMF_1(7))$. As the homotopy fixed point spectral sequence for $\zsk_+\wedge \TMF_1(7)$ is concentrated in the $0$-line, this implies that every element in the image of $\Tr$ is a permanent cycle. In particular, $\sigma_3p = \Tr(\frac{z_1^3z_2^2z_3}2)$ implies that $d_5(\sigma_3p) = 0$.  

Taken together, these tools imply the following computation:
\begin{align*}
    \alpha\beta^2 &= d_5(\Delta) \\
    &= d_5(-\sigma_3^3p-8\sigma_3^4) \\
    &= -\sigma_3^2d_5(\sigma_3p+8\sigma_3^2)-d_5(\sigma_3^2)(\sigma_3p+8\sigma_3^2)\\
    &= -8\sigma_3^2d_5(\sigma_3^2)-8\sigma_3^2d_5(\sigma_3^2)\\
    &= -\sigma_3^2d_5(\sigma_3^2).
\end{align*}
In total, we obtain $\sigma_3^{2}d_5(\sigma_3^2) = -\alpha\beta^2$ and deduce
\begin{align*}
    d_5(\Delta\sigma_3^2) &= \alpha\beta^2 \sigma_3^2 +\Delta d_5(\sigma_3^2) \\
    &= \alpha\beta^2 \sigma_3^2-8\sigma_3^4d_5(\sigma_3^2) \\
    &= 9\alpha\beta^2 \sigma_3^2=0.
\end{align*}
\end{proof}

\bibliographystyle{plain}
\bibliography{refTMF}
\end{document}